
\documentclass[letter,12pt]{amsart}


\usepackage{tikz}
\usetikzlibrary{decorations.markings}
\usetikzlibrary{shapes}
\usetikzlibrary{backgrounds}
\usepackage{subfig}
\captionsetup[subfigure]{labelformat=empty,margin=0pt, parskip=0pt,
  hangindent=0pt, indention=0pt}

\usepackage{amssymb} 
\usepackage[all]{xy}

\usepackage{hyperref}
\def\thetitle{{Embedability between right-angled Artin groups}}
\hypersetup{
    pdftitle=   \thetitle,
   pdfauthor=  {Sang-hyun Kim and Thomas Koberda}
}
\usepackage{enumerate}
\makeatletter
\let\@@enum@org\@@enum@
\def\@@enum@[#1]{\@@enum@org[\normalfont #1]}
\makeatother

\newtheorem{thm}{Theorem}
\newtheorem{lem}[thm]{Lemma}
\newtheorem{cor}[thm]{Corollary}
\newtheorem{prop}[thm]{Proposition}
\newtheorem{con}[thm]{Conjecture}
\newtheorem{que}{Question}

\newtheorem*{thm6}{Theorem 6}

\theoremstyle{remark}

\newtheorem*{rem}{Remark}

\theoremstyle{definition}
\newtheorem{defn}[thm]{Definition}

\newcommand\form[1]{\langle #1\rangle}
\def\opp{^\mathrm{opp}}

\newcommand\N{\mathbb{N}}

\newcommand\Z{\mathbb{Z}}

\newcommand\supp{\operatorname{supp}}

\newcommand\lk{\operatorname{Lk}}
\newcommand\st{\operatorname{St}}
\newcommand\diam{\operatorname{diam}}

\newcommand\gex{{\Gamma}^e}
\newcommand\aga{{A(\Gamma)}}

\newcommand\gam{\Gamma}
\newcommand\bZ{\mathbb{Z}}
\newcommand\Mod{Mod}
\newcommand\Homeo{Homeo}
\newcommand\mC{\mathcal{C}}



\begin{document}

\title\thetitle

\author{Sang-hyun Kim}
\address{Department of Mathematical Sciences, KAIST, 335 Gwahangno, Yuseong-gu, Daejeon 305-701, Republic of Korea}
\email{shkim@kaist.edu}
\thanks{The first named author is supported by the Basic Science Research Program (2010-0023515) and the Mid-Career Researcher Program (2010-0027001) through the National Research Foundation funded by the Ministry of Education, Science and Technology of Korea.}

\author{Thomas Koberda}
\address{Department of Mathematics, Harvard Univerity, 1 Oxford St., Cambridge, MA 02138, USA}
\email{koberda@math.harvard.edu}
\thanks{}
\date{\today}
\keywords{right-angled Artin group, surface group, mapping class group, co-contraction}
\begin{abstract}
In this article we study the right-angled Artin subgroups of a given right-angled Artin group.  Starting with a graph $\gam$, we produce a new graph through a purely combinatorial procedure, and call it the extension graph $\gex$ of $\gam$.  We produce a second graph $\gex_k$, the clique graph of $\gex$, by adding extra vertices for each complete subgraph of $\gex$.  We prove that each finite induced subgraph $\Lambda$ of $\gex$ gives rise to an inclusion $A(\Lambda)\to A(\gam)$.  Conversely, we show that if there is an inclusion $A(\Lambda)\to A(\gam)$ then $\Lambda$ is an induced subgraph of $\gex_k$.  These results have a number of corollaries.  Let $P_4$ denote the path on four vertices and let $C_n$ denote the cycle of length $n$.  We prove that $A(P_4)$ embeds in $A(\gam)$ if and only if $P_4$ is an induced subgraph of $\gam$.  We prove that if $F$ is any finite forest then $A(F)$ embeds in $A(P_4)$.  We recover the first author's result on co--contraction of graphs, and prove that if $\gam$ has no triangles and $A(\gam)$ contains a copy of $A(C_n)$ for some $n\geq 5$, then $\gam$ contains a copy of $C_m$ for some $5\le m\le n$.  We also recover Kambites' Theorem, which asserts that if $A(C_4)$ embeds in $A(\gam)$ then $\gam$ contains an induced square.  Finally, we determine precisely when there is an inclusion $A(C_m)\to A(C_n)$ and show that there is no ``universal" two--dimensional right-angled Artin group.
\end{abstract}
\maketitle

\section{Introduction}
This article gives a systematic study of the existence of embeddings between right-angled Artin groups.

By a \emph{graph}, we mean a (possibly infinite) simplicial $1$-complex; 
in particular, we do not allow loops or multi-edges.
For a graph $\gam$, we denote its vertex set by $V(\gam)$ and its edge set by $E(\gam)$. 
One can define the \emph{right-angled Artin group $\aga$ with the underlying graph $\gam$} by the following presentation:
\[ \aga= \form{V(\gam) \vert [v,v']=1\textrm{ for each }\{v,v'\}\in E(\gam)}.\]

It is a fundamental fact that two right-angled Artin groups are isomorphic if and only if their underlying graphs are isomorphic~\cite{KMNR1980}, also \cite{Sabalka2009}, \cite{Koberda2011}. 
Note that $\aga$ is free abelian for a complete graph $\gam$, 
and free for a discrete graph $\gam$. 
In these two extreme cases, 
all the subgroups are right-angled Artin groups of the same type; 
namely, free abelian or free, respectively. 

Subgroups of more general right-angled Artin groups are diverse in their isomorphism types.
If a group $H$ embeds into another group $G$, we say $G$ \emph{contains} $H$ and write $H\le G$. By a \emph{long cycle}, we mean a cycle of length at least five.
If $\gam$ is a long cycle, then $\aga$ contains the fundamental group of a closed hyperbolic surface~\cite{SDS1989}.
Actually, the fundamental group of any closed surface with Euler characteristic less than $-1$ embeds into some right-angled Artin group~\cite{CW2004}.
A group $H$ is said to \emph{virtually embed} into another group $G$ if a finite index subgroup of $H$ embeds into $G$. It is an outstanding question whether the fundamental group of an arbitrary closed hyperbolic 3-manifold virtually embeds into some right-angled Artin group~\cite{HW2008,Agol2008}.
In this paper, we are mainly concerned about subgroup relations between distinct right-angled Artin groups.

\begin{que}\label{q:main}
Is there an algorithm to decide whether there exists an embedding between two given right-angled Artin groups?
\end{que}
We remark that the answer to Question~\ref{q:main} remains unchanged if we replace ``embedding'' with ``virtual embedding''. 
Indeed, suppose $\gam$ and $\Lambda$ are finite graphs and we have an embedding $H\to A(\gam)$, where $H\le A(\Lambda)$ is a finite index subgroup. 
Every finite index subgroup of $A(\Lambda)$ contains a copy of $A(\Lambda)$.  
To see this, let $V$ be the set of vertices of $\Lambda$, viewed as generators of $A(\Lambda)$, 
and let $H\le A(\Lambda)$ have finite index $N$.  Without loss of generality, $H$ is normal.
Then the group generated by $\{v^N\mid v\in V\}$ is contained in $H$ and is isomorphic to $A(\Lambda)$.

Let $\gam$ be a graph and $U$ be a set of vertices of $\gam$.
The \emph{induced subgraph  of $\gam$ on $U$} is the subgraph $\Lambda$ of $\gam$ consisting of $U$ and the edges whose endpoints are both in $U$;
we also say $\gam$ \emph{contains an induced} $\Lambda$ and write $\Lambda\le\gam$.
It is standard that $\Lambda\le\gam$ implies that $A(\Lambda)\le A(\gam)$.
By a \emph{clique of} $\gam$, we mean a complete subgraph of $\gam$.

\begin{defn}\label{d:extension graph}
Let $\gam$ be a graph.
\begin{enumerate}
	\item
The \emph{extension graph} of $\gam$ is the graph $\gex$ 
where the vertex set of $\gex$ consists of the words in $\aga$ that are conjugate to the vertices of $\gam$, and two vertices of $\gex$ are adjacent if and only if those two vertices commute, when considered as words in $\aga$.
\item
The \emph{clique graph} of $\gam$ is the graph $\gam_k$ such that the vertex set 
is the set of nonempty cliques of $\gam$
and two distinct cliques $K$ and $L$ of $\gam$ correspond to adjacent vertices of $\gam_k$ if and only if  $K$ and $L$ are both contained in some clique of $\gam$.
\end{enumerate}
\end{defn}

Our first main theorem describes a method of embedding a right-angled Artin group into $\aga$ using extension graphs.

\begin{thm}\label{t:thm1}
For finite graphs $\Lambda$ and $\gam$, $\Lambda\le\gex$ implies $A(\Lambda)\le\aga$.
\end{thm}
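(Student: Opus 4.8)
The plan is to realize $A(\Lambda)$ inside $A(\gam)$ by means of a finite sequence of \emph{star-doublings}. For a graph $\Delta$ and a vertex $v$, write $D_v(\Delta)$ for the graph obtained by gluing two copies $\Delta_1,\Delta_2$ of $\Delta$ along the closed star $\st(v)$. I will combine two ingredients: the fact (noted above) that an induced-subgraph inclusion of finite graphs induces an inclusion of right-angled Artin groups, and a \emph{doubling lemma} asserting that $A(D_v(\Delta))\le A(\Delta)$. Given these, the theorem reduces to the combinatorial assertion that every finite induced subgraph $\Lambda$ of $\gex$ is an induced subgraph of some graph $\Delta$ obtained from $\gam$ by finitely many successive star-doublings; one then gets $A(\Lambda)\le A(\Delta)\le\cdots\le A(\gam)$.

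I would prove the doubling lemma through an index-two subgroup. The rule sending $v$ to $\bar1$ and every other vertex to $\bar0$ defines a homomorphism $\phi\colon A(\Delta)\to\Z/2\Z$, well defined because the defining relators are commutators; let $K=\ker\phi$. Running Reidemeister--Schreier with transversal $\{1,v\}$ presents $K$ on the generators $v^2$, together with $u$ for each vertex $u$ adjacent to $v$ (a single generator, since there $vuv^{-1}=u$) and the pair $u,\,vuv^{-1}$ for each vertex $u\notin\st(v)$; rewriting the conjugates of the relators $[a,b]$ shows the only relations among these generators are exactly the commutators dictated by $D_v(\Delta)$, so $K\cong A(D_v(\Delta))$. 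In this identification the vertex $v$ of $D_v(\Delta)$ corresponds to $v^2$, the vertices of $\Delta_1$ to themselves, and the vertices of $\Delta_2\setminus\st(v)$ to their conjugates by $v$.

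For the combinatorial assertion, I would exhibit $\gex$ as an increasing union of star-doublings of $\gam$. First, $D_v(\gam)$ is an induced subgraph of $\gex$ for every vertex $v$ of $\gam$: realize $\Delta_1$ tautologically and send a vertex $u$ of $\Delta_2\setminus\st(v)$ to the word $vuv^{-1}$, a conjugate of a vertex --- hence a vertex of $\gex$ --- distinct from all others since conjugate vertices in a right-angled Artin group coincide. The commutations required by $D_v(\gam)$ hold at once (for an edge between $u\in\Delta_2\setminus\st(v)$ and $n\in\st(v)$ one uses $[n,v]=1$), while the required \emph{non}-commutations --- that $v$ does not commute with $vuv^{-1}$, and that $u_1$ does not commute with $vu_2v^{-1}$ when $u_1,u_2\notin\st(v)$ --- follow from the description of the centralizer of a vertex $u$ as $\langle u\rangle\times A(\lk u)$ together with the retraction of $A(\gam)$ onto $A(\st u)$. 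Iterating such doublings diagonally over all vertices (realizing new clones, as needed, by conjugating by a vertex or by its inverse) yields a chain $\gam=\Delta_0\le\Delta_1\le\cdots$ of induced subgraphs of $\gex$ exhausting $\gex$, since each vertex $wvw^{-1}$ of $\gex$ is produced from $v$ by conjugating one syllable of $w$ at a time. As $\Lambda$ is finite it lies inside some $\Delta_k$, whence $A(\Lambda)\le A(\Delta_k)\le\cdots\le A(\Delta_0)=A(\gam)$.

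The main obstacle lies in the combinatorial assertion: verifying that star-doubling truly lands inside $\gex$ --- that is, the non-commutation statements, which is where the structure of centralizers in right-angled Artin groups is indispensable --- that the doublings exhaust $\gex$, and that the doublings can be arranged so that no newly created vertex coincides, as an element of $A(\gam)$, with one already present. By comparison the doubling lemma is a mechanical Reidemeister--Schreier computation.
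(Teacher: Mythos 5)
Your proposal is correct, and it is essentially the route the paper itself relegates to the appendix (Section~\ref{sec:appendix}) rather than the one it features. Your two ingredients are exactly the paper's Corollary~\ref{c:double} (that $A(D_{\st(v)}(\Delta))$ sits inside $A(\Delta)$ as an index-two subgroup --- the paper proves this topologically via Lemma~\ref{l:c:double}, a degree-two cover of a graph of spaces, where you run Reidemeister--Schreier; these are the same argument in different clothing) and the paper's Lemma~\ref{l:double} (every finite induced subgraph of $\gex$ lies in a finite iterated star-double of $\gam$), which the paper states with the same level of justification you give: the induced subgraph of $\gex$ on $V(\gam)\cup V(\gam)^v$ is $D_{\st(v)}(\gam)$, with non-adjacency of clones checked by the Centralizer Theorem, and iteration exhausts $\gex$ one syllable of the conjugator at a time. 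By contrast, the paper's two featured proofs realize $A(\gam)$ inside a mapping class group --- as high powers of Dehn twists about curves with co-incidence graph $\gam$ (via Lemma~\ref{l:mod}), or as pseudo-Anosovs on subsurfaces (via Lemma~\ref{l:CLM}) --- and then observe that the conjugated curves realize $\gex$ inside the curve complex, so that $A(\Lambda)$ is generated by further powers of twists. Your approach is more elementary and self-contained; what the mapping-class-group proofs buy is the embedding $\alpha^e\colon\gex\to\mC(\Sigma)$ and the surface machinery that the paper reuses repeatedly afterwards (in Lemma~\ref{l:disjoint}, Lemma~\ref{l:joingraph2}, and the proof of Theorem~\ref{t:thm2 strong}), which is why the authors call it the ``correct'' proof. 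The one place where your sketch, like the paper's Lemma~\ref{l:double}, leaves work implicit is in verifying that at each stage of the iteration the newly created clones neither coincide with nor acquire unexpected adjacencies to existing vertices; you correctly identify the Centralizer Theorem as the tool for this, so the gap is one of bookkeeping rather than of ideas.
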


The next theorem limits possible embeddings between right-angled Artin groups. Actually, we will prove a stronger version of Theorem~\ref{t:thm2}; that is, Theorem~\ref{t:thm2 strong}.

\begin{thm}\label{t:thm2}
For finite graphs $\Lambda$ and $\gam$, $A(\Lambda)\le \aga$ implies $\Lambda\le\gex_k$.
\end{thm}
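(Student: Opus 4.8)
The plan is to take an embedding $\phi\co A(\Lambda)\hookrightarrow\aga$ of finite graphs and extract from it an induced copy of $\Lambda$ inside $\gex_k$, by analysing the images $\phi(v)$ of the vertices of $\Lambda$ through their centralizers in $\aga$.

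First I would normalize $\phi$: among all embeddings obtained from $\phi$ by composing with inner automorphisms of $\aga$ and by replacing vertex images with roots, choose one minimizing the total length of the $\phi(v)$, $v\in V(\Lambda)$. After this, each $\phi(v)$ has a well-defined \emph{pure factor} decomposition: up to conjugacy, $\phi(v)=p_{v,1}^{m_{v,1}}\cdots p_{v,\ell_v}^{m_{v,\ell_v}}$, where the $p_{v,i}$ are cyclically reduced, are not proper powers, have pairwise $\gam$-joined supports, and each has join-irreducible support (the subgraph of $\gam$ induced on $\supp p_{v,i}$ has connected complement, or is a single vertex). The problem then becomes to match the commutation data among all the $p_{v,i}$ with the incidence data of a clique pattern in $\gex$.

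Next comes the centralizer analysis, using Servatius' description of centralizers in a right-angled Artin group. If $v$ and $w$ are adjacent in $\Lambda$, then $\phi(w)\in C_{\aga}(\phi(v))$, which is a conjugate of $\form{p_{v,1}}\times\cdots\times\form{p_{v,\ell_v}}\times A(L_v)$ with $L_v=\bigcap_i\lk_\gam(\supp p_{v,i})$; minimality prevents the pure factors from merging, so the factors of $\phi(v)$ and of $\phi(w)$ are forced to assemble into a single commuting block. If $v$ and $w$ are non-adjacent, then $\form{\phi(v),\phi(w)}\cong F_2$, so some pure factor of $\phi(v)$ fails to commute with some pure factor of $\phi(w)$. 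I would then induct on $|V(\Lambda)|$, peeling off the vertices of $\Lambda$ adjacent to everything (writing $A(\Lambda)$ as a direct product) and the connected components of $\Lambda$ (writing it as a free product), and, in each case, combining the induced copies produced for the pieces into a single induced copy in $\gex_k$; here Theorem~\ref{t:thm1} is used both to realize the already-constructed pieces concretely inside $\gex$ and to make enough room in $\gex$ to position them with the correct adjacencies. One then checks that the resulting assignment $v\mapsto K_v$ of nonempty cliques of $\gex$ is injective and satisfies: $v$ and $w$ are adjacent in $\Lambda$ if and only if $K_v\cup K_w$ is a clique of $\gex$, i.e.\ if and only if $K_v$ and $K_w$ are adjacent in $\gex_k$. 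This yields the induced embedding $\Lambda\hookrightarrow\gex_k$, and the same construction is what should underlie the sharper statement, Theorem~\ref{t:thm2 strong}.

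The crux --- and the reason the target has to be $\gex_k$ rather than $\gex$ --- is that a pure factor of $\phi(v)$ need not be conjugate to a vertex of $\gam$ at all: in $A(P_4)$, for instance, the element $abcd$ is cyclically reduced, is not a proper power, has join-irreducible support, and yet is conjugate to no vertex and has centralizer exactly $\form{abcd}$, so it is not a vertex of $\gex$. Hence one cannot simply replace $\phi$ by a ``standard'' embedding one generator at a time; the entire difficulty is to show that the commutation pattern of these possibly non-standard pure factors still embeds as a clique pattern in $\gex$, all the while keeping $v\mapsto K_v$ injective. Carrying that out --- especially handling the degenerate cases of Servatius' theorem, where one pure factor is a power of another or the links $L_v$ overlap --- is where essentially all the work of the proof is concentrated.
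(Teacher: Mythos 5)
Your setup matches the paper's: pass to pure factor decompositions of the $\phi(v)$, use the Centralizer Theorem to translate adjacency and non--adjacency in $\Lambda$ into commutation and non--commutation of pure factors, impose a minimality normalization to keep $v\mapsto K_v$ injective (the paper does this with a transvection argument in Theorem~\ref{t:thm2 strong}~(2)), and send each vertex to the clique spanned by the supports of its factors. But there is a genuine gap at exactly the point you flag as ``where essentially all the work of the proof is concentrated'': you never establish that the commutation graph of the conjugated pure factors embeds into $\gex$ as an induced subgraph. This is the paper's Lemma~\ref{l:joingraph2}, and nothing in your outline substitutes for it. Your proposed induction on $|V(\Lambda)|$ --- peeling off vertices adjacent to everything and splitting along connected components --- only makes progress when $\Lambda$ is disconnected or has a cone vertex; it gives no foothold when $\Lambda$ is connected and join--irreducible (e.g.\ $\Lambda=P_4$ or $C_5$), which is precisely the interesting case. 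Nor can Theorem~\ref{t:thm1} do this work: it goes in the opposite direction (from induced subgraphs of $\gex$ to subgroups of $\aga$), whereas what you need is to manufacture vertices of $\gex$ out of group elements that, as you yourself observe with the example $abcd\in A(P_4)$, need not be conjugate into $V(\gam)$ at all.

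The paper fills this gap with mapping class groups. One realizes $\gam$ as the co--incidence graph of curves $\alpha(v)$ on a closed surface and embeds $A(\gam)\to\Mod(\Sigma)$ by high powers of Dehn twists (Lemma~\ref{l:mod}). To each pure factor $u$ one associates the connected subsurface filled by $\{\alpha(v):v\in\supp(u)\}$; the Centralizer Theorem shows that the co--incidence graph of the conjugated subsurfaces equals the commutation graph $Z$. One then replaces each subsurface by the image of a single curve $\alpha(s_i)$, $s_i\in\supp(u_i)$, under a high power of a pseudo-Anosov supported on that subsurface (Lemma~\ref{l:filling curves}); that image is again a conjugate of a vertex, hence a genuine vertex of $\gex$, and for sufficiently large powers the induced subgraph of $\gex$ on these vertices is exactly $Z$. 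Without this step, or a combinatorial replacement for it, your argument does not reach the conclusion.
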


We remark that for each example of an embedding between right-angled Artin groups $A(\Lambda)$ and $A(\gam)$ known to the authors, there exists an inclusion $\Lambda\to\gex$.  We therefore conjecture:

\begin{con}\label{c:main}
For finite graphs $\Lambda$ and $\gam$, $A(\Lambda)\le A(\gam)$ if and only if $\Lambda\le\gex$.
\end{con}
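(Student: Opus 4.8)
This statement couples a provable implication with an open one. The direction $\Lambda\le\gex\Rightarrow A(\Lambda)\le\aga$ is Theorem~\ref{t:thm1}, and here is how I would establish it. By definition the vertices of $\gex$ are conjugates $gvg^{-1}$ of vertices $v\in V(\gam)$, so a finite induced subgraph $\Lambda\le\gex$ involves only finitely many conjugators, each supported on finitely many vertices of $\gam$. I would first isolate the combinatorial fact that $\gex$ is the increasing union of finite graphs $\gam=\gam_0\subseteq\gam_1\subseteq\cdots$ in which $\gam_{i+1}$ is obtained from $\gam_i$ by \emph{doubling along the star of a vertex}, i.e.\ by gluing two copies of $\gam_i$ along $\st(v)=\{v\}\cup\lk(v)$ for some $v$; the point is that when $u\not\sim v$ the conjugate $uvu^{-1}$ is a new vertex of $\gex$ whose adjacencies are dictated by $\lk(u)\cap\lk(v)$, and recording this layer by layer produces exactly such iterated doubles. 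Consequently any finite $\Lambda\le\gex$ sits as an induced subgraph in some $\gam_N$.

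The crux is then a \emph{Doubling Lemma}: if $D$ is the double of a finite graph $\Delta$ along $\st(v)$, then $A(D)\le A(\Delta)$. Granting it, one composes $A(\Lambda)\le A(\gam_N)\le A(\gam_{N-1})\le\cdots\le A(\gam_0)=\aga$ (the first inclusion because $\Lambda$ is an induced subgraph of $\gam_N$, the rest by the lemma) and Theorem~\ref{t:thm1} follows. To prove the lemma, write $A(D)=A(\Delta)*_{A(\st v)}A(\Delta')$ for the two copies $\Delta,\Delta'$, and recall that $A(\st v)=\langle v\rangle\times A(\lk v)$ is the centralizer of $v$. The naive attempt --- map $A(\Delta)$ by the identity and send each doubled vertex $x$ of $\Delta'$ to $vxv^{-1}$ --- is a well-defined homomorphism but is \emph{not} injective: already for $\Delta$ the path on three vertices with $v$ an endpoint it would yield a surjection of $\Z\times F_3$ onto $\Z\times F_2$, which is impossible since right-angled Artin groups are determined by their graphs. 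Instead one must realize the two copies of $A(\Delta)$ inside $A(\Delta)$ in general position over $A(\st v)$: conjugate the second copy by a suitable element (replacing $v$ by a power if needed) so that, with respect to the Bass--Serre tree of the splitting $A(\Delta)=A(\Delta\setminus v)*_{A(\lk v)}A(\st v)$, the two images of $A(\st v)$ stabilize the appropriate vertices, and then run a ping-pong argument on that tree to certify that the two copies generate their amalgamated product with intersection exactly $A(\st v)$. Finding the right conjugating element and verifying the tree dynamics is the main obstacle --- precisely because, as the failure of the naive map shows, the embedding cannot use the obvious conjugates that already witness $D\le\gex$.

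The reverse implication $A(\Lambda)\le\aga\Rightarrow\Lambda\le\gex$ is the hard, currently open content of the statement; the best available is the weaker Theorem~\ref{t:thm2}, $\Lambda\le\gex_k$. The natural plan is to start from an embedding $A(\Lambda)\hookrightarrow\aga$, study the images of the vertex generators via normal forms in $\aga$ --- or via the action on the $\catz$ cube complex dual to the Salvetti complex of $\gam$ and the combinatorics of its hyperplanes --- and extract from this a copy of $\Lambda$ inside the conjugacy data of $\aga$. The obstruction to landing in $\gex$ rather than $\gex_k$ is that this analysis naturally assigns to each generator of $A(\Lambda)$ not a single conjugate of a vertex of $\gam$ but a whole clique's worth of mutually commuting elements, with no canonical way to select one vertex per clique; this is exactly why the clique graph $\gex_k$ appears in Theorem~\ref{t:thm2}, and bridging that gap is the content of Conjecture~\ref{c:main} that I do not see how to close.
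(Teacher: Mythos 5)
Your reading of the statement's logical structure is exactly right: this is an open conjecture, one implication being Theorem~\ref{t:thm1} and the other currently known only in the weaker form of Theorem~\ref{t:thm2}, and your final paragraph correctly identifies why the clique graph $\gex_k$ (rather than $\gex$) is forced by the pure-factor-decomposition analysis.

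For the provable direction, your reduction --- exhibit $\gex$ as an increasing union of iterated star-doubles (this is the paper's Lemma~\ref{l:double}) and then invoke a Doubling Lemma $A(D_{\st(v)}\Delta)\le A(\Delta)$ --- is the route the paper takes in its \emph{appendix}, as an alternative to the primary mapping-class-group proofs (Dehn twists via~\cite{Koberda2011}, or pseudo-Anosovs via~\cite{CLM2010}). So your architecture is sound.

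However, your sketch of the Doubling Lemma itself has a genuine gap that the proposed fix does not close. You correctly observe that the naive map (identity on the first copy, conjugation of the doubled vertices by $v$) is non-injective, but the remedy you propose --- ``conjugate the second copy by a suitable element (replacing $v$ by a power if needed)'' --- cannot work \emph{as long as the first copy of $A(\Delta)$ is still sent by the identity}. If the first copy maps onto all of $A(\Delta)$, the total homomorphism is surjective, and since $A(D)\not\cong A(\Delta)$ (different graphs) it cannot be injective. No choice of conjugator for the second copy repairs this; the defect is in the first copy. The paper's appendix resolves it by distorting the first copy too: writing $A(\Delta)=G*_H$ with $G=A(\Delta\smallsetminus v)$, $H=A(\lk v)$ and stable letter $v$, and $A(D)=(G*_H G)*_H$ with stable letter $s$ (the image of $v$), the embedding sends $G$ (first copy) by the identity, the second copy by $g\mapsto g^v$, and crucially $s\mapsto v^2$. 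One then verifies injectivity not by ping-pong but by a covering-space computation: $A(D)$ is realized as an index-two subgroup of $A(\Delta)$ (Lemma~\ref{l:c:double}). The image of $v\in V(D)$ being $v^2$ rather than $v$ is exactly what keeps the map from being surjective, and you can sanity-check it on your own $P_3$ example: $\langle v^2,b,vbv^{-1}\rangle$ is the index-two kernel of $F_2\to\Z/2$, hence free of rank $3$, whereas $\langle v,b,gbg^{-1}\rangle=F_2$ for every $g$. So the missing idea is not a cleverer conjugator and a Bass--Serre ping-pong, but sending the doubling vertex to its \emph{square} and exploiting the resulting finite-index (or, in the paper's primary proof, replacing every vertex by a uniformly high power and running the Dehn twist machinery).
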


Theorems~\ref{t:thm1} and~\ref{t:thm2} have a number of corollaries, 
many of which provide evidence for Conjecture \ref{c:main}.
Let $\gam$ be a graph and $L$ be a set of vertices in $\gam$.
Write $D_L(\gam)$ for the \emph{double} of $\gam$ along $L$; 
this means, $D_L(\gam)$ is obtained by taking two copies of $\gam$ and gluing them along the copies of the induced subgraph on $L$.
For a vertex $v$ of $\gam$, the \emph{link} of $v$ is the set of adjacent vertices of $v$ and denoted as  $\lk_\gam(v)$ or $\lk(v)$. The \emph{star} of $v$ is the set $\lk(v)\cup\{v\}$ and denoted as $\st_\gam(v)$ or $\st(v)$.  
An easy argument on HNN-extensions shows that for a graph $\gam$ and its vertex $t$,
$A(D_{\lk_\gam(t)}(\gam\smallsetminus\{t\}))\le A(\gam)$; see~\cite{LS2001,CSS2008}. 
We strengthen this result as follows.

\begin{cor}\label{c:double}
Let $\gam$ be a finite graph and $t$ be a vertex of $\gam$.
Then $A(D_{\st_\gam(t)}(\gam))\le A(\gam)$.
\end{cor}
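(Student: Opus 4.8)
The plan is to realize $D_{\st_\gam(t)}(\gam)$ as a finite induced subgraph of the extension graph $\gex$ and then quote Theorem~\ref{t:thm1}. Set $L=\st_\gam(t)$ and work inside $\aga$. Among the vertices of $\gex$ we single out the two families $V(\gam)$ and its conjugate $V(\gam)^t=\{t^{-1}vt\mid v\in V(\gam)\}$. Since $t^{-1}vt=v$ exactly when $v$ commutes with $t$ or $v=t$, i.e.\ when $v\in L$, these two families overlap precisely along $L$, and there are no further coincidences: distinct vertices of $\gam$ have distinct images $e_v$ in the abelianization $\Z^{V(\gam)}$ and hence are never conjugate in $\aga$, so for $v\notin L$ the vertex $t^{-1}vt$ is genuinely new, different from every $w\in V(\gam)$ and from $t^{-1}wt$ for $w\ne v$. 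Thus the subset $W:=V(\gam)\cup V(\gam)^t$ of $V(\gex)$ has exactly $2\vert V(\gam)\vert-\vert L\vert$ elements, and the obvious map sending the two copies of a vertex $v$ of $\gam$ to $v$ and to $t^{-1}vt$ (and identifying the two copies of $L$) is a bijection from $V(D_L(\gam))$ onto $W$.

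Next I would compute the induced subgraph of $\gex$ on $W$. Inside $V(\gam)$, two vertices are adjacent in $\gex$ iff they commute in $\aga$ iff they span an edge of $\gam$, so $V(\gam)$ spans a copy of $\gam$; conjugation by $t$ being an automorphism of $\aga$, the family $V(\gam)^t$ likewise spans a copy of $\gam$. The only remaining question is when $v\in V(\gam)$ is adjacent to $t^{-1}wt$, i.e.\ when $[v,t^{-1}wt]=1$. Here I would use two standard facts about $\aga$: the centralizer of a vertex $v$ is the standard subgroup $A(\st_\gam(v))$, and membership in a standard subgroup $A(S)$ is detected by the canonical retraction $r\co\aga\to A(S)$ (the homomorphism fixing $S$ and killing the other vertices) via $x\in A(S)\iff r(x)=x$. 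Applying the retraction onto $A(\st_\gam(v))$ to the element $t^{-1}wt$ and examining the three cases according to whether $w$ and $t$ lie in $\st_\gam(v)$, one obtains the clean equivalence
\[
[v,t^{-1}wt]=1\iff w\in\st_\gam(v)\ \text{ and }\ \bigl(t\in\st_\gam(v)\ \text{ or }\ t\in\st_\gam(w)\bigr),
\]
equivalently ($v=w$ or $\{v,w\}\in E(\gam)$) and ($v\in L$ or $w\in L$). A short case analysis on whether $v,w\in L$ shows that, for distinct elements of $W$, these cross-adjacencies are exactly the edges of $D_L(\gam)$ between the two copies; in particular no copy-$1$-only vertex is joined to a copy-$2$-only vertex, since that would require both $v\notin L$ and $w\notin L$. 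Hence $W$ spans a copy of $D_{\st_\gam(t)}(\gam)$ in $\gex$, and Theorem~\ref{t:thm1} yields $A(D_{\st_\gam(t)}(\gam))\le\aga$.

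The crux is the cross-adjacency computation, that is, deciding exactly when a conjugate $t^{-1}wt$ of a vertex centralizes another vertex $v$; everything else is bookkeeping about the double and about conjugates of vertices. This step is short once one invokes the two classical facts above — that vertex centralizers are standard parabolic subgroups and that the canonical retractions detect standard subgroups. It is worth noting that the retraction argument delivers an honest ``if and only if'', which is what makes the induced subgraph on $W$ isomorphic to $D_L(\gam)$ rather than merely a supergraph of it.
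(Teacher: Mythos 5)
Your proposal is correct and is essentially the paper's own proof: the paper likewise observes that the induced subgraph of $\gex$ on $V(\gam)\cup V(\gam)^t$ is isomorphic to $D_{\st_\gam(t)}(\gam)$ and then invokes Theorem~\ref{t:thm1}. Your cross-adjacency computation (which the paper leaves implicit) matches the criterion that $v$ and $w^t$ commute iff $[v,w]=1$ and $t\in\form{\st(w)}\form{\st(v)}$, so it is a valid filling-in of the same argument.
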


Conjecture~\ref{c:main} has a complete answer is when $\Lambda$
is a \emph{forest}, namely, a disjoint union of trees. 
We first characterize right-angled Artin groups containing or contained in $A(P_4)$.

\begin{thm}\label{t:p4embed}
For a finite graph $\gam$, $A(P_4)\le A(\gam)$ implies that $P_4\le\gam$.
\end{thm}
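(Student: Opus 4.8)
The plan is to prove the contrapositive: if $\gam$ has no induced $P_4$, then $A(P_4)\not\le\aga$. Recall that the finite graphs with no induced $P_4$ are precisely the \emph{cographs}, i.e.\ those built from single vertices by repeatedly taking disjoint unions and joins. Since a disjoint union of graphs corresponds to a free product of right-angled Artin groups and a join to a direct product, for a cograph $\gam$ the group $\aga$ belongs to the smallest class of groups that contains $\bZ$ and is closed under finite free products and finite direct products. So I would take a cograph $\gam$ with $A(P_4)\le\aga$ for which $|V(\gam)|$ is minimal and derive a contradiction, splitting into three cases according to the outermost operation in the cograph decomposition of $\gam$.

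If $\gam$ is a single vertex, then $\aga\cong\bZ$ is abelian, which is impossible. If $\gam=\gam_1\sqcup\gam_2$ with $\gam_1,\gam_2$ nonempty, then $\aga\cong A(\gam_1)*A(\gam_2)$; since $P_4$ is connected and has an edge, $A(P_4)$ is freely indecomposable, finitely generated, and not infinite cyclic, so by the Kurosh subgroup theorem it is conjugate into one of the two free factors, giving $A(P_4)\le A(\gam_i)$ for a cograph $\gam_i$ with fewer vertices — contradicting minimality. The essential case is $\gam=\gam_1*\gam_2$ with $\gam_1,\gam_2$ nonempty, so $\aga\cong A(\gam_1)\times A(\gam_2)$. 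Let $\rho_i\colon A(P_4)\hookrightarrow\aga\to A(\gam_i)$ be the composition of the inclusion with the $i$-th coordinate projection. Then $\ker\rho_1$ and $\ker\rho_2$ are normal subgroups of $A(P_4)$; they intersect trivially and commute elementwise, because for $n_1\in\ker\rho_1$ and $n_2\in\ker\rho_2$ the commutator $[n_1,n_2]$ maps to $1$ under both $\rho_1$ and $\rho_2$ while $(\rho_1,\rho_2)$ is injective. If one of these kernels is trivial, the corresponding $\rho_i$ is injective and we get $A(P_4)\le A(\gam_i)$ for a smaller cograph — again contradicting minimality.

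The crux, and the step I expect to be the only real difficulty, is therefore to show that $\ker\rho_1$ or $\ker\rho_2$ must be trivial, i.e.\ that \emph{$A(P_4)$ contains no two nontrivial commuting normal subgroups}. I would derive this from the fact that $P_4$ is connected and not a join. The point is that $A(P_4)$ is torsion-free with trivial center, and that for every nontrivial $u\in A(P_4)$ the centralizer $C_{A(P_4)}(u)$ has trivial normal core: after cyclically reducing $u$, one reads off $C_{A(P_4)}(u)$ from the standard description of centralizers in right-angled Artin groups — it is, up to conjugacy, either a standard parabolic subgroup on a proper subset of $V(P_4)$, or a subgroup of the form $\langle u_0\rangle\times A(\lk(\supp u))$ — and in each case the intersection of all its conjugates is trivial, by a short computation using that $P_4$ is connected and not a join. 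Granting this, if $N\triangleleft A(P_4)$ is nontrivial and $1\ne u\in N$, then any subgroup commuting with $N$ centralizes every conjugate of $u$, hence lies in $\bigcap_g g\,C_{A(P_4)}(u)\,g^{-1}=1$. (Equivalently, one can quote that $A(P_4)$ is acylindrically hyperbolic because $P_4$ is not a join, together with the fact that an infinite normal subgroup of an acylindrically hyperbolic group has finite centralizer.) The mildly delicate point in carrying this out is the case $\supp(u)=V(P_4)$, where $C_{A(P_4)}(u)$ is infinite cyclic rather than parabolic.

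It is worth noting a parallel argument through the extension graph. Directly from the definition one checks that the extension graph of a join is the join of the extension graphs of the factors, and that the extension graph of a disjoint union is a disjoint union of copies of the extension graphs of the two pieces; hence, by induction on the cograph structure, $\gex$ has no induced $P_4$. On the other hand $A(P_4)\le\aga$ gives $P_4\le\gex_k$ by Theorem~\ref{t:thm2}, and the strengthened Theorem~\ref{t:thm2 strong} improves this to $P_4\le\gex$ once one has an actual embedding in hand — a step that genuinely requires the strong form, since $P_4\le\gex_k$ by itself does not force $P_4\le\gex$ (as already happens when $\gam=C_4$). Together these contradict each other, which again yields the theorem.
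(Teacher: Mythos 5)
Your main argument is correct and its skeleton (induction/minimality over the cograph structure, Kurosh for the disjoint-union case, the two projection kernels $K_1,K_2$ for the join case) is the same as the paper's. Where you genuinely diverge is in resolving the crux that $A(P_4)$ admits no two nontrivial elementwise-commuting normal subgroups. The paper takes a slicker route: each nontrivial normal subgroup of a centerless right-angled Artin group is nonabelian (pick $g\in K_i$ and $h$ with $[g,h]\neq 1$; by Baudisch, $\langle g,h\rangle\cong F_2$, so $\langle g,g^h\rangle\le K_i$ is free of rank two), whence $K_1\times K_2$ contains $F_2\times F_2\cong A(C_4)$, contradicting Proposition~\ref{p:only forest} (Droms: right-angled Artin subgroups of the three-manifold group $A(P_4)$ live on forests). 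Your route — that every nontrivial $u\in A(P_4)$ has a centralizer with trivial normal core, via the Servatius description, or alternatively via acylindrical hyperbolicity of $A(P_4)$ — is also valid, and it correctly isolates the delicate case $\supp(u)=V(P_4)$ (there $C(u)$ is maximal cyclic, and a nontrivial normal core would force $A(P_4)$ to be virtually cyclic). But be aware that the ``short computation'' is a genuine several-case verification that you have only sketched, whereas the paper's $F_2\times F_2$ argument avoids it entirely; your version buys generality (it shows the same for any connected non-join $\Lambda$ in place of $P_4$, which is essentially the paper's Theorem~\ref{t:join}) at the cost of either that case analysis or importing acylindrical hyperbolicity.

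One caution on your closing ``parallel argument'': the step from $P_4\le\gex_k$ (or from the clique data of Theorem~\ref{t:thm2 strong}) to $P_4\le\gex$ is not justified as stated. The cliques $K_{v_1},\dots,K_{v_4}$ only guarantee that \emph{some} pair of vertices in $K_{v_i}\times K_{v_j}$ is non-adjacent for each non-edge $v_iv_j$, and one cannot in general choose a single representative from each clique realizing all adjacencies and non-adjacencies simultaneously (the extraction works for $C_4$ in Corollary~\ref{c:wkambites} only because of the join structure there). Indeed, in the paper the implication $A(P_4)\le A(\gam)\Rightarrow P_4\le\gex$ is Corollary~\ref{c:forest}, which is \emph{deduced from} Theorem~\ref{t:p4embed}, so leaning on it here would be circular. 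Since you offer this only as a remark, it does not affect the validity of your main proof.
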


\begin{thm}\label{t:forest in p4}
Any finite forest $\Lambda$ is an induced subgraph of $P_4^e$; in particular, $A(\Lambda)$ embeds into $A(P_4)$.
\end{thm}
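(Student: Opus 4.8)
The plan is to show directly that any finite forest $\Lambda$ embeds as an induced subgraph of $P_4^e$, since the group-theoretic conclusion $A(\Lambda)\le A(P_4)$ then follows immediately from Theorem~\ref{t:thm1}. By adding isolated vertices if necessary and taking induced subgraphs, it suffices to treat the case where $\Lambda$ is a single finite tree $T$. First I would record the structural features of $P_4^e$ that make it rich: writing $P_4$ as a path $a - b - c - d$, the vertex $b$ has link $\{a,c\}$ and star $\{a,b,c\}$, so Corollary~\ref{c:double} (or a direct computation with the conjugation action) shows $P_4^e$ contains doubles of $P_4$ along stars; iterating, $P_4^e$ contains arbitrarily large trees built by this doubling process. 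Concretely, I expect to identify an explicit family of induced trees in $P_4^e$ — for instance, by tracking the vertices $b, b^a, b^{ac}, b^{aca}, \dots$ and their various conjugates, and checking adjacency via the commutation criterion in Definition~\ref{d:extension graph}.

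The heart of the argument is a combinatorial induction on the tree $T$. I would root $T$ at a leaf and induct on the number of vertices, or better, on something like the number of leaves or the diameter. The key claim to establish is a strong form of the inductive hypothesis: not merely that $T$ embeds in $P_4^e$, but that it embeds in a way that leaves "room to grow" — e.g., that for a suitable choice of leaf $\ell$ of $T$ mapped to a vertex $w\in P_4^e$, there are infinitely many vertices of $P_4^e$ adjacent to $w$ and non-adjacent to all other images of $V(T)$, so that one can attach a new leaf at $\ell$, and moreover these new vertices can themselves be chosen to have the same "room to grow" property. This propagation of a local richness condition through the induction is what makes it work.

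To verify such claims I would use the concrete description of $P_4^e$'s vertices as conjugates $v^g$ of $v\in\{a,b,c,d\}$ in $A(P_4)$, together with the commutation rule: $v^g$ and $w^h$ are adjacent in $P_4^e$ exactly when they commute in $A(P_4)$, which can be decided via centralizers and normal forms in right-angled Artin groups. A cleaner route, which I would pursue, is to exploit the self-similar / fractal structure: $P_4^e$ is quasi-isometric to a tree-like object, and each "star double" produces a copy of $P_4$ glued along a star, giving a recursive blow-up in which any finite tree clearly embeds as an induced subgraph. Making this recursion precise — defining the nested family of induced subgraphs of $P_4^e$ and checking at each stage that no unwanted edges appear — is the main obstacle, and it is essentially a bookkeeping problem: one must ensure that vertices introduced in distant "generations" of the doubling are non-adjacent in $\gex$, which follows because their supports (as words in $A(P_4)$) fail to commute, but this must be checked carefully.

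The step I expect to be the main obstacle is precisely this: controlling \emph{non}-adjacency, i.e. proving that the tree we build sits inside $P_4^e$ as an \emph{induced} subgraph rather than merely as a subgraph, across all the iterated doublings. Establishing adjacency is easy (it is forced by the doubling construction), but ruling out extra edges requires a uniform argument — likely a lemma stating that in the recursive construction, two vertices from different branches or generations have non-commuting normal forms in $A(P_4)$ — and this is where the bulk of the technical work lies.
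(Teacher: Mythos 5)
Your overall strategy coincides with the paper's: reduce to trees via Lemma~\ref{l:disjoint}, then induct on the number of vertices of the tree, attaching one new leaf at a time and verifying inducedness via the commutation criterion. You also correctly locate the crux: proving \emph{non}-adjacency, i.e.\ that no unwanted edges appear when a new vertex is attached. However, you flag this as ``the main obstacle'' and ``a bookkeeping problem'' without actually resolving it, and the resolution is precisely the content of the proof. As it stands, the proposal has a genuine gap at the step it identifies as hardest.

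The paper closes this gap by a concrete device you don't propose. It restricts attention to the induced subgraph $X$ of $P_4^e$ spanned only by conjugates of $b$ and of $c$ (not of $a$ or $d$), normalizes (using the group action and the $b\leftrightarrow c$ symmetry) so that the vertex $v_1$ to which the new leaf attaches is $b$ itself, and then takes the new leaf to be $c^{a^M}$ for $M$ sufficiently large. Inducedness reduces to a coset condition: writing $B=\langle a,b,c\rangle$, $C=\langle b,c,d\rangle$, one needs $a^M\in CB$ but $a^M\notin CBu_i$ for the $u_i$ appearing in the other $b$-conjugates (with $u_i\notin B$), and $a^M\notin Cv_j$ for the $v_j$ appearing in the $c$-conjugates. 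This is verified by counting occurrences of $a^{\pm1}$ in reduced words: any element of $Cv_j$ has fewer than $M$ such occurrences, and if $a^M\in CBu_i$ then because $u_i$ contains a $d^{\pm1}$ one can rewrite $a^M=w'd^{\pm1}w''d^{\mp1}w'''$ with $w''\in\langle c,d\rangle$, again bounding the $a^{\pm1}$ count below $M$. Your proposed ``strong inductive hypothesis'' (propagating a ``room to grow'' condition through the induction) is a reasonable instinct but is unnecessary once one works inside $X$ with the explicit attachment $c^{a^M}$; the room is supplied uniformly by letting $M$ grow, not by carrying extra data through the induction. To make your plan into a proof you would need to supply something playing the role of this counting lemma.
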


\begin{cor}\label{c:forest}
Let $\Lambda$ and $\gam$ be finite graphs such that $\Lambda$ is a forest.
Then $A(\Lambda)\le\aga$ if and only if $\Lambda\le\gex$.
\end{cor}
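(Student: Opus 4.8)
The ``if'' direction is immediate and uses nothing about forests: $\Lambda\le\gex$ gives $A(\Lambda)\le\aga$ by Theorem~\ref{t:thm1}. The content is the ``only if'' direction, and the plan is to route it through Theorem~\ref{t:thm2} and then descend from the clique graph to the extension graph. Theorem~\ref{t:thm2} already gives $\Lambda\le\gex_k$ from $A(\Lambda)\le\aga$, so the whole problem reduces to a combinatorial statement: \emph{if $\Lambda$ is a finite forest with $\Lambda\le\gex_k$, then $\Lambda\le\gex$.}

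This statement is special to extension graphs. It fails for clique graphs of arbitrary graphs: a graph $H$ can contain three pairwise disjoint edges, no two of which lie in a common clique of $H$, while $H$ has no independent set of size three; then the three edges span an induced $\overline{K_3}$ in $H_k$, yet the forest $\overline{K_3}$ does not embed in $H$. Hence the descent must exploit the self-similarity of $\gex$ --- concretely, the facts that $\gex=(\gex)^e$ and that $\gex$ is an increasing union $\gam=\gam_0\subseteq\gam_1\subseteq\gam_2\subseteq\cdots$ obtained by iterating the doubling construction of Corollary~\ref{c:double}. The useful consequence is that any finite induced subgraph of $\gex$ lies in some stage $\gam_N$, and a later doubling $D_{\st(t)}(\gam_N)$ then contains a parallel copy of any prescribed clique of $\gam_N$ together with a fresh vertex non-adjacent to all of $\gam_N$.

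The descent itself I would carry out by induction on $|V(\Lambda)|$, peeling a leaf $v_0$ of $\Lambda$ (or, if $\Lambda$ has no edge, any vertex, which is then isolated) with unique $\Lambda$-neighbor $w$ when one exists. After realizing $\Lambda-v_0$ inductively inside a late enough stage of the filtration, one needs a vertex to play the role of $v_0$: adjacent to the image of $w$, non-adjacent and unequal to the images of all the remaining vertices, and lying in the clique of $\gex$ assigned to $v_0$ by the embedding $\Lambda\le\gex_k$ --- up to replacing that clique by a parallel copy furnished by the filtration. Adjacency to the image of $w$ is automatic once the two vertices are distinct, since the cliques of $v_0$ and $w$ commute; the degenerate case in which $v_0$'s clique is contained in $w$'s clique forces, by triangle-freeness of $\Lambda$ together with the recorded clique relations, that $w$ has no $\Lambda$-neighbor besides $v_0$, which then frees the choice for $w$. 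A forest with several components is handled by realizing the trees one at a time, each in a later stage so that distinct trees land disjointly and with no edges between them.

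I expect the main obstacle to be precisely the choice of the vertex for the newly attached leaf (or newly added isolated vertex): it must be non-adjacent to \emph{all} previously placed images of its non-neighbors at once, i.e., one needs an independent transversal of the prescribed cliques, and the $\overline{K_3}$ example above shows that such a transversal can genuinely fail to exist for a general graph. The heart of the proof is therefore the verification that the self-similar structure of $\gex$ --- passing to a later star-doubling and taking a fresh vertex in a parallel copy of the relevant clique --- always supplies such a vertex with the needed non-adjacencies. The contained-clique degeneracies are a secondary technicality, cleared using the triangle-freeness of the forest and the clique data carried by $\Lambda\le\gex_k$.
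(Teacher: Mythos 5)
Your ``if'' direction is immediate and correct. The ``only if'' direction, however, takes a genuinely different route from the paper's, and the route contains a real gap --- one you partially flag yourself.

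The paper does not descend from $\gex_k$ to $\gex$ at all. It cases on the diameter $d$ of the largest component of $\Lambda$. If $d\ge 3$ then $P_4\le\Lambda$, so $A(P_4)\le A(\gam)$; Theorem~\ref{t:p4embed} then forces $P_4\le\gam$, hence $P_4^e\le\gex$, and Proposition~\ref{p:every forest} (every finite forest embeds in $P_4^e$) finishes immediately. If $d=2$, each component is a star and one uses $P_3^e=K_{1,\infty}\le\gex$ together with Lemma~\ref{l:disjoint} and Theorem~\ref{t:join} to handle disjoint unions; $d\le 1$ is easy. This leverages the $P_4$--rigidity machinery precisely so as never to confront the transversal problem.

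Your route --- reduce to the purely combinatorial claim ``$\Lambda$ a finite forest and $\Lambda\le\gex_k$ imply $\Lambda\le\gex$'' --- requires proving that claim, and as literally stated it is \emph{false}. Take $\gam=P_3$, so $\gex=K_{1,\infty}$ with center $b$. In $\gex_k$, for distinct leaves $x\ne y$ the four cliques $\{x\},\{b,x\},\{y\},\{b,y\}$ span an induced $2K_2$, yet $K_{1,\infty}$ contains no two disjoint edges, and indeed $\Z^2*\Z^2\not\le F_2\times\Z$. So the reduction only has a chance if one imports the non-nesting guarantee of Theorem~\ref{t:thm2 strong}(2), which rules out this example (both edges of the $2K_2$ use nested cliques). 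Your proposal alludes to clique data and degeneracies but never invokes non-nesting explicitly, and, more to the point, never proves the step on which everything rests: that one can always find, in a parallel copy of $K_{v_0}$ supplied by the doubling filtration, a vertex adjacent to the image of $w$ and simultaneously non-adjacent to the images of all of $v_0$'s non-neighbors. That is a genuine unproven claim about the geometry of $\gex$, and the paper's choice to bypass it via $P_4$ is not cosmetic --- it is where the actual work lives.
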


We will use the shorthand $C_n$ to denote the graph which is a cycle of length $n$
and the shorthand $P_n$ for the graph which is a path on $n$ vertices (namely, of length $n-1$). 
We call $C_3$ a \emph{triangle}, and $C_4$ a \emph{square}.
When $\Lambda$ is a square, we will deduce an answer to Conjecture~\ref{c:main} from Theorem~\ref{t:thm1} in a stronger form as follows; the same result was originally proved by Kambites~\cite{Kambites2009}.

\begin{cor}[cf. \cite{Kambites2009}]\label{c:kambites}
For a finite graph $\gam$, $F_2\times F_2\cong A(C_4) \le \aga$ implies that $\gam$ contains an induced square.
\end{cor}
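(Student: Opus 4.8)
The plan is to deduce Corollary~\ref{c:kambites} from Theorem~\ref{t:thm2}, which says that $A(C_4)\le A(\gam)$ forces $C_4\le\gex_k$. So I would start by supposing $C_4\le\gex_k$ and aim to show that $\gam$ itself (hence, equivalently, $\gex$) already contains an induced square. First I would unravel what the four vertices of the embedded square in the clique graph $\gex_k$ are: they are four distinct nonempty cliques $K_1,K_2,K_3,K_4$ of $\gex$ such that $K_1\cup K_2$, $K_2\cup K_3$, $K_3\cup K_4$, $K_4\cup K_1$ are each contained in a clique of $\gex$ (equivalently, each such union is itself a clique), while $K_1\cup K_3$ and $K_2\cup K_4$ are \emph{not} cliques of $\gex$. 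The last two conditions are the crucial ones: they say that some vertex of $K_1$ fails to be adjacent to some vertex of $K_3$, and similarly for $K_2,K_4$.

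Next I would extract an honest square in $\gex$. Pick $a\in K_1$ and $c\in K_3$ non-adjacent (possible since $K_1\cup K_3$ is not a clique), and pick $b\in K_2$, $d\in K_4$ non-adjacent. Now I claim $\{a,b,c,d\}$ spans an induced square in $\gex$: the edges $ab$, $bc$, $cd$, $da$ are all present because $a,b$ lie in the clique $K_1\cup K_2$, $b,c$ in $K_2\cup K_3$, $c,d$ in $K_3\cup K_4$, and $d,a$ in $K_4\cup K_1$; and the two diagonals $ac$, $bd$ are absent by our choice. One must also check $a,b,c,d$ are genuinely four distinct vertices — e.g.\ $a\ne c$ since they are non-adjacent (in a simplicial graph a vertex is not adjacent to itself, but is ``equal'' to itself, so non-adjacency gives distinctness only after noting the graph has no loops; distinctness of $a$ from $b$ follows because otherwise the diagonal $bd=ad$ would be an edge, contradiction, and similarly for the other pairs). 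So $C_4\le\gex$.

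The final step is to pass from an induced square in the extension graph $\gex$ down to an induced square in $\gam$. This is where I expect the real content to lie, and I would handle it exactly as in the proof of Theorem~\ref{t:thm1}'s converse-type statements: an induced square has no triangles, so it is ``triangle-free'', and for triangle-free graphs one should be able to show $C_4\le\gex$ implies $C_4\le\gam$ by the same mechanism that (per the abstract) yields the long-cycle and Kambites results — namely, that a shortest induced cycle, or more precisely a square, appearing among conjugates of vertices can be pushed back to the original graph by a support/length argument on the words representing the vertices of $\gex$. Concretely, each vertex of $\gex$ is a conjugate $w v w^{-1}$ of some vertex $v$ of $\gam$, and I would argue that if four such conjugates form an induced square then, after analyzing commutation of conjugates in a right-angled Artin group (two conjugates $w_1 v_1 w_1^{-1}$, $w_2 v_2 w_2^{-1}$ commute iff a suitable normal-form condition holds), the underlying vertices $v_1,v_2,v_3,v_4$ of $\gam$ — or rather a carefully chosen cyclic pattern of them — already form an induced square in $\gam$.

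\begin{rem}
The main obstacle is the last paragraph: the reduction from $C_4\le\gex$ to a square in $\gam$. One cannot simply take images of the conjugating words, since distinct vertices of $\gex$ may project to the same vertex of $\gam$, and commutation of conjugates is subtler than commutation of generators. The right framework is presumably a lemma — used already for Theorems~\ref{t:p4embed} and the long-cycle corollary — stating that an induced cycle in $\gex$ on vertices all of whose supports avoid triangles descends to an induced cycle (of length between $4$ and the original) in $\gam$; applied to a square, whose length cannot drop below $4$ without creating a triangle or an edge, this yields precisely an induced square in $\gam$.
\end{rem}
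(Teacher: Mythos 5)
The first two-thirds of your argument is correct and is essentially the paper's proof of its intermediate statement (Corollary~\ref{c:wkambites}): Theorem~\ref{t:thm2} (really Theorem~\ref{t:thm2 strong}) gives an induced square of cliques in $\gex_k$, and choosing a non-adjacent pair inside each pair of opposite cliques produces an induced square in $\gex$. Your distinctness checks are fine.

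The gap is the last step, from $C_4\le\gex$ to $C_4\le\gam$, which you explicitly leave as a conjectured lemma and for which the route you sketch (a word-level analysis of when conjugates $v^{w}$ commute, followed by projecting the four vertices back to $\gam$) is not carried out and would be delicate for exactly the reason you note: the retraction $\gex\to\gam$ can identify vertices and create adjacencies. The paper's mechanism is different and purely graph-theoretic: it is Lemma~\ref{l:cmfree}~(2), that $\gam$ square--free implies $\gex$ square--free. This rests on Lemma~\ref{l:double}, which exhibits every finite induced subgraph of $\gex$ inside an iterated double of $\gam$ along stars of vertices, so one only needs to show that doubling a square--free graph $\gam$ along $\st(t)$ cannot create an induced square. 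Concretely, a square $\Omega$ in the double $\gam\cup_{\st(t)}\gam'$ not contained in one copy must meet both $A=V(\gam)\smallsetminus\st(t)$ and its mirror $A'$; since $\st(t)$ separates them and removing two adjacent vertices of $C_4$ leaves a connected graph, $\Omega$ has two \emph{opposite} vertices $x,z\in\lk(t)$ and one vertex $y\in A$, so $(x,y,z)$ is an induced $P_3$ in $\gam\smallsetminus\{t\}$ meeting $\lk(t)$ only at its endpoints, whence $(t,x,y,z)$ is an induced square in $\gam$ — contradiction. This is the ``key observation'' the paper records in the proof of Lemma~\ref{l:cmfree}. Without this (or an equivalent) descent argument, your proof of the corollary is incomplete; with it, your outline closes up and agrees with the paper's.
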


A graph $\gam$ is \emph{$\Lambda$--free} for some graph $\Lambda$ if $\gam$ contains no induced $\Lambda$. We prove Conjecture~\ref{c:main} when the target graph $\gam$ is triangle--free; this includes the cases when $\gam$ is bipartite or a cycle of length at least $4$.  It is easy to see that a graph $\gam$ is triangle--free if and only the associated Salvetti complex of $A(\gam)$ has dimension two.

\begin{thm}\label{t:triangle--free}
Suppose $\Lambda$ and $\gam$ are finite graphs such that $\gam$ is triangle--free.
Then $A(\Lambda)$ embeds in $A(\gam)$ if and only if $\Lambda$ is an induced subgraph of $\gex$.
\end{thm}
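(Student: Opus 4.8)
The forward implication is precisely Theorem~\ref{t:thm1}, so the plan is to prove the converse: if $A(\Lambda)\le\aga$ with $\gam$ triangle--free, then $\Lambda\le\gex$. I would begin by recording two consequences of triangle--freeness of $\gam$. Since the Salvetti complex of $\aga$ is then two--dimensional, $\aga$ has cohomological dimension at most $2$ and hence contains no copy of $\Z^3$; as $A(\Lambda)\le\aga$ and $A(\Lambda)$ contains $\Z^n$ whenever $\Lambda$ contains a clique $K_n$, this forces $\Lambda$ to be triangle--free. Moreover a clique $K_n\le\gex$ would yield $\Z^n\cong A(K_n)\le\aga$ by Theorem~\ref{t:thm1}, so $\gex$ is triangle--free as well. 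Theorem~\ref{t:thm2} supplies an induced embedding $\phi\colon\Lambda\hookrightarrow\gex_k$, and a standard free--product argument -- decompose $A(\Lambda)$ into freely indecomposable factors, apply the Kurosh subgroup theorem to the free--product decomposition of $\aga$ indexed by the connected components of $\gam$, and use that $\gex$ is the disjoint union of the extension graphs of the components of $\gam$ -- reduces the statement to the case in which $\gam$ is connected and has an edge. The subcase $\gam=P_2$ being trivial, I may assume $\gam$ has at least three vertices, so that $\gex$ is an infinite connected triangle--free graph with no isolated edge; in this situation a vertex of $\gex$ has $\gex$--degree $0$, $1$, or $\infty$ according as its underlying vertex of $\gam$ has $\gam$--degree $0$, $1$, or at least $2$, and whenever an edge of $\gex$ has an endpoint of degree $1$ its other endpoint has degree $\infty$.

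Next I would use the very restricted shape of $\gex_k$ in this setting. Since the only cliques of $\gex$ are its vertices and edges, $V(\gex_k)=V(\gex)\sqcup E(\gex)$, the ``edge vertices'' $m_e$ form an independent set, and each $m_e$ is adjacent in $\gex_k$ to exactly the two endpoints of $e$. Triangle--freeness of $\Lambda$ then forbids an edge vertex in the image of $\phi$ from being joined in $\phi(\Lambda)$ to both endpoints of its edge, so every edge vertex used by $\phi$ has at most one neighbour there. The goal becomes purely combinatorial: to convert $\phi$ into an induced embedding $\Lambda\hookrightarrow\gex$ by removing the edge vertices one at a time. Suppose $\phi(x)=m_e$ with $e=\{a,b\}$. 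If $x$ has no neighbour in $\phi(\Lambda)$, I would reset $\phi(x)$ to a vertex of $\gex$ neither equal nor adjacent to any vertex of the current (finite) image. If $x$ has a unique neighbour, which maps to $a$ say -- note its image must then be a genuine vertex $a\in V(\gex)$, since edge vertices are pairwise non--adjacent, and $b$ cannot be in the image, lest $x$, that neighbour, and $\phi^{-1}(b)$ span a triangle -- then I would reset $\phi(x)$ to a vertex of $\gex$ adjacent to $a$ and to no other vertex of the current image, except that when $\deg_\gex(a)=1$ the pair $\{x,\phi^{-1}(a)\}$ is forced to be an entire $K_2$--component of $\Lambda$, and I would instead drop both of them from the image and reinsert this $K_2$ as a fresh edge of $\gex$ whose endpoints are neither equal nor adjacent to the rest of the image. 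Iterating over all edge vertices produces $\Lambda\le\gex$.

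Every step above rests on a single genericity statement, which I expect to be the crux: for $\gam$ connected and triangle--free with at least three vertices, and for any finite $W\subseteq V(\gex)$, (i) infinitely many vertices of $\gex$ are neither in nor adjacent to $W$; (ii) for each $a\in W$ of infinite $\gex$--degree, infinitely many vertices are adjacent to $a$ but to no vertex of $W\smallsetminus\{a\}$; and (iii) infinitely many edges of $\gex$ have both endpoints neither in nor adjacent to $W$. This should be provable either directly, using that the centralizer in $\aga$ of a vertex $v$ is $\langle v\rangle\times A(\lk_\gam(v))$, which equals $\Z\times F_{\deg_\gam(v)}$ because $\gam$ is triangle--free, so that conjugating a free generator by long generic words yields new vertices of $\gex$ whose supports escape any prescribed finite $W$; or more conceptually, using the description of $\gex$ as the increasing union of graphs obtained from $\gam$ by repeatedly doubling along stars of vertices (the construction underlying Corollary~\ref{c:double}), where each doubling introduces fresh vertices whose adjacencies lie inside a single star and hence are eventually disjoint from $W$. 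Granting this lemma, one runs the replacements above, keeping the finitely many choices mutually compatible, and obtains the desired induced copy of $\Lambda$ in $\gex$. I anticipate that the real work is in proving (i)--(iii) with enough uniformity to drive the iteration; the cohomological--dimension observations, the description of $\gex_k$, and the $K_2$--bookkeeping are routine.
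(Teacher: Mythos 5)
Your reduction and setup track the paper closely (triangle--freeness of $\Lambda$ and $\gex$, Theorem~\ref{t:thm2} giving an induced copy of $\Lambda$ in $\gex_k$, and the observation that in a triangle--free $\gex$ the edge--vertices of $\gex_k$ are only adjacent to their two endpoints), but your endgame has a genuine gap. The entire repair procedure rests on the genericity claims (i)--(iii), and these are \emph{false} in the generality you state them: if $\gam$ is connected, triangle--free, and splits as a nontrivial join --- i.e.\ $\gam$ is complete bipartite --- then $\gex$ is $K_{1,\infty}$ or the complete bipartite graph on two countable sets (Lemma~\ref{l:extension}~(1)), and once $W$ meets both sides of the join there is no vertex of $\gex$ that is neither in nor adjacent to $W$, so (i), (ii) and (iii) all fail. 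Your Kurosh reduction only gets you to ``$\gam$ connected,'' not to ``$\gam$ connected and not a join,'' so the lemma is invoked exactly where it breaks. The same issue infects your implicit use of disjoint unions: Lemma~\ref{l:disjoint} requires $\gam$ not to split as a nontrivial join. The paper avoids this by disposing of the complete bipartite case separately (Corollary~\ref{c:complete bipartite}) before assuming $\gam$ is not a join; you would need to do the same. Even after that restriction, (i)--(iii) are nontrivial statements about $\gex$ that you leave unproved while acknowledging they are ``the crux''; (i) follows from the infinite diameter of $\gex$ (Lemma~\ref{l:extension}~(5)) and (iii) is essentially Lemma~\ref{l:disjoint}, but (ii) would require a separate argument.

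It is worth noting that the paper's proof makes the whole repair step unnecessary: Theorem~\ref{t:thm2 strong}~(2) lets one choose the embedding $\Lambda\to\gex_k$ so that no clique in the image is contained in another. Then a vertex $u$ of $\Lambda$ mapping to an edge--vertex $v_{a,b}$ can have no neighbour at all in the image (its only possible neighbours are $\{a\}$ and $\{b\}$, which are excluded by non--nesting), so connectivity of $\Lambda$ --- arranged via Lemma~\ref{l:disjoint} --- forces $\Lambda$ to be a single vertex. Your route is workable in principle for $\gam$ not a join, but it trades one already--proved hypothesis (clique non--nesting) for an unproved and delicate genericity lemma plus substantial bookkeeping.
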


A corollary of Theorem \ref{t:triangle--free} is the following quantitative version of Conjecture~\ref{c:main} when the target graph is a cycle:

\begin{thm}\label{t:cntarget}
Let $m, n\geq 4$.
Then $A(C_m)\le A(C_n)$ if and only if $m=n+k(n-4)$ for some $k\ge0$.
\end{thm}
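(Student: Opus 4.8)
The plan is to invoke Theorem~\ref{t:triangle--free} to reduce the question to a purely combinatorial one: since $C_n$ is triangle--free for $n\ge 4$, we have $A(C_m)\le A(C_n)$ if and only if $C_m\le C_n^e$. So the entire task becomes understanding which cycles occur as induced subgraphs of the extension graph of a cycle. First I would analyze the structure of $C_n^e$ directly from the definition. Label the vertices of $C_n$ cyclically as $v_0,\dots,v_{n-1}$. The vertices of $C_n^e$ are conjugates of the $v_i$ in $A(C_n)$; by standard normal-form arguments in right-angled Artin groups (the centralizer of a vertex $v_i$ is $\langle v_i\rangle\times A(\lk(v_i))=\langle v_i\rangle\times\langle v_{i-1},v_{i+1}\rangle$, noting $v_{i-1}$ and $v_{i+1}$ do not commute), two conjugates $g v_i g^{-1}$ and $h v_j h^{-1}$ are adjacent in $C_n^e$ precisely when they have a common power-free representative supported on an edge of $C_n$. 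I would make precise the claim that $C_n^e$ is itself an infinite graph built out of ``sheets'' isomorphic to the universal-cover-like object obtained by repeatedly doubling $C_n$ along stars.

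The key computation is the following: doubling $C_n$ along the star of a vertex $t$ replaces the $3$-vertex path $\st(t)=v_{i-1}v_iv_{i+1}$ by itself and reglues the two arcs of $C_n\smallsetminus\{v_{i-1},v_i,v_{i+1}\}$, each of length $n-4$ (i.e.\ with $n-3$ edges), in parallel; the resulting graph, when one looks for the shortest induced cycle through both copies, is $C_{n+(n-4)}=C_{2n-4}$. Iterating, and using Corollary~\ref{c:double} together with Theorem~\ref{t:thm1}, one sees that $C_{n+k(n-4)}\le C_n^e$ for every $k\ge 0$, which gives the ``if'' direction. For the ``only if'' direction I would argue that any induced cycle in $C_n^e$ must have length $\equiv n\pmod{n-4}$ and be at least $n$: an induced cycle in $C_n^e$ projects to a closed path in $C_n$ under the natural map sending a conjugate of $v_i$ to $v_i$, and each time this closed path ``turns around'' it must traverse a full complementary arc of length $n-4$, so the cycle length is $n$ plus a nonnegative multiple of $n-4$. (When $n=4$ this degenerates: $n-4=0$, and indeed $C_4^e$ contains no induced cycle other than $C_4$ itself, consistent with the formula $m=4+k\cdot 0$.)

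I expect the main obstacle to be the ``only if'' direction — specifically, making rigorous the claim that an induced cycle in $C_n^e$ cannot take a ``shortcut'' through the complementary arcs, i.e.\ that it must traverse each such arc completely once committed to leaving a star. The danger is that the induced-subgraph condition might be compatible with some cleverer embedded cycle that weaves between more than two sheets; ruling this out requires a careful case analysis of how an induced cycle can intersect the stars of vertices in $C_n$, using the fact that $\st(t)$ is a separating set in each sheet and that the link of any vertex in $C_n$ is a pair of non-adjacent vertices, so an induced cycle passing near $t$ has very limited local behavior. I would handle this by setting up an appropriate combinatorial distance or ``level'' function on the sheets of $C_n^e$ (analogous to the Bass--Serre tree coming from the iterated HNN/amalgam structure) and showing any induced cycle that leaves one sheet must return through a geodesic of length $\ge n-4$ in the relevant arc. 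Once that structural lemma is in place, the numerics $m=n+k(n-4)$ fall out immediately, and combining with Theorem~\ref{t:triangle--free} completes the proof.
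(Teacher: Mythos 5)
Your reduction to a graph-theoretic question via Theorem~\ref{t:triangle--free} is exactly the paper's first move, and your sketch of the ``if'' direction (iterated doubling along stars to build $C_{n+k(n-4)}\le C_n^e$) matches the paper's explicit construction of a ``linear cellulation of a disk.'' So those two pieces are fine.

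The gap is the ``only if'' direction, and you have flagged it yourself. Your proposed argument — project an induced $C_m\subseteq C_n^e$ to a closed walk in $C_n$ via the retraction $\gex\to\gam$, and then count how many complementary arcs of length $n-4$ must be traversed — is not what the paper does, and it is not clear it can be made to work. The retraction image of an induced cycle in $C_n^e$ is an arbitrary closed walk in $C_n$ and can backtrack, revisit vertices, or be much shorter than $m$; nothing in the induced-subgraph hypothesis forces a clean ``commit to a sheet, traverse the whole arc'' dichotomy, and the Bass--Serre level function you gesture at controls the coarse tree geometry but not the local combinatorics of which edges the cycle may use. The paper instead proves this direction by induction on $m$: for $m>n$, Lemma~\ref{l:extension}(6) gives a vertex $v$ whose star separates the cycle $\gamma\cong C_m$, so $v$ together with $\gamma$ cellulates $\gamma$ into induced subcycles $A_1,\dots,A_k$. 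The crucial step you are missing is showing $k=2$, i.e.\ that at most two such cells can be ``packed'' cyclically around $v$. This uses Lemma~\ref{l:conjugate} (every $n$-cycle in $C_n^e$ is a conjugate of the original $C_n$) plus a word-combinatorics argument: the conjugating elements $w_1,\dots,w_k$ lie in $\langle a,b\rangle$ with consecutive quotients $w_{i-1}w_i^{-1}$ alternating between powers of $a$ and powers of $b$, which forces $w_kw_1^{-1}$ to be neither a power of $a$ nor of $b$ when $k\ge 3$, contradicting that $\Omega^{w_k}$ and $\Omega^{w_1}$ share an edge. Once $k=2$ is known, $m=k_1+k_2-4$ with $k_1,k_2<m$, and the induction together with the arithmetic $m=n+(\ell_1+\ell_2+1)(n-4)$ closes the argument. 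Without an analogue of this ``at most two cells around a separating vertex'' lemma, your projection-based outline does not rule out cycles that weave through three or more sheets, which is exactly the danger you correctly identified but did not neutralize.
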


In particular, $A(C_5)$ contains $A(C_m)$ for every $m\ge6$. 
We write ${\gam}\opp$ for the \emph{complement graph} of a graph $\gam$;
this means ${\gam}\opp$ is given by completing $\gam$ and deleting the edges which occur in $\gam$.  The following result, originally due to the first author\footnote{The complement graph of $C_6$ was the first known example of a graph not containing a long induced cycle such that the corresponding right-angled Artin group contains the fundamental group of a closed hyperbolic surface~\cite{Kim2008}; see also~\cite{CSS2008,Kim2010}.}, is also an easy consequence of Theorem~\ref{t:thm1}.

\begin{cor}[cf. \cite{Kim2008}]\label{c:chordal}
For $n\ge4$, $A(C_{n-1}\opp)$ embeds into $A(C_n\opp)$;
in particular, $A(C_n\opp)$ contains $A(C_5)=A(C_5\opp)$ for any $n\ge6$.
\end{cor}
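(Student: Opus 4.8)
The plan is to apply Theorem~\ref{t:thm1}, so it suffices to realize $C_{n-1}\opp$ as an induced subgraph of the extension graph $(C_n\opp)^e$. The geometric idea is that $C_{n-1}$ is obtained from $C_n$ by contracting one edge of the cycle, and on the level of right-angled Artin groups this edge contraction is carried out by conjugation. Label the vertices of $C_n$ cyclically by $1,2,\dots,n$, so that in $C_n\opp$ the vertices $i$ and $j$ are adjacent exactly when $i-j\not\equiv\pm1\pmod n$, and let $x_1,\dots,x_n$ be the corresponding generators of $A(C_n\opp)$. I would take the $n-1$ vertices
\[
y_i=x_i\quad(1\le i\le n-2),\qquad y_{n-1}=x_n^{-1}x_{n-1}x_n
\]
of $(C_n\opp)^e$; each is a conjugate of a generator, hence genuinely a vertex of the extension graph, and their images in the abelianization are pairwise distinct, so the $y_i$ are distinct. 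It remains to identify the induced subgraph of $(C_n\opp)^e$ that they span, namely the commutation graph of $y_1,\dots,y_{n-1}$ in $A(C_n\opp)$.

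This is a short case check. The relations among $y_1,\dots,y_{n-2}$ are those of $x_1,\dots,x_{n-2}$, and since $i-j\equiv\pm1\pmod n$ forces $|i-j|=1$ whenever $1\le i,j\le n-2$, the non-commuting pairs among $y_1,\dots,y_{n-2}$ form the path $y_1-y_2-\dots-y_{n-2}$. For $y_{n-1}=x_n^{-1}x_{n-1}x_n$, I would use that its centralizer in $A(C_n\opp)$ is $x_n^{-1}\langle\st(x_{n-1})\rangle x_n$: since the conjugator $x_n$ commutes with each of $x_2,\dots,x_{n-2}$, while $x_{n-1}$ commutes with $x_2,\dots,x_{n-3}$ but not with $x_{n-2}$, this gives $[y_{n-1},y_j]=1$ for $2\le j\le n-3$ and $[y_{n-1},y_{n-2}]\neq1$. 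Finally $x_1$ commutes with $x_{n-1}$ but not with the conjugator $x_n$, and the word $x_n^{-1}x_{n-1}x_n\,x_1\,x_n^{-1}x_{n-1}^{-1}x_n$ is easily seen to be reduced and nontrivial, so $[y_{n-1},y_1]\neq1$. Hence $y_{n-1}$ fails to commute with exactly $y_1$ and $y_{n-2}$, so attaching a vertex to the two endpoints of the path $y_1-\dots-y_{n-2}$ produces exactly $C_{n-1}$ as the non-commutation graph of $\{y_1,\dots,y_{n-1}\}$; equivalently, the induced subgraph of $(C_n\opp)^e$ on $\{y_1,\dots,y_{n-1}\}$ is $C_{n-1}\opp$. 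Theorem~\ref{t:thm1} now gives $A(C_{n-1}\opp)\le A(C_n\opp)$.

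For the final assertion, the complement of a $5$-cycle is again a $5$-cycle, so $C_5\opp\cong C_5$; composing the embeddings just constructed yields $A(C_5)=A(C_5\opp)\le A(C_6\opp)\le\dots\le A(C_n\opp)$ for every $n\ge6$. The one step I expect to require care is the analysis of $y_{n-1}$: conjugating $x_{n-1}$ by $x_n$ must \emph{break} its commutation with $x_1$, even though $x_1$ and $x_{n-1}$ commute in $A(C_n\opp)$, and it is precisely the reduced-word computation --- or, equivalently, the description of centralizers of elements in a right-angled Artin group --- that guarantees this. The remaining checks, including the small cases $n=4,5$, follow from the same computation.
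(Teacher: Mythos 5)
Your proof is correct, and it is essentially the paper's argument with the co-contraction machinery unpacked by hand. The paper proves this corollary by noting that $C_n\opp$ co-contracts onto $C_{n-1}\opp$ along the non-adjacent pair $B=\{x_{n-1},x_n\}$, and the paper's proof of the co-contraction theorem realizes $\overline{CO}(\Gamma,B)$ inside $\gex$ by precisely the set of vertices you use: $V(\Gamma)\smallsetminus B$ together with $v_2^{v_1}$ (here $x_{n-1}^{x_n}=x_n^{-1}x_{n-1}x_n$). Your direct commutation-graph verification --- including the centralizer computation showing $y_{n-1}$ fails to commute with exactly $y_1$ and $y_{n-2}$ --- is the same calculation that confirms $\overline{CO}(C_n\opp,B)\cong C_{n-1}\opp$, and both routes then appeal to Theorem~\ref{t:thm1}.
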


An interesting, but still unresolved case of Conjecture~\ref{c:main} is when $\Lambda$ is a long cycle.
A graph $\gam$ is called \emph{weakly chordal} if $\gam$ does not contain an induced $C_n$ or $C_n^{opp}$ for $n\geq 5$.
We will show that $\gex$ contains an induced long cycle if and only if $\gam$ is not weakly chordal (Lemma~\ref{l:cmfree}).
Hence, Conjecture~\ref{c:main} for the case when $\Lambda$ is a long cycle reduces to the following:

\begin{con}[Weakly--Chordal Conjecture]\label{c:wchordal}
If $\gam$ is a weakly chordal graph, then $A(\gam)$ does not contain $A(C_n)$ for any $n\geq 5$.
\end{con}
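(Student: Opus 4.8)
The plan is to reduce the conjecture, via Theorem~\ref{t:thm2} and Lemma~\ref{l:cmfree}, to a self-contained combinatorial assertion about the clique--graph operation, and then to prove that assertion by ``unpacking'' cliques into single vertices. First, some harmless reductions: the connected components of a weakly chordal graph are again weakly chordal, $A(\gam)$ is the free product of the groups $A(\gam_i)$ over the components $\gam_i$ of $\gam$, and $A(C_n)$ is one--ended for $n\ge5$ (since $C_n$ is connected with more than one vertex), hence freely indecomposable and not infinite cyclic, so the Kurosh subgroup theorem lets us assume $\gam$ is connected. Now suppose, for a contradiction, that $\gam$ is connected and weakly chordal and that $A(C_n)\le A(\gam)$ for some $n\ge5$. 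By Theorem~\ref{t:thm2}, $C_n$ is an induced subgraph of $\gex_k$; by Lemma~\ref{l:cmfree}, weak chordality of $\gam$ forces $\gex$ itself to contain no induced long cycle. So the conjecture follows once we prove the \emph{Main Claim}: if a graph $\Delta$ has no induced cycle of length at least five, then neither does its clique graph $\Delta_k$.

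We only need the Main Claim when $\Delta=\gex$, and here extra structure is available. The link in $\gex$ of the vertex $v$ (indeed of any vertex conjugate to $v$) is canonically the extension graph $(\lk_\gam(v))^e$ of the induced subgraph of $\gam$ on $\lk_\gam(v)$; since an induced subgraph of a weakly chordal graph is weakly chordal, Lemma~\ref{l:cmfree} applies to $\lk_\gam(v)$ as well, so \emph{every} vertex link of $\gex$ is again free of induced long cycles. This recursive feature, absent in a general $\Delta$, is the thing one would induct on.

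To attack the Main Claim, take an induced cycle $(K_0,K_1,\dots,K_{n-1})$ in $\Delta_k$ with $n\ge5$, indices read modulo $n$ and each $K_i$ a clique of $\Delta$; thus $K_i\cup K_{i+1}$ is a clique of $\Delta$ for all $i$, whereas $K_i\cup K_j$ is \emph{not} a clique whenever $i,j$ are non--consecutive. The aim is to pick representatives $v_i\in K_i$, pairwise distinct, with $v_iv_{i+1}\in E(\Delta)$ for all $i$ and $v_iv_j\notin E(\Delta)$ for all non--consecutive $i,j$; any such choice displays an induced $C_n$ in $\Delta$, the desired contradiction. One starts by fixing, for each non--consecutive pair $(i,j)$, a ``witness'' $\{x,y\}$ with $x\in K_i$, $y\in K_j$ and $xy\notin E(\Delta)$, and then tries to reconcile the (at most $n-3$) witness constraints inside a single $K_i$ down to one vertex $v_i$. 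Since $K_i$ is a clique, any two candidate representatives inside it are adjacent, which is exactly what makes it plausible to collapse $K_i$ to a single vertex without breaking its adjacency to $K_{i\pm1}$; one would then run an induction on $\sum_i|K_i|$, deleting a vertex of some non--singleton $K_i$ at each stage and invoking the recursive link structure above to control the surviving witnesses. The base case, in which all the $K_i$ are singletons, is immediate, since the cycle already lies in $\Delta$.

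The main obstacle is precisely this reconciliation step. When $\Delta$ is \emph{chordal} (no induced cycle of length $\ge4$) the analogue of the Main Claim is classical --- the clique graph of a chordal graph is chordal --- but the usual proof runs through clique trees and the Helly property of subtrees of a tree, and graphs with no induced cycle of length $\ge5$ carry no comparable tree structure, so a genuinely new argument is needed, and a naive choice of the $v_i$ can leave some non--consecutive $v_iv_j$ an edge. Closing this gap seems to require more than the Main Claim's bare hypothesis: either the fine structure of $\gex$ just noted, via which an induction on $|V(\gam)|$ becomes available; or the full force of weak chordality of $\gam$ through the structure theory of weakly chordal graphs (each such graph is complete or contains a two--pair, and adding the edge joining a two--pair preserves weak chordality), inducting on $|V(\gam)|$ while relating $A(\gam)$ to the Artin groups obtained by adding that edge or by deleting one of its two vertices; or a sharpening of Theorem~\ref{t:thm2}, along the lines of its stronger form Theorem~\ref{t:thm2 strong}, producing the embedding $C_n\to\gex_k$ so that its image cliques are pairwise non--adjacent in $\gex_k$ except along the cycle, whereupon the cycle pulls straight back into $\gex$. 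The single lemma I would most like to establish is that \emph{a shortest induced long cycle in $\gex_k$ uses only cliques of size at most two}; given that, Lemma~\ref{l:cmfree} finishes the proof at once.
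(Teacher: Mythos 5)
There is a genuine gap here, and it is important to be clear about its nature: the statement you are proving is stated in the paper as a \emph{conjecture}, not a theorem. The paper offers no proof of it; it only establishes the special cases where $\gam$ is triangle--free or square--free (Corollary~\ref{c:tsfree}), the former via the structure theorem for triangle--free targets (Theorem~\ref{t:triangle--free}) and the latter via a surface--group obstruction for chordal graphs. Your opening reductions are fine and in fact reproduce exactly the paper's own framing: Theorem~\ref{t:thm2} places $C_n$ inside $\gex_k$, Lemma~\ref{l:cmfree}~(4) shows $\gex$ is weakly chordal, and the entire unresolved content of the conjecture is the descent from $\gex_k$ to $\gex$. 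Your ``Main Claim'' --- that the clique--graph operation preserves the absence of induced cycles of length at least five --- is precisely that unresolved content, repackaged. You do not prove it; you candidly list three strategies that might prove it and identify the lemma you ``would most like to establish.'' A proof proposal that terminates in an unproven claim equivalent to the target statement is not a proof.

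Two further cautions. First, the Main Claim as stated for an arbitrary graph $\Delta$ is a strictly stronger assertion than what is needed, and there is no evidence offered that it holds even for $\Delta=\gex$; the classical chordal analogue you cite really does rely on clique trees and the Helly property, neither of which survives when $C_4$'s are permitted, so the analogy gives no traction. Second, note how the paper actually closes this gap in the one case where it can be closed by this route: in the triangle--free case all cliques of $\gex$ have size at most two, and the proof of Theorem~\ref{t:triangle--free} exploits the fact that a size--two clique $v_{a,b}$ is separated from the rest of $\gex_k$ by its two endpoints, forcing a connected $\Lambda$ back into $\gex$. Your proposed sharpening (``a shortest induced long cycle in $\gex_k$ uses only cliques of size at most two'') would indeed suffice together with Lemma~\ref{l:cmfree}, but it is asserted, not proved, and proving it would settle an open problem.
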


We note the Weakly--Chordal Conjecture is true when $\gam$ is triangle--free or square--free; see Corollary \ref{c:tsfree}.

A final topic which we address in this paper is the (non)--existence of \emph{universal} right-angled Artin groups.
A right-angled Artin group is \emph{$n$--dimensional} if its cohomological dimension is $n$.
An $n$--dimensional finitely generated right-angled Artin group $G$ is called a \emph{universal $n$-dimensional right-angled Artin group} if $G$ contains copies of each $n$--dimensional right-angled Artin group.  Since $F_2$ contains every other finitely generated free group and since free groups are precisely the groups with cohomological dimension one, $F_2$ is a universal $1$--dimensional right-angled Artin group.
We prove:

\begin{thm}\label{t:universal}
There does not exist a universal two--dimensional right-angled Artin group.
\end{thm}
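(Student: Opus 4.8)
The plan is to show that for \emph{every} finite triangle--free graph $\gam$ one can find a finite triangle--free graph $\Lambda$ having at least one edge with $A(\Lambda)\not\le A(\gam)$. Since $A(\gam)$ has cohomological dimension equal to the clique number of $\gam$, a finitely generated right-angled Artin group is two--dimensional precisely when its defining graph is finite, triangle--free, and has an edge; so producing such a $\Lambda$ for each such $\gam$ proves that no finitely generated right-angled Artin group contains copies of all two--dimensional right-angled Artin groups. The invariant I would exploit is the chromatic number, which is monotone under passage to induced subgraphs.

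The first step is the elementary observation that chromatic number is unchanged by doubling: if $\Delta$ is a graph and $S\subseteq V(\Delta)$, then $\chi(D_S(\Delta))=\chi(\Delta)$. Indeed $\Delta\le D_S(\Delta)$ gives $\chi(D_S(\Delta))\ge\chi(\Delta)$, while conversely $D_S(\Delta)$ has no edge joining the two copies of $V(\Delta)\smallsetminus S$, so any proper colouring of $\Delta$ extends to one of $D_S(\Delta)$ by colouring both copies identically. The second step is to invoke the structural description of $\gex$ contained in the proof of Theorem~\ref{t:thm1}: iterating Corollary~\ref{c:double}, every finite induced subgraph of $\gex$ is an induced subgraph of a graph obtained from $\gam$ by finitely many successive doublings along stars of vertices. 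Combining the two steps, every finite induced subgraph $\Lambda$ of $\gex$ satisfies $\chi(\Lambda)\le\chi(\gam)=:k<\infty$.

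The last step is to feed in the classical fact (Mycielski; Erd\H{o}s) that triangle--free graphs of arbitrarily large chromatic number exist, and to pick a finite triangle--free $\Lambda$ with $\chi(\Lambda)=k+1$. Then $\Lambda$ has an edge (since $k+1\ge 2$), so $A(\Lambda)$ is a finitely generated two--dimensional right-angled Artin group; but $\chi(\Lambda)>k$ forces $\Lambda\not\le\gex$ by the previous paragraph, and hence $A(\Lambda)\not\le A(\gam)$ by Theorem~\ref{t:triangle--free}, using that $\gam$ is triangle--free. This contradicts universality, completing the argument.

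The step I expect to require the most care is the structural claim that finite induced subgraphs of $\gex$ sit inside iterated star--doublings of $\gam$. Concretely one must check that the embedding $A(D_{\st_\gam(t)}(\gam))\to A(\gam)$ of Corollary~\ref{c:double} realizes $D_{\st_\gam(t)}(\gam)$ as an \emph{induced} subgraph of $\gex$ (via $w\mapsto w$ on one copy and $w\mapsto twt^{-1}$ on the other, the induced--ness being a short centralizer computation), that this extension graph is unchanged by the doubling so that the process iterates, and that every vertex $gvg^{-1}$ of $\gex$ is reached after finitely many doublings — one for each letter of a word representing $g$. Granting the structural fact, which is essentially already established in proving Theorem~\ref{t:thm1}, the remainder is routine.
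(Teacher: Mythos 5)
Your proposal is correct and follows essentially the same route as the paper: bound the chromatic number of finite induced subgraphs of $\gex$ by $\chi(\gam)$, invoke Erd\H{o}s's triangle--free graphs of large chromatic number, and bridge back to group embeddings via Theorem~\ref{t:triangle--free}. The only cosmetic difference is that you derive the chromatic bound through the iterated star--doubling description (Lemma~\ref{l:double}), whereas the paper gets $\chi(\gex)=\chi(\gam)$ in one line by pulling back a colouring along the natural retraction $\gex\to\gam$ (Lemma~\ref{l:extension}~(8)); your write-up is in fact more explicit than the paper's about the reduction to triangle--free $\gam$ and the final appeal to Theorem~\ref{t:triangle--free}.
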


This paper is organized as follows. 
In Section~\ref{sec:preliminary}, we recall basic facts on right-angled Artin groups and mapping class groups. 
Definition and properties of extension graphs will be given in Section~\ref{sec:gex}.
We prove Theorems~\ref{t:thm1} and~\ref{t:thm2} in Section~\ref{sec:gex and raag}.
Section~\ref{sec:trees} mainly discusses $A(P_4)$ and as a result, we prove Conjecture~\ref{c:main} when $\Lambda$ or $\gam$ is a forest.
Answers to Conjecture~\ref{c:main} for complete bipartite graphs will be given in Section~\ref{sec:complete bipartite}.
Results on co-contraction are proved in Section~\ref{sec:cocontraction}.
In Sections~\ref{sec:long cycle}, we prove Conjecture~\ref{c:main} for $\gam$ a triangle--free graph, and give the quantitative version when $\gam$ and $\Lambda$ are both long cycles.  In Section \ref{s:universal}, we prove that there is no universal two--dimensional right-angled Artin group.
Section~\ref{sec:appendix} contains a topological proof of Corollary~\ref{c:double}.

\section{Background material}\label{sec:preliminary}
\subsection{Centralizers of right-angled Artin groups}
Suppose $\gam$ is a graph. Each element in $V(\gam)\cup V(\gam)^{-1}$ is called a \emph{letter}. Any element in $A(\gam)$ can be expressed as a \emph{word},
which is a finite multiplication of letters.
Let $w=a_1\ldots a_l$ be a word in $A(\gam)$ where $a_1,\ldots,a_l$ are letters.
We say $w$ is \emph{reduced} if any other word representing the same element in $A(\gam)$ as $w$ has at least $l$ letters.
We say $w$ is \emph{cyclically reduced} if any cyclic conjugation of $w$ is reduced.
The \emph{support} of a reduced word $w$ is the smallest subset $U$ of $V(\gam)$ such that each letter of $w$ is in $U\cup U^{-1}$; we write $U=\supp(w)$.

We will use the notation $h^{-1}gh=g^h$ for two group elements $g$ and $h$.  Let $g$ be an element in $A(\gam)$.
There exists $p\in A(\gam)$ such that $g=w^p$ 
for some cyclically reduced word $w$.
Put $B=\supp(w)$, and let $B_1,B_2,\ldots,B_k$ be the vertex sets of the connected components of $\gam_B^{opp}$, where $\gam_B$ is the induced subgraph of $\gam$ on $B$.   
Then one can write
$w= g_1^{e_1} g_2^{e_2}\cdots g_k^{e_k}$ where $\supp(g_i)\in B_i$, $e_i > 0$ and $\form{g_i}$ is maximal cyclic  for each $i=1,\ldots,k$. We say each $g_i$ is a \emph{pure factor} of $g$, and the expression $g = p^{-1}g_1^{e_1}\cdots g_k^{e_k}p$ is a \emph{pure factor decomposition} of $g$; this decomposition is unique up to reordering~\cite{Servatius1989}.
In the special case when $p=1=k$ and $e_1=1$,  $g$ is a pure factor.
The centralizer of a word in $A(\gam)$ is completely described by the following theorem.

\begin{thm}[Centralizer Theorem~\cite{Servatius1989}]\label{t:centralizer}
Let $\gam$ be a finite graph and $g = p^{-1}g_1^{e_1}\cdots g_k^{e_k}p$ be a pure factor decomposition of $g$ in $\aga$.
Then any element $h$ in the centralizer of $g$ can be written as 
\[h=p^{-1}g_1^{f_1}\cdots g_k^{f_k} g' p\]
 for some integers $f_1,\ldots,f_k$ and $g'\in \aga$ such that each vertex in $\supp(g')$ is adjacent to every vertex in
 $\cup_i \supp(g_i)$ in $\gam$.
\end{thm}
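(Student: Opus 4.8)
\emph{Setup.} The plan is to conjugate to the cyclically reduced case, bound the support of any centralizing element, and then read off the centralizer from a direct product decomposition; the one genuinely hard input is that a ``full-support'' pure factor has cyclic centralizer. Since $C_{A(\gam)}(g)=p^{-1}C_{A(\gam)}(w)p$ for $w=g_1^{e_1}\cdots g_k^{e_k}$, and $h\in C_{A(\gam)}(g)$ has the stated form precisely when $php^{-1}\in C_{A(\gam)}(w)$ has the analogous form, it suffices to treat the cyclically reduced case and prove
\[C_{A(\gam)}(w)=\bigl\{\,g_1^{f_1}\cdots g_k^{f_k}\,g'\ :\ f_i\in\Z,\ \supp(g')\subseteq \lk_\gam(B)\,\bigr\},\]
where $B=\supp(w)=B_1\sqcup\cdots\sqcup B_k$ and $\lk_\gam(B)=\bigcap_{v\in B}\lk_\gam(v)$. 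Because the $B_i$ are the vertex sets of distinct components of $\gam_B^{opp}$, every vertex of $B_i$ is adjacent in $\gam$ to every vertex of $B_j$ for $i\neq j$, while every vertex of $\lk_\gam(B)$ is adjacent to all of $B$; hence the induced subgraph on $B\cup\lk_\gam(B)$ is the join of $\gam_{B_1},\ldots,\gam_{B_k}$ and $\gam_{\lk_\gam(B)}$ (every vertex of one factor adjacent to every vertex of the others), so
\[A(B\cup\lk_\gam(B))\cong A(B_1)\times\cdots\times A(B_k)\times A(\lk_\gam(B)),\qquad w=(g_1^{e_1},\ldots,g_k^{e_k},1).\]
The inclusion ``$\supseteq$'' is immediate, as the pairwise-commuting $g_i$ and $g'$ all commute with $w$.

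\emph{Two facts, and reduction to a pure factor.} We use: (A) for every cyclically reduced $u\in A(\gam)$ with $\supp(u)=S$, one has $C_{A(\gam)}(u)\subseteq A(S\cup\lk_\gam(S))$; and (B) if $v$ is a pure factor of a right-angled Artin group $A(\Delta)$ with $\supp(v)=V(\Delta)$ --- equivalently, $\Delta^{opp}$ connected --- then $C_{A(\Delta)}(v)=\form{v}$. Granting these, take $h\in C_{A(\gam)}(w)$; by (A), $h\in A(B\cup\lk_\gam(B))$, so $h=(h_1,\ldots,h_k,g')$ in the product above, and $h$ commutes with $w$ if and only if $h_i$ commutes with $g_i^{e_i}$ in $A(B_i)$ for each $i$. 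As right-angled Artin groups have unique roots (being bi-orderable), $C_{A(B_i)}(g_i^{e_i})=C_{A(B_i)}(g_i)$; and $g_i$ is a pure factor of $A(B_i)$ on all of $B_i$ (trivially when $|B_i|=1$), with $\form{g_i}$ maximal cyclic in $A(B_i)$ since it is so in $A(\gam)$, so (B) gives $C_{A(B_i)}(g_i)=\form{g_i}$. Thus $h_i=g_i^{f_i}$ and $h=g_1^{f_1}\cdots g_k^{f_k}g'$, which is the reverse inclusion.

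\emph{Proof of (A).} It is enough to show that if $x\in V(\gam)$ lies neither in $S$ nor in $\lk_\gam(S)$, then $x\notin\supp(h)$ for every $h\in C_{A(\gam)}(u)$. Apply the canonical retraction $\rho\co A(\gam)\to A(\gam_{S\cup\{x\}})$ (killing the remaining vertices): it fixes $u$, so $\rho(h)\in C_{A(\gam_{S\cup\{x\}})}(u)$, and $\supp(\rho(h))=\supp(h)\cap(S\cup\{x\})$. Picking $s_0\in S$ not adjacent to $x$, the graph $\gam_{S\cup\{x\}}$ yields a splitting
\[A(\gam_{S\cup\{x\}})=A(S)\ast_{A(\lk_\gam(x)\cap S)}\bigl(A(\lk_\gam(x)\cap S)\times\form{x}\bigr),\]
in which $\lk_\gam(x)\cap S$ is a proper subset of $S$. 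In the Bass--Serre tree of this splitting, $u\in A(S)$ fixes the vertex stabilized by $A(S)$; and since $u$ is cyclically reduced with support $S\not\subseteq\lk_\gam(x)\cap S$, no conjugate of $u$ lies in $A(\lk_\gam(x)\cap S)$, so $u$ fixes no edge, hence no other vertex. Since $C(u)$ preserves the fixed set of $u$ in the tree, $C_{A(\gam_{S\cup\{x\}})}(u)\subseteq A(S)$; therefore $x\notin\supp(\rho(h))$ and so $x\notin\supp(h)$.

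\emph{Proof of (B), and the main obstacle.} If $|V(\Delta)|=1$ the claim is trivial; otherwise $\Delta$ is not a join and has no vertex adjacent to all the others. Then $A(\Delta)$ acts freely on the universal cover $X$ of its Salvetti complex, a $\catz$ cube complex; the cyclically reduced $v$ has a $v$-invariant combinatorial axis in $X$, and the absence of a cone vertex of $\Delta$ forces $\operatorname{Min}(v)$ to be exactly this line. Hence $C_{A(\Delta)}(v)$ acts properly and cocompactly, and (by freeness of the $X$-action) faithfully, by isometries on $\operatorname{Min}(v)\cong\R$, so it is virtually cyclic; torsion-freeness makes it infinite cyclic, and maximality of $\form{v}$ makes it equal to $\form{v}$. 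Fact (B) is the irreducible core of the theorem, and I expect pinning down $\operatorname{Min}(v)$ --- equivalently, ruling out an element outside $\form{v}$ that centralizes a cyclically reduced word of full support --- to be the main obstacle; a purely combinatorial alternative instead analyzes the cancellation in a reduced expression for $hvh^{-1}=v$ and shows that a reduced $h\notin\form{v}$ can be shortened by a suitable power of $v$.
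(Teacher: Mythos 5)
The paper does not prove this theorem; it is quoted as a black box from Servatius's article, so your attempt can only be measured against the statement itself. Your overall architecture --- conjugate to the cyclically reduced case, bound the support of centralizing elements, split $A(B\cup\lk_\gam(B))$ as a direct product, and reduce everything to the assertion (B) that a pure factor of full support has cyclic centralizer --- is sound, and is essentially the skeleton of Servatius's original argument; the surrounding bookkeeping (the join structure on $B\cup\lk_\gam(B)$, unique roots giving $C(g_i^{e_i})=C(g_i)$, componentwise commutation, maximality of $\form{g_i}$) is correct.

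There are, however, two genuine gaps. First, in the proof of (A) the identity $\supp(\rho(h))=\supp(h)\cap(S\cup\{x\})$ is false: applying the retraction to a reduced word deletes letters but need not leave a reduced word, and the retained letters can then cancel (e.g.\ $h=xcx^{-1}$ with $c\notin S\cup\{x\}$ not adjacent to $x$ retracts to the identity although $x\in\supp(h)$). Hence the step ``$x\notin\supp(\rho(h))$, therefore $x\notin\supp(h)$'' is unjustified. This is repairable: run the identical Bass--Serre argument in $A(\gam)$ itself using the splitting $A(\gam)=A(\gam\smallsetminus\{x\})\ast_{A(\lk(x))}A(\st(x))$; then $u$ fixes a unique vertex of the tree, so $C(u)\le A(\gam\smallsetminus\{x\})$, and membership in this standard parabolic genuinely excludes $x$ from $\supp(h)$. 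Second, and more seriously, fact (B) --- which you correctly identify as the irreducible core of the theorem --- is not actually proved. The assertion that the absence of a cone vertex ``forces $\operatorname{Min}(v)$ to be exactly this line'' is a restatement of what must be shown, not an argument: one has $\operatorname{Min}(v)\cong Y\times\R$ in the $\catz$ cube complex, and ruling out an unbounded factor $Y$ is precisely equivalent to ruling out an element outside $\form{v}$ that centralizes $v$; a cone vertex is only the most obvious source of such an element, not the only conceivable one. Since you concede this point rather than close it (either by the combinatorial cancellation analysis you sketch or by a genuine computation of $\operatorname{Min}(v)$), the proposal as written does not constitute a complete proof.
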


For two graphs $\gam_1$ and $\gam_2$, the \emph{join} $\gam_1\ast\gam_2$ of $\gam_1$ and $\gam_2$ is defined by the relation
$\gam_1\ast\gam_2= (\gam_1\opp\coprod\gam_2\opp)\opp$.
Note that $A(\gam_1\ast\gam_2) = A(\gam_1)\times A(\gam_2)$.
A graph is said to \emph{split as a nontrivial join} if it can be written as $\gam_1\ast\gam_2$ for nonempty graphs $\gam_1$ and $\gam_2$. 
As a corollary of Theorem~\ref{t:centralizer}, one can describe when the centralizer of a reduced word is non-cyclic.

\begin{cor}[\cite{BC2010}]
Let $g\in A(\gam)$ be cyclically reduced.  The following are equivalent:
\begin{enumerate}
\item
The centralizer of $g$ is noncyclic.
\item
The support of $g$ is contained in a non-trivial join of $\gam$.
\item
The supports of the words in the centralizer of $g$ is contained in a non-trivial join of $\gam$.
\end{enumerate}
\end{cor}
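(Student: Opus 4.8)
The plan is to read off all three conditions from the Centralizer Theorem applied to our cyclically reduced element $g$, which we may take to be nontrivial. Since $g$ is cyclically reduced, its pure factor decomposition has trivial conjugator, so write $g=g_1^{e_1}\cdots g_k^{e_k}$ with $\supp(g_i)=B_i$, where $B=\supp(g)$ and $B_1,\dots,B_k$ are the vertex sets of the connected components of $\gam_B\opp$. Let $L$ be the set of vertices of $\gam$ adjacent to every vertex of $B$; note $L\cap B=\emptyset$ since $\gam$ has no loops. The Centralizer Theorem says precisely that $h$ lies in the centralizer $C(g)$ of $g$ if and only if $h=g_1^{f_1}\cdots g_k^{f_k}g'$ for some integers $f_i$ and some $g'$ with $\supp(g')\subseteq L$. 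From this I would first record two facts: $(\mathrm{a})$ $\bigcup_{h\in C(g)}\supp(h)=B\cup L$, where $\subseteq$ is immediate from the normal form and $\supseteq$ holds because $g\in C(g)$ has support $B$ while every $v\in L$, regarded as a generator, already lies in $C(g)$; and $(\mathrm{b})$ the centralizer $C(g)$ is noncyclic if and only if $k\ge 2$ or $L\ne\emptyset$.

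The content of $(\mathrm{b})$ is short. If $k\ge 2$, the distinct pure factors $g_1$ and $g_2$ both commute with $g$ --- indeed all the $g_i$ pairwise commute, since vertices of distinct $B_i$ are adjacent in $\gam_B$ --- and they have disjoint nonempty supports, so $\form{g_1,g_2}\cong\Z^2$ sits inside $C(g)$. If instead $L\ne\emptyset$, any $v\in L$ commutes with $g$, and since $v\notin\supp(g)$, the standard retraction $\aga\to A(\gam_B)$ killing the vertices outside $B$ shows that $\form{g,v}$ is free abelian of rank two. Conversely, if $k=1$ and $L=\emptyset$, the normal form forces $g'=1$, so $C(g)=\form{g_1}$ is infinite cyclic.

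It then remains to run the cycle $(1)\Rightarrow(3)\Rightarrow(2)\Rightarrow(1)$. For $(1)\Rightarrow(3)$, apply $(\mathrm{b})$: either $L\ne\emptyset$, and then $\gam_{B\cup L}=\gam_B\ast\gam_L$ is a nontrivial join which by $(\mathrm{a})$ contains $\bigcup_{h\in C(g)}\supp(h)$; or $L=\emptyset$ and $k\ge 2$, and then $\gam_B=\gam_{B_1}\ast\cdots\ast\gam_{B_k}$ is a nontrivial join containing $\bigcup_{h\in C(g)}\supp(h)=B$. The implication $(3)\Rightarrow(2)$ is immediate since $\supp(g)\subseteq\bigcup_{h\in C(g)}\supp(h)$. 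For $(2)\Rightarrow(1)$, suppose $B$ lies in the vertex set of an induced subgraph $J_1\ast J_2\le\gam$ with both $J_i$ nonempty. If $B$ meets both $V(J_1)$ and $V(J_2)$, then $\gam_B\opp$ has no edge between $B\cap V(J_1)$ and $B\cap V(J_2)$ and is therefore disconnected, so $k\ge 2$; otherwise $B$ lies in a single factor, say $V(J_1)$, and then every vertex of the nonempty graph $J_2$ is adjacent to all of $B$, so $L\ne\emptyset$. In either case $(1)$ follows from $(\mathrm{b})$.

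The only delicate point is fixing the meaning of ``the support of $g$ is contained in a nontrivial join of $\gam$''; I read it as ``contained in the vertex set of some induced subgraph of $\gam$ that splits as a nontrivial join'', and the argument above confirms that this is the right reading, since under the stricter interpretation ``$\gam_{\supp(g)}$ is itself a nontrivial join'' the implication $(1)\Rightarrow(2)$ would already fail when $k=1$ and $L\ne\emptyset$. The remaining ingredients --- the normal-form bookkeeping, and the standard fact that two commuting elements with disjoint supports generate a $\Z^2$ via the retraction onto the sub-right-angled Artin group on their union of supports --- are routine.
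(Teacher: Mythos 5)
Your proof is correct. The paper itself gives no argument for this corollary, merely citing \cite{BC2010} after remarking that it follows from the Centralizer Theorem, and your derivation is exactly the natural one: reduce (via cyclic reducedness) to the normal form $h=g_1^{f_1}\cdots g_k^{f_k}g'$ with $\supp(g')\subseteq L$, observe that $\bigcup_{h\in C(g)}\supp(h)=B\cup L$, characterize noncyclicity by ``$k\ge2$ or $L\ne\emptyset$,'' and close the cycle of implications. Two small remarks: you were right to restrict to $g\neq 1$ (for $g=1$ and $\gam$ a discrete graph on at least two vertices, (1) holds but (2) and (3) fail); and in your closing sentence the retraction you actually use to get $\Z^2$ from commuting elements with disjoint supports is the one onto the parabolic on \emph{one} of the two supports (killing the other element), not onto the union of supports---you do apply it correctly in the body, so this is just a slip in the summary wording.
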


The following is well-known and stated at various places, such as~\cite{CV2009}.

\begin{lem}\label{l:rank}
For a finite graph $\Gamma$, the maximum rank of a free abelian subgroup of $A(\Gamma)$ is the size of a largest clique in $\Gamma$.
\end{lem}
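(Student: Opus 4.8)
The plan is to prove both inequalities between the maximum rank $r$ of a free abelian subgroup of $A(\Gamma)$ and the clique number $\omega(\Gamma)$ (the size of a largest clique). The easy direction is $\omega(\Gamma) \le r$: if $K$ is a clique on vertices $v_1,\dots,v_\omega$, then the defining relations make $v_1,\dots,v_\omega$ pairwise commute, so they generate a subgroup which is a quotient of $\bZ^\omega$; since the standard generators of $A(\Gamma)$ are each of infinite order and the subgroup $\form{v_1,\dots,v_\omega}$ retracts onto each coordinate $\form{v_i}\cong\bZ$, it is in fact isomorphic to $\bZ^\omega$. Hence $A(\Gamma)$ contains $\bZ^\omega$ and $r \ge \omega(\Gamma)$.

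For the reverse inequality $r \le \omega(\Gamma)$, I would argue by induction on $\vform{V(\Gamma)}$, splitting on whether $\Gamma$ splits as a nontrivial join. If $\Gamma = \Gamma_1 \ast \Gamma_2$ with both parts nonempty, then $A(\Gamma) = A(\Gamma_1) \times A(\Gamma_2)$ and $\omega(\Gamma) = \omega(\Gamma_1) + \omega(\Gamma_2)$; a free abelian subgroup of a direct product projects to free abelian subgroups of the factors, and by a standard argument (e.g.\ using that $\bZ^n$ cannot embed in $A(\Gamma_1)\times A(\Gamma_2)$ unless $n \le r_1 + r_2$, where $r_i$ is the max rank in $A(\Gamma_i)$) one gets $r \le r_1 + r_2 = \omega(\Gamma_1) + \omega(\Gamma_2)$ by induction. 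If $\Gamma$ does not split as a nontrivial join, then $\Gamma^{opp}$ is connected, and $A(\Gamma)$ splits as a nontrivial amalgamated product or HNN extension over the subgroup $A(\st(v))$ for any vertex $v$ (removing $v$ gives the Droms/standard decomposition). A free abelian subgroup $H \cong \bZ^n$ of $A(\Gamma)$ acts on the associated Bass--Serre tree; since $\bZ^n$ cannot act on a tree without a global fixed point unless it has a $\bZ$ quotient acting by translations, either $H$ is conjugate into a vertex group $A(\st(v))$ — which is a join $\form{v}\ast \lk(v)$, handled by the join case and induction, giving $n \le 1 + \omega(\lk(v)) \le \omega(\Gamma)$ — or $H$ has an infinite cyclic quotient with kernel $H_0\cong\bZ^{n-1}$ fixing a point of the tree, hence conjugate into an edge group $A(\st(v)\cap \,\cdot\,)$; pushing the analysis down again bounds $n-1$ and hence $n$ by $\omega(\Gamma)$.

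Alternatively, and perhaps more cleanly, I would invoke the Centralizer Theorem (Theorem~\ref{t:centralizer}) directly: given $H \cong \bZ^n \le A(\Gamma)$, pick a nontrivial $g \in H$; conjugating, write $g$ in pure factor decomposition, and observe that $H$ lies in the centralizer $C(g)$. The Centralizer Theorem describes $C(g)$ as (a conjugate of) $\bZ^k \times A(\Delta)$ where $\Delta$ is the induced subgraph on the common neighborhood of $\supp(g)$ and $\Delta$ has strictly fewer vertices than $\Gamma$ (since it omits the vertices of $\supp(g)$). Then $n \le k + \omega(\Delta)$ by induction, and one checks $k + \omega(\Delta) \le \omega(\Gamma)$ because the $k$ pure factors together with a maximum clique of $\Delta$ span a clique of $\Gamma$ of size at least $k + \omega(\Delta)$ — each pure factor's support contributes at least one vertex, all mutually adjacent to each other across distinct components and to all of $\Delta$. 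The main obstacle is getting the bookkeeping right in this last step: one must verify that the common neighborhood $\Delta$ of $\supp(g)$, together with one vertex chosen from each connected component $B_i$ of $\Gamma_{\supp(g)}^{opp}$, really does form a clique — the cross-component adjacencies hold by definition of the $B_i$, adjacency to $\Delta$ holds by the definition of $\Delta$, and within a maximum clique of $\Delta$ adjacency is automatic — so that $\omega(\Gamma) \ge k + \omega(\Delta) \ge n$, completing the induction.
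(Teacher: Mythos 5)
The paper does not actually prove this lemma: it records it as ``well-known and stated at various places, such as~\cite{CV2009}'' and cites it without argument, so there is no in-paper proof to compare against. Your proposal is essentially correct, and your second argument is the one worth keeping, since it runs entirely on machinery the paper already has. Given $\bZ^n\cong H\le A(\Gamma)$, pick $1\ne g\in H$ with pure factor decomposition $g=p^{-1}g_1^{e_1}\cdots g_k^{e_k}p$; Theorem~\ref{t:centralizer} gives $H\le C(g)=p^{-1}\bigl(\form{g_1}\times\cdots\times\form{g_k}\times A(\Delta)\bigr)p$ with $\Delta$ the common link of $\supp(g)$, which is disjoint from $\supp(g)$ and hence strictly smaller than $\Gamma$; projecting $\bZ^n$ to the factors gives $n\le k+\omega(\Delta)$ by induction, and your clique-assembly step (one vertex from each $B_i$, all mutually adjacent because the $B_i$ are distinct components of $\Gamma_B^{\mathrm{opp}}$, all adjacent to $\Delta$ by definition of $\Delta$) correctly yields $k+\omega(\Delta)\le\omega(\Gamma)$. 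The easy direction also works, though the cleaner phrasing is that $\form{v_1,\dots,v_\omega}$ is a quotient of $\bZ^\omega$ that surjects onto $A(K)\cong\bZ^\omega$ under the retraction killing the other vertices, and $\bZ^\omega$ is Hopfian. Your first (Bass--Serre) argument is salvageable but contains a misstatement: when $\Gamma$ does not split as a join, removing a vertex $v$ gives the splitting $A(\Gamma)=A(\Gamma\smallsetminus v)\ast_{A(\lk(v))}A(\st(v))$, an amalgam over $A(\lk(v))$, not over $A(\st(v))$; with that correction the elliptic/hyperbolic dichotomy for $\bZ^n$ acting on the Bass--Serre tree goes through as you sketch. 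I would present only the centralizer argument.
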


\subsection{Mapping class groups}
The material in the subsection can be found in most general references on mapping class groups, such as Farb and Margalit's book \cite{FM2011}.  Let $\Sigma$ be a surface of genus $g\geq 0$ and $n\geq 0$ punctures.  The mapping class group of $\Sigma$ is defined to be the group of components of the group of orientation--preserving homeomorphisms of $\Sigma$.  Precisely, \[\Mod(\Sigma)\cong \pi_0(\Homeo^+(\Sigma)).\]  It is a standard fact that mapping class groups are finitely presented groups.
The following result is due to Nielsen and Thurston:

\begin{thm}
Let $\psi\in\Mod(\Sigma)$.  Then exactly one of the following three possibilities occurs:
\begin{enumerate}
\item
The mapping class $\psi$ has finite order in $\Mod(\Sigma)$.
\item
The mapping class $\psi$ has infinite order in $\Mod(\Sigma)$ and preserves some collection of homotopy classes of essential, non--peripheral, simple closed curves.  In this case, $\psi$ is called \emph{reducible}.
\item
The mapping class $\psi$ has infinite order in $\Mod(\Sigma)$ and for each essential, non--peripheral, simple closed curve $c$ and each nonzero integer $N$, $\psi^N(c)$ and $c$ are not homotopic to each other.  In this case, $\psi$ is called \emph{pseudo-Anosov}.
\end{enumerate}
\end{thm}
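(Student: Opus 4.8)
The three alternatives, as phrased, are pairwise incompatible for essentially formal reasons: (1) excludes (2) and (3) because both require infinite order, and a mapping class realizing (2) preserves a nonempty finite system of pairwise disjoint essential non--peripheral simple closed curves, hence permutes it, so some power of it fixes one of the components --- contradicting (3). Thus the genuine content is that the three cases are \emph{exhaustive}, and in particular that an infinite--order mapping class which is not reducible carries the full geometric structure of a pseudo--Anosov homeomorphism. One may assume $\chi(\Sigma)<0$, since the remaining surfaces (sphere with at most three punctures, torus, once--punctured torus, \dots) are handled by direct inspection. The plan is to run Thurston's argument through the action of $\Mod(\Sigma)$ on the Thurston compactification of Teichm\"uller space.

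First I would recall the two ingredients of Thurston theory that drive the whole argument: Teichm\"uller space $\mathcal{T}(\Sigma)$ is homeomorphic to an open ball of dimension $6g-6+2n$, and Thurston's compactification attaches to it the sphere $\mathcal{PMF}(\Sigma)$ of projective measured foliations, producing a closed ball $\overline{\mathcal{T}}(\Sigma)$ on which $\Mod(\Sigma)$ acts by homeomorphisms, extending its properly discontinuous, Teichm\"uller--isometric action on the interior. I would take these as input; re--establishing them is itself a substantial development (via length functions of measured foliations, or of hyperbolic structures).

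Now let $\psi\in\Mod(\Sigma)$. If $\psi$ has finite order we are in case (1), so assume $\psi$ has infinite order. By the Brouwer fixed point theorem $\psi$ fixes some $x\in\overline{\mathcal{T}}(\Sigma)$; since $\Mod(\Sigma)$ acts properly discontinuously on the interior, point stabilizers there are finite, so $x$ cannot lie in $\mathcal{T}(\Sigma)$. Hence $x=[\mathcal{F}]\in\mathcal{PMF}(\Sigma)$, that is, $\psi\cdot\mathcal{F}=\lambda\mathcal{F}$ for some $\lambda>0$ (up to Whitehead equivalence). If $\mathcal{F}$ is not filling, the set of essential non--peripheral simple closed curves $c$ with $i(c,\mathcal{F})=0$ is nonempty and $\psi$--invariant; extracting from it a canonical finite subsystem of pairwise disjoint curves exhibits $\psi$ as reducible, which is case (2). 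If $\mathcal{F}$ is filling, one shows $\lambda\neq1$ and produces a transverse filling measured foliation $\mathcal{G}$ with $\psi\cdot\mathcal{G}=\lambda^{-1}\mathcal{G}$; the transverse pair $(\mathcal{F},\mathcal{G})$ equips $\Sigma$ with a singular flat structure in which $\psi$ is represented by an affine homeomorphism of derivative $\mathrm{diag}(\lambda,\lambda^{-1})$ --- a pseudo--Anosov map --- and the uniform expansion of $\mathcal{F}$ together with the contraction of $\mathcal{G}$ under iteration forbid any power of $\psi$ from fixing an isotopy class of essential non--peripheral curve, placing $\psi$ in case (3). Combined with the incompatibility noted above, this gives the trichotomy.

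The main obstacle is the filling case: promoting ``$\psi$ fixes a filling projective measured foliation'' to ``$\psi$ is a pseudo--Anosov homeomorphism.'' Here one must (i) rule out the multiplier $\lambda=1$, and (ii) produce the second foliation $\mathcal{G}$, transverse to $\mathcal{F}$, with $\psi\cdot\mathcal{G}=\lambda^{-1}\mathcal{G}$, so that $(\mathcal{F},\mathcal{G})$ determines the flat structure with respect to which $\psi$ becomes affine. Both points reduce to a careful analysis of the dynamics of $\psi$ on the sphere $\mathcal{PMF}(\Sigma)$ --- concretely, to showing that $\psi$ exhibits source--sink dynamics there --- together with the uniqueness theory of measured foliations and quadratic differentials; this is the technical heart of Thurston's theory and a theorem of real substance rather than a routine verification. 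An alternative organization, due to Bers, replaces the Brouwer argument by the study of the Teichm\"uller translation length $\inf_x d_{\mathcal{T}}(x,\psi\cdot x)$ --- whether it vanishes or is positive, and whether it is realized --- but it shoulders the same essential difficulty, now housed inside Teichm\"uller's existence and uniqueness theorem for extremal quasiconformal maps. A secondary but still nontrivial cost is establishing the input recalled in the second paragraph.
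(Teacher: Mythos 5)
This statement is the Nielsen--Thurston classification, which the paper does not prove: it is quoted verbatim in the background section (Section 2.2) as a classical theorem, with Farb and Margalit's book cited as the general reference. There is therefore no proof in the paper to compare against. Your proposal is a correct outline of the standard argument: the disjointness of the three cases for the formal reasons you give, and exhaustiveness via the Brouwer fixed point theorem applied to the action on the Thurston compactification $\overline{\mathcal{T}}(\Sigma)$, splitting on whether the fixed projective measured foliation fills. You correctly locate the genuine difficulty in the filling case (ruling out multiplier $1$ and constructing the transverse contracted foliation, i.e.\ the source--sink dynamics on $\mathcal{PMF}(\Sigma)$), and you are right that this, together with the ball structure of the compactification itself, is deferred to Thurston theory rather than reproved. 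As a blind reconstruction of a cited background theorem, this is exactly the right level of detail; nothing in it is wrong, and the acknowledged gaps are precisely the standard hard inputs one would cite from Fathi--Laudenbach--Po\'enaru or Farb--Margalit rather than rederive.
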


In many senses, ``typical" mapping classes are pseudo-Anosov.  Nevertheless, we shall be exploiting reducible mapping classes in this paper.  The following result characterizes typical reducible mapping classes, and is due to Birman, Lubotzky and McCarthy in \cite{BLM1983}:
  
\begin{thm}
Let $\psi$ be a reducible mapping class in $\Mod(\Sigma)$.  Then there exists a multicurve $\mC$, called a canonical reduction system, and a positive integer $N$ such that:
\begin{enumerate}
\item
Each element $c\in\mC$ is fixed by $\psi^N$.
\item
The restriction of $\psi^N$ to the interior of each component of $\Sigma\smallsetminus\mC$ is either trivial or pseudo-Anosov.
\end{enumerate}
The mapping class $\psi^N$ is called \emph{pure}.
\end{thm}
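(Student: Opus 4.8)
The plan is to derive this from the Nielsen--Thurston classification (the theorem stated just above) together with two elementary finiteness inputs: on a surface $\Sigma$ of genus $g$ with $n$ punctures every collection of pairwise disjoint, pairwise non-isotopic essential non-peripheral simple closed curves has at most $\xi(\Sigma)=3g-3+n$ members; and $\Mod(\Sigma)$ contains a torsion-free finite-index subgroup $\Gamma$ (for instance the kernel of the action on $H_1(\Sigma;\Z/3\Z)$) in which any restriction to an essential subsurface that is periodic is in fact trivial. The overall strategy has three stages: first produce \emph{some} complete reduction system (a $\psi$-invariant multicurve on which a power of $\psi$ restricts, on each complementary piece, to something trivial or pseudo-Anosov); then extract from these the \emph{canonical} one $\mC$; finally pass to the power $\psi^N$ that is ``pure''.

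For the first stage I would induct on the complexity $\xi(\Sigma)$. Since $\psi$ is reducible it fixes the isotopy class of some multicurve $\mC_0$; replacing $\psi$ by a power lying in $\Gamma$ we may assume $\psi$ fixes each component of $\mC_0$ and each component $S_j$ of $\Sigma\smallsetminus\mC_0$. Apply Nielsen--Thurston to each restriction $\psi|_{S_j}$: it is trivial, pseudo-Anosov, or reducible. Each $S_j$ has complexity strictly less than $\xi(\Sigma)$, so whenever $\psi|_{S_j}$ is reducible the inductive hypothesis supplies a complete reduction system inside $S_j$, which we adjoin to $\mC_0$. The process terminates because $\xi$ is bounded, yielding a complete reduction system for a power of $\psi$; the multicurve itself, being unchanged, has its isotopy class fixed by $\psi$.

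For the second stage I would define $\mC$ to be the set of \emph{essential reduction curves}: isotopy classes of essential non-peripheral simple closed curves lying in \emph{every} complete reduction system of $\psi$. One must check $\mC$ is nonempty when $\psi$ has infinite order and is reducible, is $\psi$-invariant, and is itself a complete reduction system. Invariance is immediate since $\psi$ (by conjugation) permutes the complete reduction systems and hence fixes their common part. The workhorse is the sub-lemma: if a power $\psi^k$ fixes the isotopy class of an essential simple closed curve $c$ and $\mathcal{R}$ is any complete reduction system, then $c$ may be isotoped off $\mathcal{R}$ into a single complementary component $S_j$ (a $\psi^k$-fixed simple curve cannot essentially cross a curve permuted by the pseudo-Anosov dynamics of a piece, nor the piece itself), and $\psi^k|_{S_j}$ cannot be pseudo-Anosov (a pseudo-Anosov map fixes no essential non-peripheral isotopy class), so $S_j$ is a trivial piece. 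From this one shows any two complete reduction systems can be amalgamated and pared down to a common complete sub-system: the curves that sit in a trivial piece of one but not of the other are discarded, and what survives is precisely the interface forced by the pseudo-Anosov (and nontrivially periodic) behavior. Hence the intersection of all complete reduction systems is again complete, and is $\mC$.

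With $\mC$ fixed, choose $N$ so that $\psi^N$ fixes each component of $\mC$ individually and so that each restriction $\psi^N|_{S_j}$ is either trivial or pseudo-Anosov --- possible by finiteness of the set of components together with the passage to $\Gamma$ that trivializes periodic restrictions --- and then $\psi^N$ is pure by construction, proving the theorem. I expect the middle stage to be the main obstacle: showing that the essential reduction curves form a nonempty, complete reduction system. The delicate point is that complete reduction systems are highly non-unique --- one may freely add curves inside any trivial piece --- so one has to argue that exactly the ``genuinely forced'' curves (those bordering pseudo-Anosov pieces, or separating regions with incompatible dynamics) are the ones common to all of them, and that deleting the inessential curves does not destroy completeness.
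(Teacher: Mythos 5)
This theorem is not proved in the paper at all: it is quoted as a known background result of Birman, Lubotzky, and McCarthy (the citation \cite{BLM1983}), in the same way the paper quotes the Nielsen--Thurston trichotomy and Servatius' Centralizer Theorem. So there is no ``paper's own proof'' to compare against; the right reference point is the argument in BLM or the account in Farb--Margalit.

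With that caveat, your sketch is a reasonable outline of the standard argument, and you have correctly located the crux: showing that the curves common to all complete reduction systems form a nonempty, complete, $\psi$-invariant system. Two specific points deserve more care than your sketch gives them. First, your characterization of $\mC$ as the intersection of all \emph{complete} reduction systems is equivalent to BLM's definition (a curve $c$ with $\psi^n(c)=c$ such that every $b$ with $i(b,c)\ne 0$ has infinite $\psi$-orbit), but the equivalence is not a tautology --- you are taking an intersection over a strictly larger family than the maximal reduction systems, and the containment $\bigcap(\text{complete})\subseteq\bigcap(\text{maximal})$ needs to be reversed by exactly the kind of ``fixed curve cannot sit essentially across $\mathcal{R}$, nor inside a pseudo-Anosov piece, so it sits inside a trivial piece, and a trivial piece admits curves intersecting $c$ that are also fixed'' argument you gesture at. Second, your parenthetical justification of the sub-lemma (``a $\psi^k$-fixed curve cannot essentially cross a curve permuted by the pseudo-Anosov dynamics of a piece, nor the piece itself'') is imprecise: the real content is an intersection-number estimate showing that if $i(c,d)\ne0$ and $\psi^m(d)=d$ then iterating $\psi$ would force $i(\psi^{km}(c),d)$ to grow, contradicting $\psi^{km}(c)=c$; and separately that a pseudo-Anosov restriction fixes no non-peripheral isotopy class. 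You flag this middle stage yourself as the main obstacle, which is the right diagnosis; as written it is a gap, though a fillable one, and for the purposes of this paper one would simply cite \cite{BLM1983} rather than reprove it.
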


If $\psi$ is a pure mapping class and $\mC$ is its canonical reduction system, $\psi$ may not act trivially on a neighborhood of $\mC$.  In particular, $\psi$ may perform Dehn twists, which are homeomorphisms given by cutting $\Sigma$ open along $\mC$ and re--gluing with a power of a twist.

The primary reducible mapping classes with which we concern ourselves in this paper are Dehn twists and pseudo-Anosov homeomorphisms supported on a proper, connected subsurface of $\Sigma$. We denote by $T_\alpha$ the Dehn twist about a simple closed curve $\alpha$ on a surface; here, we assume $\alpha$ is oriented whenever needed.

\begin{lem}[cf. \cite{Penner1988}, \cite{Mangahas2010}]\label{l:filling curves}
Suppose $\Sigma$ is a connected surface and $\alpha_1,\ldots,\alpha_r$ are pairwise--non-isotopic, essential simple closed curves on $\Sigma$ such that $\cup_i \alpha_i$ is connected.
We let $\Sigma_0$ be the connected subsurface of $\Sigma$ obtained by taking a regular neighborhood of $\cup_i \alpha_i$ and capping off boundary curves which are nullhomotopic in $\Sigma$.  
For a multi--index  $\underline{n}$ of nonzero integers $(n_1,\ldots,n_r)$, 
let $\psi_{\underline{n}}$ be the product of the Dehn twists given by
 \[T_{\alpha_1}^{n_1}\cdots T_{\alpha_r}^{n_r}.\]  Then there exists a multi--index $\underline{n}$ satisfying the following:
\begin{enumerate}[(i)]
\item
The restriction of $\psi^m_{\underline{n}}$ on $\Sigma_0$ is pseudo-Anosov for any $m\neq 0$;
\item
For any $i,j=1,\ldots,r$, $\psi^m_{\underline{n}}(\alpha_i)$ essentially intersects $\alpha_j$ for $m$ sufficiently large;
\item
If an essential subsurface $\Sigma_1$ of $\Sigma$ essentially intersects $\Sigma_0$,
$\psi_1$ is a pure mapping class on $\Sigma$ which is pseudo-Anosov only on $\Sigma_1$,
and $\beta$ is an essential simple closed curve in $\Sigma_1$,
then each $\psi^m_{\underline{n}}(\alpha_i)$ essentially intersect $\psi_1^{k}(\beta)$ for $m,k$ sufficiently large.
\end{enumerate}
\end{lem}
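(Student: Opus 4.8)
The plan is to build $\psi_{\underline n}$ by the Thurston--Penner construction and then argue that the three properties are ``generic'' in the multi-index $\underline n$, so that a single choice works for all of them simultaneously. First I would invoke the standard fact (Penner~\cite{Penner1988}, refined by Thurston and by Fathi) that since $\cup_i\alpha_i$ is connected and fills the subsurface $\Sigma_0$ by construction, for all sufficiently large $|n_1|,\dots,|n_r|$ (of appropriate signs, say all positive) the product $T_{\alpha_1}^{n_1}\cdots T_{\alpha_r}^{n_r}$ restricts to a pseudo-Anosov homeomorphism on $\Sigma_0$; moreover every nonzero power of a pseudo-Anosov is again pseudo-Anosov, which gives (i) once $\underline n$ lies in the appropriate cone. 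The key point to isolate here is that ``pseudo-Anosov on $\Sigma_0$'' is an open condition that persists under taking powers, so it constrains $\underline n$ only to lie in a fixed open cone $C_0$ of admissible multi-indices.

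Next I would deduce (ii). Once $\psi=\psi_{\underline n}|_{\Sigma_0}$ is pseudo-Anosov on $\Sigma_0$, its stable and unstable laminations $\lambda^\pm$ fill $\Sigma_0$, so each $\lambda^\pm$ has positive geometric intersection number with every essential simple closed curve of $\Sigma_0$, in particular with every $\alpha_j$. By the north--south dynamics of $\psi$ on the space of projective measured laminations $\mathcal{PML}(\Sigma_0)$, the sequence $\psi^m(\alpha_i)$ converges projectively to $\lambda^+$ as $m\to\infty$; hence $i(\psi^m(\alpha_i),\alpha_j)\to\infty$, and in particular is nonzero for $m$ large. This gives property (ii) for $m$ sufficiently large, uniformly over the finitely many pairs $(i,j)$, with no further restriction on $\underline n$ beyond $\underline n\in C_0$.

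For (iii) the situation is the genuinely delicate one, and I expect it to be the main obstacle. Here $\Sigma_1$ is a subsurface meeting $\Sigma_0$ essentially, $\psi_1$ is pure and pseudo-Anosov exactly on $\Sigma_1$, and $\beta\subset\Sigma_1$ is essential; I must show each $\psi_{\underline n}^m(\alpha_i)$ crosses $\psi_1^k(\beta)$ for $m,k$ large. The strategy is again dynamical: the curves $\psi_1^k(\beta)$ converge, as $k\to\infty$, to the projectivized stable lamination $\mu^+$ of $\psi_1$ restricted to $\Sigma_1$, viewed inside $\mathcal{PML}(\Sigma)$ (supported on $\Sigma_1$), while $\psi_{\underline n}^m(\alpha_i)$ converges to $\lambda^+$ supported on $\Sigma_0$. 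Since $\Sigma_0$ and $\Sigma_1$ intersect essentially and $\lambda^+$ fills $\Sigma_0$ while $\mu^+$ fills $\Sigma_1$, the two laminations are not disjoint, so $i(\lambda^+,\mu^+)>0$; by continuity and bihomogeneity of the intersection form on $\mathcal{ML}(\Sigma)$, the intersection number $i(\psi_{\underline n}^m(\alpha_i),\psi_1^k(\beta))$ stays bounded away from $0$ once $m,k$ are large, which is exactly the claim. The two technical points I would need to nail down carefully are (a) that ``$\Sigma_0$ and $\Sigma_1$ intersect essentially'' together with ``$\lambda^+$ fills $\Sigma_0$, $\mu^+$ fills $\Sigma_1$'' really does force $i(\lambda^+,\mu^+)>0$ — this uses that a lamination filling a subsurface hits every curve or lamination that crosses that subsurface's boundary or interior — and (b) that the convergences $\psi^m(\alpha_i)\to\lambda^+$ and $\psi_1^k(\beta)\to\mu^+$ can be taken in a common compact model for $\mathcal{PML}(\Sigma)$ so that the joint limit argument is legitimate; an alternative, more hands-on route to (iii) is the ping-pong/Mangahas-style estimate~\cite{Mangahas2010} bounding subsurface projections, but the $\mathcal{PML}$ compactness argument is cleaner to state. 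Finally I would observe that (i)--(iii) each hold for all $\underline n$ in the fixed open cone $C_0$ (with the relevant $m$, and $m,k$, taken large), so any $\underline n\in C_0$ satisfies the lemma.
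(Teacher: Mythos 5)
Your handling of (ii) and (iii) is correct but follows a genuinely different route from the paper's. The paper disposes of (iii) in a single line by applying Mangahas' criterion to the composition $\psi_1^{-k}\psi_{\underline{n}}^m$, whereas you argue via north--south dynamics and continuity of the intersection pairing on $\mathcal{PML}(\Sigma)$: $\psi_{\underline{n}}^m(\alpha_i)\to\lambda^+$ filling $\Sigma_0$, $\psi_1^k(\beta)\to\mu^+$ filling $\Sigma_1$, and $i(\lambda^+,\mu^+)>0$ because the two subsurfaces essentially intersect, so positivity of intersection persists for $m,k$ large. This is self-contained and arguably cleaner than the paper's terse appeal to \cite{Mangahas2010}; the two technical points you flag (that a lamination filling a subsurface meets everything crossing that subsurface, and that both limits live in the common compact space $\mathcal{PML}(\Sigma)$) are standard and do go through. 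Part (ii) by the same dynamics is likewise fine.

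The genuine soft spot is (i). The ``standard fact'' you invoke --- that for an arbitrary connected collection of pairwise non-isotopic curves filling $\Sigma_0$, the single product $T_{\alpha_1}^{n_1}\cdots T_{\alpha_r}^{n_r}$ with all $n_i$ large and positive is pseudo-Anosov on $\Sigma_0$ --- is not what Penner's or Thurston's construction gives. Those constructions require the curves to be partitioned into two multicurves $A$ and $B$ (each a \emph{disjointly embedded} family) with positive twists along $A$ and negative twists along $B$; three pairwise-intersecting curves, for example, admit no such bipartition. Fathi's theorem likewise concerns a single twist composed with a fixed mapping class. The claim is true, but it requires an argument, and the paper supplies precisely this: an induction on $r$, with Thurston's construction for two intersecting curves as the base case and Mangahas' lemma to show that $\psi^n T_{\alpha}^m$ is pseudo-Anosov on the subsurface filled by the support of $\psi$ together with $\alpha$. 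You would need to reproduce some such inductive step rather than cite it away. Relatedly, your assertion that the admissible multi-indices form a fixed ``open cone'' $C_0$ is unjustified ($\underline{n}\mapsto\psi_{\underline{n}}$ is in no sense homogeneous), but nothing in your argument uses the cone structure beyond ``all $n_i$ sufficiently large,'' so that point is cosmetic.
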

Such a pseudo-Anosov homeomorphism can be constructed quite explicitly:
\begin{proof}[Proof of Lemma \ref{l:filling curves}]
The proof is by induction on $r$.  If $\alpha_1$ and $\alpha_2$ are intersecting simple closed curves then there exist powers $n$ and $m$ such that $T_{\alpha_1}^nT_{\alpha_2}^m$ is pseudo-Anosov on the subsurface filled by $\alpha_1$ and $\alpha_2$.  For the inductive step, let $\psi$ be a pseudo-Anosov supported on a surface $\Sigma_0'$ and let $\alpha$ be a curve which intersects $\Sigma_0'$ essentially (in the sense that it cannot be homotoped off of $\Sigma_0'$).  Then by \cite{Mangahas2010} there exist powers $n$ and $m$ such that $\psi^n T_{\alpha}^m$ is pseudo-Anosov on the subsurface filled by $\Sigma_0'$ and $\alpha$.
For (iii), apply  \cite{Mangahas2010} to $\psi_1^{-k} \psi^m_{\underline{n}}$. 
\end{proof}

By the \emph{disjointness} of two curves or two subsurfaces of a surface, we will mean disjointness within their isotopy classes. Namely, $c_1$ and $c_2$ are not disjoint if no isotopy representatives of $c_1$ and $c_2$ are disjoint.

\begin{defn}\label{d:coincidence}
Let $\Sigma$ be a connected surface and $C$ be either 
\begin{enumerate}[(i)]
\item
a collection of essential simple closed curves on $\Sigma$; or,
\item
a collection of connected essential subsurfaces of $\Sigma$.
\end{enumerate}
Then the \textit{co--incidence graph} of $C$ is a graph where $C$ is the vertex set 
and two vertices $x$ and $y$ are adjacent if and only if $x$ and $y$ are disjoint.
\end{defn}

\section{The topology and geometry of extension graphs}\label{sec:gex}
Let $\gam$ be a finite graph.
Note that there is a natural retraction $\gex\to\gam$ which maps $v^w$ to $v$ for each vertex $v$ of $\gam$ and a word $w$ of $\aga$.
Suppose $a$ and $b$ are vertices of $\gam$, and $x$ and $y$ are words in $\aga$.
By Theorem~\ref{t:centralizer}, $a^x$ and $b^y$ commute in $\aga$ if and only if $[a,b]=1$ and $a^w=a^x,b^w=b^y$ for some $w\in\aga$; this is equivalent to $\form{\st(a)}x\cap \form{\st(b)}y\ne \varnothing$.
We will think of $\gam$ and $\gex$ as metric graphs so that adjacent vertices have distance one. We will denote the distance functions commonly as $d(\cdot,\cdot)$. There is a natural right action of $A(\gam)$ on $\gex$; namely, for $g,w\in\aga$ and $v\in V(\gam)$ we define $v^w.g= v^{wg}$.
Note that the quotient of $\gex$ by this action is $\gam$.
In this paper, we will always regard $\gam$ as an induced subgraph of $\gex$ and $\gam_k$.
In particular, $\gex$ is the union of conjugates of $\gam$.

We can explicitly build the graph $\gex$ as follows. Fix a vertex $v$ of $\gam$ and glue two copies of $\gam$ along the star of $v$; the resulting graph is an induced subgraph of $\gex$ on $V(\gam)\cup V(\gam)^v$. To obtain $\gex$, we repeat this construction for each vertex of $\gam$ countably many times, at each finite stage getting a (usually) larger finite graph. So, we have the following.

\begin{lem}\label{l:double}
Let $\gam$ be a finite graph and $\Lambda$ be a finite induced subgraph of $\gex$.
Then there exists an $l>0$, a sequence of vertices $v_1,v_2,\ldots,v_l$ of $\gex$, and a sequence of finite induced subgraphs $\gam=\gam_0\le\gam_1\le\ldots\le\gam_l$ of $\gex$ where $\gam_i$ is obtained by taking the double of $\gam_{i-1}$ along $\st_{\gam_{i-1}}(v_i)$ for each $i=1,\ldots,l$, such that $\Lambda\le\gam_l$.\qed
\end{lem}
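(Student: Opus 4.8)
The plan is to make precise the informal "explicit construction" sketched in the paragraph preceding the statement, and to check that iterated doubling along stars does exhaust the extension graph $\gex$ by finite induced subgraphs. First I would recall the description of $\gex$ given just above: its vertices are words of $\aga$ conjugate to vertices of $\gam$, and $a^x \sim b^y$ iff $\form{\st(a)}x \cap \form{\st(b)}y \neq \varnothing$. The key elementary observation is that if $v$ is a vertex of a finite induced subgraph $\Delta \le \gex$, then the induced subgraph of $\gex$ on $V(\Delta) \cup V(\Delta)^{v}$ — where $V(\Delta)^v = \{u^v : u \in V(\Delta)\}$ — is exactly the double of $\Delta$ along $\st_\Delta(v)$. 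This is because the map $u \mapsto u^v$ is an automorphism of $\gex$ (it is right multiplication by $v$, which preserves adjacency by the $A(\gam)$--action noted in the text), it fixes $\st_{\gex}(v)$ pointwise since $u^v = u$ exactly when $u$ commutes with $v$, and $V(\Delta) \cap V(\Delta)^v = \st_\Delta(v)$ for the same reason; moreover no extra edges appear between the two copies, since an edge $u^{}{\sim}w^v$ with $u,w \in V(\Delta)$, $u \neq u^v$, $w \neq w^v$ would force (via commutation relations) $u$ and $w$ to both lie in $\st(v)$, a contradiction. So doubling along a star is an intrinsic operation realized inside $\gex$.

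Next I would set up the exhaustion. Enumerate the countably many vertices of $\gex$ as a sequence in which every vertex of $\gex$ appears infinitely often. Build an increasing chain $\gam = \Delta_0 \le \Delta_1 \le \Delta_2 \le \cdots$ of finite induced subgraphs of $\gex$ by letting $\Delta_{i} $ be the induced subgraph on $V(\Delta_{i-1}) \cup V(\Delta_{i-1})^{v_i}$, i.e.\ the double of $\Delta_{i-1}$ along $\st_{\Delta_{i-1}}(v_i)$, where $v_i$ is the $i$-th vertex in the enumeration (discarding steps where $v_i \notin V(\Delta_{i-1})$, or simply allowing the trivial double in that case). The union $\bigcup_i \Delta_i$ is an induced subgraph of $\gex$; I claim it is all of $\gex$. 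Since $\gex$ is connected (it is a union of conjugates of the connected graph $\gam$, glued along stars), it suffices to show that whenever a vertex $x$ lies in some $\Delta_i$ and $y$ is a vertex of $\gex$ adjacent to $x$, then $y \in \Delta_j$ for some $j$. Writing $x = a^w$ and $y = b^w$ with $[a,b]=1$ in $\gam$ (using the commutation criterion above, one can arrange a common conjugator $w$), the vertex $y = b^w$ is obtained from $a^w = x$ by a bounded number of "star moves": conjugating by the letters of a word expressing the relevant change of conjugator, each such letter $v$ appears infinitely often in the enumeration, and one checks that $y$ enters the chain after finitely many doubling steps. Alternatively, and perhaps more cleanly, one shows directly that any vertex $b^{g}$ of $\gex$ with $g = s_1 s_2 \cdots s_m$ a word in the generators lies in $\Delta_j$ for $j$ large, by induction on $m$: $b \in V(\gam) = V(\Delta_0)$, and if $b^{s_1\cdots s_{k}} \in \Delta_i$ then $b^{s_1 \cdots s_{k+1}} = (b^{s_1\cdots s_k})^{s_{k+1}}$ lies in the double of $\Delta_i$ along $\st(s_{k+1})$, which is some $\Delta_{i'}$.

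Finally, given the finite induced subgraph $\Lambda \le \gex$ of the statement, each of its finitely many vertices lies in some $\Delta_{j}$, hence all of them lie in a common $\Delta_l$ for $l$ large enough, and since $\Lambda$ and $\Delta_l$ are both induced subgraphs of $\gex$ we get $\Lambda \le \Delta_l$. Relabeling the chain $\Delta_0 \le \Delta_1 \le \cdots \le \Delta_l$ (after discarding the trivial doubling steps) as $\gam_0 \le \gam_1 \le \cdots \le \gam_l$ with the corresponding vertices $v_1,\dots,v_l$ gives exactly the assertion. I expect the only real subtlety — the "hard part" — to be the verification that no unexpected edges are created between the two copies when doubling inside $\gex$ (so that the induced subgraph on $V(\Delta_{i-1})\cup V(\Delta_{i-1})^{v_i}$ really is the abstract double and not something with extra edges), together with the book-keeping that guarantees the chain is cofinal in $\gex$ rather than stabilizing at a proper subgraph; both reduce to careful but routine applications of the Centralizer Theorem and the commutation criterion for adjacency in $\gex$ recorded at the start of Section~\ref{sec:gex}. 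Everything else is straightforward enumeration and induction, which is why the statement is marked with $\qed$ in the text.
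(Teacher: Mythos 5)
Your overall route is the one the paper intends (the lemma carries a \qed precisely because the authors regard it as immediate from the iterated--doubling description of $\gex$ in the preceding paragraph), but your ``key elementary observation'' is false as stated, and the gap is not cosmetic. You claim that for an arbitrary finite induced subgraph $\Delta\le\gex$ and a vertex $v$ of $\Delta$, the induced subgraph of $\gex$ on $V(\Delta)\cup V(\Delta)^v$ is the double of $\Delta$ along $\st_\Delta(v)$, because $V(\Delta)\cap V(\Delta)^v=\st_\Delta(v)$ ``for the same reason'' that $u^v=u$ iff $u$ commutes with $v$. That reason only rules out fixed points; it does not rule out $u^v=w$ for two \emph{distinct} vertices $u,w\in V(\Delta)$ with $u\notin\st(v)$. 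Concretely, let $a$ and $v$ be non-adjacent vertices of $\gam$ and let $\Delta$ be the induced subgraph on $\{a,v,a^v\}$ --- this $\Delta$ already arises at the first step of your chain. Then $a^v\in V(\Delta)\cap V(\Delta)^v$ but $a^v\notin\st_\Delta(v)$, so the induced subgraph on $V(\Delta)\cup V(\Delta)^v$ has four vertices while the double of $\Delta$ along $\st_\Delta(v)=\{v\}$ has five: the union is a proper quotient of the double, not the double. Since your enumeration revisits every vertex infinitely often, this actually happens in your chain, so the chain $\Delta_0\le\Delta_1\le\cdots$ you build, while cofinal in $\gex$, is not a chain of doubles, and the lemma as stated (each $\gam_i$ must \emph{be} the double of $\gam_{i-1}$ along a star) is not proved.

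Your observation is correct for $\Delta=\gam$ itself, because the retraction $\gex\to\gam$ is injective on each of $V(\gam)$ and $V(\gam)^v$, which forces $V(\gam)\cap V(\gam)^v=\st_\gam(v)$; that is exactly the case treated in the paragraph before the lemma. For a general $\Delta$ one must conjugate not by $v$ but by a suitably chosen element of $\form{\st_{\gex}(v)}$. For instance, a high power $v^N$ handles the vertex sets: by the Centralizer Theorem the centralizer of $v^N$ equals that of $v$, so for $u\notin\st_{\gex}(v)$ the conjugates $u^{v^N}$, $N\in\Z$, are pairwise distinct, and hence $V(\Delta)\cap V(\Delta)^{v^N}=\st_\Delta(v)$ once $N$ is large. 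One must then also check that no edges are created between $V(\Delta)\smallsetminus\st(v)$ and its image --- your version of this step (``would force $u$ and $w$ to both lie in $\st(v)$'') is asserted rather than proved and needs the commutation criterion $\form{\st(a)}x\cap\form{\st(b)}y\ne\varnothing$ written out, or else the realization of $\gex$ by curves on a surface as in Lemma~\ref{l:extension}, where conjugating by a high power of a twist visibly pushes the new copy off the old one except along the star. With these repairs, your exhaustion and finiteness argument in the last paragraph goes through unchanged.
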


There are various other ways to think of $\gex$.  Probably the most useful of these is that $\gex$ is a  ``universal" graph obtained from $\gam$, in the sense that $\gex$ produces all potential candidates for right-angled Artin subgroups of $A(\gam)$ (cf. Theorems \ref{t:thm1} and \ref{t:thm2}).  Another useful perspective is that $\gex$ is an analogue of the complex of curves for $A(\gam)$ (cf. Lemma~\ref{l:extension} (4)).  In this section we will list some of the properties of $\gex$ and amplify these perspectives.

The essential tool in studying extension graphs is the following special case of a result due to the second author in \cite{Koberda2011}:

\begin{lem}[\cite{Koberda2011}]\label{l:mod}
Let $c_1,\ldots,c_m$ be pairwise--non-isotopic, essential, simple closed curves on a surface $\Sigma$, and let $T_1,\ldots,T_m$ be the respective Dehn twists in the mapping class group $\Mod(\Sigma)$.  Let $\gam$ be the co--incidence graph of $\{c_1,\ldots,c_m\}$, so that the vertices corresponding to two curves are connected if and only if the two curves are disjoint.  Then there exists an $N>0$ such that for any $n\geq N$, \[\langle T_1^n,\ldots,T_m^n\rangle \cong A(\gam)<\Mod(\Sigma).\]  Furthermore, for any finite graph $\gam$ one can find a surface and a collection of curves with co--incidence graph $\gam$.
\end{lem}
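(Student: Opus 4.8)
The statement to prove is Lemma~\ref{l:mod}, which has two parts: an embedding statement (high powers of Dehn twists about curves with co-incidence graph $\gam$ generate $A(\gam)$ inside $\Mod(\Sigma)$) and a realization statement (every finite graph arises as a co-incidence graph of curves on some surface).

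Here is how I would approach each part.

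\begin{proof}[Proof sketch of Lemma~\ref{l:mod}]
The plan is to treat the two assertions separately. For the embedding statement, I would first record the two structural facts about Dehn twists that drive the argument: if $c_i$ and $c_j$ are disjoint then $T_i$ and $T_j$ commute, so there is always a surjection $A(\gam)\to\langle T_1^n,\dots,T_m^n\rangle$; and if $c_i$ and $c_j$ intersect essentially then, by a ping-pong/Thurston-type argument (e.g. the work of Thurston, Hamidi-Tehrani, or Ishida on groups generated by Dehn twists, or the treatment in \cite{Koberda2011}), sufficiently high powers of $T_i$ and $T_j$ generate a free group of rank two. The real work is to upgrade these pairwise statements to a global statement that the surjection $A(\gam)\to\langle T_1^n,\dots,T_m^n\rangle$ is injective for $n$ large. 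I would do this via the normal form for $A(\gam)$: take a reduced word $w$ in the generators of $A(\gam)$ that is not trivial, and show its image $\bar w$ acts nontrivially on the surface. The mechanism is to track the action on isotopy classes of curves (or on measured laminations / the Thurston boundary of Teichm\"uller space): one shows that a reduced word, after replacing each generator by its $n$-th power with $n$ large, moves some reference curve to a curve with large geometric intersection number with the original, using that consecutive syllables in the reduced word involve twists about curves that intersect. This is precisely the kind of North–South / attracting-lamination bookkeeping that appears in \cite{Koberda2011}, so I would cite that result rather than reprove it, noting only that the case of Dehn twists is the base case there. The main obstacle is making the ``large $n$ uniform over all reduced words'' claim precise; the standard fix is that only finitely many local configurations (pairs of adjacent syllables) occur, so a single threshold $N$ works, and then an induction on syllable length of $w$ closes the argument.

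For the realization statement, I would argue as follows. Given a finite graph $\gam$ with $m$ vertices, I want a surface $\Sigma$ and essential simple closed curves $c_1,\dots,c_m$, pairwise non-isotopic, with $c_i$ disjoint from $c_j$ exactly when $\{v_i,v_j\}\in E(\gam)$. One clean construction: take the complement graph $\gam\opp$, and build a surface by attaching a handle or one-holed torus for each vertex and a tube/band for each edge of $\gam\opp$ realizing an intersection, being careful that no unwanted intersections or isotopies are introduced; alternatively, realize $\gam$ as the co-incidence graph directly by a ``thickening'' of $\gam\opp$ (a ribbon-graph type construction), taking $c_i$ to be the boundary of a neighborhood of the $i$-th vertex-piece. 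A slicker route is to use that every finite graph embeds as an induced subgraph of the curve graph co-incidence pattern of a surface of sufficiently high genus: one can take $2m$ disjoint simple closed curves on a large genus surface arranged so that prescribed pairs intersect once and others are disjoint, which is elementary to arrange by choosing the curves on successive handles. I would spell out whichever construction is cleanest and verify the three required properties: essentiality, pairwise non-isotopy, and that the disjointness pattern is exactly $E(\gam)$.

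I expect the genuine obstacle to be the injectivity half of the embedding statement, since it is where the combinatorics of $A(\gam)$ normal forms must be matched against the dynamics of surface homeomorphisms; but because this is quoted as a special case of \cite{Koberda2011}, in the paper proper it suffices to invoke that theorem and observe that Dehn twists are reducible mapping classes whose canonical reduction systems are exactly the defining curves, so the hypotheses of the cited result are met. The realization half is elementary and I would give it in full.
\end{proof}
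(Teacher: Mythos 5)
This lemma appears in the paper as a direct citation to \cite{Koberda2011} with no argument supplied, and your proposal correctly identifies the injectivity of the induced map $A(\gam)\to\langle T_1^n,\dots,T_m^n\rangle$ as the genuine content of that reference (to be quoted rather than reproved), while supplying a standard ribbon-graph construction for the elementary realization half. That is essentially how the paper treats the statement, so your approach is the right one; the only quibble is a small slip in the realization sketch (``$2m$ disjoint simple closed curves \dots\ arranged so that prescribed pairs intersect'' is internally contradictory as worded --- you want $m$ curves, one per vertex, with intersections dictated by the complement graph $\gam\opp$).
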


We note that there is a similar construction which is an equivalent tool for our purposes, using pseudo-Anosov homeomorphisms supported on subsurfaces of a given surface $\Sigma$ instead of Dehn twists.  M. Clay, C. Leininger and J. Mangahas have recently proven in \cite{CLM2010} that under certain further technical conditions, powers of such mapping classes generate a quasi--isometrically embedded right-angled Artin group:

\begin{lem}[\cite{CLM2010}]\label{l:CLM}
Let $\psi_1,\ldots,\psi_m$ be pseudo-Anosov homeomorphisms supported on subsurfaces $\Sigma_,\ldots,\Sigma_m$ such that no inclusion relations hold between $\Sigma_i$ and $\Sigma_j$ for $i\neq j$.
\begin{enumerate}
\item
There exists an $N$ such that for each $n\geq N$, the group $\langle\psi_1^n,\ldots,\psi_m^n\rangle$ is a right-angled Artin group which is quasi--isometrically embedded in the mapping class group.
\item
The abstract isomorphism type of $\langle\psi_1^n,\ldots,\psi_m^n\rangle$ is the ``expected" right-angled Artin group, as in Lemma \ref{l:mod}.
\item
Each nontrivial word in the group $\langle\psi_1^n,\ldots,\psi_m^n\rangle$ is pseudo-Anosov on the minimal subsurface filled by the letters occurring in the word.
\end{enumerate}
\end{lem}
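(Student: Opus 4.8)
The plan is to deduce Lemma~\ref{l:CLM} from the Masur--Minsky distance formula together with Behrstock's inequality on subsurface projections, following \cite{CLM2010}; we only indicate the structure of the argument. Let $\gam$ be the co--incidence graph of $\Sigma_1,\dots,\Sigma_m$, so that $i$ and $j$ are adjacent precisely when $\Sigma_i$ and $\Sigma_j$ are disjoint. Disjoint subsurfaces carry commuting mapping classes, so for every $n$ there is a homomorphism $\phi_n\co A(\gam)\to\Mod(\Sigma)$ sending the $i$-th standard generator to $\psi_i^n$. Since each generator maps to a mapping class of uniformly bounded word length, $\phi_n$ is Lipschitz for the word metrics; thus the whole of parts (1) and (2) amounts to the matching \emph{lower} bound
\[
  d_{\Mod(\Sigma)}\big(1,\phi_n(w)\big)\ \gtrsim\ |w|_{A(\gam)}
\]
for every geodesic word $w$ in $A(\gam)$, once $n$ is large; such a bound gives injectivity of $\phi_n$ (hence the ``expected'' isomorphism type) and the quasi--isometric embedding simultaneously.

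To get the lower bound, fix a marking $\mu_0$ of $\Sigma$ and recall that $d_{\Mod(\Sigma)}(1,g)$ is coarsely equal to $\sum_{Y}\big[\,d_Y(\mu_0,g\mu_0)\,\big]_K$, the sum over essential subsurfaces $Y\subseteq\Sigma$ and $[\,\cdot\,]_K$ discarding terms below a fixed constant $K$. Write a geodesic $w$ as a product of runs $b_1^{k_1}\cdots b_s^{k_s}$ with each $b_j$ one of the generators $\psi_i^{\pm n}$ and $b_j\neq b_{j+1}$, and set $g_0=1$, $g_j=\phi_n(b_1^{k_1}\cdots b_j^{k_j})$, and $W_j=g_{j-1}(\Sigma_{i_j})$, where $\Sigma_{i_j}$ is the support of the letter underlying $b_j$. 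The heart of the proof is the estimate
\[
  d_{W_j}\big(\mu_0,\ \phi_n(w)\mu_0\big)\ \geq\ c\,n\,|k_j|\qquad(1\le j\le s),
\]
with $c>0$ independent of $n$ and $w$. The input is that a pseudo-Anosov acts on the curve complex of its supporting subsurface with positive translation length (Masur--Minsky), so that $d_{\Sigma_{i_j}}(\mu_0,\psi_{i_j}^{\pm nk_j}\mu_0)$ grows linearly in $n|k_j|$; applying $g_{j-1}$ transports this to $W_j$, and the no--nesting hypothesis on the $\Sigma_i$ is what makes all the subsurface projections between distinct $W_j$'s behave well.

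The main obstacle is the ``no cancellation'' step: one must show that the $s$ large projections at $W_1,\dots,W_s$ genuinely all survive into the distance formula rather than partially cancelling, i.e.\ that the letters lying to the left of $g_{j-1}$ and to the right of $g_j$ do not disturb the projection to $W_j$. I would handle this by the standard Behrstock--type argument: show that the domains $W_1,\dots,W_s$ occurring along a geodesic word are either disjoint or \emph{linked} compatibly with a single linear order, so that the syllables before $j$ project near one boundary of $W_j$ and those after $j$ near the other; this ordering property is an invariant of the geodesic normal form of $A(\gam)$ that can be maintained by induction on $s$. Granting it, every term $\big[\,d_{W_j}(\mu_0,\phi_n(w)\mu_0)\,\big]_K$ with $n$ large equals $d_{W_j}(\mu_0,\phi_n(w)\mu_0)\gtrsim n|k_j|$, so the distance formula yields
\[
  d_{\Mod(\Sigma)}\big(1,\phi_n(w)\big)\ \gtrsim\ \sum_{j=1}^{s} n\,|k_j|\ =\ n\,|w|_{A(\gam)}\ \gtrsim\ |w|_{A(\gam)},
\]
which is exactly what is needed for (1) and (2). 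Finally, for (3), let $w\neq 1$ and let $\Sigma_w$ be the subsurface filled by the supports of the letters occurring in $w$. Running the same projection estimate but recording only subsurfaces $Y\subseteq\Sigma_w$ shows that $\langle w\rangle$ has unbounded orbits in the curve complex $\mC(\Sigma_w)$, so $w$ has positive translation length there; in particular no power of $w$ fixes a simple closed curve inside $\Sigma_w$, so the Nielsen--Thurston classification applied to $\Sigma_w$ forces $w|_{\Sigma_w}$ to be pseudo-Anosov, and $\Sigma_w$ is minimal with this property by construction. (Parts (2) and (3) alone can also be obtained by a more elementary ping--pong on the curve complexes $\mC(\Sigma_i)$, in the spirit of Lemma~\ref{l:mod}; it is the uniform quasi--isometric statement in (1) that seems to require the distance formula.)
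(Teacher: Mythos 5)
The paper does not prove this lemma at all: it is quoted verbatim as a black--box result of Clay, Leininger and Mangahas \cite{CLM2010}, so there is no internal proof to compare yours against. Judged on its own terms, your outline is a faithful reconstruction of the strategy actually used in that reference: the upper bound is the trivial Lipschitz estimate, and everything reduces to a lower bound on $d_{\Mod(\Sigma)}(1,\phi_n(w))$ obtained by exhibiting, for each syllable of a normal form of $w$, a translated domain $W_j$ carrying a subsurface projection of size $\gtrsim n|k_j|$, and then summing via the Masur--Minsky distance formula. Your identification of the ``no cancellation'' step as the main obstacle, and of Behrstock's inequality plus a consistent time--ordering of the domains $W_1,\dots,W_s$ as the tool to overcome it, is exactly right; this is also where the no--nesting hypothesis enters, since overlapping (rather than nested) domains are what make the mutual projections and the Behrstock ordering well behaved. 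Your argument for (3) --- restrict attention to subsurfaces $Y\subseteq\Sigma_w$, deduce unbounded orbits in $\mC(\Sigma_w)$, and invoke Nielsen--Thurston on $\Sigma_w$ --- likewise matches the logic of \cite{CLM2010}.

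That said, what you have written is a roadmap, not a proof. The two load--bearing claims are only asserted: (a) the estimate $d_{W_j}(\mu_0,\phi_n(w)\mu_0)\geq c\,n|k_j|$, which requires showing that the prefix $g_{j-1}$ and the suffix following the $j$--th syllable each move $\mu_0$ a bounded amount as seen from $W_j$ (this is where Behrstock's inequality is actually applied, with constants that must be uniform over all $w$); and (b) the existence of a linear order on the $W_j$ compatible with every normal form of $w$, which is delicate precisely because commuting syllables can be permuted and the naive decomposition $b_1^{k_1}\cdots b_s^{k_s}$ with $b_j\neq b_{j+1}$ is not canonical in a right-angled Artin group. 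Establishing (a) and (b) is the technical content of \cite{CLM2010}; as long as you are citing that paper for them, your write--up is an acceptable summary, but it should not be presented as a self--contained proof.
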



The following lemma is sometimes called \textit{Manning's bottleneck criterion}. Recall that in a metric space $(X,d)$, a point $m$ is a \textit{midpoint} of two points $x$ and $y$ if $d(x,m)=d(y,m)=\frac12 d(x,y)$. We denote by $B(x;r)$ the $r$--ball centered at $x$.
\begin{lem}[\cite{Manning2005}]\label{l:bottleneck}
A geodesic metric space $(X,d)$ is quasi-isometric to a tree if and only if there is a $\Delta>0$ satisfying the following:
for every pair of points $x,y$ in $X$, there is a midpoint $m$ of $x$ and $y$ such that every path from $x$ to $y$ intersects the $B(m;\Delta)$.
\end{lem}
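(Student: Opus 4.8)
The plan is to prove both directions. For the ``only if'' direction (a space quasi-isometric to a tree satisfies the bottleneck property), let $f\co X\to T$ be a $(\lambda,c)$--quasi-isometry onto a tree. Given $x,y\in X$, pick a geodesic in $X$ from $x$ to $y$ and let $m$ be its midpoint. The image of this geodesic under $f$ is a $(\lambda,c)$--quasigeodesic in $T$; since $T$ is $0$--hyperbolic, it lies within a bounded distance $R=R(\lambda,c)$ of the segment $[f(x),f(y)]$, so there is a point $m^\ast\in[f(x),f(y)]$ with $d(f(m),m^\ast)\le R$. Now if $\gamma$ is any path in $X$ from $x$ to $y$, sample it by points $x=z_0,z_1,\ldots,z_n=y$ with $d(z_i,z_{i+1})\le 1$ and join consecutive $f(z_i)$ by geodesics in $T$; this is a genuine path in $T$ from $f(x)$ to $f(y)$, hence passes through the cut point $m^\ast$. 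So some $f(z_i)$ lies within $\tfrac12(\lambda+c)$ of $m^\ast$, hence within $R+\tfrac12(\lambda+c)$ of $f(m)$; applying the quasi-isometry inequality bounds $d(z_i,m)$ by a constant $\Delta=\Delta(\lambda,c)$, and $\gamma$ meets $B(m;\Delta)$.

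For the ``if'' direction, assume the $\Delta$--bottleneck property. The first step is an iteration lemma: applying the hypothesis to the midpoint of $[x,y]$, then to the two halves, then to the quarters, and so on, shows that every path $\gamma$ from $x$ to $y$ passes within $\Delta$ of each of a family of ``approximate midpoints'' of $x$ and $y$ at every dyadic scale down to scale $O(\Delta)$ --- the short geodesic bridges used to splice $\gamma$ into a path between consecutive approximate midpoints are too short to interfere with the next level. Since these approximate midpoints lie on geodesics from $x$ to $y$ and are $O(\Delta)$--dense at the finest scale, this is a uniform geodesic-stability statement, and in particular, applying it to the path $[x,z]\cup[z,y]$, it shows geodesic triangles in $X$ are uniformly thin, so $X$ is $\delta$--hyperbolic with $\delta=O(\Delta)$. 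Now fix a basepoint $p$, choose for each $x$ a geodesic $\gamma_x$ from $p$ to $x$, and build an approximating tree $T$ by gluing the segments $\gamma_x$ together along their common initial portions, where the gluing length for $x$ and $y$ is (essentially) the last parameter at which $\gamma_x$ and $\gamma_y$ stay within a fixed constant $O(\Delta)$ of each other. The bottleneck property forces this length to be comparable to the Gromov product $(x\cdot y)_p$ and --- crucially --- forces two geodesics out of $p$, once they have separated by more than $O(\Delta)$, never to recombine. After passing to a graph model of $X$ (a net) and quotienting along this fellow-travelling relation one obtains a genuine tree $T$, and the comparison map $X\to T$ is a quasi-isometry with constants depending only on $\Delta$.

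The main obstacle is this last construction --- checking that the quotient really is a tree (no ``almost-closed'' loops) and that the comparison map is a quasi-isometry with uniform constants. Hyperbolicity by itself does not suffice: in $\mathbb H^2$ the analogous gluing accumulates error of order $\delta\log n$ over a chain of $n$ geodesics, and indeed $\mathbb H^2$ fails the bottleneck property. What the hypothesis buys us is exactly the absence of this accumulation: since any path between two points must track the geodesic between them at \emph{every} dyadic scale, a long chain of pairwise $O(\Delta)$--fellow-travelling geodesics out of $p$ cannot spread apart, so the fellow-travelling relation is transitive up to a uniform constant and the branching pattern of the family $\{\gamma_x\}$ is genuinely tree-like at bounded scale. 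Keeping careful track of which constants depend on which, and handling the non-uniqueness of geodesics by fixing a net once and for all, is where the technical work lies; this is carried out in~\cite{Manning2005}.
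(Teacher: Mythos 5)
The paper does not prove this lemma at all: it is quoted verbatim from Manning's paper \cite{Manning2005} and used as a black box (only in the proof of Lemma 9(7)), so there is no in-paper argument to compare yours against. Judged on its own terms, your ``only if'' direction is correct and essentially complete: the image of a geodesic is a quasigeodesic in the tree, quasigeodesic stability in a $0$--hyperbolic space puts $f(m)$ within $R(\lambda,c)$ of a point $m^\ast$ on $[f(x),f(y)]$, interior points of that arc are cut points of the tree, and the discretized image of any path must therefore pass through $m^\ast$; the degenerate case where $m^\ast$ is an endpoint just forces $d(x,y)$ to be bounded, where the claim is trivial. That half stands.

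The ``if'' direction, however, has a genuine gap at exactly the step you flag as unproblematic. In the dyadic iteration, the bridge you splice in at level $k$ runs from a point of $\gamma$ to the level-$(k-1)$ approximate midpoint, so its length is the \emph{accumulated} error $E_{k-1}$, not $\Delta$; the resulting recursion is $E_k\le E_{k-1}+\Delta$, giving $E_k\approx k\Delta$, i.e.\ an error of order $\Delta\log_2 d(x,y)$ over all scales. This is precisely the logarithmic accumulation you correctly identify as the failure mode in $\mathbb{H}^2$, so the assertion that ``the short geodesic bridges \ldots are too short to interfere with the next level'' is not justified as written and cannot be, without invoking the bottleneck property again in a more structured way (which is what Manning actually does: he works on a graph net and runs an induction on distance that rebuilds the tree directly, rather than first extracting hyperbolicity from a naive multiscale argument). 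The subsequent tree construction and the verification that the comparison map is a quasi-isometry with constants depending only on $\Delta$ are described but not carried out, and you explicitly defer them to \cite{Manning2005}. So your proposal is a correct proof of the easy implication together with a plausible but incomplete outline of the hard one; since the paper itself only cites the result, deferring to Manning is acceptable for the purposes of this paper, but the outline should not be mistaken for a proof.
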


We summarize important geometric properties of $\gex$ as follows.

\begin{lem}\label{l:extension}
Let $\gam$ be a finite graph.
\begin{enumerate}
\item
If $\gam=\gam_1*\gam_2$ for some finite graphs $\gam_1$ and $\gam_2$, then $\gex=\gam_1^e*\gam_2^e$.
\item
If $\gam=\gam_1\coprod\gam_2$ for some finite graphs $\gam_1$ and $\gam_2$, then $\gex$ is a countable union of disjoint copies of $\gam_1^e$ and $\gam_2^e$.
\item
The operation $\gam\to\gex$ respects induced subgraphs: if $\Lambda\subset\gam$ is an induced subgraph then $\Lambda^e\subset\gex$ is an induced subgraph.
\item
The graph $\gex$ embeds as an induced subgraph into the $1$-skeleton of the curve complex of some surface $\Sigma$, which we can take to be closed of some genus $g$ depending on $\gam$.
\item
Suppose $\gam$ is connected.  Then the graph $\gex$ is connected.  The graph $\gex$ has finite diameter if and only if $\gam$ is an isolated vertex or $\gam$ splits as a nontrivial join.  The graph $\gex$ is finite if and only if $\gam$ is complete.
\item
Suppose $\gam$ is connected.
If two vertices $w,w'$ of $\gex$ do not belong to the same conjugate of $\gam$, then $w$ and $w'$ are separated by the star of some vertex.
If we further assume that $\gam$ has no central vertices, then the star of each vertex $v$ of $\gex$ separates $\gex$. 
\item
Suppose $\gam$ is connected.
Then $\gex$ is quasi--isometric to a tree.
\item
The chromatic number of $\gam$ is equal to that of $\gex$.
\end{enumerate}
\end{lem}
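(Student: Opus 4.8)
The plan is to exhibit a graph homomorphism from $\gex$ onto $\gam$ and then invoke the elementary fact that a graph homomorphism cannot increase the chromatic number. The candidate is the natural retraction $\rho\colon\gex\to\gam$ sending $v^w$ to $v$. First I would check that $\rho$ is well defined on vertices: every vertex of $\gex$ is a word of $\aga$ conjugate to a \emph{unique} vertex of $\gam$, since two vertices of $\gam$ that are conjugate in $\aga$ have the same image in the abelianization $\Z^{V(\gam)}$, where the vertices form a basis, and hence must be equal.

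The crux is to show that $\rho$ sends edges to edges. Let $a^x$ and $b^y$ be adjacent vertices of $\gex$, so they are distinct and commute in $\aga$. By the commutation criterion recorded at the beginning of Section~\ref{sec:gex} (itself a consequence of the Centralizer Theorem), commuting forces $[a,b]=1$ in $\aga$ together with $a^w=a^x$ and $b^w=b^y$ for some $w\in\aga$. If we had $a=b$, the latter two equalities would give $a^x=a^y$, i.e.\ $a^x=b^y$, contradicting that the two vertices are distinct. Hence $a\neq b$, and then $[a,b]=1$ forces $\{a,b\}\in E(\gam)$, since distinct generators of a right-angled Artin group commute only when joined by an edge. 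Thus $\{\rho(a^x),\rho(b^y)\}=\{a,b\}\in E(\gam)$, so $\rho$ is a graph homomorphism.

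To conclude, compose a proper colouring $\gam\to K_{\chi(\gam)}$ with $\rho$: since $\chi(\gam)<\infty$, this yields a proper $\chi(\gam)$-colouring of $\gex$, whence $\chi(\gex)\le\chi(\gam)$. In the other direction $\gam$ is an induced subgraph of $\gex$, so restricting any proper colouring of $\gex$ to $V(\gam)$ gives $\chi(\gam)\le\chi(\gex)$. Combining the two inequalities yields $\chi(\gam)=\chi(\gex)$.

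I do not expect a serious obstacle here: the only step that needs a moment's thought is ruling out $a=b$ on an edge of $\gex$, which is immediate from distinctness of the endpoints. If one preferred to avoid the commutation criterion, an alternative is to use Lemma~\ref{l:double}: doubling a graph along an arbitrary subgraph admits a folding map back onto the original graph, which is a homomorphism, so each iterated double $\gam_i$ satisfies $\chi(\gam_i)=\chi(\gam)$; since every finite induced subgraph $\Lambda$ of $\gex$ embeds in some $\gam_l$, the de Bruijn--Erd\H{o}s theorem gives $\chi(\gex)=\sup_\Lambda\chi(\Lambda)\le\chi(\gam)$. The retraction argument is shorter and sidesteps any compactness input.
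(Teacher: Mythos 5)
Your argument addresses only item (8) of the lemma; items (1)--(7) are not touched, so as a proof of the full statement as quoted it is incomplete. For item (8) itself, however, your proof is correct and is essentially the paper's: the paper likewise pulls back a proper colouring of $\gam$ along the natural retraction $\gex\to\gam$, $v^w\mapsto v$, and obtains the reverse inequality from the fact that $\gam$ is an induced subgraph of $\gex$. The only difference is that you supply the routine verifications (well-definedness of the retraction via the abelianization, and that edges of $\gex$ map to genuine edges of $\gam$ via the commutation criterion) that the paper leaves implicit.
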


\begin{proof}
(1)
Since $A(\gam)=A(\gam_1)\times A(\gam_2)$, the group $A(\gam_1)$ acts trivially on the vertices of $\gam_2$, and vice versa. 
So each conjugate of every vertex of $\gam_2$ will be adjacent to each conjugate of every vertex of $\gam_1$. We thus get the claimed splitting.

(2)
We get a splitting $A(\gam)=A(\gam_1)*A(\gam_2)$.  No conjugate of any vertex of $\gam_1$ is adjacent to any conjugate of any vertex of $\gam_2$ by the definition of $\gex$.  The description of $\gex$ can be seen by taking one copy of $\gam_1^e$ for each element of $A(\gam_2)$ and one copy of $\gam_2^e$ for each element of $A(\gam_1)$.  The conjugation action permutes the copies of these graphs according to the regular representation.

(3)
We clearly have a map $\Lambda^e\to\gex$.  Any edge between two vertices of $\gex$ is a conjugate of an edge in $\gam$ by an element of $A(\gam)$.  The claim follows, since two vertices in the image of $\Lambda^e$ are connected by an edge if and only if those two vertices are simultaneously conjugate to adjacent vertices in $\gam$, be it by an element of $A(\Lambda)$ or an element of $A(\gam)$.

(4)
One can realize $\gam$ as the co--incidence graph of pairwise--non-isotopic simple closed curves $\{\alpha(v):v\in V(\gam)\}$ on a closed surface $\Sigma$. 
Namely, $\alpha$ is an embedding from $\gam$ to the curve complex $\mC(\Sigma)$ such that the image of $\alpha$ is an induced subgraph. 
By Lemma~\ref{l:mod}, there exists an $M>0$ such that the map $\phi:A(\gam)\to\Mod(\Sigma)$ defined by $\phi(v)=T_{\alpha(v)}^N$ is injective.
We extend $\alpha$ to $\alpha^e:\gex\to\mC(\Sigma)$ as follows:
\[ \alpha^e(v^w)=\phi(w^{-1}).\alpha(v),\mbox{ for }v\in V(\gam)\mbox{ and }w\in A(\gam).\]
We claim that $\alpha^e$ is an injective graph map such that the image is an induced subgraph.
To check that $\alpha^e$ is well-defined, suppose $v^w=v'^{w'}$ for some $v,v'\in V(\gam)$ and $w,w'\in A(\gam)$. Then $v=v'$ and $w'w^{-1}\in\form{\st(v)}$ and so, $\phi(w'w^{-1})$ is some multiplication of Dehn twists about $\alpha(v)$ and simple closed curves which are disjoint from $\alpha(v)$. It follows that $\alpha(v)=\phi(w'w^{-1}).\alpha(v)$, and $\alpha^e(v^w)=\alpha^e(v'^{w'})$. We can also easily see that $\alpha^e$ maps an edge $\{v^w,v'^w\}$ to an edge, where $w\in A(\gam)$ and $\{v,v'\}\in E(\gam)$. For injectivity, assume $\alpha^e(v^w)=\alpha^e(v'^{w'})$. After conjugation, we may assume $w'=1$.
Then $\phi(v')=T_{\alpha(v')}^N = T_{\phi(w^{-1}).\alpha(v)}^N=\phi(w^{-1})\circ T_{\alpha(v)}^N\circ \phi(w)
=\phi(v^w)$. The injectivity of $\phi$ implies that $v'=v^w$.
Similar argument shows that the image of $\alpha^e$ is an induced subgraph.

To alternatively see that every \emph{finite} subgraph of $\gex$ embeds in the curve complex of some surface as an induced subgraph, one can use the result of Clay, Leininger and Mangahas in Lemma~\ref{l:CLM} to embed $A(\gam)$ into the mapping class group of some surface $\Sigma$ by sending vertices to certain pseudo-Anosov homeomorphisms on connected subsurfaces of $\Sigma$.  By considering conjugates of sufficiently high powers of these pseudo-Anosovs by elements of $A(\gam)$, we obtain a collection of subsurfaces of $\Sigma$ whose co--incidence graph is precisely $\gex$.  These subsurfaces are then equipped with pseudo-Anosov homeomorphisms, any finite collection of which generates a right-angled Artin subgroup of $A(\gam)$ corresponding to a finite subgraph $\Lambda$ of $\gex$. Approximating the stable laminations of these pseudo-Anosov homeomorphisms with simple closed curves on $\Sigma$ embeds any finite subgraph of $\gex$ into the curve complex of $\Sigma$ as an induced subgraph.

(5)
The first claim is obvious by the construction of $\gex$ via iterated doubles along stars of vertices, namely Lemma~\ref{l:double}.

If $\gam$ splits nontrivially as $\gam_1*\gam_2$ then every conjugate of each vertex in $\gam_1$ is adjacent to every conjugate of each vertex in $\gam_2$, so that $\gex$ has finite diameter.  If $\gam$ is complete then the conjugation action of $A(\gam)$ on the vertices of $\gam$ is trivial so that $\gex=\gam$.  Conversely, if $\gam$ is not complete then there are two vertices in $\gam$ which generate a copy of $F_2$ in $A(\gam)$ and hence there are infinitely many distinct conjugates of these vertices.  If $\gam$ does not split as a join then we can represent $A(\gam)$ as powers of Dehn twists about simple closed curves on a connected surface $\Sigma$ or as pseudo-Anosov homeomorphisms on connected subsurfaces of $\Sigma$ (see \cite{CLM2010} or \cite{Koberda2011}), and the statement that $\gam$ does not split as a nontrivial join is precisely the statement that these curves fill a connected subsurface $\Sigma_0$ of $\Sigma$.  By Lemma~\ref{l:CLM}, there is a word in the powers $\psi$ of these Dehn twists which is pseudo-Anosov on $\Sigma_0$ and hence has a definite translation distance in the curve complex of $\Sigma_0$.  
It follows that $\psi$--conjugates of twisting curves in the generators of $A(\gam)$ have arbitrarily large distance in the curve complex of $\Sigma_0$.  
There is a map $\phi$ from the graph $\gex$ to the curve complex of $\Sigma_0$ which sends a vertex to the curve about which the vertex twists.  General considerations show that this map is distance non--increasing.  It follows that $\gex$ has infinite diameter.

(6)
By the symmetry of conjugate action, we may assume $w$ belongs to $\gam$.
Construct $\gex$ as \[\gam=\gam_0\subset\gam_1\subset\cdots,\] where the union of these graphs is $\gex$ and $\gam_n$ is obtained from $\gam_{n-1}$ by doubling $\gam_{n-1}$ along the star of a 
vertex of $\gam$, for each $n$.
There is $k$ such that $w'$ is a vertex of $\gam_k\smallsetminus\gam_{k-1}$;
we choose $\gam_0,\gam_1,\ldots,$ such that $k$ is minimal.
This implies that $w$ and $w'$ belongs to distinct components of $\gam_k\smallsetminus\st(v)$ for some vertex $v$ of $\gam_{k-1}$. We claim that $w$ and $w'$ remain separated in $\gex\smallsetminus\st(v)$.

If $w$ and $w'$ are in the same component of $\gex\smallsetminus\st(v)$, then this fact becomes evident at a finite stage of the construction of $\gex$.  There exists 
$m\ge k$ 
such that $w$ and $w'$ are separated in $\gam_m\smallsetminus\st(v)$; and furthermore, we assume $\gam_{m+1}$ is the double of $\gam_m$ along the star of a vertex $z$, and $w$ and $w'$ are in the same component of $\gam_{m+1}\smallsetminus\st(v)$. See Figure~\ref{f:extension6} (a). Note that if $z\notin\st(v)$ then $\st(z)=\st_{\gam_m}(z)$ cannot intersect both of the components of $\gam_m\smallsetminus\st(v)$ which contain $w$ and $w'$ and thus those two vertices are in two different components of $\gam_{m+1}\smallsetminus\st(v)$.  
Therefore, we may assume $z\in\st(v)$.  In that case, the two copies of $v$ in both copies of $\gam_m$ are identified, so that the star of $v$ in $\gam_{m+1}$ is the union of the two stars of $v$ in the two copies of $\gam_m$; see Figure~\ref{f:extension6} (b).  The stars of $v$ separated both copies of $\gam_m$ into components $S_1,\ldots,S_n$ and $T_1,\ldots,T_n$, where these are subgraphs of the two respective copies of $\gam_m$.  It is possible that $S_i$ is glued to $T_i$ along some common vertices, but it is not possible for two distinct components $T_i$ and $T_j$ to be glued to a single copy of $S_i$.  Indeed otherwise $S_i$ and $S_j$ would share a common vertex, a contradiction.  It follows that the components of $\gam_m\smallsetminus\st(v)$ which contain $w$ and $w'$ are not contained in the same component of $\gam_{m+1}\smallsetminus\st(v)$.

To prove the second claim, assume $v,w\in\gam$ and simply let $w'$ be a vertex in the double of $\gam$ along $\st(v)$ such that $w'\not\in\gam$. We have shown that $w$ and $w'$ belong to distinct components of $\gex\smallsetminus\st(v)$.

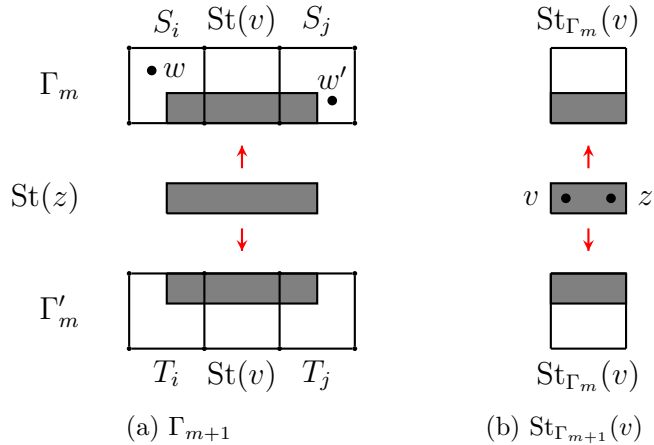
\begin{figure}[htb!]
  \tikzstyle {a}=[red,postaction=decorate,decoration={%
    markings,%
    mark=at position 1 with {\arrow[red]{stealth};}}]
  \tikzstyle {b}=[blue,postaction=decorate,decoration={%
    markings,%
    mark=at position .43 with {\arrow[blue]{stealth};},%
    mark=at position .57 with {\arrow[blue]{stealth};}}]
  \tikzstyle {v}=[draw,shape=circle,inner sep=0pt]
  \tikzstyle {bv}=[black,draw,shape=circle,fill=black,inner sep=1pt]
  \tikzstyle{every edge}=[-,draw]
\subfloat[(a) $\gam_{m+1}$]{
	\begin{tikzpicture}[thick]
\draw [fill=gray] (.5,1) -- (.5,.6) -- (2.5,.6) -- (2.5,1) -- (.5,1);
\draw [fill=gray] (.5,3) -- (.5,3.4) -- (2.5,3.4) -- (2.5,3) -- (.5,3);
\draw [fill=gray] (.5,1.8) -- (.5,2.2) -- (2.5,2.2) -- (2.5,1.8) -- (.5,1.8);
\foreach \i in {0,...,3}
    \foreach \j in {0,1,3,4}
		\draw (\i, \j ) node [v] (v\i\j) {};
\foreach \j in {0,1,3,4}
\draw (v0\j)--(v3\j);
\foreach \i in {0,...,3}
\draw (v\i0)--(v\i1);
\foreach \i in {0,...,3}
\draw (v\i3)--(v\i4);
\draw[a] (1.5, 2.4) -- (1.5, 2.7);
\draw[a] (1.5, 1.6) -- (1.5, 1.3);
\draw (.5, 4) node [above] {$S_i$};
\draw (1.5, 4) node [above] {$\st(v)$};
\draw (1.5, 0) node [below] {$\st(v)$};
\draw (2.5, 4) node [above] {$S_j$};
\draw (.5, 0) node [below] {$T_i$};
\draw (2.5,0) node [below] {$T_j$};
\draw (-.5,3.5) node [left] {$\gam_m$};
\draw (-.5,0.5) node [left] {$\gam_m'$};
\draw (-.5,2) node [left] {$\st(z)$};
\draw (.3,3.7) node[bv] {};
\draw (.3,3.7) node[right] {${w}$};
\draw (2.7,3.3) node[bv] {};
\draw (2.7,3.3) node[above] {${w'}$};
\end{tikzpicture}
}
\hspace{.4in}
\subfloat[(b) $\st_{\gam_{m+1}}(v)$]{
	\begin{tikzpicture}[thick]
\draw [fill=gray] (1,1) -- (1,.6) -- (2,.6) -- (2,1) -- (1,1);
\draw [fill=gray] (1,3) -- (1,3.4) -- (2,3.4) -- (2,3) -- (1,3);
\draw [fill=gray] (1,1.8) -- (1,2.2) -- (2,2.2) -- (2,1.8) -- (1,1.8);
\draw node [] (.5,0) {};
\draw node [] (2.5,0) {};
\foreach \j in {0,1,3,4}
\draw (1,\j) -- (2,\j);
\foreach \i in {1,2}
\draw (\i,0)--(\i,1);
\foreach \i in {1,2}
\draw (\i,3)--(\i,4);
\draw[a] (1.5, 2.4) -- (1.5, 2.7);
\draw[a] (1.5, 1.6) -- (1.5, 1.3);
\draw (1.5, 4) node [above] {$\st_{\gam_m}(v)$};
\draw (1.5, 0) node [below] {$\st_{\gam_m}(v)$};
\draw (1.2,2) node[bv] {};
\draw (1,2) node[left] {${v}$};
\draw (1.8,2) node[bv] {};
\draw (2,2) node[right] {${z}$};

	\end{tikzpicture}
}
  \caption{Proof of Lemma~\ref{l:extension} (6).}
  \label{f:extension6}
\end{figure}

(7)
From (5), we can assume that $\gam$ does not split as a nontrivial join.
To apply Lemma~\ref{l:bottleneck}, let us consider two vertices $x_0,y_0$ in $\gex$, a geodesic $\gamma_0$ joining them, and the midpoint $m$ of $\gamma_0$.
Here, $m$ is either a vertex or the midpoint of an edge.
We have only to find $\Delta>0$ such that $B(m;\Delta)$ separates $x_0$ and $y_0$.
For arbitrary pair of points $p,q$ on $\gamma_0$, we denote by $[p,q]$ the geodesic on $\gamma_0$ joining $p$ and $q$.
We may assume $D=\diam\gam\ge3$ and $d(x_0,y_0)\ge3D$.
We now inductively define $z_i, x_{i+1},y_{i+1}$ and $\gamma_{i+1}$ as long as $d(x_i,y_i)\ge3D$,
for $i\ge0$:
\begin{enumerate}[(i)]
\item
using Lemma~\ref{l:2D} (2) below, choose a vertex $z_i$ such that $\st(z_i)$ separates $x_i$ from $y_i$, and $d(z_i,x_i),d(z_i,y_i)\ge D$;
\item
$x_{i+1}$ is a vertex in $\st(z_i)\cap\gamma_i$;
\item $\gamma_{i+1}$ is the closure of the component of $\gamma_i\smallsetminus x_{i+1}$ containing $m$;
\item
$\partial\gamma_{i+1} = \{x_{i+1},y_{i+1}\}$.
\end{enumerate}

Note that $d(x_i,y_i)=l(\gamma_i)$ is strictly decreasing, since $x_{i+1}\not\in\{x_i,y_i\}$.
So for some $j>0$, we have $d(x_j,y_j)\ge 3D$ and $d(x_{j+1},y_{j+1}) < 3D$. 
Without loss of generality, let us assume $x_0,x_j,x_{j+1},y_j=y_{j+1},y_0$ appear on $\gamma_0$ in this order.
If $p$ is a vertex in $\st(z_j)\cap[x_0,x_j]$, then $d(x_j,x_{j+1})  < d(p,x_{j+1}) \le d(p,z_j)+d(z_j,x_{j+1})\le2$ and this contradicts to $d(x_j,x_{j+1})\ge d(x_j,z_j) - 1 \ge D-1$.
So, $\st(z_j)$ intersects neither $[x_0,x_j]$ nor $[y_j,y_0]$.
If there were a path $\delta$ from $x_0$ to $y_0$ not intersecting $\st(z_j)$, then $\delta\cup[x_0,x_j]\cup [y_j,y_0]$ would be a path joining $x_j$ to $y_j$ without intersecting $\st(z_j)$. It follows that $\st(z_j)$ separates $x_0$ from $y_0$.
We see that $3D+2$ is a desired value for $\Delta$; for, $m\in[x_{j+1},y_{j+1}]$ and 
 $\st(z_j) = B(z_j;1) \subseteq B(x_{j+1};2)\subseteq B(m;3D+2)$.
 
(8)
 A coloring of $\gam$ pulls back to a coloring of $\gex$ by the natural retraction $\gex\to\gam$.
Hence, the chromatic number of $\gex$ is at most that of $\gam$.  The converse is immediate.
\end{proof}

\begin{lem}\label{l:2D}
Let $\gam$ be a finite graph with diameter $D$, and $x,y\in V(\gex)$.
\begin{enumerate}
\item
There exist $x = x_0,x_1,\ldots,x_l,x_{l+1}=y$ in $V(\gex)$ such that
\begin{enumerate}[(i)]
\item
$x_i$ and $x_{i+1}$ belong to the same conjugate of $\gam$ for $i=0,1,\ldots,l$,
\item
$\st(x_i)$ separates $x$ from $y$, for $i=1,\ldots,l$.
\end{enumerate}
\item
Suppose $D\ge3$.
If $x$ and $y$ are vertices in $\gex$ such that $d(x,y)\ge3D$,
then there exists a vertex $z$ in $\gex$ which is at least of distance $D$ from $x$ and $y$, and whose star separates $x$ from $y$ in $\gex$.
\end{enumerate}
\end{lem}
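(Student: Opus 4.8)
\emph{Strategy.}
I would deduce part~(2) from part~(1) by elementary distance estimates, so the real work is part~(1), which I would prove by induction on the length $N$ of an iterated doubling that exhibits both $x$ and $y$; by Lemma~\ref{l:double} such a doubling always exists. Throughout, ``$\st(w)$ separates $p$ from $q$'' means the genuine separation $p,q\notin\st(w)$ and $p,q$ in distinct components of $\gex\smallsetminus\st(w)$. I would actually prove the strengthening: \emph{if $\gam=\gam_0\le\gam_1\le\cdots\le\gam_N$ is an iterated doubling of $\gam$ inside $\gex$ (so $\gam_i$ is the double of $\gam_{i-1}$ along $\st_{\gam_{i-1}}(v_i)$, realized in $\gex$ as $\gam_{i-1}\cup\gam_{i-1}^{v_i}$) and $x,y\in V(\gam_N)$, then there is a chain $x=x_0,\ldots,x_{l+1}=y$ satisfying \textup{(i)} and \textup{(ii)} with, in addition, every $x_i\in V(\gam_N)$.} Retaining this last clause is exactly what makes the induction run. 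The case $N=0$ is trivial ($l=0$). For the step, write $\gam_N=\gam_{N-1}\cup\gam_{N-1}^v$ glued along $S:=V(\gam_{N-1})\cap V(\gam_{N-1}^v)=\st_{\gam_{N-1}}(v)$, $v:=v_N$. If $x,y\in V(\gam_{N-1})$, invoke the inductive hypothesis for $\gam_0,\ldots,\gam_{N-1}$. Otherwise one of $x,y$, say $y$, lies in $V(\gam_{N-1}^v)\smallsetminus S$, so $y=u^v$ with $u:=y^{v^{-1}}\in V(\gam_{N-1})\smallsetminus S$; and if moreover $x\in V(\gam_{N-1}^v)$ (that is, $x\in S$ or $x\in V(\gam_{N-1}^v)\smallsetminus S$), then $x^{v^{-1}},u\in V(\gam_{N-1})$ and conjugating by $v$ a chain for $(x^{v^{-1}},u)$ obtained from the inductive hypothesis produces the required chain (here $x^{v^{-1}}=x$ if $x\in S$). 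The remaining, main case is $x\in V(\gam_{N-1})\smallsetminus S$.

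\emph{The main case.}
First, $\st_\gex(v)$ separates $x$ from $y$ in $\gex$: one has $\st_{\gam_N}(v)=S$ (the doubling does not enlarge the star of $v$), and $\gam_N\smallsetminus S$ is the disjoint union of $V(\gam_{N-1})\smallsetminus S$ and $V(\gam_{N-1}^v)\smallsetminus S$ with no edges across, so $x$ and $y$ lie in different components of $\gam_N\smallsetminus S$; that this survives in all of $\gex$ is precisely the persistence-of-separation argument in the proof of Lemma~\ref{l:extension}(6). Next, apply the inductive hypothesis for $\gam_0,\ldots,\gam_{N-1}$ to the pairs $(x,v)$ and $(v,u)$ — both lie in $V(\gam_{N-1})$ — to get chains $x=a_0,\ldots,a_{r+1}=v$ and $v=b_0,\ldots,b_{s+1}=u$ with all vertices in $V(\gam_{N-1})$, and conjugate the second by $v$ to get $v=b_0^v,\ldots,b_{s+1}^v=y$. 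I take the concatenation
\[ x=a_0,a_1,\ldots,a_r,\;v,\;b_1^v,\ldots,b_s^v,\;y. \]
Clause~(i) and the clause ``all vertices in $V(\gam_N)$'' are immediate. For~(ii): the vertex $v$ separates $x$ from $y$ by the previous paragraph. For an intermediate $a_i$, since $\st(a_i)$ genuinely separates $x$ from $v$ we get $v\notin\st_\gex(a_i)$, hence $a_i\notin\st_\gex(v)\supseteq S$, hence $a_i\in V(\gam_{N-1})\smallsetminus S$; as $\gam_N$ is an induced subgraph of $\gex$ and no edge of $\gam_N$ joins $V(\gam_{N-1})\smallsetminus S$ to $V(\gam_{N-1}^v)\smallsetminus S\ni y$, we get $y\notin\st_\gex(a_i)$. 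Now I use the elementary fact: \emph{if $\st(a)$ separates $p$ from $q$, $\st(q)$ separates $p$ from $r$, and $r\notin\st(a)$, then $\st(a)$ separates $p$ from $r$} — a path from $p$ to $r$ in $\gex\smallsetminus\st(a)$ must meet $\st(q)$ at some $w$, and then $w$, together with the edge $wq$ (or the equality $w=q$), would put $q$ into the same component of $\gex\smallsetminus\st(a)$ as $p$. With $(p,q,r)=(x,v,y)$ this shows $\st(a_i)$ separates $x$ from $y$. The vertices $b_j^v$ are treated symmetrically: $\st(b_j^v)$ separates $v$ from $y$, forcing $b_j^v\in V(\gam_{N-1}^v)\smallsetminus S$ and hence $x\notin\st_\gex(b_j^v)$, and the elementary fact with $(p,q,r)=(y,v,x)$ finishes it. This completes the induction, hence part~(1).

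\emph{Part~(2).}
Given $x,y$ with $d(x,y)\ge 3D$, apply part~(1) to get a chain $x=x_0,\ldots,x_{l+1}=y$. Consecutive $x_i,x_{i+1}$ lie in a common conjugate of $\gam$, which is isomorphic to $\gam$ and so has diameter $D$; thus $d(x_i,x_{i+1})\le D$. Let $i^*$ be the largest index with $d(x,x_{i^*})<D$; it exists since $d(x,x_0)=0$, and $i^*\le l-1$ since $d(x,x_l)\ge d(x,y)-d(x_l,x_{l+1})\ge 3D-D>D$. Put $z:=x_{i^*+1}$, so $1\le i^*+1\le l$. Then $\st(z)$ separates $x$ from $y$ by~(ii); $d(x,z)\ge D$ by maximality of $i^*$; and, using $d(x,z)\le d(x,x_{i^*})+D<2D$, $d(z,y)\ge d(x,y)-d(x,z)\ge 3D-d(x,z)>D$. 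So $z$ is the desired vertex.

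\emph{Where the difficulty lies.}
The only non-formal ingredient is the persistence claim in the main case — that $\st(v)$ separates $x$ from $y$ in all of $\gex$, not merely in the finite stage $\gam_N$ — and this is exactly the argument already carried out, for a single separating star, in the proof of Lemma~\ref{l:extension}(6). Everything else is a matter of organizing the induction correctly; in particular the auxiliary clause $x_i\in V(\gam_N)$ is essential, since without it one cannot rule out the degenerate possibility $y\in\st(a_i)$, in which case $\st(a_i)$ would fail to separate $x$ from $y$.
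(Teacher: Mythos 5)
Your proof is correct and follows essentially the same strategy as the paper: part (1) by induction on the iterated-doubling construction of $\gex$ from Lemma~\ref{l:double}, concatenating chains at the separating vertex $v$, and part (2) by picking the transition point in the chain where the distance from $x$ first reaches $D$. You are somewhat more explicit than the paper on the key technical point — verifying $y\notin\st(a_i)$ via the auxiliary clause $a_i\in V(\gam_N)$ and then invoking the cleanly stated ``elementary fact'' about chaining two star-separations — which the paper asserts rather tersely (``one would get a contradiction by finding a path from $x$ to $z$ and $z$ to $y$ avoiding $\st(x_i)$'').
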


\begin{proof}
(1)
By the proof of Lemma~\ref{l:extension}, there exist $\gam_0\subseteq \gam_1\subseteq\cdots\gam_k\subseteq\gex$ such that $\gam_0$ is a conjugate of $\gam$ containing $x$, $\gam_k$ contains $y$, and $\gam_{i+1}$ is obtained from $\gam_i$ by doubling along the star of a vertex $v_i$ in $\gam_i$, for $i=0,\ldots,k-1$.
Let us choose $k$ to be minimal, so that $\st(v_k)$ separates $x$ from $y$ in $\gex$. 
We may assume $\gam_0=\gam$.
We claim that there exist $l\ge0$,
$g_0=1,g_1,\ldots,g_l\in A(\gam)$, $x_i\in V(\gam^{g_{i-1}})\cap V(\gam^{g_i})$ for $i=1,\ldots,l$,
and  $\Lambda_0\subseteq\Lambda_1\subseteq\cdots\Lambda_l\subseteq\gex$ such that
\begin{enumerate}[(i)]
\item
$x\in \Lambda_0=\gam$ and $y\in\gam^{g_l}\subseteq\Lambda_l\subseteq\gam_k$;
\item
$\Lambda_i=\Lambda_{i-1}\cup\gam^{g_i}$ and $\Lambda_{i-1}\cap\gam^{g_i}\subseteq\st(x_i)$ for $i=1,\ldots,l$;
\item
$\st(x_i)$ separates $x$ from $y$ in $\gex$  for $i=1,\ldots,l$.
\end{enumerate}
If $k=0$, then $l=0$ and so the claim is clear. To use an induction, we assume the claim for $k-1$.
Write $\gam_k = \gam_{k-1}\cup\gam_{k-1}^z$ for some $z\in V(\gam_{k-1})$. 
By inductive hypothesis, one can construct
$x\in \Lambda_0\subseteq\ldots\subseteq\Lambda_m\subseteq\gam_{k-1}$ and $z\in \Lambda'_0\subseteq\ldots\subseteq\Lambda'_{m'}\subseteq\gam_{k-1}^z$ such that $z\in\gam^{g_m}\subseteq\Lambda_m$, $y\in \Lambda'_{m'}$, and the conditions (i),(ii),(iii) above are satisfied. 
Let us define $\Lambda_{m+i} = \Lambda_m\cup\Lambda'_{i-1}$ for $i=1,\ldots,m'+1$. 
This means, in particular, that $x_{m+1}=z$ and $\gam^{g_{m+1}}=\Lambda'_0$. 
Note that $\Lambda_{m+m'+1}\subseteq\gam_k$ and $\st(x_{m+1})=\st(z)$ separates $x$ from $y$ in $\gex$. If $\st(x_i)$ did not separate $x$ from $y$ in $\gex$  for $i\ne m+1$, one would get a contradiction by finding a path from $x$ to $z$ and $z$ to $y$ avoiding $\st(x_i)$. The claim is proved.

(2) In (1), we note that $d(x_i,x_{i+1})\le D$ for each $i=0,\ldots,l$.
There exists $j$ such that $d(x_0,x_{j-1})\le D$ and $d(x_0,x_j)\ge D$.
Then $d(x_{j-1},x_l)\ge 2D$ and $d(x_j,x_l)\ge d(x_{j-1},x_l) - d(x_{j-1},x_j) \ge D$.
\end{proof}


\begin{lem}\label{l:disjoint}
Suppose $\gam$ is a finite graph with at least two vertices such that $\gam$ does not split as a nontrivial join.
If $\Lambda_1\le\gex$ and $\Lambda_2\le\gex$, then $\Lambda_1\coprod\Lambda_2\le\gex$.
\end{lem}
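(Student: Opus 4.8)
The goal is to show that if $\gam$ is finite, has at least two vertices, and does not split as a nontrivial join, then $\gex$ contains $\Lambda_1 \coprod \Lambda_2$ as an induced subgraph whenever it contains each $\Lambda_i$ separately. The key leverage is Lemma~\ref{l:extension}(5)--(7): since $\gam$ does not split as a nontrivial join (and is not a single vertex), $\gex$ is connected, has infinite diameter, and — crucially — is quasi-isometric to a tree. I would first reduce to a statement about ``pushing $\Lambda_2$ far away'': it suffices to find a copy of $\Lambda_2$ inside $\gex$ all of whose vertices lie far (in the path metric of $\gex$) from a fixed copy of $\Lambda_1$, far enough that no edge of $\gex$ joins the two. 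Since edges of $\gex$ connect vertices at distance $1$, ``no edge between them'' just means the two vertex sets are at distance $\ge 2$; but to get an \emph{induced} disjoint union we only need distance $\ge 2$, so really the task is to translate one copy well away from the other.

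\medskip
\noindent\textbf{Using the tree-like structure and the group action.} The natural tool is the right action of $A(\gam)$ on $\gex$ (defined in Section~\ref{sec:gex}): translating $\Lambda_2$ by an element $g \in A(\gam)$ gives another induced copy $\Lambda_2.g$ of $\Lambda_2$ in $\gex$, by Lemma~\ref{l:extension}(3) applied after realizing $\Lambda_2$ inside a finite iterated double and then conjugating. So I would fix finite induced subgraphs $\Lambda_1, \Lambda_2 \le \gex$, each contained (by Lemma~\ref{l:double}) in a finite iterated double $\gam_l$, and then look for $g \in A(\gam)$ so that $\Lambda_1$ and $\Lambda_2.g$ are at distance $\ge 2$ in $\gex$. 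To produce such a $g$, I would pick a vertex $v$ of $\gam$ with a long (genuinely commuting-free) axis: because $\gam$ does not split as a join, Lemma~\ref{l:extension}(5) tells us $\gex$ has infinite diameter, and the argument there provides a word $\psi$ acting on $\gex$ (via the pseudo-Anosov / Dehn twist model, or directly via the separating-star combinatorics) with positive translation length. Taking $g = \psi^N$ for $N$ large translates the bounded set $\Lambda_2$ (diameter bounded in terms of $l$) entirely outside any fixed ball containing $\Lambda_1$; since $\Lambda_1 \cup \Lambda_2.g$ is then a disjoint union of the two induced subgraphs at distance $\ge 2$, and by Lemma~\ref{l:extension}(3) each piece sits as an induced subgraph, the union $\Lambda_1 \coprod \Lambda_2$ embeds as an induced subgraph of $\gex$.

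\medskip
\noindent\textbf{Alternative purely combinatorial route, and the main obstacle.} A cleaner route avoids the mapping-class-group machinery entirely and uses Lemma~\ref{l:extension}(6): if $\gam$ has no central vertex, the star of any vertex $v$ of $\gex$ separates $\gex$, and moreover two vertices lying in distinct conjugates of $\gam$ are separated by some such star. One would argue: place $\Lambda_1$ inside one conjugate $\gam^{g_1}$ and $\Lambda_2$ inside another conjugate $\gam^{g_2}$ on the ``far side'' of a separating star, iterating the doubling construction enough times that the two copies are not only in different conjugates but at distance $\ge 2$. The genuinely delicate point — and I expect this to be the main obstacle — is the presence of \emph{central} vertices of $\gam$: a central vertex $v$ has $\st_\gam(v) = V(\gam)$, its conjugates are all adjacent to everything, and such a vertex is never separated off, so $\Lambda_1$ and $\Lambda_2$ can genuinely fail to be pushed apart along that coordinate. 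However, $\gam$ having a central vertex means $\gam = \{v\} \ast (\gam \smallsetminus \{v\})$ splits as a nontrivial join unless $\gam$ is a single vertex — which is excluded by hypothesis (at least two vertices). So the ``no nontrivial join'' assumption exactly rules out central vertices, and one must make sure to invoke this reduction carefully before applying Lemma~\ref{l:extension}(6). With central vertices excluded, the remaining work is bookkeeping: control the diameter of the finite piece of $\gex$ that contains $\Lambda_1 \cup \Lambda_2$, then double/translate to separate them by more than an edge, and finally appeal to Lemma~\ref{l:extension}(3) to conclude the union is induced.
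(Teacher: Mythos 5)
Your primary route is essentially the paper's argument: use the $A(\gam)$--action on $\gex$ and translate $\Lambda_2$ by a large power of a word that acts as a pseudo-Anosov, so that the translated curves eventually intersect all the original ones. The differences are in execution. The paper first reduces to $\Lambda_1 = \Lambda_2 = \Lambda$ (take $\Lambda = \Lambda_1 \cup \Lambda_2$), replaces $\Lambda$ by $\Lambda\cup\gam$ if $\Lambda$ splits as a join so the curves $\phi(w_i^{-1}).\alpha(v_i)$ fill a connected subsurface $\Sigma_0$, builds a pseudo-Anosov $\psi$ on that $\Sigma_0$ from Lemma~\ref{l:filling curves}, and then cites part~(ii) of that lemma directly: $\psi^M$ applied to any one of the $\Lambda$--curves eventually intersects every other $\Lambda$--curve, which is precisely the statement that no vertex of $\Lambda.w$ is equal or adjacent to a vertex of $\Lambda$. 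You instead take a pseudo-Anosov from Lemma~\ref{l:extension}(5) and argue via translation distance in $\gex$ through the distance--non-increasing map to $\mC(\Sigma_0)$; this also works (since that map is $\psi$--equivariant, positive translation length in $\mC(\Sigma_0)$ pushes $\Lambda_2.\psi^N$ arbitrarily far from the fixed finite set $\Lambda_1$), but it is a more roundabout way of getting exactly the conclusion of Lemma~\ref{l:filling curves}(ii). Minor quibble: your sentence ``pick a vertex $v$ of $\gam$ with a long axis'' is misworded — a single vertex gives only a Dehn twist; what you need (and what you in fact go on to use) is a word $\psi$ that acts pseudo-Anosov on a subsurface meeting all the relevant curves. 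Your sketched ``purely combinatorial'' alternative via separating stars is a genuinely different route not taken by the paper; you correctly observe that the no-nontrivial-join hypothesis rules out central vertices (which is what lets Lemma~\ref{l:extension}(6) apply), but as you acknowledge the bookkeeping needed to keep all of $\Lambda_2$ in a single far component has not been carried out there.
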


\begin{proof}
We may only consider the case $\Lambda_1=\Lambda=\Lambda_2$; 
for in general, we can just take $\Lambda=\Lambda_1\cup\Lambda_2$.
We may further assume that $\Lambda$ does not split as a nontrivial join; 
otherwise, replace $\Lambda$ by $\Lambda\cup\gam$. 
Write the vertices of $\Lambda$ as $v_1^{w_1}, v_2^{w_2}, \ldots, v_r^{w_r}$ where $v_i\in V(\gam), w_i\in A(\gam)$. 
Following the notation in the proof of Lemma~\ref{l:extension} (4),
consider an embedding $\phi: \aga\to \Mod(\Sigma)$ for some closed surface $\Sigma$ such that each vertex $v$ of $\gam$ is mapped to some power of the Dehn twist about a simple closed curve $\alpha(v)$. 
We take the union of the regular neighborhoods of the curves $\phi(w_i^{-1}).\alpha(v_i)$ for $i=1,\ldots,r$ and cap off the null-homotopic boundary curves, to get a connected subsurface $\Sigma_0$ of $\Sigma$. 
Let $\psi$ be some product of powers of the Dehn twists about $\phi(w_i^{-1}).\alpha(v_i)$ for $i=1,\ldots,r$ to get a pseudo-Anosov homeomorphism on $\Sigma_0$, as per Lemma \ref{l:filling curves}.  There exists $M>0$ such that 
$\psi^M\phi(w_i^{-1}).\alpha(v_i)$ and $\phi(w_j^{-1}).\alpha(v_j)$ essentially intersect for any $i,j=1,\ldots,r$. Note that $\psi^{-M}$ is the image of some word $w\in\aga$ by the embedding $\phi$. It follows that an arbitrary vertex $v_i^{w_iw}$ of $\Lambda.w$ is neither equal nor adjacent to any vertex $v_j^{w_j}$ of $\Lambda$.
\end{proof}

Let $\gam$ be a graph. We say \emph{$(v_1,\ldots,v_n)$ spans an induced $P_n$} for $v_1,\ldots,v_n \in V(\gam)$ if $\{v_1,\ldots,v_n\}$ induces $P_n$ in $\gam$ and $v_i$ and $v_{i+1}$ are adjacent for $i=1,\ldots,n-1$.
We also say \emph{$(v_1,\ldots,v_n)$ spans an induced $C_n$},
if $\{v_1,\ldots,v_n\}$ induces $C_n$ in $\gam$ and $v_i$ and $v_{i+1}$ are adjacent for $i=1,\ldots,n$ with convention that $v_{n+1}=v_1$.

\begin{lem}\label{l:star double detail}
Suppose $t$ is a vertex of a finite graph $\gam$. 
Let $\gam^*$ denotes the double of $\gam$ along the star of $t$.
\begin{enumerate}
\item
If $\gam^*$ contains an induced $C_n$ for some $n\ge6$, then $\gam$ contains an induced $C_m$ for some $5\le m\le n$.
\item
If $\gam^*$ contains an induced $C_n\opp$ for some $n\ge5$, then $\gam$ contains an induced $ C_n\opp$ or $C_{n+1}\opp$.
\item
If $\gam^*$ contains an induced $P_4$, then $\gam$ contains an induced $P_4$.
\end{enumerate}
\end{lem}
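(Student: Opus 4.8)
The plan is to work purely combinatorially with the doubling. Write $\gam^\ast=\gam\cup\gam'$, where $\gam'$ is a disjoint copy of $\gam$ (whose vertices we decorate with primes), the two copies being glued along $\st_\gam(t)=\{t\}\cup\lk_\gam(t)$. Everything rests on the single observation that $\gam^\ast$ has \emph{no} edge between a vertex of $\gam\smallsetminus\st(t)$ and a vertex of $\gam'\smallsetminus\st(t)$: thus $\st(t)$ is a separating set of $\gam^\ast$, and there is a folding graph morphism $\rho\colon\gam^\ast\to\gam$ which is the identity on $\gam$ and the evident isomorphism on $\gam'$. Call the vertices of $\gam\smallsetminus\st(t)$ the \emph{$\gam$-side}, those of $\gam'\smallsetminus\st(t)$ the \emph{$\gam'$-side}, and those of $\st(t)$ \emph{shared}.

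Let $H$ be an induced copy in $\gam^\ast$ of $C_n$ (resp. $C_n\opp$, resp. $P_4$). If $V(H)$ lies in $\gam$ or in $\gam'$ we are done, so suppose it meets both non-shared sides. A short case-check rules out $t\in V(H)$: for an induced cycle this is because no vertex of an induced cycle is adjacent to more than two other vertices of that cycle, so $t$'s only $H$-neighbours would be its two cycle-neighbours, forcing the rest of $H$ (a connected arc) onto one side; for $C_n\opp$ one uses instead that the two $H$-non-neighbours of $t$ are the $C_n$-neighbours of $t$, hence adjacent in $C_n\opp$ and so on the same side; the $P_4$ case is similar. Therefore $Z:=V(H)\cap\lk(t)$ is nonempty, and, since no edge crosses between the two sides, $Z$ separates the $\gam$-side part $X$ of $H$ from the $\gam'$-side part $Y$.

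The construction in the two cyclic cases is: reflect the $\gam'$-side material of $H$ into $\gam$ by $\rho$, and wherever the reflection misbehaves, splice in $t$, using that $t$ is adjacent to every vertex of $Z$ and to no $\gam$-side or reflected-$\gam'$-side vertex. For $H=C_n$: the arcs into which $Z$ cuts $H$ each lie entirely on one side; pick a side-arc $A$ (say on the $\gam'$-side) with cycle-neighbours $a,b\in Z$, verify $a\neq b$ and $a\not\sim b$ (otherwise $H\subseteq\gam'$), and note that $\{t,a\}\cup\rho(A)\cup\{b\}$ induces in $\gam$ a cycle of length $|A|+3$, with $5\le|A|+3\le n$ as soon as $|A|\ge2$; no spurious edges can arise here since only $t$, two shared vertices, and reflected $\gam'$-side vertices are involved. (The same works with the roles of the two sides reversed, so it suffices that some $Z$-arc, on either side, have length at least $2$.) For $H=C_n\opp$: the separation forces $|X|,|Y|\le2$, not both equal to $2$ (since $C_n$ has girth $n\ge5$); if $\rho$ is injective on $V(H)$ and introduces no new edge of $\gam$, then $\rho$ carries $H$ to an induced $C_n\opp$ in $\gam$; if instead a new edge appears (necessarily joining an $X$-vertex to a reflected $Y$-vertex), one adjoins $t$, and since the complement of the reflected anticycle is then a path whose two ends are exactly the non-neighbours of $t$, the set $Z\cup X\cup\rho(Y)\cup\{t\}$ induces a $C_{n+1}\opp$ in $\gam$ --- this is precisely the source of the ``$C_{n+1}\opp$'' in (2). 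The $P_4$ case is handled by running through $|Z|\in\{1,2,3\}$: in each subcase reflecting the $\gam'$-side vertices works unless a new edge intervenes, in which case replacing the offending non-shared endpoint(s) by $t$ restores an induced $P_4$.

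The main obstacle is assembling the remaining cases that the naive reflection does not close directly: for (1), the degenerate situation in which \emph{every} $Z$-arc of $H$ has length $1$ (so the construction above only yields a $C_4$); and for (1) and (2), the cases where $\rho$ fails to be injective on $V(H)$ or creates a chord. In all of these the plan is to examine the closed walk $\rho(H)$ in $\gam$ directly --- it is a genuine cycle of the expected length when $\rho$ is injective on $V(H)$, and its only possible chords or identifications localize to pairs (a $\gam$-side vertex, a reflected $\gam'$-side vertex) --- and then to splice $t$ across such a chord: $t$ is adjacent to all the shared vertices in sight, so in case (1), using $n\ge6$, one can always carve out an induced cycle of length between $5$ and $n$, and in case (2) an induced $C_n\opp$ or $C_{n+1}\opp$. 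Verifying that the finitely many coincidence patterns of these ``bad'' vertices are all covered is where the real work lies.
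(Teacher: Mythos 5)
Your overall strategy is the same as the paper's: cut $\gam^*$ along the separating set $\st(t)$, push the primed side back into $\gam$ via the folding map $\rho=\mu$, and splice in $t$ where the fold misbehaves. The easy halves you do carry out (the arc of length $\ge 2$ giving $C_{|A|+3}$ with $5\le |A|+3\le n$; the $t\in V(H)$ exclusion; the shape of the two cases in (2)) agree with the paper. But there is a genuine gap, and you flag it yourself: for part (1) you defer exactly the hard case --- every one-sided arc of the cycle is a single vertex, and the fold creates a chord $\mu(v_i)\sim\mu(v_j)$ with $v_i\in A$, $v_j\in A'$ --- to the assertion that ``one can always carve out an induced cycle of length between $5$ and $n$,'' adding that verifying the coincidence patterns ``is where the real work lies.'' That work is not routine bookkeeping, and ``splicing $t$ across the chord'' does not by itself close it. The natural candidate $5$--cycle is $(t,x,v_i,\mu(v_j),y)$ with $x\in\lk_\Omega(v_i)$, $y\in\lk_\Omega(v_j)$, and for it to be induced you need $x\notin\lk_\Omega(v_j)$, $y\notin\lk_\Omega(v_i)$, and $x\not\sim y$. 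The paper's proof hinges on a dichotomy you do not state: either such a non-adjacent pair $(x,y)$ exists, yielding the induced $C_5$ in $\gam$, or else the two links $\lk_\Omega(v_i)=\{x,y\}$ and $\lk_\Omega(v_j)=\{x,z\}$ must overlap in one vertex with $y\sim z$, in which case $(v_i,x,v_j,z,y)$ is an induced $C_5$ \emph{inside} $\Omega\cong C_n$ --- impossible for $n\ge 6$. Without this second horn the argument does not close, and nothing in your sketch indicates how you would rule that configuration out.

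Two smaller points. First, you also leave the injectivity of $\rho$ on $V(H)$ as a ``remaining case''; the paper disposes of it in one line ($\mu(v_i)=\mu(v_j)$ forces both neighbours of $v_j$ to lie in $L$ and be adjacent to $v_i$, producing a $4$--cycle, contradicting $n\ge 6$), and in (2) one similarly needs $\mu(a')\ne a$ (there because $\Omega$ is not a triangle). Second, in (3) the set $Z=V(H)\cap\st(t)$ has at most $2$ vertices (not up to $3$), since $H$ has four vertices and meets both sides; the actual content of (3) is the observation that $a$ and $\mu(a')$ \emph{must} be adjacent when $\gam$ is assumed $P_4$--free, after which $(t,x,a,\mu(a'))$ or its analogue is the desired path --- again a short but non-automatic step that your sketch waves at rather than performs.
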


\begin{proof}
Let $L$ be the link of $t$ in $\gam$ and $A = V(\gam)\smallsetminus (L\cup\{t\})$.
Take an isomorphic copy $\gam'$ of $\gam$, and let $A'$ be the image of $A$ in $\gam'$.
We may write $\gam^* = \gam\cup_\sigma\gam'$, where
 $\sigma$ is the restriction on $L\cup\{t\}$ of the given isomorphism between $\gam$ and $\gam'$.
The image of $L$ or $t$ in $\gam^*$ is still denoted by $L$ or $t$, respectively.
Let $\mu : \gam^*\to \gam$ be the natural retraction so that $\mu(A')=A$.
Note $V(\gam^*) = A\cup A'\cup L\cup \{t\}$, and $L\cup\{t\}$ separates $\gam^*$ into induced subgraphs on $A$ and on $A'$; see Figure~\ref{f:stardouble} (a).

(1) Suppose $\Omega\cong C_n$ is an induced subgraph of $\gam^*$ for some $n\ge 6$,
and assume the contrary of the conclusion.
If $t$ is in $\Omega$, then $V(\Omega)\smallsetminus \st_\Omega(t) = V(\Omega)\smallsetminus (L\cup\{t\})$ 
induces a connected graph in $\gam^*$  
and hence, $V(\Omega)$ is contained either in $A\cup L\cup\{t\}$
or $A'\cup L\cup\{t\}$; in particular, $\gam$ would contain an induced $C_n$. 
 
So we have $t\not\in\Omega$. 
Suppose the tuple of vertices $(v_1,\ldots,v_n)$ spans $\Omega\cong C_n$. 
We will take indices of $v_i$ modulo $n$.
If $\{v_{i+1},\ldots,v_{i+k}\}$ form a maximal path that is contained in $A$ or in $A'$ for some $1\le k\le n-3$,
then $v_i,v_{i+k+1}\in L$ and hence,
 $(t,v_i,\mu(v_{i+1}),\ldots,\mu(v_{i+k}),v_{i+k+1})$ spans an induced $C_{k+3}$ in $\gam$; 
hence, $k = 1$.
This means that
if $v_i$ is in $A$ or $A'$, then $\lk_\Omega(v_i)$ is contained in $L$.
If $v_i,v_j\in A\cup A'$ for $i\ne j$ and $\mu(v_i)=\mu(v_j)$, then $v_{j\pm1}\in L$ are adjacent to $v_i$ and hence, $C_n\cong C_4$; 
this implies that $\mu(v_1),\ldots,\mu(v_n)$ are all distinct.
Since $\mu(\Omega)$ is not an induced $C_n$ in $\gam$,
$\mu(v_i)$ and $\mu(v_j)$ are adjacent for some $v_i\in A$ and $v_j\in A'$.

\textit{Case 1.}
	Suppose some $x\in \lk_\Omega(v_i)\smallsetminus \lk_\Omega(v_j)$ 
	is non-adjacent to some $y\in\lk_\Omega(v_j)\smallsetminus \lk_\Omega(v_i)$.
	Then, $(t,x,v_i,\mu(v_j),y)$ spans an induced $C_5$ in $\gam$.

\textit{Case 2.}
Suppose Case 1 does not occur. 
This implies that $\lk_\Omega(v_i)$ and $\lk_\Omega(v_j)$ are neither equal nor disjoint.
One can write $\lk_\Omega(v_i)=\{x,y\}$ and $\lk_\Omega(v_j)=\{x,z\}$ such that $y$ and $z$ are adjacent.
Then $(v_i,x,v_j,z,y)$ spans an induced $C_5$ in $\Omega$, which is a contradiction.

(2)
Suppose $\Omega\cong C_n$ is an induced subgraph of $(\gam^*)\opp$ for some $n\ge5$,
and assume the contrary of the conclusion.
Let $\Lambda\le\gam\opp$ and $\Lambda'\le(\gam')\opp$ be the induced subgraphs on $A$ and on $A'$, respectively.
Note that ${\Lambda}\ast{\Lambda'}\le ({\gam^*})\opp$; see Figure~\ref{f:stardouble} (b).

First suppose $t\in\Omega$ and write $\lk_\Omega(t)=\{a,a'\}$.
If $a,a'\in A$, then $V(\Omega)\subseteq \{t\}\cup A\cup L$ and hence, $\Omega\le{\gam}\opp$.
Similarly, it is not allowed that $a,a'\in A'$. 
Hence, we may assume $a\in A$ and $a'\in A'$;
this would still be a contradiction since $a$ and $a'$ are adjacent in $({\gam'})\opp$.

This shows $t\not\in\Omega$.
Note that $\Omega\cap (\Lambda\ast\Lambda')$ has at most three vertices, 
since so does every non-trivial join subgraph of $\Omega\cong C_n$.

\textit{Case 1.}
	Suppose $V(\Omega\cap\Lambda)=\{a\}$ and $V(\Omega\cap\Lambda')=\{a'\}$ for some $a,a'$.
	We label cyclically $V(\Omega)= \{a,a',v_1,\ldots,v_{n-2}\}$ where $v_1,\ldots,v_{n-2}\in L$.
 	Since $\Omega$ is not a triangle, $\mu(a')\ne a$.
    If $\mu(a')$ is adjacent to $a$ in $\Lambda$, $\mu(\Omega)$ is an induced $C_n$ in ${\gam}\opp$. If $\mu(a')$ is not adjacent to $a$ in $\Lambda$, then $(a,t,\mu(a'),v_1,\ldots,v_{n-2})$ spans an induced $C_{n+1}$ in $\gam\opp$.
	
\textit{Case 2.}
    $V(\Omega\cap\Lambda)=\{a\}$ and $V(\Omega\cap\Lambda')=\{a',a''\}$ for some $a,a',a''$:
	write $V(\Omega)= \{a'',a,a',v_1,\ldots,v_{n-3}\}$ where $v_1,\ldots,v_{n-3}\in L$.
	Then $(\mu(a''),t,\mu(a'),v_1,\ldots,v_{n-3})$ spans an induced $C_n$ in ${\gam}\opp$.
	
(3) Suppose $\Omega\cong P_4$ is an induced subgraph of $\gam^*$, and assume $\gam$ is $P_4$--free.
Since $\Omega$ intersects both $A$ and $A'$, we have $|V(\Omega)\cap (L\cup\{t\})|\le 2$. If $t\in\Omega$, then $| \Omega\cap L| = 1$; this is a contradiction, for $\Omega\cap (L\cup \{t\})$ separates $\Omega$ while the valence of $t$ in $\Omega$ is $1$. Hence $t\not\in\Omega$. 

Now if $\Omega\cap(L\cup\{t\})=\Omega\cap L$ is a single vertex, then one of $A$ or $A'$ intersect $\Omega$ at two verties, and those two vertices along with $t$ and $\Omega\cap L$ span an induced $P_4$ in $\gam$.
If $\Omega\cap L$ has two vertices, say $x$ and $x'$, then $\Omega\cap A=\{a\},\Omega\cap A'=\{a'\}$ for some vertices $a$ and $a'$. We note that $a$ and $\mu(a')$ are adjacent, since $\gam$ is $P_4$--free.
Without loss of generality, we may assume $(a,x,x',a')$ or  $(x,a,x',a')$ spans $\Omega\cong P_4$; in either case, $(t,x,a,\mu(a'))$ spans an induced $P_4$ in $\gam$.
\end{proof}

\begin{figure}[htb!]
  \tikzstyle {a}=[red,postaction=decorate,decoration={%
    markings,%
    mark=at position 1 with {\arrow[red]{stealth};}}]
  \tikzstyle {b}=[blue,postaction=decorate,decoration={%
    markings,%
    mark=at position .43 with {\arrow[blue]{stealth};},%
    mark=at position .57 with {\arrow[blue]{stealth};}}]
  \tikzstyle {v}=[draw,shape=circle,inner sep=0pt]
  \tikzstyle {bv}=[black,draw,shape=circle,fill=black,inner sep=1pt]
  \tikzstyle{every edge}=[-,draw]
\subfloat[(a) $\gam^*$]{
	\begin{tikzpicture}[thick]
\node [circle,draw,inner sep=10pt] at (0,0) (l) {};
\node [circle,draw,inner sep=15pt] at (-1.8,0) (a) {};
\node [circle,draw,inner sep=15pt] at (1.8,0) (ap) {};
\draw (1.8, 1) node [above] {$A'$};
\draw (-1.8, 1) node [above] {$A$};
\draw (0, 1) node [above] {$L$};
\draw (1.8,.35) node[bv] (apu) {};
\draw (1.8,.-.35) node[bv] (apl) {};
\draw (-1.8,.35) node[bv] (au) {};
\draw (-1.8,.-.35) node[bv] (al) {};
\draw (0,.25) node[bv] (lu) {};
\draw (0.25,.-.2) node[bv] (ll) {};
\draw (apl) -- (lu) -- (al);
\draw (0,-1.5) node [bv] (t) {} node [below] {$t$};
\draw (lu) -- (t) -- (ll);
\end{tikzpicture}
}
\hspace{.4in}
\subfloat[(b) $(\gam^*)\opp$]{
	\begin{tikzpicture}[thick]
\node [circle,draw,inner sep=10pt] at (0,1) (l) {};
\node [circle,draw,inner sep=15pt] at (-1.8,-.2) (a) {};
\node [circle,draw,inner sep=15pt] at (1.8,-.2) (ap) {};
\draw (2.4, 1) node [] {$A'$};
\draw (-2.4,1) node [] {$A$};
\draw (.6, 1.2) node [right] {$L$};
\draw (1.8,.15) node[bv] (apu) {};
\draw (1.8,.-.55) node[bv] (apl) {};
\draw (-1.8,.15) node[bv] (au) {};
\draw (-1.8,.-.55) node[bv] (al) {};
\draw (0,1.2) node[bv] (lu) {};
\draw (0,.75) node[bv] (ll) {};
\draw (apu) -- (lu) -- (au);
\draw (ll) -- (lu);
\draw (apu) -- (t) -- (apl);
\draw (au) -- (t) -- (al);
\draw (au) -- (apl) -- (al) -- (apu) -- (au);
\draw (0,-1.5) node [bv] (t) {} node [below] {$t$};
\end{tikzpicture}}
  \caption{Proof of Lemma~\ref{l:star double detail}.}
  \label{f:stardouble}
\end{figure}
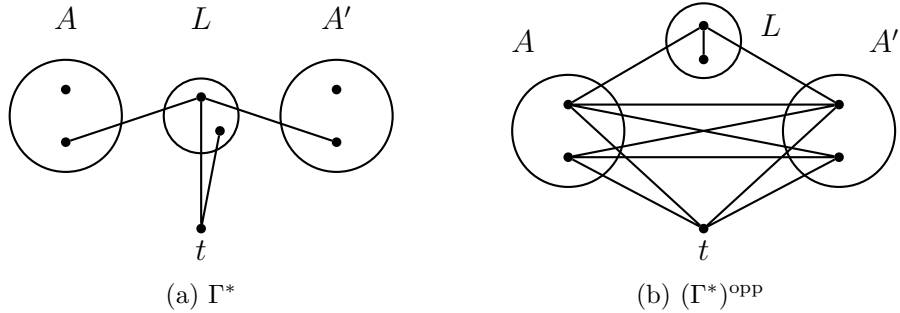

The extension graph of a given graph preserves $C_m$--freeness for certain $m$, as described in Lemma~\ref{l:cmfree}.
\begin{lem}\label{l:cmfree}
Suppose $\gam$ is a finite graph.
\begin{enumerate}
\item
If $\gam$ is triangle--free, then so is $\gex$.
\item
If $\gam$ is square--free, then so is $\gex$.
\item
Suppose $n\ge5$, and $\gam$ is triangle--free or square--free.
If $\gam$ is $C_m$--free for every $m=5,\ldots,n$, then so is $\gex$.
\item
If $\gam$ is weakly chordal, then so is $\gex$.
\item
If $\gam$ is bipartite, then so is $\gex$.
\end{enumerate}
\end{lem}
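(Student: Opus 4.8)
The common thread is that every property in the statement is determined by the finite induced subgraphs of $\gex$, so by Lemma~\ref{l:double} it suffices to control its behaviour under a single doubling along the star of a vertex. Indeed, Lemma~\ref{l:double} realizes each finite induced subgraph of $\gex$ as an induced subgraph of some $\gam_l$ obtained from $\gam$ by a chain $\gam=\gam_0\le\gam_1\le\cdots\le\gam_l$ of star--doublings. Hence, for (1)--(4), once I show that ``triangle--free'' (resp.\ ``square--free'', ``$C_m$--free for $m=5,\dots,n$'', ``weakly chordal'') is inherited by a single star--double $\gam\mapsto\gam^\ast$, an immediate induction along the chain transports the property from $\gam$ to every finite induced subgraph of $\gex$, hence to $\gex$ itself (these are all ``no forbidden induced subgraph'' conditions, automatically inherited by induced subgraphs). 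So the plan is: handle $C_3$ and $C_4$ by a direct argument, handle $C_m$ ($m\ge6$) and $C_m\opp$ ($m\ge5$) by quoting Lemma~\ref{l:star double detail}, and handle bipartiteness separately via chromatic number.

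For (1) and (2) I argue directly, since $C_3$ and $C_4$ are not among the configurations treated in Lemma~\ref{l:star double detail}. Adopt the notation of that lemma's proof: $\gam^\ast=\gam\cup_\sigma\gam'$ with vertex set partitioned as $A\cup A'\cup L\cup\{t\}$, where $L=\lk_\gam(t)$, the set $L\cup\{t\}$ separates $A$ from $A'$ in $\gam^\ast$, and $\mu\co\gam^\ast\to\gam$ is the natural retraction with $\mu(A')=A$. If an induced triangle or square of $\gam^\ast$ lies in $A\cup L\cup\{t\}$ or in $A'\cup L\cup\{t\}$, then (applying $\mu$ when necessary) it is a configuration of the same type in $\gam$. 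Otherwise it meets both $A$ and $A'$; since there is no edge between $A$ and $A'$ and since $t$ is adjacent to no vertex of $A\cup A'$, a triangle is impossible, and a square must consist of one vertex $a\in A$, one vertex $a'\in A'$, and two vertices $q,s\in L$ (opposite on the square), in which case one checks that $\{t,q,\mu(a'),s\}$ spans an induced square in $\gam$. Thus a single star--double preserves triangle-- and square--freeness, and Lemma~\ref{l:double} gives (1) and (2).

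For (3) and (4), preservation under a single star--double is exactly Lemma~\ref{l:star double detail}(1),(2) read contrapositively. Weak chordality: an induced $C_k$ $(k\ge6)$ in $\gam^\ast$ forces an induced $C_m$ with $5\le m\le k$ in $\gam$, while an induced $C_k\opp$ $(k\ge5)$ in $\gam^\ast$---which at $k=5$ also covers an induced $C_5$, since $C_5=C_5\opp$---forces an induced $C_k\opp$ or $C_{k+1}\opp$ in $\gam$; all of these violate weak chordality of $\gam$, so $\gam^\ast$ is weakly chordal and (4) follows. For (3) I run the same dichotomy, now using the elementary fact that $C_6\opp$ contains both a triangle and an induced square: under the standing hypothesis that $\gam$ is triangle--free or square--free, the branch ``$\gam$ contains $C_6\opp$'' produced by Lemma~\ref{l:star double detail}(2) at $k=5$ is impossible, leaving ``$\gam$ contains $C_5$'', which contradicts the hypothesis; the cases $k\ge6$ are handled by Lemma~\ref{l:star double detail}(1), and parts (1)--(2) keep triangle--/square--freeness alive along the chain so the induction closes. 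Finally, (5) is immediate and independent of the above: $\gam$ is bipartite if and only if its chromatic number is at most $2$, and by Lemma~\ref{l:extension}(8) the chromatic numbers of $\gam$ and $\gex$ coincide. The only step that is not pure bookkeeping is the direct $C_3$/$C_4$ analysis for (1)--(2)---tracking which vertices of a forbidden four--cycle can lie in $A$, $A'$, $L$, or equal $t$---but this is short and elementary, so I expect no serious obstacle.
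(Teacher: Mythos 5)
Your proof is correct and follows essentially the same route as the paper: reduce to a single star--double via the iterated--double construction of $\gex$, invoke Lemma~\ref{l:star double detail} for long cycles and their complements (together with the fact that $C_6\opp$ contains a triangle and a square), argue the $C_3$/$C_4$ cases directly, and handle bipartiteness by pulling back a $2$--coloring along the retraction $\gex\to\gam$. You simply make explicit the induction and the case analysis that the paper leaves terse.
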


\begin{proof}
(1) is immediate from the definition of $\gex$. (2) is similar to, and much easier than, the proof of Lemma~\ref{l:star double detail}; a key observation is that if $\gam$ is square--free and $t$ is a vertex, then $\gam\smallsetminus\{t\}$ does not have an induced $P_3$ which intersects $\lk(t)$ only on its endpoints. The proofs of (3) and (4) are direct consequences of Lemma~\ref{l:star double detail} and the fact that $C_n\opp$ contains a triangle and a square for every $n\ge6$.
For (5), consider the pullback of a $2$--coloring of $\gam$ by the retraction $\gex\to\gam$.
\end{proof}

\begin{rem}
The class $\mathcal{W}$ of weakly chordal graphs is closed under edge contractions~\cite[Theorem 4.7]{Kim2007}. 
Oum pointed to us that $\mathcal{W}$ is closed under taking the double along the \textit{link} of a vertex (private communication). 
Lemma~\ref{l:cmfree} (4) follows the lines of these two results.
\end{rem}

\begin{lem}\label{l:thin bigon}
Suppose $\gam$ is a finite weakly chordal graph. 
Then $\gex$ satisfies the \emph{2--thin bigon property}. 
Namely, suppose that $x$ and $y$ are two geodesic segments connecting two vertices $v$ and $w$.  Then $x$ is contained in a $2$--neighborhood of $y$.
\end{lem}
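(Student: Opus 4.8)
The plan is to reduce the 2-thin bigon property for $\gex$ to a finite-stage statement and then analyze a single doubling step, exactly in the spirit of Lemma~\ref{l:double} and the proof of Lemma~\ref{l:extension}(6). Given two geodesics $x$ and $y$ from $v$ to $w$ in $\gex$, by Lemma~\ref{l:double} there is a finite induced subgraph $\Lambda \le \gam_l$ containing $v$, $w$, and all of $x$ and $y$, where $\gam = \gam_0 \le \gam_1 \le \cdots \le \gam_l$ is a sequence of doubles along stars of vertices. Since $\gam$ is weakly chordal, Lemma~\ref{l:cmfree}(4) guarantees that each $\gam_i$ is weakly chordal as well. So it suffices to prove: \emph{if $\Delta$ is a finite weakly chordal graph, $\Delta^*$ is the double of $\Delta$ along the star of a vertex $t$, and $\Delta$ satisfies the 2-thin bigon property, then so does $\Delta^*$}; one then induces on $l$. (One must also check the base case, that a finite weakly chordal graph satisfies the 2-thin bigon property — this should follow from a direct argument: in a weakly chordal graph two geodesics between the same pair of vertices cannot diverge too far, since a long "bigon" would force a long induced cycle or its complement.)

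For the inductive step, write $\Delta^* = \Delta \cup_\sigma \Delta'$ glued along $\st(t) = L \cup \{t\}$, with $A = V(\Delta)\smallsetminus\st(t)$ and $A'$ its mirror, using the notation of Lemma~\ref{l:star double detail}; let $\mu: \Delta^* \to \Delta$ be the retraction. The key geometric fact is that $\st(t)$ separates $\Delta^*$ into the parts on $A$ and on $A'$, and $\mu$ is distance-non-increasing. Take geodesics $x, y$ from $v$ to $w$ in $\Delta^*$. If both endpoints lie in the same half (say in $\Delta$), I would argue that $x$ and $y$ can be pushed into $\Delta$ by applying $\mu$: a geodesic between two vertices of $\Delta$ that wanders into $A'$ must cross $\st(t)$ twice, and replacing the excursion by its $\mu$-image (which stays in $L\cup\{t\}\subseteq \st(t)$, of bounded length) does not increase length, so WLOG $x, y \subseteq \Delta$ and we invoke the inductive hypothesis. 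If $v \in A$ and $w \in A'$, then both $x$ and $y$ pass through $\st(t)$; let $v = a_x, a_y$ be the last vertices of $x, y$ in $\Delta$ before entering $A'$ and similarly on the $\Delta'$ side, so each geodesic splits as a piece in $\Delta$ followed by a short passage through $\st(t)$ followed by a piece in $\Delta'$. Applying $\mu$ to the $\Delta$-pieces and the mirror retraction to the $\Delta'$-pieces, together with the inductive hypothesis applied in each half (after adjusting endpoints within $\st(t)$, which has diameter controlled because $\st(t)$ is a star), gives the $2$-fellow-traveling. The constant $2$ rather than something larger should survive because the "transition region" $\st(t)$ is a single star: any two vertices of $\st(t)$ are within distance $2$ of each other through $t$.

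The main obstacle I expect is bookkeeping the constant: showing that the excursions through $\st(t)$ genuinely cost at most distance $2$ of fellow-traveling and do not accumulate over the two halves, i.e. that the bound stays $2$ and not $2+2$. This requires being careful that where $x$ and $y$ each enter $A'$, the entry vertices in $\st(t)$ are close (distance $\le 2$ via $t$), and that on either side the inductive hypothesis is applied to geodesics with endpoints that are genuinely the images under $\mu$ of the original geodesic endpoints — so that no length is lost and the fellow-traveling constant is not inflated. A secondary technical point is handling the case where $v$ or $w$ itself lies in $\st(t)$, and the degenerate cases where a geodesic meets $\st(t)$ in more than the expected pattern; these are ruled out or trivialized by the separation property and the fact that $x, y$ are geodesics, but they need to be addressed explicitly. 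The weak chordality of $\Delta$ (hence of $\Delta^*$, by Lemma~\ref{l:cmfree}(4)) is used precisely to guarantee the inductive hypothesis is available at each stage and, in the base case, to forbid the long induced cycles or anticycles that a fat bigon would produce.
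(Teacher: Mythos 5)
Your proposal defers the real content of the lemma to an unproved ``base case'' and builds an induction around it that is both unnecessary and itself gappy. The paper's proof is the direct argument you wave at: by Lemma~\ref{l:cmfree}~(4), $\gex$ itself (not just each finite stage $\gam_i$) is weakly chordal, so one works directly in $\gex$. Since $x$ and $y$ are geodesics, non-consecutive vertices on each are non-adjacent; if some vertex $u$ of $y$ were at distance greater than $2$ from $x$, then $u$ would have no neighbor on $x$ and one could check that one of its $y$-neighbors also has no neighbor on $x$, and tracing the bigon between the nearest ``contact points'' on either side produces an induced cycle of length at least $5$ in $\gex$, contradicting weak chordality. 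That two-sentence argument is the entire proof; there is no induction on doublings. So the first gap is simply that the step carrying all the mathematical weight --- ``a fat bigon in a weakly chordal graph forces a long induced cycle'' --- is asserted but never executed, and until it is, nothing has been proved.

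The second gap is in the inductive step you do sketch. Even granting that $\mu$ is distance non-increasing and that the $\mu$-image of a geodesic between two vertices of $\Delta$ is again a geodesic in $\Delta$, knowing that $\mu(x)$ lies in the $2$-neighborhood of $\mu(y)$ in $\Delta$ says nothing about a vertex $u$ of $x$ lying in $A'$: the distance from $u$ to $y$ in $\Delta^*$ is not controlled by the distance from $\mu(u)$ to $\mu(y)$, since $\mu$ can only decrease distances and you need a lower bound to pull the conclusion back. In the case $v\in A$, $w\in A'$, splitting each geodesic at its passage through $\st(t)$ and applying the hypothesis separately in each half compares the $\Delta$-piece of $x$ with the $\Delta$-piece of $y$ only after modifying their endpoints inside $\st(t)$; these modified segments need not be geodesics with common endpoints, so the inductive hypothesis does not apply to them as stated, and the adjustments cost additive constants that you have no mechanism to absorb back into the bound of $2$. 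I recommend abandoning the induction entirely and proving the deferred claim directly for $\gex$, using only Lemma~\ref{l:cmfree}~(4) and the observation about non-consecutive vertices on a geodesic.
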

\begin{proof}
Lemma~\ref{l:cmfree} (4) implies that $\gex$ is weakly chordal.
Since $x$ is geodesic, non-consecutive vertices of $x$, or those of $y$, are non-adjacent.
If one vertex in $y$ is not adjacent to any vertex in $x$, it is easy to check that both of its neighbors in $y$ are adjacent to a vertex of $x$; otherwise, $\gex$ would contain an induced long cycle.
Therefore, the distance between any vertex of $y$ and a vertex of $x$ is at most two.
\end{proof}

We will need the following observation later on:

\begin{lem}\label{l:conjugate}
Let $n\geq 5$ and consider an arbitrary inclusion $i:C_n\to C_n^e$.  Then $i(C_n)$ is conjugate to the original copy of $C_n$.
\end{lem}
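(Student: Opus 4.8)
The plan is to prove the equivalent statement that every induced subgraph $\Omega$ of $C_n^e$ with $\Omega\cong C_n$ is of the form $C_n\cdot g$ for some $g\in A(C_n)$; applying this to $\Omega=i(C_n)$ yields the lemma. First I would record the rigidity needed: since $n\ge5$, the cycle $C_n$ is triangle--free, square--free, and $C_m$--free for every $m$ with $5\le m\le n-1$, so Lemma~\ref{l:cmfree}~(1)--(3) shows that $C_n^e$ contains no induced $C_m$ for $3\le m\le n-1$.

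By Lemma~\ref{l:double} there is a chain $C_n=\gam_0\le\gam_1\le\cdots\le\gam_l\le C_n^e$ with $\Omega\le\gam_l$, where $\gam_i$ is the double of $\gam_{i-1}$ along $\st_{\gam_{i-1}}(v_i)$ for some vertex $v_i$ of $C_n^e$. Every vertex of $C_n^e$ is a conjugate of a generator of $A(C_n)$, so conjugation by $v_i$ is a graph automorphism of $C_n^e$ fixing $\st_{C_n^e}(v_i)$ pointwise; by the explicit construction preceding Lemma~\ref{l:double} we may take $\gam_i=\gam_{i-1}\cup\gam_{i-1}'$ with $\gam_{i-1}':=\gam_{i-1}\cdot v_i$ the image of $\gam_{i-1}$ under this automorphism and $\gam_{i-1}\cap\gam_{i-1}'=\st_{\gam_{i-1}}(v_i)$. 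I would then prove by induction on $i$ that every induced $C_n$ in $\gam_i$ has the form $C_n\cdot g$ for some $g\in A(C_n)$; the base case $i=0$ holds since the only induced $C_n$ in $C_n$ is $C_n$ itself.

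For the inductive step, write $\gam_l=\gam_{l-1}\cup_\sigma\gam_{l-1}'$ as the double along $\st_{\gam_{l-1}}(t)$ with $t=v_l$, and adopt the notation of Lemma~\ref{l:star double detail}: $L=\lk_{\gam_{l-1}}(t)$, $A=V(\gam_{l-1})\smallsetminus(L\cup\{t\})$, $A'$ the copy of $A$ in $\gam_{l-1}'$, so $V(\gam_l)=A\cup A'\cup L\cup\{t\}$, the set $L\cup\{t\}$ separates $A$ from $A'$ in $\gam_l$, and there is a retraction $\mu\co\gam_l\to\gam_{l-1}$. Let $\Omega\cong C_n$ be induced in $\gam_l$. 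If $\Omega$ does not meet $A'$, then $V(\Omega)\subseteq V(\gam_{l-1})$ and the inductive hypothesis applies. If $\Omega$ does not meet $A$, then $\Omega\le\gam_{l-1}'$, so $\mu(\Omega)$ is an induced $C_n$ in $\gam_{l-1}$, hence $\mu(\Omega)=C_n\cdot g$ for some $g$, and applying $\mu^{-1}$ gives $\Omega=C_n\cdot gv_l$. If $t\in\Omega$, then $\lk_\Omega(t)\subseteq\lk_{\gam_l}(t)=L$, so $V(\Omega)\smallsetminus\st_\Omega(t)=V(\Omega)\smallsetminus(L\cup\{t\})$ is a connected subgraph of $\Omega$ contained in $A\cup A'$; since no edge of $\gam_l$ joins $A$ to $A'$, this set lies in $A$ or in $A'$, so $\Omega$ lies in $\gam_{l-1}$ or $\gam_{l-1}'$, and we finish as above.

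The crux, which I expect to be the main obstacle together with the bookkeeping above, is excluding the case $t\notin\Omega$ with $\Omega$ meeting both $A$ and $A'$. Arguing as in the proof of Lemma~\ref{l:star double detail}~(1), choose a maximal run $\{v_{i+1},\dots,v_{i+k}\}$ of consecutive vertices of $\Omega$ lying in $A$ (cyclic indexing, $k\ge1$). Its cyclic neighbours $v_i$ and $v_{i+k+1}$ in $\Omega$ lie neither in $A$ (maximality), nor in $A'$ (no $A$--$A'$ edge), nor equal $t$; hence $v_i,v_{i+k+1}\in L$, and since $\Omega$ also meets $A'$ we get $k\le n-3$. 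Then $(t,v_i,v_{i+1},\dots,v_{i+k},v_{i+k+1})$ spans an induced $C_{k+3}$ in $\gam_{l-1}\le C_n^e$; as $k+3\ge4$ and $C_n^e$ has no induced $C_m$ with $3\le m\le n-1$, this forces $k+3\ge n$, so (recalling $k\le n-3$) $k=n-3$. Applying $\mu$, the same holds for maximal runs of $\Omega$ in $A'$. But then the $A$--run (of size $n-3$), the vertices $v_i$ and $v_{i+k+1}$, and a maximal $A'$--run (of size $n-3$) are pairwise disjoint subsets of $V(\Omega)$, giving $n=|V(\Omega)|\ge 2(n-3)+2=2n-4$, contradicting $n\ge5$. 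This completes the induction, and since $\Omega=i(C_n)$ lies in some $\gam_l$ by Lemma~\ref{l:double}, the lemma follows.
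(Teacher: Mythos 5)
Your proof is correct, but it takes a genuinely different route from the paper's. The paper argues by contradiction in a single step: if $i(C_n)$ is not contained in one conjugate of $C_n$, then Lemma~\ref{l:extension}~(6) supplies a vertex $v$ whose star separates $i(C_n)$ into at least two pieces, and closing up one arc of the cycle through $v$ produces an induced cycle of length at least $4$ and strictly less than $n$, contradicting Lemma~\ref{l:cmfree}. You instead run an induction over the doubling filtration of Lemma~\ref{l:double}, classifying at each stage all induced $n$--cycles and disposing of the ``crossing'' case by the maximal-run argument modelled on Lemma~\ref{l:star double detail}~(1). The combinatorial heart is the same in both arguments --- a copy of $C_n$ meeting both sides of a separating star forces a shorter induced cycle through the separating vertex, which the triangle-, square-, and $C_m$--freeness of $C_n^e$ forbids --- but you push it further: every maximal run must have length exactly $n-3$, and the two disjoint runs plus their two endpoints in $L$ give the clean count $n\ge 2(n-3)+2=2n-4$, i.e.\ $n\le 4$. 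This replaces the paper's terser observation that the closure of the second component cannot be a $P_3$. Your version is longer and leans on the unproved assertion of Lemma~\ref{l:double} that each stage is an honest double (intersection exactly the star, no edges between the two copies of the complement), but it avoids Lemma~\ref{l:extension}~(6) entirely and yields the slightly stronger intermediate statement that every induced $C_n$ in each finite stage $\gam_i$ is already a conjugate of the original $C_n$.
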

\begin{proof}
Suppose that the inclusion $i$ maps $C_n$ into $C_n^e$ in such a way that the image $i(C_n)$ is not contained in a conjugate of $C_n$.  
By Lemma~\ref{l:extension} (6), there is a star of a vertex $v$ which separates $i(C_n)$ into at least two smaller connected subgraphs. Let $A$ and $B$ be the closures of two distinct components of $i(C_n)\smallsetminus\st(v)$. 
Since $C_n^e$ is triangle-free and square-free, $B\not\cong P_3$. Hence, the induced subgraph of $\gex$ spanned by $A\cup\{v\}$ is a cycle of length strictly less than $n$.
This is a contradiction to Lemma~\ref{l:cmfree} (3).
\end{proof}

\section{Right-angled Artin subgroups of right-angled Artin groups}\label{sec:gex and raag}
In this section we will prove Theorems \ref{t:thm1} and \ref{t:thm2}.  To get a more concrete grip on $\gex$, one can check the following three examples.  In the case where $\gam$ is a complete graph, $\gex=\gam$.  In the case where $\gam$ is discrete and $|V|>1$ then $\gex$ is a countable union of vertices with no edges.

In the case where $\gam$ is a square, we have our first nontrivial example.  Label the vertices of $\gam$ by $\{a,b,c,d\}$, with $a$ and $c$ connected to $b$ and $d$.  Performing the construction of $\gex$, we see that the vertices consist of all $c$--conjugates of $a$, all $d$--conjugates of $b$, all $a$--conjugates of $c$ and all $b$--conjugates of $d$.  Note also that each conjugate of $a$ and $c$ is connected to each conjugate of $b$ and $d$.  It follows that $\gex$ is isomorphic to a complete bipartite graph on two countable sets.

We will give two proofs of Theorem \ref{t:thm1}.  The first will use Dehn twists and the result from \cite{Koberda2011}.  The other will use pseudo-Anosov homeomorphisms and the result from \cite{CLM2010}. Also, one can deduce Theorem~\ref{t:thm1} from Corollary~\ref{c:double}, of which we give an alternative, topological proof in Section~\ref{sec:appendix}.

\begin{proof}[First proof of Theorem \ref{t:thm1}]
Let us recall the notations from the proof of Lemma~\ref{l:extension} (4). 
There exists a closed surface $\Sigma$, an embedding $\alpha:\gam\to\mC(\Sigma)$ and $N>0$ such that  the map $\phi:A(\gam)\to\Mod(\Sigma)$ defined by $\phi(v)=T_{\alpha(v)}^N$ for each $v\in V(\gam)$ is injective. 
If we extend $\alpha$ to an embedding $\alpha^e:\gex\to\mC(\Sigma)$ by $\alpha^e(v^w)=\phi(w^{-1}).\alpha(v)$ for $v\in V(\gam)$ and $w\in A(\gam)$, then the image of $\alpha^e$ is an induced subgraph of $\mC(\Sigma)$.

Now put $V(\Lambda)=\{v_1^{w_1},\ldots,v_n^{w_n}\}\subseteq V(\gex)$, where $v_i\in V(\gam)$ and $w_i\in A(\gam)$ for $i=1,\ldots,n$. The co--incidence graph of $\{\alpha^e(v_1^{w_1}),\ldots,\alpha^e(v_n^{w_n})\}$ is $\Lambda$. By Lemma~\ref{l:mod}, there exists an $M>0$ such that the map  $\psi: A(\Lambda)\to \Mod(\Sigma)$ defined by
$\psi(v_i^{w_i}) = T^{MN}_{\alpha^e(v_i^{w_i})}$ is injective.
Since
\[
T^{MN}_{\alpha^e(v_i^{w_i})} = T^{MN}_{\phi(w_i^{-1}).\alpha(v_i)} = 
\phi(w_i^{-1})\circ T^{MN}_{\alpha(v_i)}\circ \phi(w_i) = \phi((v_i^M)^{w_i}),
\]
we have that $\psi$ factors through $\phi$ as follows.

\[
\xymatrix{
A(\Lambda)\ar@{^{(}-->}[dr]\ar@{^{(}->}[rr]^\psi && \Mod(\Sigma)\\
& A(\gam)\ar@{^{(}->}[ur]^\phi
}
\]
Since $\psi$ is injective, we have an embedding from $A(\Lambda)$ to $A(\gam)$.
\end{proof}

\begin{proof}[Second proof of Theorem \ref{t:thm1}]
Choose a closed surface $\Sigma$ with a configuration of subsurfaces and pseudo-Anosov homeomorphisms whose co--incidence graph is $\gam$ and which satisfy the technical hypothesis of \cite{CLM2010}.  The second proof of the theorem is essentially identical to the first one, with Dehn twists replaced by pseudo-Anosov homeomorphisms.  The only nuance is that for each finite subgraph of $\gex$ which we produce, we must show that we do not get any unexpected nesting of subsurfaces.

The easiest way to avoid unexpected nesting is to arrange for the pseudo-Anosov generators of $A(\gam)$ to be supported on surfaces with no inclusion relations between them.  It is clear and can be seen from \cite{Koberda2011} that one can find a configuration of subsurfaces with the desired intersection correspondence on a surface of sufficiently large genus.  Furthermore, one can arrange for these subsurfaces to all have a given genus $g$ and one boundary component.  If $\gam$ has $n$ vertices $\{v_1,\ldots,v_n\}$, modify the subsurface corresponding to $v_i$ to have genus $g+n-i$, $i$ punctures and one boundary component.  It is clear then that no two subsurfaces produced in this way can be nested in a way which sends punctures to punctures.  After finding a pseudo-Anosov homeomorphism on each of these subsurfaces which does not extend to the subsurfaces with the punctures filled in, we can apply the main result of \cite{CLM2010}.
\end{proof}

\begin{proof}[Proof of Corollary \ref{c:double}]
Note that the induced subgraph of $\gex$ on $V(\gam)\cup V(\gam).t$ is isomorphic to $D_{\st(t)}(\gam)$.
Theorem \ref{t:thm1} completes the proof.
\end{proof}

We now turn our attention to Theorem \ref{t:thm2}. 
Suppose two cyclically reduced words $w$ and $v$ have pure factor decompositions $w=w_1\cdots w_n$ and $v=v_1\cdots v_m$. Then $w$ and $v$ do not commute if and only if there is a pair of pure factors $w_i$ and $v_j$ such that $[w_i,v_j]\neq 1$, which follows easily from the Centralizer Theorem.
Using the pure factor decomposition, we can give the following result concerning the structure of copies of $\bZ*\bZ^2$ in a right-angled Artin group:

\begin{lem}
Suppose cyclically reduced words $a,b$ and $x$ generate a copy of $\bZ*\bZ^2$ of $A(\gam)$, where the splitting is given by \[\langle x\rangle*\langle a,b\rangle.\]  Let $\{x_1\cdots x_k\}$ be the sets of pure factors of $x$.
Then there is a pure factor $x_l$ which commutes with neither $a$ nor $b$.
\end{lem}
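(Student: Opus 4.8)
The plan is to argue by contradiction. Suppose that every pure factor of $x$ commutes with $a$ or with $b$; I will exhibit an element of $\langle x,a,b\rangle$ whose commutation behaviour is incompatible with the free product decomposition $\langle x\rangle*\langle a,b\rangle$.

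First I would record the structural input. Since $x$ is cyclically reduced its pure factor decomposition is $x=x_1^{e_1}\cdots x_k^{e_k}$ with no conjugating term, and the pure factors pairwise commute: for $i\neq j$, $\supp(x_i)$ and $\supp(x_j)$ lie in distinct connected components of $\gam_B\opp$, where $B=\supp(x)$, so every vertex of $\supp(x_i)$ is adjacent in $\gam$ to every vertex of $\supp(x_j)$. Now set $S=\{\,i:[x_i,a]=1\,\}$, $y=\prod_{i\in S}x_i^{e_i}$ and $z=\prod_{i\notin S}x_i^{e_i}$. Reordering the commuting factors gives $x=yz$; plainly $[y,a]=1$; and for each $i\notin S$ we have $[x_i,a]\neq 1$, so the contradiction hypothesis forces $[x_i,b]=1$, whence $[z,b]=1$.

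The heart of the argument is then the following computation in $\aga$, using $[z,b]=1$ for the first display and $[y,a]=1$ together with the given $[a,b]=1$ for the second:
\[ xbx^{-1}=(yz)b(yz)^{-1}=y(zbz^{-1})y^{-1}=yby^{-1}, \]
\[ a(xbx^{-1})a^{-1}=a(yby^{-1})a^{-1}=(aya^{-1})(aba^{-1})(ay^{-1}a^{-1})=yby^{-1}=xbx^{-1}. \]
Hence $[a,\,xbx^{-1}]=1$ in $\langle x,a,b\rangle$. On the other hand, in a free product $A*B$ the centralizer of a nontrivial element $b\in B$ is $C_B(b)$; applied to $\langle x,a,b\rangle=\langle x\rangle*\langle a,b\rangle$ this gives $C_{\langle x,a,b\rangle}(xbx^{-1})=x\langle a,b\rangle x^{-1}$. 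Were $a$ in this subgroup we would have $x^{-1}ax\in\langle a,b\rangle$, which is impossible since $x^{-1}ax$ is a reduced word of syllable length three in $\langle x\rangle*\langle a,b\rangle$ and hence does not lie in the free factor $\langle a,b\rangle$. This contradiction shows that some pure factor $x_l$ of $x$ commutes with neither $a$ nor $b$.

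I expect the only subtle point to be locating the right test element: $xbx^{-1}$ is the natural choice precisely because deciding whether it commutes with $a$ uses all three available relations $[y,a]=1$, $[z,b]=1$, $[a,b]=1$ at once, while its failure to commute with $a$ inside $\bZ*\bZ^2$ is visible directly from the free product normal form. The remaining ingredients — commutativity of the pure factors of a cyclically reduced word and the description of centralizers in a free product — are standard, so I anticipate no real difficulty there.
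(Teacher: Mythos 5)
Your proof is correct and follows essentially the same route as the paper's: split $x=yz$ into the product of pure factors commuting with $a$ and those commuting with $b$, then derive the contradiction from the commutator $[a,xbx^{-1}]$ (the paper uses the equivalent $[b,a^x]$), which the relations force to be trivial but which is nontrivial by the free product normal form. The only difference is that you spell out the centralizer argument in the free product, which the paper leaves implicit.
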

\begin{proof}
Suppose to the contrary that every pure factor appearing in the pure factor decomposition of $x$ commutes with either $a$ or $b$.  We may assume that \[x=(x_1^{e_1}\cdots x_m^{e_m})\cdot x_{m+1}^{e_{m+1}}\cdots x_n^{e_n},\] where each of $\{x_1,\ldots,x_m\}$ commute with $a$ and each of $\{x_{m+1}\cdots x_n\}$ commute with $b$ for some integers $e_1,\ldots,e_n$.  We then form the commutator $[b,a^x]$, which is nontrivial in $\bZ*\bZ^2$.  Note that this is just the commutator of $b$ with $a$ conjugated by $x_{m+1}^{e_{m+1}}\cdots x_n^{e_n}$.  Since this last element commutes with $b$, and $a$ commutes with $b$, the commutator is trivial; so we have a contradiction.
\end{proof}

Let $\gam$ be a finite graph.
For $W\subseteq A(\gam)$, the \emph{commutation graph} of $W$ is a graph with vertex set $W$ such that two vertices are adjacent if and only if they commute in $A(\gam)$. The following is a key step in the proof of Theorem~\ref{t:thm2}.

\begin{lem}\label{l:joingraph2}
Let $\gam$ be a finite graph and $W=\{w_1,\ldots,w_n\}$ be a set of conjugates of pure factors in $A(\gam)$ such that $w_i\ne w_j^{\pm1}$ for any $i\ne j$.
Then the commutation graph of $W$ embeds into $\gex$ as an induced subgraph.
\end{lem}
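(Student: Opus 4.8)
The plan is to argue by induction on the pair $(|V(\gam)|,n)$, ordered lexicographically, peeling off one generator at a time with the Centralizer Theorem. First a reduction: if $\gam=\gam_1*\gam_2$ is a nontrivial join then $\gex=\gam_1^e*\gam_2^e$ by Lemma~\ref{l:extension}(1), and since the support of a pure factor cannot split as a join, $W$ splits as $W_1\sqcup W_2$ with $W_t$ a set of conjugates of pure factors of $A(\gam_t)$ (after absorbing the $A(\gam_{3-t})$-part of each conjugator, which commutes with the factor); every element of $W_1$ commutes with every element of $W_2$, so the commutation graph of $W$ is the join of those of $W_1$ and $W_2$, and we finish by induction on $|V(\gam)|$. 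Hence we may assume $\gam$ does not split as a nontrivial join; discarding the trivial case $n\le 1$, $\gam$ is then not complete.

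For the inductive step, conjugating the whole configuration (a graph automorphism of $\gex$ leaving the commutation graph unchanged) we may assume $w_n=g$ is itself a pure factor, with support $S=\supp(g)\ne\varnothing$ and common link $L=\bigcap_{s\in S}\lk(s)$; note $S\cap L=\varnothing$ since $\gam_S$ is not a join. Split $W\smallsetminus\{w_n\}=C\sqcup D$ according to whether the element commutes with $w_n$. For $w_i\in C$ the Centralizer Theorem writes $w_i=g^{f_i}h_i$ with $\supp(h_i)\subseteq L$; since $w_i$ is a conjugate of a pure factor (hence generates a maximal cyclic subgroup) and $w_i\ne w_n^{\pm1}$, we must have $f_i=0$, so $w_i=h_i$ is a conjugate of a pure factor supported inside $\gam_L$. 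By induction the commutation graph of $C$ embeds as an induced subgraph of $(\gam_L)^e$, which is an induced subgraph of $\gex$ by Lemma~\ref{l:extension}(3). Fixing any $s\in S$, every vertex of $(\gam_L)^e$ has the form $t^q$ with $t\in L\subseteq\lk(s)$ and $q\in A(\gam_L)$ (so $q$ commutes with $s$), hence is adjacent to $s$ in $\gex$, while it is distinct from $s$ by looking at the retraction $\gex\to\gam$ and using $S\cap L=\varnothing$. Thus the commutation graph of $C\cup\{w_n\}$ embeds, with $w_n\mapsto s$.

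The remaining task, which I expect to be the technical core, is to incorporate $D$: the commutation graph of $W$ is the commutation graph of $\{w_1,\dots,w_{n-1}\}$ with $w_n$ joined exactly to $C$, and an element of $C$ and an element of $D$ may commute with each other, so this is not a disjoint union of the pieces already controlled. I plan to handle this by strengthening the inductive hypothesis so that the embedding of the commutation graph of $\{w_1,\dots,w_{n-1}\}$ is ``aligned with the link filtration'': whenever a sub-collection is supported inside some $\gam_{L'}$ (for $L'$ a common link), its image lies in $(\gam_{L'})^e$. Applied with $L'=L$ this puts $\iota(C)\subseteq(\gam_L)^e\subseteq\st_\gex(s)$; then, since $\gam$ does not split as a nontrivial join, Lemma~\ref{l:disjoint} lets us move $\iota(D)$ into a copy of $\gex$ disjoint from a neighbourhood of $s$, so that $\iota(D)\cap\st_\gex(s)=\varnothing$. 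Adjoining the vertex $s=\iota(w_n)$ then realizes the commutation graph of $W$ as an induced subgraph of $\gex$.

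As an alternative one may run the argument topologically: realize $A(\gam)\hookrightarrow\Mod(\Sigma)$ as in Lemma~\ref{l:CLM}, so that $\gex$ is the co--incidence graph of the subsurfaces $\{\phi(w)^{-1}(Y_v)\}$, and note that $\phi(w_i)$ is pseudo-Anosov on the connected subsurface $\Sigma_i$ obtained by filling with $\{Y_v:v\in\supp(g_i)\}$ (connected because $\gam_{\supp(g_i)}^{\mathrm{opp}}$ is connected, by definition of a pure factor) and translating by $\phi(p_i)^{-1}$. One checks that $[w_i,w_j]=1$ iff $\Sigma_i$ and $\Sigma_j$ are disjoint, and in particular distinct: disjointness gives commutation, and conversely commuting mapping classes pseudo-Anosov on connected subsurfaces must preserve one another's supports, which forces disjointness unless the subsurfaces coincide, while coincidence would give a common nonzero power of $w_i$ and $w_j$ and hence, both generating maximal cyclic subgroups with $w_i\ne w_j^{\pm1}$, a contradiction. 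Choosing for each $i$ a suitable vertex $v_i\in\supp(g_i)$ (so that the $\gex$--vertices $v_i^{p_i}$ stay distinct, and so as to avoid spurious adjacencies when two supports are linked) then produces the embedding. In both approaches the delicate point is the same: ensuring the chosen representatives faithfully record non--commutation, which is where the bulk of the work lies.
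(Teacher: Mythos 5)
Your second (topological) sketch is essentially the paper's route: realize $A(\gam)$ in a mapping class group, assign to each $w_i=u_i^{p_i}$ the connected subsurface $\Sigma_i$ filled by the curves $\{\alpha(v):v\in\supp(u_i)\}$ translated by $\phi(p_i)^{-1}$, observe via the Centralizer Theorem that the co--incidence graph of $\{\phi(p_i)^{-1}(\Sigma_i)\}$ is exactly the commutation graph, and then replace each subsurface by a single curve. But you leave out the one nontrivial step, which is exactly where the content lies: simply picking a vertex $v_i\in\supp(u_i)$ does \emph{not} work, because $\alpha(v_i)$ may be disjoint from $\Sigma_j$ even when $\Sigma_i$ and $\Sigma_j$ overlap, giving spurious adjacencies. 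The paper's fix (via Lemma~\ref{l:filling curves}) is to build a pseudo-Anosov $\psi_i=\phi(q_i)$ supported on $\Sigma_i$ with $q_i\in\langle\supp(u_i)\rangle$, and to use $\phi(p_i)^{-1}\psi_i^{-M}(\alpha(s_i))$ for a large common $M$; parts (ii)--(iii) of that lemma are precisely what guarantee these curves reproduce the disjointness pattern of the $\Sigma_i$. Crucially, this curve is then recognized as $\alpha^e(s_i^{q_i^M p_i})$, landing you back in $\gex$. Without this step the argument does not close.

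Your primary (combinatorial) induction has a real gap where you predicted one. The reduction to the non-join case, conjugating $w_n$ to a pure factor $g$, and using the Centralizer Theorem to show that every $w_i\in C$ lies in $\langle L\rangle$ with $f_i=0$ are all correct and a nice observation. But the endgame does not work as stated: Lemma~\ref{l:disjoint} produces an induced copy of $\Lambda_1\sqcup\Lambda_2$, i.e.\ it destroys all adjacencies between the two pieces, whereas the commutation graph of $W$ generically has edges between $C$ and $D$. You cannot ``move $\iota(D)$ off $\st(s)$'' while retaining its edges to $\iota(C)$ by that lemma, and the proposed ``alignment with the link filtration'' strengthening of the induction is not formulated precisely enough to see that it could be carried (in particular, elements of $D$ that commute with elements of $C$ need not have supports controlled by $L$, so there is no evident reason $\iota(D)$ avoids $\lk_\gex(s)$). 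The paper avoids this problem entirely by never decomposing $W$ into $C$ and $D$: the surface construction handles all pairwise commutation constraints simultaneously.
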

\begin{proof}
Let $Z$ be the commutation graph of $W$, and write $w_i = u_i^{p_i}$ where $u_i$ is a pure factor and $p_i\in A(\gam)$. 
Consider an embedding $\phi:A(\gam)\to \Mod(\Sigma)$ for some closed surface $\Sigma$, as described in Lemma~\ref{l:mod}; here, each vertex $v$ maps to a power of a Dehn twist along a simple closed curve $\alpha(v)$. We set $\Sigma_i$ to be the regular neighborhood of the union of the simple closed curves $\{\alpha(v):v\in \supp(u_i)\}$; we will cap off any null-homotopic boundary components of $\Sigma_i$. Since $u_i$ is a pure factor, $\Sigma_i$ is connected. 
Centralizer Theorem implies that the co--incidence graph of $\{\phi(p_i)^{-1}(\Sigma_i)\}$ is the same as $Z$. 
Note our convention that if
$\phi(p_i)^{-1}(\Sigma_i)$ and $\phi(p_j)^{-1}(\Sigma_j)$ are isotopic and 
$i\ne j$, then the two corresponding vertices in the co--incidence graph are declared to be distinct and non-adjacent.

A certain multiplication of Dehn twists along $\{\alpha(v):v\in \supp(u_i)\}$ will give a pseudo-Anosov homeomorphism $\psi_i$ on $\Sigma_i$ by Lemma~\ref{l:filling curves}.
One may choose $\psi_i = \phi(q_i)$ for some $q_i\in\form{\supp(u_i)}$.
For each $i$, arbitrarily fix $s_i\in\supp(u_i)$.
Then for some sufficiently large $M$,
the co--incidence graph $Z'$ of $\{\phi(p_i)^{-1}(\psi_i^{-M}(\alpha(s_i))):i=1,2,\ldots,n\}$ is the same as the co-incidence graph of 
$\{\phi(p_i)^{-1}(\Sigma_i)\}$.
Note that $Z'$ is the same as the induced subgraph of $\gex$ spanned by $\{s_i^{q_i^M p_i}:i=1,\ldots,n\}$.
\end{proof}

Therefore, to prove that a particular graph is an induced subgraph of $\gex$, it suffices to exhibit it as the commutation graph of some conjugates of pure factors.  We remark that Lemma \ref{l:joingraph2} can be proved alternatively using the primary result of \cite{CLM2010}.  The proof carries over nearly verbatim, replacing Dehn twists and annuli with pseudo-Anosov homeomorphisms and the surfaces on which they are supported.

Theorem~\ref{t:thm2} is an easy consequence of the following.

\begin{thm}\label{t:thm2 strong}
Suppose $\Lambda$ and $\gam$ are finite graphs and $A(\Lambda)\le \aga$.
\begin{enumerate}[(1)]
\item
There is an embedding $\phi:A(\Lambda)\to \aga$ that factors through $\psi:A(\Lambda)\to A(\gex)$ and the natural retraction $\pi:A(\gex)\to A(\gam)$ such that for each vertex $v$ of $\Lambda$, $\supp(\psi(v))$ induces a clique $K_v\le\gex$.
\item
In (1), $\psi$ can be chosen so that $K_v\not\subseteq K_w$ for any $v\ne w\in V(\Lambda)$.
\end{enumerate}
\end{thm}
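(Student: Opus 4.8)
The plan is to start from an arbitrary embedding $f\colon A(\Lambda)\to A(\gam)$ and analyze the images of the vertices of $\Lambda$ using the theory of pure factor decompositions and the Centralizer Theorem recalled in Section~\ref{sec:preliminary}. For each vertex $v$ of $\Lambda$, write $f(v)$ in pure factor decomposition $f(v)=p_v^{-1}(g_{v,1}^{e_{v,1}}\cdots g_{v,k_v}^{e_{v,k_v}})p_v$. The first key observation is that if $v$ and $w$ are adjacent in $\Lambda$ then $f(v)$ and $f(w)$ commute, so by the remark preceding Lemma~\ref{l:joingraph2} (which follows from the Centralizer Theorem) every pure factor of $f(v)$ commutes with every pure factor of $f(w)$; conversely if $v$ and $w$ are non-adjacent then $f(v),f(w)$ generate a copy of $F_2$ and hence some pure factor of $f(v)$ fails to commute with some pure factor of $f(w)$. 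The strategy is therefore to \emph{replace} each generator $f(v)$ by a single well-chosen conjugate of a pure factor $x_v$ appearing in $f(v)$, in such a way that: (a) non-adjacent $v,w$ still map to a non-commuting pair, and (b) adjacent $v,w$ still map to a commuting pair. Property (b) is automatic from the commutation analysis above no matter which pure factors we pick. For property (a), for each non-adjacent pair $v,w$ we have \emph{some} pair of pure factors that do not commute; the combinatorial task is to make a single global choice $x_v$ for each $v$ that simultaneously witnesses non-commutation against all of $v$'s non-neighbors. This is where the hypothesis that $A(\Lambda)$ has a specific structure (in particular the $\bZ*\bZ^2$ lemma proved just before, which guarantees that in a $\langle x\rangle * \langle a,b\rangle$ configuration some pure factor of $x$ commutes with neither $a$ nor $b$) gets used to propagate the choice.

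Once the pure factors $x_v$ are chosen, I would invoke Lemma~\ref{l:joingraph2}: the set $\{x_v^{\,\text{conj}} : v\in V(\Lambda)\}$, after discarding coincidences, is a set of conjugates of pure factors, and its commutation graph embeds into $\gex$ as an induced subgraph. By construction this commutation graph \emph{is} $\Lambda$ (adjacency $\iff$ commuting, by (a) and (b)). Moreover Lemma~\ref{l:joingraph2}'s proof realizes each $x_v$ as conjugate to a single generator $s_v\in\supp(u_v)$ by a conjugating element lying in $A(\gam)$ after passing through $A(\gex)$; chasing that construction shows the resulting $\psi\colon A(\Lambda)\to A(\gex)$ sends $v$ to a word whose support is exactly the clique $K_v$ cut out by $\supp(u_v)$ inside $\gex$, and that $\pi\circ\psi$ is (up to replacing generators by powers, which is harmless by the discussion of virtual embeddings in the introduction) the original map $f$, hence injective. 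This establishes part (1): $\phi=\pi\circ\psi$ is an embedding, $\psi$ lifts it, and each $\supp(\psi(v))$ is a clique of $\gex$.

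For part (2), I would add a final cleanup step to the choice of pure factors: whenever $K_v\subseteq K_w$, the clique $K_w$ already commutes with everything $K_v$ commutes with, so $v$ and $w$ have the same closed neighborhood in $\Lambda$ (or $K_v=K_w$ forces $v,w$ to behave identically). In the first case, replace the pure factor $u_w$ by a longer pure factor supported on a strictly larger connected subgraph of $\gex$ (e.g. conjugate the relevant curve/subsurface in the mapping-class-group model of Lemma~\ref{l:joingraph2} so that its support properly contains a vertex outside $K_v$); since $\gex$ is an infinite graph built by iterated doubling (Lemma~\ref{l:double}) such room always exists. One checks this does not disturb properties (a) and (b), because enlarging a support can only preserve or create non-commutation with non-neighbors and we verify it creates none with neighbors. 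Iterating over the finitely many inclusion pairs terminates because the total support size is bounded by the size of the ambient finite induced subgraph of $\gex$ we are working in, which we can take as large as needed. I expect the main obstacle to be the global consistency of step~(a): choosing, for each $v$ simultaneously, one pure factor that witnesses non-commutation against \emph{all} non-neighbors at once, rather than a different witness for each pair. The $\bZ*\bZ^2$ lemma is precisely the tool for handling the local conflicts, and I anticipate the argument runs by considering, for a fixed $v$, the subgroup generated by $f(v)$ together with $f(w_1),\dots$ over the non-neighbors $w_i$ and showing that no single pure factor of $f(v)$ can be "hidden" (commute past) all of them without contradicting injectivity of $f$.
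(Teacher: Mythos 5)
There is a genuine gap, and it sits exactly where you flagged it: step (a), the ``single global choice'' of one pure factor $x_v$ per vertex witnessing non-commutation against \emph{all} non-neighbors of $v$ at once. If that step could be carried out, the commutation graph of the chosen conjugates of pure factors would be $\Lambda$ itself, and Lemma~\ref{l:joingraph2} would then embed $\Lambda$ into $\gex$ as an induced subgraph --- i.e.\ you would have proved Conjecture~\ref{c:main}, which the paper explicitly leaves open. The $\bZ*\bZ^2$ lemma cannot do this work: it treats only the configuration $\langle x\rangle * \langle a,b\rangle$ with $[a,b]=1$, producing one pure factor of $x$ that commutes with neither of two \emph{commuting} elements; for a general vertex $v$ whose non-neighbors $w_1,\ldots,w_m$ neither commute with one another nor generate a free product with $\langle f(v)\rangle$, no such statement is available, and different non-neighbors may genuinely require different pure factors of $f(v)$ as witnesses. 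This is precisely why the theorem's conclusion is about cliques in $\gex$ (vertices of $\gex_k$) rather than single vertices of $\gex$.

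The paper's proof avoids the choice entirely. It keeps \emph{all} the conjugated pure factors $u_{i,j}^{p_i}$ of every $f(v_i)$, forms the commutation graph $Z$ of the whole collection, embeds $Z$ into $\gex$ by Lemma~\ref{l:joingraph2}, and sends $v_i$ to the product $\prod_j k_{i,j}^{a_{i,j}}$, whose support is the clique $K_i$ spanned by the vertices corresponding to the pure factors of $f(v_i)$. Injectivity of $\pi\circ\psi$ is then automatic because the composite $A(\Lambda)\to A(Z)\to A(\gam)$ is the original embedding $\iota$ --- no commutation bookkeeping for non-adjacent pairs is needed. Your argument for part (2) also goes the wrong way: enlarging the support of $u_w$ produces a map that no longer factors the original embedding, so injectivity of $\pi\circ\psi$ is lost; and $K_v\subseteq K_w$ does not force equal closed neighborhoods. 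The paper instead chooses $\psi$ minimizing $\sum_v|\supp(\psi(v))|$, observes that $K_v\subseteq K_w$ gives $\lk(w)\subseteq\st(v)$, and applies a transvection $w\mapsto vw$ (composed with a power map) to strictly shrink $\supp(\psi(w))$, a contradiction.
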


\begin{proof}
(1)
We write $V(\Lambda) = \{v_1,\ldots,v_n\}$.
Suppose $\iota:A(\Lambda)\to A(\gam)$ is an inclusion, and put $\iota(v_i)=w_i$.
For each $i=1,\ldots,n$, we have a unique pure factor decomposition
\[
w_i = \left(\prod_{j=1}^{n(i)}u_{i,j}^{a_{i,j}}\right)^{p_i}
\]
where $a_{i,j}\ne0$, $p_i\in A(\gam)$ and $u_{i,j}$ is a pure factor. We can further assume that no two elements in $\{u_{i,j}:i=1,\ldots,n\mbox{ and }j=1,\ldots,n(i)\}$ are inverses of each other.
Let $Z$ be the commutation graph of $\{u_{i,j}^{p_i}\}$.
There is a clique in $Z$ corresponding to the words $\{u_{i,j}^{p_i}:j=1,\ldots,n(i)\}$ for each $i$.

We denote the vertex of $Z$ corresponding to $u_{i,j}^{p_i}$ by $k_{i,j}$.
Let $\rho:A(Z)\to A(\gam)$ be the map sending $k_{i,j}$ to $u_{i,j}^{p_i}\in A(\gam)$. 
There is a map $\lambda:A(\Lambda)\to A(Z)$ which sends each vertex $v_i$ to the element \[\prod_{j=1}^{n(i)} k_{i,j}^{a_{i,j}}\]
Notice that $\iota=\rho\circ\lambda$ and so, $\lambda$ is injective.
By Lemma \ref{l:joingraph2}, the graph $Z$ embeds in $\gex$ as an induced subgraph.
By the proof of Theorem~\ref{t:thm1}, there is an embedding $\rho':A(Z)\to A(\gex)$ where $\rho'$ maps each vertex $k_{i,j}$ in $Z$ to a power of a vertex in $\gex$, such that $\pi\circ\rho'$ is injective.
We define $K_i$ to be the clique of $\gex$ induced by $\{\supp(\rho'(k_{i,j})):j=1,\ldots,n(i)\}$ and $\psi=\rho'\circ\lambda$. Note $\pi\circ\psi=(\pi\circ\rho')\circ\lambda$ is injective; see the following commutative diagram.
\[
\xymatrix{
&&A(\gex)\ar[dr]^\pi\\
A(\Lambda)\ar@/_1pc/[rrrd]_{\iota}\ar@/^1pc/[rru]^\psi\ar@{^{(}->}[r]_\lambda& A(Z)\ar@{^{(}->}[ur]_>>>>{\rho'}\ar@{^{(}->}[rr]
\ar[rrd]_\rho &&A(\gam) \\
 & &&A(\gam)
}
\]

For each vertex $v_i$ of $\Lambda$, 
\[
\psi(v_i) = \rho'\left(\prod_{j=1}^{n(i)} k_{i,j}^{a_{i,j}}\right)
\]
is generated by the vertices of $K_i$.

(2) Choose $\psi$ in (1) such that $\sum_{v\in V(\Lambda)} |\supp(\psi(v))|$ is minimal. Assume the contrary so that for some $v\ne w\in V(\Lambda)$,
\[
\psi(v) = a_1^{e_1}\cdots a_r^{e_r}\quad\textrm{ and }\quad
\psi(w) = a_1^{f_1}\cdots a_r^{f_r}
\]
where $\{a_1,\ldots,a_r\}$ induces a clique in $\gex$, $e_1\ne0$,
and $f_i\ne 0$ for each $i=1,\ldots,r$.
There exists $\ell m\ne0$ such that $\psi(v^{\ell} w^m)=a_2^{d_2}\cdots a_r^{d_r}$
for some $d_2,\ldots,d_r\in\Z$.
We note that if $[w,x]=1$ for some $x\in V(\Lambda)$, then each $a_i$ is adjacent to each vertex in $\supp(\psi(x))$, and so, $[\psi(v),\psi(x)]=1$.
This shows $\lk(w)\subseteq\st(v)$.
Define $\mu,\mu':A(\Lambda)\to A(\Lambda)$ by $\mu(w)=vw, \mu'(w)=w^m$ and $\mu(x)=x=\mu'(x)$ for $x\in V(\gam)\smallsetminus\{w\}$. Then $\mu'$ is a monomorphism and $\mu$ is an isomorphism, called a \emph{transvection}~\cite{Servatius1989}.

Define a new embedding $A(\Lambda)\to A(\gex)$ by $\psi' = \psi\circ\mu'\circ\mu^{\ell}$.
For a vertex $x$, we have
\[
\psi'(x)
= \begin{cases}
\psi(x)\quad & \textrm{if }x \ne w,\\
a_2^{d_2}\cdots a_r^{d_r}& \textrm{if }x=w.
\end{cases}
\]
Since $|\supp(\psi'(w))|\le r-1 < |\supp(\psi(w))|$, we have a contradiction.
\end{proof}

\begin{cor}[cf. \cite{Kambites2009}]\label{c:wkambites}
Let $\gam$ be a finite graph and let $F_2\times F_2<A(\gam)$.  Then $\gex$ contains an induced square.
\end{cor}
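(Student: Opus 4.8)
The plan is to feed the inclusion $A(C_4)\cong F_2\times F_2\hookrightarrow A(\gam)$ into Theorem~\ref{t:thm2 strong} and read off an induced square directly from the cliques it produces. Write $A(C_4)=\langle a,b,c,d\rangle$ with the cycle $a$--$b$--$c$--$d$--$a$, so that $\{a,c\}$ and $\{b,d\}$ are the two non-adjacent pairs. Theorem~\ref{t:thm2 strong}(1) yields a homomorphism $\psi\colon A(C_4)\to A(\gex)$, which is injective since $\pi\circ\psi$ is an embedding, together with cliques $K_a,K_b,K_c,K_d$ of $\gex$ with $\supp(\psi(v))=K_v$ for each vertex $v$. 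Injectivity of $\psi$ forces $[\psi(a),\psi(b)]=[\psi(b),\psi(c)]=[\psi(c),\psi(d)]=[\psi(d),\psi(a)]=1$ while $[\psi(a),\psi(c)]\neq 1\neq[\psi(b),\psi(d)]$.

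The one non-formal ingredient, which I would establish first, is the observation: if $g,h\in A(\gex)$ have supports that are cliques, then $[g,h]=1$ if and only if $\supp(g)\cup\supp(h)$ is a clique of $\gex$. The ``if'' direction is immediate, since then $g$ and $h$ lie in a common free abelian subgroup of $A(\gex)$. For ``only if'', suppose some $u\in\supp(g)$ and $u'\in\supp(h)$ are distinct and non-adjacent, and apply the natural retraction $A(\gex)\to A(\{u,u'\})\cong F_2$ killing every other generator: because $\supp(g)$ and $\supp(h)$ are cliques we get $u'\notin\supp(g)$ and $u\notin\supp(h)$, and because these are exactly the supports, the exponents of $u$ in $g$ and of $u'$ in $h$ are nonzero, so $g$ maps to a nonzero power of $u$ and $h$ to a nonzero power of $u'$; these do not commute in $F_2$, a contradiction.

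Applying the observation to the commutation data above, $K_a\cup K_b$, $K_b\cup K_c$, $K_c\cup K_d$, $K_d\cup K_a$ are cliques of $\gex$ while $K_a\cup K_c$ and $K_b\cup K_d$ are not. Since $K_a$ and $K_c$ are each cliques but $K_a\cup K_c$ is not, there exist $p\in K_a$ and $q\in K_c$ with $p\neq q$ and $p,q$ non-adjacent; similarly choose $r\in K_b$ and $s\in K_d$ with $r\neq s$ and $r,s$ non-adjacent. Next I would verify $p,q,r,s$ are pairwise distinct: for instance, if $p=r$ then $p=r\in K_b$ and $q\in K_c$ with $p\neq q$, so $p$ and $q$ are adjacent by the clique $K_b\cup K_c$, contradicting the choice of $p,q$; the coincidences $q=r$, $p=s$, $q=s$ are excluded in exactly the same way using the cliques $K_a\cup K_b$, $K_c\cup K_d$, $K_d\cup K_a$. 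With $p,q,r,s$ distinct, those same four cliques force the edges $pr$, $rq$, $qs$, $sp$, while $pq$ and $rs$ are non-edges by construction; hence $\{p,q,r,s\}$ induces the square $p$--$r$--$q$--$s$--$p$ in $\gex$.

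The main obstacle is the commuting-elements observation in the second paragraph; everything after it is bookkeeping. I expect the $F_2$-retraction to dispatch it cleanly, the only subtlety being that one must know the relevant exponents are nonzero — which is precisely why Theorem~\ref{t:thm2 strong} is phrased so that $\supp(\psi(v))$ \emph{equals} the clique $K_v$ rather than merely being contained in one.
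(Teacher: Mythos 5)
Your proposal is correct and follows essentially the same route as the paper: apply Theorem~\ref{t:thm2 strong} to realize the four generators of $A(C_4)$ with clique supports $K_a,K_b,K_c,K_d$ in $\gex$, extract a non-adjacent pair from $K_a\cup K_c$ and from $K_b\cup K_d$, and check these four vertices span an induced square. The commutation observation you isolate (clique supports commute iff their union is a clique) and the distinctness checks are exactly the facts the paper's terser proof uses implicitly.
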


This is not the exact statement of Kambites' Theorem (Corollary \ref{c:kambites}), but it is the most important step in the proof.  Combining Corollary~\ref{c:wkambites} and Lemma~\ref{l:cmfree} (2), we obtain the statement given by Kambites.

\begin{proof}[Proof of Corollary \ref{c:wkambites}]
Label the edges of a square cyclically by \[\{a,b,c,d\}.\]  By Theorem \ref{t:thm2}, there exists an embedding of the square $C_4\to \gex_k$ so that the support of each edge of $C_4$ in $\gex$ is a clique.  Denote the supports of the vertices of $C_4$ in $\gex$ by $\{V_a,V_b,V_c,V_d\}$.  There exist nonadjacent vertices $x$ and $y$ in $V_a$ and $V_c$ respectively, and nonadjacent vertices $w$ and $z$ in $V_b$ and $V_d$ respectively.  The vertices $x$ and $y$ are clearly distinct and are adjacent to each vertex of $V_b$ and $V_d$ and are therefore distinct from $w$ and $z$.  
It follows that $(x,w,y,z)$ is an induced cycle of length four in $\gex$.
\end{proof}

\section{Trees and the path on four vertices}\label{sec:trees}
Recall that a \textit{forest} is a disjoint union of trees.
In this section, we characterize the right-angled Artin groups that contain, or are contained in, the right-angled Artin groups on forests. 

\begin{prop}\label{p:only forest}
Let $\gam, \gam'$ be finite graphs such that $\gam$ is a forest.
If $A(\gam')$ embeds into $A(\gam)$, then $\gam'$ is also a forest.
\end{prop}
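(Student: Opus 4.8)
The plan is to reduce the statement to one combinatorial fact --- \emph{the extension graph of a finite forest is again a forest} --- and then feed this into the structural theorems already available.

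First I would establish that fact. Since a forest contains no cycle at all, it is in particular triangle--free, square--free, and $C_m$--free for every $m\ge5$. Feeding this into Lemma~\ref{l:cmfree}(1), (2) and (3) in turn --- the hypothesis of (3) is met because $\gam$ is triangle--free --- one gets that $\gex$ is triangle--free, square--free and $C_m$--free for every $m\ge5$, i.e.\ $\gex$ has no induced cycle whatsoever. A graph with no induced cycle has no cycle at all, because a shortest cycle in any graph is chordless; hence $\gex$ is a forest.

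From here there are two ways to finish. The short route: a forest is triangle--free, so Theorem~\ref{t:triangle--free} applies to $\gam$, and an embedding $A(\gam')\hookrightarrow A(\gam)$ forces $\gam'\le\gex$; an induced subgraph of a forest is a forest, so $\gam'$ is a forest. If one wants an argument independent of Theorem~\ref{t:triangle--free} (which is proved only later), one can argue by contradiction from Theorem~\ref{t:thm2 strong}. Suppose $\gam'$ is not a forest, so it contains a chordless cycle $C_n$ with $n\ge3$, whence $A(C_n)\le A(\gam')\le A(\gam)$. If $n=3$ then $\Z^3=A(C_3)\le A(\gam)$, contradicting Lemma~\ref{l:rank} since a forest has clique number at most $2$. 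If $n\ge4$, take the embedding $\psi\co A(\gam')\to A(\gex)$ from Theorem~\ref{t:thm2 strong}, so that $\supp(\psi(v))$ is a clique $K_v$ of $\gex$ with $K_v\not\subseteq K_w$ whenever $v\ne w$. Since $\gex$ is a forest, hence triangle--free, each $K_v$ is a single vertex or a single edge; and if $K_v=\{a,b\}$ is an edge then $\psi(v)=a^eb^f$, whose centralizer in $A(\gex)$ is just $\form{a,b}$ by the Centralizer Theorem (Theorem~\ref{t:centralizer}) and triangle--freeness, so any generator $w$ of $\gam'$ commuting with $v$ would satisfy $K_w\subseteq K_v$, contradicting Theorem~\ref{t:thm2 strong}(2); thus such a $v$ is isolated in $\gam'$. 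On the induced subgraph $\gam''$ of $\gam'$ spanned by the non--isolated vertices every $K_v$ is then a single vertex $\bar v$, and a direct check of when $\psi(v)$ and $\psi(w)$ commute shows $v\mapsto\bar v$ realizes $\gam''$ as an induced subgraph of the forest $\gex$; so $\gam''$, and hence $\gam'$, is a forest.

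I expect the only genuine obstacle to be the discrepancy between the clique graph and the extension graph: Theorem~\ref{t:thm2} by itself places $\gam'$ only inside $\gex_k$, and $\gex_k$ is \emph{not} a forest (each edge of $\gex$ spawns a triangle in $\gex_k$), so one really does need either Theorem~\ref{t:triangle--free} or the centralizer bookkeeping above to pass from $\gex_k$ back down to $\gex$. Everything here rests on the target $\gam$ being triangle--free, which is automatic for a forest.
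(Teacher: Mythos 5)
Your proof is correct, but it takes a fundamentally different route from the paper's. The paper disposes of Proposition~\ref{p:only forest} in two lines by citing external three--manifold--group results: since $\gam$ is a forest, $A(\gam)$ is a $3$--manifold group (Brunner, Droms), hence so is its subgroup $A(\gam')$, and by Droms' characterization $\gam'$ must be a disjoint union of trees and triangles; Lemma~\ref{l:rank} then excludes triangles because the clique number of a forest is at most two. Your argument instead stays entirely inside the paper's own extension--graph machinery: you first prove the (correct, and nicely packaged) fact that $\gex$ is a forest via Lemma~\ref{l:cmfree}(1)--(3) and the observation that a shortest cycle is chordless, and then pull $\gam'$ back to an induced subgraph of $\gex$ either via Theorem~\ref{t:triangle--free}, or --- to avoid the forward reference, since Theorem~\ref{t:triangle--free} appears three sections later --- via the centralizer bookkeeping on Theorem~\ref{t:thm2 strong}(2), which you carry out correctly (in a triangle--free graph a two--vertex clique $K_v$ forces $v$ to be isolated, and on the non--isolated part all $K_v$ are singletons, giving a genuine induced embedding into $\gex$). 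Both proofs are valid. The paper's is much shorter but imports nontrivial external theorems; yours is self--contained and gives along the way the reusable fact that the extension graph of a forest is a forest, at the cost of more work and, in the short route, an out--of--order dependency (which, as you note, is an expository rather than logical issue: Theorem~\ref{t:triangle--free}'s proof does not invoke this proposition). Your closing remark that Theorem~\ref{t:thm2} alone only lands you in $\gex_k$, which is never a forest once $\gex$ has an edge, is exactly the right thing to flag --- that is the obstruction your centralizer argument is built to cross, and it is precisely what the paper avoids by not touching Theorem~\ref{t:thm2} here.
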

\begin{proof}
Since $A(\gam)$ is a three-manifold group~\cite{Brunner1992}, 
so is $A(\gam')$ and $\gam'$ is a disjoint union of trees and triangles~\cite{Droms1987}. 
As the maximum rank of an abelian subgroup of $A(\gam)$ is two, 
$\gam'$ does not contain a  triangle.
\end{proof}

\begin{prop}\label{p:every forest}
Every finite forest is an induced subgraph of $P_4^e$.
\end{prop}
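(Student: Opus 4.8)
The plan is to show that every finite forest $T$ embeds as an induced subgraph of $P_4^e$, and for this it suffices (by Proposition~\ref{p:only forest} being irrelevant here, but rather by the structure of $P_4^e$) to understand $P_4^e$ concretely. First I would analyze $P_4^e$ directly. Label the vertices of $P_4$ as $a - b - c - d$. The extension graph is built, per Lemma~\ref{l:double}, by iterated doubling along stars of vertices. Doubling along $\st(b) = \{a,b,c\}$ adds a new vertex $d' = d^b$ adjacent only to $c$; doubling along $\st(c) = \{b,c,d\}$ adds a new vertex $a' = a^c$ adjacent only to $b$; and so on. Iterating, I expect $P_4^e$ to be an infinite tree: more precisely, I would verify that $P_4^e$ is itself a tree (it is connected by Lemma~\ref{l:extension}(5), it is triangle-free and square-free by Lemma~\ref{l:cmfree}, and being quasi-isometric to a tree plus the doubling description should let me check it has no cycles at all — any cycle would have to be a long induced cycle since it is triangle- and square-free, contradicting $C_m$-freeness of $P_4^e$ via Lemma~\ref{l:cmfree}(3), as $P_4$ has no induced $C_m$ for any $m$). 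So $P_4^e$ is an infinite tree.

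The key remaining question is \emph{which} infinite tree it is, and whether every finite forest embeds into it as an induced subgraph. Since $P_4^e$ is a tree, \emph{induced} subgraph just means subtree (plus isolated vertices, but a finite forest embeds in a single tree by connecting components cheaply — actually as an \emph{induced} subgraph we need the components to stay non-adjacent, so I should be careful: embed the forest into a tree, then realize that tree). The real content is: $P_4^e$ contains every finite tree as a subtree. I would prove this by exhibiting that $P_4^e$ contains vertices of arbitrarily high (indeed countably infinite) valence, and more strongly that it is a "universal" tree for finite trees. Concretely, starting from $b$, repeatedly doubling along stars of various vertices produces, at $b$, more and more pendant neighbors (each conjugate $a^w$ with $w$ built from $c,d$ contributes a distinct neighbor of $b$), and similarly every vertex of $P_4^e$ can be shown to have infinite valence. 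An infinite tree in which every vertex has infinite valence contains every countable tree as a subtree, hence every finite tree; the standard argument is a greedy embedding: order the vertices of the target finite tree $T$ by a BFS from a root, map the root anywhere, and map each subsequent vertex to an as-yet-unused neighbor of the image of its already-placed parent, which exists because valences are infinite. Since the embedding is into a tree, it is automatically induced.

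The main obstacle I anticipate is the bookkeeping to prove rigorously that \emph{every} vertex of $P_4^e$ has infinite valence (equivalently, that $P_4^e$ is a tree with no leaves — or if there are leaves, that this does not obstruct embedding arbitrary finite forests). By the orbit structure, every vertex of $P_4^e$ is a conjugate $v^w$ with $v \in \{a,b,c,d\}$, and its neighbors are the conjugates of $\st(v) \setminus \{v\}$; the point is to count how many distinct such conjugates arise. For $v = b$: neighbors are conjugates of $a$ and $c$ of the form $a^{w}$, $c^w$ lying in $\mathrm{St}(b)$-cosets — and the conjugating words can involve $c$ (for $a$) or $d$ (for $c$), giving infinitely many. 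A clean way to package this: use Lemma~\ref{l:extension}(5) — since $P_4$ is connected, not a single vertex, and does not split as a nontrivial join, $P_4^e$ has infinite diameter; combined with being a tree, every vertex has a neighbor "going outward" in infinitely many directions by homogeneity of the $A(P_4)$-action on conjugates of a fixed $v$. I would make the homogeneity precise: the stabilizer of the $\langle \mathrm{St}(v)\rangle$-coset acts, and the number of neighbors of $v^w$ equals the number of neighbors of $v$, which I compute by hand to be infinite for each $v \in \{a,b,c,d\}$ (for the endpoints $a,d$ this needs the doubling along $\st(b)$ resp. $\st(c)$ to already produce two neighbors, then induction). Once infinite valence at every vertex is established, the greedy embedding argument finishes the proof, and since $P_4^e$ is a tree the image is induced, giving $A(T) \le A(P_4^e)$, hence $A(T) \le A(P_4)$ by Theorem~\ref{t:thm1}.
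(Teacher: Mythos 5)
Your overall architecture is sound and close in spirit to the paper's (which works inside the subtree of $P_4^e$ spanned by the conjugates of $b$ and $c$ and adds one leaf at a time), and your observation that $P_4^e$ is a tree is correct: by Lemma~\ref{l:cmfree} it has no induced cycle of any length, a shortest cycle in any graph is induced, and connectivity comes from Lemma~\ref{l:extension}(5). But there is a genuine error at exactly the point you flag as the main obstacle: it is \emph{false} that every vertex of $P_4^e$ has infinite valence. The conjugates of the two endpoints $a$ and $d$ are leaves of $P_4^e$. Indeed, $\st(a)=\{a,b\}$, so the only candidates for neighbors of $a$ are conjugates $b^y$, and by the commutation criterion ($a^x$ and $b^y$ adjacent iff $\form{\st(a)}x\cap\form{\st(b)}y\ne\varnothing$) the vertex $b^y$ is adjacent to $a$ iff $y\in\form{\st(b)}\form{\st(a)}=\form{a,b,c}$, which forces $b^y=b$. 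So $a$ has exactly one neighbor. Your specific claim that doubling along $\st(b)$ already gives $a$ two neighbors is also wrong: that double only duplicates $d$, and $a$ keeps valence one. Since your greedy embedding extends from an arbitrary already-placed vertex to a fresh neighbor of it, it breaks whenever the parent lands on a conjugate of $a$ or $d$.

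The fix is exactly the paper's move: restrict to the induced subtree $X$ on the conjugates of $b$ and $c$. There every vertex does have infinitely many neighbors inside $X$ (e.g.\ the neighbors of $b$ among conjugates of $c$ are parametrized by the infinitely many cosets $\form{b,c,d}w$ with $w\in\form{b,c,d}\form{a,b,c}$), and your greedy argument then goes through, with inducedness automatic because $P_4^e$ is a tree: a new neighbor of the parent adjacent to any other already-placed vertex would close a cycle. You would still need to make the ``fresh neighbor'' step effective — the paper does this with an explicit coset computation showing $c^{a^M}$ works for large $M$ — and to handle disconnected forests either by Lemma~\ref{l:disjoint} or by adjoining a root joined by an edge to one vertex of each component, embedding the resulting tree, and deleting the root's image. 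As written, though, the proof rests on a false structural claim about $P_4^e$ and does not go through.
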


\begin{proof}
Label $V(P_4)$ as $\{a,b,c,d\}$ such that
$[a,b]=[b,c]=[c,d]=1$ in $G=A(P_4)$.
Put $B = \form{\st(b)}=\form{a,b,c}$ and $C=\form{\st(c)}=\form{b,c,d}$.
Let $X$ be the induced subgraph of $P_4^e$
spanned by the vertices which are conjugates of $b$ or $c$.

By Lemma~\ref{l:disjoint}, it suffices to show that every finite tree $T$ is an induced subgraph of $P_4^e$.
We use an induction on the number of vertices of $T$.
Assume every tree with at most $k$ vertices embeds into $X$ as an induced subgraph,
and fix a tree $T$ with $k+1$ vertices.
Choose a valence-one vertex $v_0$ in $T$ and let $T'$ be the induced subgraph of $T$ spanned by $V(T)\smallsetminus\{v_0\}$.
By inductive hypothesis, $T'$ can be regarded as an induced subgraph of $X$. Let $v_1$ be the unique vertex of $T'$ that is adjacent to $v_0$.
Without loss of generality, we may assume that $v_1 = b$.
Write $V(T') = \{b^{u_0},b^{u_1},\ldots,b^{u_p},c^{v_1},\ldots,c^{v_q}\}$ 
such that $u_0=1$ and $u_i\not\in B$ for $i>0$.
Recall that for $i=1,2,\ldots,p$, $c^w$ and $b^{u_i}$ are non-adjacent if and only if $Bu_i\cap Cw=\varnothing$; this is equivalent to that $w\not\in CBu_i$.
Also, $c^w$ and $c^{v_j}$ are distinct if and only if $w\not\in Cv_j$.
Hence, the following claim implies that the induced subgraph of $X$ spanned by $V(T')\cup\{c^w\}$ is isomorphic to $T$ for $w = a^M$.

\textbf{Claim.} For some $M>0$, $a^M\in CB\smallsetminus(\bigcup_{i=1}^p CBu_i\cup \bigcup_{j=1}^q Cv_j)$.
Choose $M$ to be larger than the maximum number of occurrences of $a^{\pm1}$ in $u_1,\ldots,u_p,v_1,\ldots,v_q$.
It is clear that $a^M$ does not belong to $Cv_j$, 
since $a^{\pm1}$ occurs at most $M-1$ times in each element of $Cv_j = \form{b,c,d}v_j$.
Suppose $a^M \in CBu_i$ for some $i=1,\ldots,p$.
One can choose $w_1(b,d)\in\form{b,d}$ and $w_2(a,c)\in\form{a,c}$ such that $a^M = w_1(b,d)w_2(a,c)u_i$.
Since $u_i$ is not in $B$, there exists a $d^{\pm1}$ in $u_i$.
In particular, one can write $a^M = w' d^{\pm1}w'' d^{\mp1}w'''$ where $w'$ is a subword of $w_1(b,d)$, $w'''$ is a subword of $u_i$, and $w''$ is a word in $\st(d) = \form{c,d}$. The number of the occurrences of $a^{\pm1}$ on the right-hand-side is at most that of $w'''$, which is less than $M$. This is a contradiction.
\end{proof}

\begin{lem}[cf.~\cite{CLB1981}]\label{l:cograph}
For a finite graph $\gam$, the following are equivalent.
\begin{enumerate}[(i)]
\item $\gam$ is $P_4$--free.
\item $\gex$ is $P_4$--free.
\item Each connected component of an arbitrary induced subgraph of $\gam$ either is an isolated vertex or splits as a nontrivial join.
\item $\gam$ can be constructed from isolated vertices by taking successive disjoint unions and joins.
\end{enumerate}
\end{lem}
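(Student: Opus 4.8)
The plan is to close the cycle of implications (ii)$\Rightarrow$(i)$\Rightarrow$(ii) together with the classical chain (i)$\Rightarrow$(iii)$\Rightarrow$(iv)$\Rightarrow$(i). The equivalence of (i), (iii) and (iv) is the standard characterization of cographs \cite{CLB1981}, so the genuinely new content is the equivalence (i)$\Leftrightarrow$(ii). Of these, (ii)$\Rightarrow$(i) is immediate: $\gam$ is an induced subgraph of $\gex$ by construction, so any induced $P_4$ in $\gam$ is already an induced $P_4$ in $\gex$.

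For (i)$\Rightarrow$(ii) I would argue exactly as in the proof of Lemma~\ref{l:cmfree}. Assume for contradiction that $\gex$ contains an induced $P_4$. Since $P_4$ is finite, Lemma~\ref{l:double} puts this copy inside some $\gam_l$ in a tower $\gam=\gam_0\le\gam_1\le\cdots\le\gam_l\le\gex$ in which each $\gam_i$ is the double of $\gam_{i-1}$ along the star of a vertex. Now Lemma~\ref{l:star double detail}~(3) says that if $\gam_i$ contains an induced $P_4$ then so does $\gam_{i-1}$; running this from $i=l$ down to $i=1$ shows that $\gam=\gam_0$ contains an induced $P_4$, contradicting (i). Alternatively, once (i)$\Leftrightarrow$(iv) is known, one may assemble $\gex$ from countable discrete graphs by iterated disjoint unions and joins using Lemma~\ref{l:extension}~(1)--(2), and observe that the class of $P_4$--free graphs is closed under arbitrary disjoint unions and joins; I expect the doubling argument to be the cleaner of the two.

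For the classical equivalences I would include short self--contained proofs. The implication (iv)$\Rightarrow$(i) follows because the class of graphs built from isolated vertices by iterated disjoint unions and joins is closed under passing to induced subgraphs (induction on the construction), while $P_4$ lies outside this class: it is not an isolated vertex; being connected it is not a nontrivial disjoint union; and ${P_4}\opp\cong P_4$ is connected, so it is not a nontrivial join. For (iii)$\Rightarrow$(iv) I would induct on $|V(\gam)|$: if $\gam$ is disconnected then each of its components again satisfies (iii) and has fewer vertices, so the inductive hypothesis expresses each as a union/join of isolated vertices and hence so is $\gam$; if $\gam$ is connected with at least two vertices then (iii) applied to $\gam$ itself gives a nontrivial join decomposition $\gam=\gam_1\ast\gam_2$, and the inductive hypothesis applies to the two smaller factors, which again satisfy (iii); the empty graph and a single vertex are immediate.

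The main obstacle is (i)$\Rightarrow$(iii), the one step that needs real combinatorics. Because induced subgraphs of $P_4$--free graphs are $P_4$--free, and the components of a $P_4$--free graph are connected and $P_4$--free, this reduces to the classical fact that a connected $P_4$--free graph $G$ on at least two vertices splits as a nontrivial join, i.e.\ ${G}\opp$ is disconnected. I would prove this by induction on $|V(G)|$. Delete a vertex $v$. If $G\smallsetminus\{v\}$ is disconnected, then $P_4$--freeness forces $v$ to be adjacent to every other vertex: otherwise, within one component of $G\smallsetminus\{v\}$ one finds adjacent vertices $b,c$ with $v\sim b$ and $v\not\sim c$, and a neighbour $a$ of $v$ in another component, whence $(a,v,b,c)$ spans an induced $P_4$; so $v$ is universal and $G=\{v\}\ast(G\smallsetminus\{v\})$. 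If $G\smallsetminus\{v\}$ is connected, the inductive hypothesis gives $G\smallsetminus\{v\}=H_1\ast H_2$ with both factors nonempty, and a short case analysis of how $v$ attaches to $H_1$ and $H_2$ — systematically forbidding induced paths $v$--$x$--$y$--$z$ with $x,z$ in one factor and $y$ in the other — shows that $v$ is either universal or can be absorbed into one of the two factors, again exhibiting $G$ as a nontrivial join. This is precisely the argument of \cite{CLB1981} and can simply be cited.
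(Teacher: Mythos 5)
Your overall architecture is sound and your treatment of the genuinely new implication matches the paper: the paper also disposes of (i)$\Leftrightarrow$(ii) by combining the doubling tower of Lemma~\ref{l:double} with Lemma~\ref{l:star double detail}~(3), exactly as you describe. Where you diverge is in how the join condition (iii) is reached. The paper proves (ii)$\Rightarrow$(iii) geometrically: for a connected component $\Lambda$ of an induced subgraph, $\Lambda^e$ is $P_4$--free, hence has bounded diameter (a connected graph of diameter at least $3$ contains an induced $P_4$ along a geodesic), and Lemma~\ref{l:extension}~(5) then forces $\Lambda$ to be an isolated vertex or a nontrivial join. This leans on the mapping-class-group machinery behind Lemma~\ref{l:extension}~(5) but is very short. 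You instead prove (i)$\Rightarrow$(iii) by the classical combinatorial induction; that is more elementary and self-contained, and is a legitimate alternative route since your cycle of implications still closes.

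There is, however, a flaw in your sketch of that classical induction, in the case where $G\smallsetminus\{v\}$ is connected. It is not true that $v$ must be universal or absorbable into one of the two given factors $H_1, H_2$. Take $G=K_5$ minus the two edges $vx$ and $vx'$: this is $P_4$--free (its complement is $P_3\coprod K_1\coprod K_1$) and connected, and $G\smallsetminus\{v\}=K_4$ may be handed to you as $H_1\ast H_2$ with $H_1=\{y,x\}$, $H_2=\{y',x'\}$; then $v$ is adjacent to neither all of $H_1$ nor all of $H_2$, so no absorption into either factor is possible, even though $G$ is still a nontrivial join ($G=\{y,y'\}\ast\{v,x,x'\}$) --- the decomposition just has to be regrouped. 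So the case analysis you propose does not terminate as stated; one must either work with a finer (atomic) join decomposition, or argue in the complement that $G\opp$ is disconnected, as in the standard proofs. Since you explicitly fall back on citing \cite{CLB1981} (as the paper itself does for the classical equivalences), this does not invalidate your proof, but the inductive step should not be presented as routine in the form you gave.
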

Also known as \emph{cographs}, $P_4$--free graphs are extensively studied~\cite{CLB1981}. Here, we give a self-contained proof for readers' convenience.

\begin{proof}[Proof of Lemma~\ref{l:cograph}]
(i) $\Leftrightarrow$ (ii) follows from Lemma~\ref{l:star double detail} (3).

For (ii) $\Rightarrow$ (iii), suppose $\gex$ is $P_4$--free, and choose a connected component $\Lambda$ of some induced subgraph of $\gam$. Then $\Lambda$ and $\Lambda^e$ are also $P_4$--free.
Since $\Lambda$ has no path on four vertices, it must certainly have bounded diameter. By Lemma~\ref{l:extension} (5),
This can happen only if $\Lambda$ is an isolated vertex or splits as a nontrivial join.  

(iii) $\Rightarrow$ (iv) is immediate from induction on $|V(\gam)|$.

(iv) $\Rightarrow$ (i) follows from the observation that the join of two $P_4$--free graphs are still $P_4$--free.
\end{proof}

We will now prove Theorem \ref{t:p4embed}.  Recall the statement:
\begin{thm6}
There is an embedding $A(P_4)\to A(\gam)$ if and only if $P_4$ arises as an induced subgraph of $\gam$.
\end{thm6}


\begin{proof}[Proof of Theorem~\ref{t:p4embed}]
We use an induction on the number of vertices of $\gam$.
Suppose there is an embedding $\phi:A(P_4)\to A(\gam)$ and $\gam$ is $P_4$--free.
Note that $A(P_4)$ is freely indecomposable. 
By Kurosh subgroup theorem, we may assume that $\gam$ is connected.
From the characterization of $P_4$--free graphs, one can write $\gam=\gam_1\ast\gam_2$ for some nonempty $P_4$--free graphs $\gam_1$ and $\gam_2$. 
Let $\pi_i$ denote the projection $A(\gam)\to A(\gam_i)$.  
By the inductive hypothesis, the kernel $K_i$ of $\pi_i\circ\phi$ is nontrivial. 
The subgroup $K_1K_2$ of $A(P_4)$ is isomorphic to $K_1\times K_2$.  If we can show that $K_1$ and $K_2$ are both nonabelian then we are done.  Indeed, then we obtain an embedding $A(C_4)\cong F_2\times F_2\to A(P_4)$ and this contradicts to Proposition~\ref{p:only forest}.

To see that $K_1$ and $K_2$ are both nonabelian, notice that they are both normal in $A(P_4)$. 
Fix $i=1$ or $2$.
Since $A(P_4)$ is centerless, there exist $g\in K_i$ and $h\in A(P_4)$ such that $[g,h]\ne1$.
By~\cite{Baudisch1981}, $g$ and $h$ generate a copy of $F_2$.
In particular, $g$ and $g^h$ generate a copy of $F_2$ in $K_i$.
The conclusion follows.
\end{proof}

The argument given in Theorem \ref{t:p4embed} is a reflection of a more general principle concerning right-angled Artin subgroups of right-angled Artin groups on joins:
\begin{thm}\label{t:join}
Let $\Lambda$ be a finite graph whose associated right-angled Artin group has no center and let $J=J_1*J_2$ be a nontrivial join.  Suppose we have an embedding $A(\Lambda)\to A(J)$.  Let $\pi_1$ and $\pi_2$ be the projections of $A(J)$ onto $A(J_1)$ and $A(J_2)$.  Restricting each $\pi_i$ to $A(\Lambda)$, we write $K_1$ and $K_2$ for the two kernels.  Then either at least one of $K_1$ and $K_2$ is trivial or $\Lambda$ contains a induced  square.
\end{thm}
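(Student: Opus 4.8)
The plan is to imitate the proof of Theorem~\ref{t:p4embed}, stripping it down to the two features of $A(P_4)$ that were actually used: that it is centerless, and that it is freely indecomposable (so that Baudisch's two-generator subgroup theorem and a Kurosh-type reduction apply). Here freely-indecomposability is replaced by the hypothesis that $A(\Lambda)$ has no center, together with normality of the $K_i$; so I expect the argument to go through almost verbatim, with Proposition~\ref{p:only forest} replaced by the observation that a square in $\gex$ already gives a square in $\Lambda$ only when $\Lambda$ itself is a forest (which we cannot assume here), so instead the conclusion of the theorem is ``$\Lambda$ contains an induced square'' rather than a contradiction.

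First I would dispose of the trivial cases: if $K_1$ or $K_2$ is trivial we are done, so assume both are nontrivial. Both $K_1$ and $K_2$ are normal in $A(\Lambda)$, being kernels of homomorphisms from $A(\Lambda)$. Since $A(\Lambda)$ is centerless, for each $i$ there exist $g_i\in K_i$ and $h_i\in A(\Lambda)$ with $[g_i,h_i]\neq 1$ (otherwise every element of the nontrivial normal subgroup $K_i$ would be central). By the result of Baudisch~\cite{Baudisch1981}, any two non-commuting elements of a right-angled Artin group generate a copy of $F_2$; hence $g_i$ and $g_i^{h_i}$ generate a copy of $F_2$ inside $K_i$, using normality of $K_i$. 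Therefore each $K_i$ is nonabelian, and in particular contains a copy of $F_2$.

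Next, since $K_1$ and $K_2$ are the kernels of the two coordinate projections of $A(J)=A(J_1)\times A(J_2)$ restricted to $A(\Lambda)$, they commute elementwise: $K_1$ maps into $A(J_2)$ and lies in the kernel of the first projection, so $K_1\subseteq 1\times A(J_2)$ up to the identification, and similarly $K_2\subseteq A(J_1)\times 1$. Thus $[K_1,K_2]=1$ and the subgroup they generate is isomorphic to $K_1\times K_2$. Pick copies of $F_2$ inside each $K_i$ as above; together they generate a subgroup isomorphic to $F_2\times F_2\cong A(C_4)$ inside $A(\Lambda)$. By Theorem~\ref{t:thm2} (equivalently Corollary~\ref{c:wkambites}), $A(C_4)\le A(\Lambda)$ forces $\Lambda^e$ to contain an induced square, and then by Lemma~\ref{l:cmfree} (2), since a square is not removed, one checks directly (or invokes Corollary~\ref{c:kambites}) that $\Lambda$ itself contains an induced square. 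This completes the proof.

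The only step that requires genuine care is the very first reduction in the original $P_4$ argument — there a Kurosh-subgroup argument was used to assume $\gam$ connected, exploiting free indecomposability of $A(P_4)$. Here $A(\Lambda)$ need not be freely indecomposable, but I do not actually need $J$ connected: the decomposition $J=J_1*J_2$ is given in the hypothesis, and the product structure $A(J)=A(J_1)\times A(J_2)$ is all that the argument uses. So I expect no obstacle there. The main thing to get right is the bookkeeping showing $\langle K_1,K_2\rangle\cong K_1\times K_2$ and that the two $F_2$'s assemble into an honest $F_2\times F_2$; this is routine but is where a careless argument could slip.
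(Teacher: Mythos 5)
Your argument is correct and is essentially the paper's own proof: establish $K_1\cap K_2=1$ and $[K_1,K_2]=1$ so that $K_1K_2\cong K_1\times K_2$, use normality plus centerlessness of $A(\Lambda)$ and Baudisch's theorem (via $g_i$ and $g_i^{h_i}$) to show each nontrivial $K_i$ contains $F_2$, and then apply Kambites' theorem (Corollary~\ref{c:kambites}, i.e.\ Corollary~\ref{c:wkambites} together with Lemma~\ref{l:cmfree}(2)) to the resulting $F_2\times F_2\le A(\Lambda)$. The paper's proof is just a condensed version of this, citing the identical $F_2$-in-a-normal-subgroup argument from the proof of Theorem~\ref{t:p4embed}.
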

\begin{proof}
Since $A(\Lambda)$ is embedded in $A(J)$, the intersection $K_1\cap K_2$ is trivial.  Therefore, $K_1K_2\cong K_1\times K_2$.  Since each $K_i$ is normal in $A(\Lambda)$ and since $A(\Lambda)$ has no center, either at least one of $K_1$ and $K_2$ is trivial or we can realize $F_2\times F_2$ as a subgroup of $A(\Lambda)$.  In the latter case, $\Lambda$ must contain a induced  square by Corollary~\ref{c:kambites}.
\end{proof}

The conclusion of the previous result holds in particular whenever $\Lambda$ does not split as a nontrivial join.

\begin{cor}\label{c:37}
Suppose $\Lambda$ is a square--free graph.
If $A(\Lambda)$ is centerless and contained in $A(J_1*J_2)$, then $A(\Lambda)$ embeds in $A(J_1)$ or $A(J_2)$.
\end{cor}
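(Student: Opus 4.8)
The plan is to derive Corollary~\ref{c:37} directly from Theorem~\ref{t:join}, since that theorem was set up precisely to feed this statement. First I would recall the hypotheses: $\Lambda$ is square--free, $A(\Lambda)$ is centerless, and we are given an embedding $A(\Lambda)\hookrightarrow A(J)$ where $J=J_1*J_2$. I would immediately invoke Theorem~\ref{t:join} with this embedding, obtaining the two kernels $K_1=\ker(\pi_1\circ\iota)$ and $K_2=\ker(\pi_2\circ\iota)$, where $\pi_i\co A(J)\to A(J_i)$ is the canonical projection and $\iota$ is the given inclusion.

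Next, Theorem~\ref{t:join} gives a dichotomy: either $\Lambda$ contains an induced square, or at least one of $K_1,K_2$ is trivial. Since $\Lambda$ is square--free by hypothesis, the first alternative is impossible, so we conclude that $K_i$ is trivial for some $i\in\{1,2\}$. Without loss of generality, say $K_1=1$. Then $\pi_1\circ\iota\co A(\Lambda)\to A(J_1)$ is a homomorphism with trivial kernel, i.e.\ an embedding. Hence $A(\Lambda)$ embeds in $A(J_1)$, which is the desired conclusion (with the symmetric case giving an embedding into $A(J_2)$).

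The one point that deserves a sentence of care is verifying that Theorem~\ref{t:join} genuinely applies: it requires that the right-angled Artin group on the source graph have no center, which is exactly our assumption that $A(\Lambda)$ is centerless, and it requires $J=J_1*J_2$ to be a nontrivial join, which is part of the notation $J_1*J_2$ as used in the statement. No further obstacle arises; the corollary is essentially a restatement of Theorem~\ref{t:join} under the extra square--free hypothesis, which forces the ``good'' branch of the dichotomy. Thus the proof is short:

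\begin{proof}
Apply Theorem~\ref{t:join} to the given embedding $A(\Lambda)\le A(J_1*J_2)$, with $K_1$ and $K_2$ the kernels of the compositions of this embedding with the projections onto $A(J_1)$ and $A(J_2)$. Since $A(\Lambda)$ is centerless, the theorem implies that either one of $K_1,K_2$ is trivial or $\Lambda$ contains an induced square. As $\Lambda$ is square--free, the latter cannot occur, so $K_i$ is trivial for some $i$. Then the composition of the embedding $A(\Lambda)\to A(J_1*J_2)$ with the projection onto $A(J_i)$ is injective, giving the desired embedding $A(\Lambda)\hookrightarrow A(J_i)$.
\end{proof}
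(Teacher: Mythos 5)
Your proof is correct and is exactly the argument the paper intends: Corollary~\ref{c:37} is an immediate consequence of Theorem~\ref{t:join}, where the square--free hypothesis rules out the second branch of the dichotomy and the trivial kernel gives an injective projection onto one factor. No gaps.
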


\begin{cor}\label{c:z star z2}
Suppose we have an embedding $\Z*\Z^2\to A(\gam)$.  Then $\gam$ contains a disjoint union of an edge and a point as an induced subgraph.
\end{cor}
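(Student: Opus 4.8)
The plan is to reduce the embeddability question to a structural dichotomy for $\gam$. Set $\Lambda$ to be the disjoint union of a single edge with a single vertex, so that $A(\Lambda)\cong\Z*\Z^2$; the corollary is precisely the assertion that $A(\Lambda)\le A(\gam)$ forces $\Lambda\le\gam$. I would argue by contradiction and suppose $\gam$ contains no induced $\Lambda$. The first step is a purely combinatorial remark: $\Lambda$ is the complement of $P_3$, so ``$\gam$ is $\Lambda$--free'' is equivalent to ``$\gam\opp$ is $P_3$--free'', which in turn means that every connected component of $\gam\opp$ is a complete graph. Dualizing, this says $\gam=\Gamma_1*\cdots*\Gamma_r$ for some edgeless graphs $\Gamma_1,\dots,\Gamma_r$, so $A(\gam)$ is a direct product of finitely many (possibly cyclic) free groups.

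The second step is an induction on $r$ proving that $\Z*\Z^2$ does not embed into such a product, which is the desired contradiction. When $r=1$ the group $A(\gam)$ is free; since $\Z*\Z^2$ contains $\Z^2$, Lemma~\ref{l:rank} (largest free abelian subgroup equals largest clique) rules this out. When $r\ge 2$, write $\gam=J_1*J_2$ with $J_1=\Gamma_1$ and $J_2=\Gamma_2*\cdots*\Gamma_r$, a nontrivial join. The group $\Z*\Z^2$ is a free product of nontrivial groups, hence centerless, and $\Lambda$ is trivially square--free (it has only three vertices), so Corollary~\ref{c:37} applies: an embedding $A(\Lambda)\to A(J_1*J_2)$ yields an embedding $A(\Lambda)\to A(J_1)$ or $A(\Lambda)\to A(J_2)$. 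The first is impossible because $A(J_1)$ is free, and the second is impossible by the inductive hypothesis, since $J_2$ is a join of $r-1$ edgeless graphs. This completes the induction and the proof.

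The steps are short, and I expect the only care needed is in the bookkeeping: correctly matching ``$\Lambda$--free'' with ``$\gam$ is a join of discrete graphs'' through the complement operation, and verifying at each stage of the induction that Corollary~\ref{c:37} is genuinely applicable --- in particular that the join $J_1*J_2$ is nontrivial and that $J_2$ again has the required form so that one may recurse. The mild subtlety --- peeling off one edgeless factor at a time rather than splitting arbitrarily --- is what keeps the induction going. As an alternative one could bypass Corollary~\ref{c:37} entirely: if $\gam=\Gamma_1*\cdots*\Gamma_r$ with each $\Gamma_i$ edgeless then, by Lemma~\ref{l:extension}(1)--(2), $\gex=\Gamma_1^e*\cdots*\Gamma_r^e$ is again a join of edgeless graphs and hence has no induced $\Lambda$; but $\Lambda$ is a forest, so $A(\Lambda)\le A(\gam)$ would force $\Lambda\le\gex$ by Corollary~\ref{c:forest}, a contradiction.
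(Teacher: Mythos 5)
Your main argument is correct, and it reaches the conclusion by a route close in spirit to the paper's but organized around a sharper combinatorial reduction. The paper reduces to $\gam$ being $P_4$-free (hence a cograph, via Lemma~\ref{l:cograph}) and then splits on whether $\gam$ is connected: in the connected case it writes $\gam$ as a nontrivial join and recurses via Corollary~\ref{c:37}; in the disconnected case it notes that $\Z^2\le A(\gam)$ forces an edge in some component (Lemma~\ref{l:rank}), and a vertex in another component supplies the induced $\Lambda$. You instead observe that $\Lambda$-freeness of $\gam$ is exactly the condition that $\gam\opp$ is $P_3$-free, i.e.\ that $\gam$ is a join of edgeless graphs and $A(\gam)$ is a finite direct product of free groups; a single contradiction induction on the number of join factors then closes the argument, with the base case by Lemma~\ref{l:rank} and the inductive step by Corollary~\ref{c:37}. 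Both proofs lean on the same two lemmas, so the essential content is identical, but your reduction to $A(\gam)\cong F_{n_1}\times\cdots\times F_{n_r}$ collapses the paper's connected/disconnected dichotomy into one clean induction. One caveat about the proposed shortcut via Corollary~\ref{c:forest}: that corollary is proved \emph{after} the present one in the paper, and its treatment of the small-diameter disconnected case is terse in a way that arguably appeals to exactly the present result, so that route risks circularity; the argument via Corollary~\ref{c:37} should be regarded as the real proof.
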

\begin{proof}
Clearly we may assume that $\gam$ is $P_4$--free.
If $\gam$ is connected, then $\gam$ is a nontrivial join $\gam=J_1*J_2$. In this case, $\Z*\Z^2$ embeds in $A(J_1)$ by Corollary \ref{c:37}. The conclusion follows from induction.
Now assume $\gam$ is disconnected.  Since $\Z^2$ has rank two, at least one of the components of $\gam$ has an edge. Therefore, $\gam$ contains a disjoint union of an edge and a vertex as an induced subgraph.
\end{proof}

\begin{proof}[Proof of Corollary~\ref{c:forest}]
Let $d$ be the diameter of a largest component of $\Lambda$.
If $d\ge3$, then $A(P_4)\le \aga$ and so, Theorem \ref{t:p4embed} and Proposition~\ref{p:every forest} imply that $\gex$ contains every finite forest as an induced subgraph.

Assume $d=2$. In this case, $P_3\le\Lambda$. 
We see $P_3\le\gam$ for otherwise, $\gam$ would be complete.
So, $\gex$ contains an induced  $P_3^e$, which is the join of a vertex and an infinite discrete graph.
Hence, $\gex$ contains each component of $\Lambda$ as an induced subgraph.
If $\Lambda$ is connected or $\gam$ does not split as a nontrivial join, this would imply that $\Lambda\le\gex$; see Lemma~\ref{l:disjoint}.
Suppose $\Lambda$ is disconnected and $\gam=\gam_1\ast\gam_2$. 
Since $A(\Lambda)$ has no center, Theorem~\ref{t:join} implies that $A(\Lambda)\le A(\gam_i)$ for $i=1$ or $2$. By induction, we deduce that $\Lambda\le\gex$.
When $d\le 1$, $A(\Lambda)$ is abelian and the proof is easy.
\end{proof}

\section{Complete bipartite graphs}\label{sec:complete bipartite}
We denote the complete graph on $n$ vertices by $K_n$. The complete bipartite graph which is the join of $m$ and $n$ vertices is written as $K_{m,n}$. For convention, we also regard discrete graphs $K_{n,0}$ and $K_{0,n}$ as complete bipartite graphs.
Conjecture~\ref{c:main} is true when $\Gamma$ is complete bipartite; more precisely, one can classify right-angled Artin groups embedded in $A(K_{m,n})$ as follows.

\begin{cor}\label{c:complete bipartite}
Let $\Lambda$ be a finite graph.
\begin{enumerate}
\item
Suppose $m,n\ge2$. Then $A(\Lambda)\le A(K_{m,n})$ if and only if $\Lambda\cong K_{p,q}$ for some $p,q\ge0$.
\item
Suppose $n\ge2$. Then $A(\Lambda)\le A(K_{1,n})$ if and only if $\Lambda\cong K_{p,q}$ for some $0\le p\le 1$ and $q\ge0$.
\item
$A(\Lambda)\le A(K_{1,1})$ if and only if $\Lambda\cong K_{p,q}$ for some $0\le p,q\le 1$.
\end{enumerate}
\end{cor}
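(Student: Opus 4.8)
The plan is to prove the two directions with different tools: the extension graph together with Theorem~\ref{t:thm1} produces the embeddings, while Lemma~\ref{l:rank} and the two--dimensional corollaries of Theorem~\ref{t:thm2} (Corollaries~\ref{c:z star z2} and~\ref{c:kambites}) rule out everything else. Theorem~\ref{t:thm2} itself is too blunt here: the clique graph $(K_{m,n})^e_k$ already contains induced $P_4$'s, triangles and disjoint unions of two edges, so the two--dimensionality of $A(K_{m,n})$ must be injected through the sharper corollaries rather than through Theorem~\ref{t:thm2} directly.

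\emph{Sufficiency.} Since $K_{m,n} = K_m\opp * K_n\opp$ is a join, Lemma~\ref{l:extension}(1) gives $(K_{m,n})^e = (K_m\opp)^e * (K_n\opp)^e$. For $m\ge2$ the graph $K_m\opp$ is a disjoint union of isolated vertices, so by Lemma~\ref{l:extension}(2) its extension graph is a countably infinite discrete graph; for $m=1$ it is a single vertex equal to its own extension graph. Hence $(K_{m,n})^e$ is $K_{\aleph_0,\aleph_0}$ when $m,n\ge2$, is $K_{1,\aleph_0}$ when $m=1$ and $n\ge2$, and is $K_{1,1}$ when $m=n=1$. In each case the finite induced subgraphs of $(K_{m,n})^e$ are exactly the graphs $K_{p,q}$ listed in the corresponding part of the statement (any induced subgraph of a complete bipartite graph is again complete bipartite), so Theorem~\ref{t:thm1} gives $A(K_{p,q})\le A(K_{m,n})$ for each such $\Lambda$.

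\emph{Necessity, part (1).} Suppose $A(\Lambda)\le A(K_{m,n})$. As $A(K_{m,n})\cong F_m\times F_n$ contains no $\Z^3$ (the clique number of $K_{m,n}$ is $2$), Lemma~\ref{l:rank} forces $\Lambda$ to be triangle--free. Next, $\Lambda$ has no induced copy of the disjoint union $K_2\coprod K_1$ of an edge and a vertex: otherwise $\Z*\Z^2 = A(K_2\coprod K_1)\le A(\Lambda)\le A(K_{m,n})$, and Corollary~\ref{c:z star z2} would force $K_{m,n}$ to contain an induced $K_2\coprod K_1$, which is impossible since in a complete bipartite graph any three vertices span either a discrete graph or a path $P_3$. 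Finally, a triangle--free graph with no induced $K_2\coprod K_1$ is complete bipartite: passing to complements, the absence of an induced $K_2\coprod K_1$ says $\Lambda\opp$ has no induced $P_3$, hence $\Lambda\opp$ is a disjoint union of cliques and $\Lambda$ is complete multipartite, and triangle--freeness caps the number of parts at two. Thus $\Lambda\cong K_{p,q}$.

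\emph{Parts (2) and (3).} For part (2) it remains to exclude $A(K_{p,q})\le A(K_{1,n})$ with $p,q\ge2$ and $n\ge2$: since $K_{2,2}=C_4$ is an induced subgraph of $K_{p,q}$, such an embedding would yield $F_2\times F_2 = A(C_4)\le A(K_{1,n})$, and Corollary~\ref{c:kambites} would force the star $K_{1,n}$ to contain an induced square, which is impossible as a star is a tree. Hence one part of $\Lambda$ has size at most $1$, as required. Part (3) is immediate: $A(K_{1,1})\cong\Z^2$, so $A(\Lambda)$ is free abelian of rank at most $2$, forcing $\Lambda$ to be complete on at most two vertices, i.e.\ $\Lambda\cong K_{p,q}$ with $0\le p,q\le1$. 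The only real obstacle in this argument is recognizing which prior result kills each forbidden configuration — Corollary~\ref{c:z star z2} for $K_2\coprod K_1$, Corollary~\ref{c:kambites} for $C_4$, Lemma~\ref{l:rank} for $K_3$ — together with the elementary identity ``triangle--free $+$ no induced $K_2\coprod K_1$ $=$ complete bipartite.''
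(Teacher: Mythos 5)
Your proof is correct and follows essentially the same route as the paper's: sufficiency via the computation of $(K_{m,n})^e$ as a complete bipartite graph on countable parts together with Theorem~\ref{t:thm1}, and necessity via Lemma~\ref{l:rank}, Corollary~\ref{c:z star z2}, and Corollary~\ref{c:kambites}. The only (harmless) divergence is the final combinatorial step: the paper deduces that a connected $\Lambda$ splits as a nontrivial join from $P_4$--freeness and Lemma~\ref{l:cograph}, whereas you pass to the complement and use that $P_3$--free graphs are disjoint unions of cliques; both arguments are valid.
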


\begin{proof}
We first show that if $\gam=K_{m,n}$ and $A(\Lambda)\le A(\gam)$, then $\Lambda$ is complete bipartite.
Note that a triangle--free graph which is a non-trivial join is complete bipartite.
We have that $\Gamma$ and $\Lambda$ are triangle--free; see Lemma~\ref{l:rank}.
Recall that $P_m$ denotes a path on $m$ vertices.
Since $\gam$ does not have an induced subgraph isomorphic to $P_1\coprod P_2$, neither does $\Lambda$ by Corollary~\ref{c:z star z2}. 
So if $\Lambda$ is disconnected, then each connected component is a vertex and in particular, $\Lambda$ is discrete. 
Now we assume $\Lambda$ is connected. 
As $\Lambda$ does not contain an induced $P_4$, Lemma~\ref{l:cograph} implies that $\Lambda=J_1\ast J_2$ for some nonempty graphs $J_1$ and $J_2$ in $\mathcal{K}$. Since $\Lambda$ is triangle--free, $\Lambda$ is complete bipartite.

To complete the proof of (1), it remains to show that $A(K_{p,q})\le A(K_{m,n})$ for any $p,q\ge0$, which is clear since $C_4^e\le K_{m,n}^e$ and $C_4^e$ is the complete bipartite graph on two countable sets.

In (2), if $A(\Lambda)\le A(K_{1,n})$, then $\Lambda$ is a complete bipartite graph not containing an induced square by Corollary~\ref{c:kambites}.
The converse follows from $K_{1,2}^e = K_{1,\infty}$.
Proof of (3) is clear.
\end{proof}

\section{Co-contraction}\label{sec:cocontraction}
Let us recall the definition of the operations \emph{contraction} and \emph{co--contraction} on a graph~\cite{Kim2008}.
Let $\gam$ be a finite graph and $B$ a connected subset of the vertices; this means, the induced subgraph on $B$ is connected.
The contraction $CO(\gam,B)$ is a graph whose vertices are those of $\gam\smallsetminus B$ together with one extra vertex $v_B$, and whose edges are those of $\gam\smallsetminus B$ together with an extra edge whenever the link of a vertex in $\gam\smallsetminus B$ intersects $B$ nontrivially.  In this case we draw an edge between that vertex and $v_B$. A subset is \emph{anticonnected} if it induces a connected subgraph in ${\gam\opp}$.  
Co--contraction is defined dually for anticonnected subsets of the vertices of $\gam$.  
For instance, any pair of nonadjacent vertices is anticonnected.  We have \[\overline{CO}(\gam,B)=CO({\gam\opp},B)\opp.\]  
We can explicitly define co--contraction as follows.  The vertices of $\overline{CO}(\gam,B)$ are the vertices of $\gam\smallsetminus B$, together with an extra vertex $v_B$.  The edges of $\overline{CO}(\gam,B)$ are the edges of $\gam\smallsetminus B$.  In addition, we glue in an edge between $v_B$ and a vertex of $\gam\smallsetminus B$ if $B$ is contained in the link of that vertex.

A very easy observation is the following:
\begin{lem}\label{l:cocontract}
Let $\gam$ be a finite graph and $B$ an anticonnected subset of the vertices of $\gam$.  Then there is a sequence of graphs \[\gam=\gam_0\to\gam_1\to\cdots\to\gam_p=\overline{CO}(\gam,B)\] such that $\gam_i$ is obtained from $\gam_{i-1}$ by co--contracting $\gam_{i-1}$ relative to a pair of non--adjacent vertices.
\end{lem}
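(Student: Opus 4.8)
The plan is to induct on the number of vertices in $B$. If $|B|=1$, say $B=\{b\}$, then $\overline{CO}(\gam,B)$ is just $\gam$ with $b$ relabeled as $v_B$, so there is nothing to prove (the sequence has length zero). If $|B|=2$, then since $B$ is anticonnected it must be a pair of non-adjacent vertices, and $\overline{CO}(\gam,B)$ is exactly the co-contraction of $\gam$ along that pair; again the sequence has length one. So assume $|B|=b\ge 3$ and the claim holds for all smaller anticonnected sets (in all finite graphs).

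The key step is to find, inside $B$, a pair of non-adjacent vertices $u,w$ whose co-contraction can be ``peeled off'' compatibly. Since $B$ is anticonnected, $\gam\opp_B$ is connected on at least three vertices, so it contains two vertices $u,w$ that are adjacent in $\gam\opp_B$ (i.e. non-adjacent in $\gam$) such that $\gam\opp_B\smallsetminus\{u\}$ is still connected — for instance, take $w$ to be a non-cut vertex of the connected graph $\gam\opp_B$ and let $u$ be any $\gam\opp$-neighbour of $w$ in $B$; then removing $u$ may disconnect things, so instead one should be slightly more careful and choose a spanning tree $\mathcal{T}$ of $\gam\opp_B$, take $w$ a leaf of $\mathcal{T}$ and $u$ its unique $\mathcal{T}$-neighbour. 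Set $\gam_1=\overline{CO}(\gam,\{u,w\})$, with new vertex $v_{\{u,w\}}$, and let $B' = (B\smallsetminus\{u,w\})\cup\{v_{\{u,w\}}\}$. The heart of the argument is the claim that $B'$ is anticonnected in $\gam_1$ and that $\overline{CO}(\gam_1,B') \cong \overline{CO}(\gam,B)$; granting this, the inductive hypothesis applied to $(\gam_1,B')$ — which has $|B'| = b-1 < b$ — produces a sequence $\gam_1\to\cdots\to\overline{CO}(\gam_1,B')=\overline{CO}(\gam,B)$ of single-pair co-contractions, and prepending $\gam=\gam_0\to\gam_1$ finishes the proof.

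To verify the claim I would argue directly from the explicit description of co-contraction given just before the lemma. Anticonnectedness of $B'$: a vertex $x\notin B\cup\{v_{\{u,w\}}\}$ is $\gam_1$-adjacent to $v_{\{u,w\}}$ iff $\{u,w\}\subseteq\lk_\gam(x)$, and adjacency among vertices of $\gam\smallsetminus B$ is unchanged, so $\gam_{1,B'}\opp$ is obtained from $\gam_B\opp$ by contracting the edge $\{u,w\}$ of the spanning tree $\mathcal{T}$; contracting a tree edge keeps the graph connected, so $B'$ is anticonnected. For the isomorphism, note both $\overline{CO}(\gam_1,B')$ and $\overline{CO}(\gam,B)$ have vertex set $(V(\gam)\smallsetminus B)\cup\{\text{one new vertex}\}$, agree on the induced subgraph on $V(\gam)\smallsetminus B$, and a vertex $x\in V(\gam)\smallsetminus B$ is joined to the new vertex iff $B\subseteq\lk(x)$ on the $\overline{CO}(\gam,B)$ side, versus iff $B'\subseteq\lk_{\gam_1}(x)$ on the other side; since $B'\subseteq\lk_{\gam_1}(x)$ unwinds (using the definition of $\gam_1$) to $(B\smallsetminus\{u,w\})\subseteq\lk_\gam(x)$ together with $\{u,w\}\subseteq\lk_\gam(x)$, i.e. to $B\subseteq\lk_\gam(x)$, the two adjacency conditions coincide.

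The main obstacle I anticipate is purely bookkeeping: being careful that the chosen pair $\{u,w\}\subseteq B$ really does keep $B'$ anticonnected (hence the spanning-tree-leaf choice rather than an arbitrary non-adjacent pair), and checking that ``co-contract a pair, then treat the merged vertex as a member of a smaller set, then co-contract again'' genuinely reconstructs $\overline{CO}(\gam,B)$ on the nose — essentially an associativity statement for iterated co-contraction. Both reduce to the elementary adjacency computations above, so there is no serious difficulty, only care with the definitions.
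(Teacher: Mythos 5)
Your proof is correct, and it is more detailed than the paper's treatment: the paper states this lemma without proof, labeling it ``a very easy observation,'' so there is no argument in the paper to compare against. One small simplification: the spanning-tree/leaf choice is unnecessary. You note yourself that passing from $\gam_B\opp$ to $\gam_{1,B'}\opp$ amounts to contracting the edge $\{u,w\}$ of $\gam_B\opp$, and contracting \emph{any} edge of a connected graph yields a connected graph; so you may simply pick any pair $u,w\in B$ that are non-adjacent in $\gam$ (such a pair exists once $|B|\ge 2$ since $\gam_B\opp$ is connected with at least two vertices), and the claim that $B'$ stays anticonnected follows immediately. The rest of the argument --- the adjacency bookkeeping showing $\overline{CO}(\gam_1,B')\cong\overline{CO}(\gam,B)$, and the induction on $|B|$ --- is exactly right and is the natural way to make the paper's ``easy observation'' rigorous.
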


We can now give another proof of the following result which appears in \cite{Kim2008}:
\begin{thm}
Let $\gam$ be a finite graph and $B$ an anticonnected subset of the vertices of $\gam$.  Then \[A(\overline{CO}(\gam,B))\le A(\gam).\]
\end{thm}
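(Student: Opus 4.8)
The plan is to reduce to the case of co--contracting along a single pair of non--adjacent vertices, realize the resulting graph as an induced subgraph of the extension graph, and then invoke Theorem~\ref{t:thm1}. By Lemma~\ref{l:cocontract} there is a chain $\gam=\gam_0\to\gam_1\to\cdots\to\gam_p=\overline{CO}(\gam,B)$ in which each $\gam_i$ is the co--contraction of $\gam_{i-1}$ relative to some pair of non--adjacent vertices, and all $\gam_i$ are finite. Since a composition of inclusions of right-angled Artin groups is again an inclusion, it suffices to prove the statement when $B=\{a,b\}$ consists of two non--adjacent vertices; applying that case successively along the chain then gives $A(\overline{CO}(\gam,B))\le A(\gam)$.

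So fix non--adjacent vertices $a,b$ of $\gam$ and set $\Lambda=\overline{CO}(\gam,\{a,b\})$, whose vertex set is $(V(\gam)\smallsetminus\{a,b\})\cup\{v_B\}$. By Theorem~\ref{t:thm1} it is enough to exhibit $\Lambda$ as an induced subgraph of $\gex$. I would consider the vertex $a^b$ of $\gex$ (it is conjugate to $a$, hence a vertex of $\gex$) together with the vertices in $V(\gam)\smallsetminus\{a,b\}\subseteq V(\gam)\subseteq V(\gex)$, and claim that the induced subgraph of $\gex$ on this set is isomorphic to $\Lambda$ via $v_B\mapsto a^b$ and $w\mapsto w$. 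First, $a^b$ is not a vertex of $\gam$: being conjugate to $a$ it can equal a vertex $c$ only if $c=a$ (abelianize), and $a^b=a$ would mean $[a,b]=1$, contradicting non--adjacency. Next, since $\gam\le\gex$, the induced subgraph on $V(\gam)\smallsetminus\{a,b\}$ agrees with that of $\gam$, matching the $\gam\smallsetminus\{a,b\}$ part of $\Lambda$. Thus everything reduces to the edges at $a^b$: one must show that for $c\in V(\gam)\smallsetminus\{a,b\}$, the vertices $a^b$ and $c$ are adjacent in $\gex$ if and only if $c$ is adjacent to both $a$ and $b$ in $\gam$ — which is exactly the adjacency condition for $v_B$ and $c$ in $\Lambda$.

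For this I would invoke the commutation criterion recorded at the beginning of Section~\ref{sec:gex}: $a^b$ and $c$ commute in $\aga$ if and only if $[a,c]=1$ and $\form{\st(a)}b\cap\form{\st(c)}\ne\varnothing$, the latter being equivalent to $b\in\form{\st(a)}\form{\st(c)}$. If $c$ is adjacent to both $a$ and $b$, then $[a,c]=1$ and $b\in\st(c)\subseteq\form{\st(c)}$, so both conditions hold. Conversely, assume $a^b$ and $c$ commute. Then $[a,c]=1$ forces $c\in\lk(a)$, since distinct vertices of a right-angled Artin group commute only when adjacent. Writing $b=uv$ with $u,v$ reduced words supported in $\st(a)$ and $\st(c)$ respectively, the reduced word for $uv$ can only involve letters occurring in $u$ or $v$, so $\supp(b)\subseteq\supp(u)\cup\supp(v)$; since $a$ and $b$ are non--adjacent we have $b\notin\st(a)$, hence $b\notin\supp(u)$, and therefore $b\in\supp(v)\subseteq\st(c)$, i.e.\ $c\in\lk(b)$. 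This proves the edge correspondence, so $\Lambda\le\gex$, and Theorem~\ref{t:thm1} completes the argument.

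I expect the main obstacle to be precisely the converse direction of this edge correspondence: passing from the algebraic statement $b\in\form{\st(a)}\form{\st(c)}$ inside $\aga$ to the combinatorial conclusion that $b$ is adjacent to $c$ in $\gam$. The non--adjacency hypothesis on $a$ and $b$ is what makes this work, as it prevents the letter $b$ from being absorbed into the $\st(a)$--factor, and the argument leans on the elementary fact that cancellation in a right-angled Artin group never enlarges the support of a word.
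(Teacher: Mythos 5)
Your proposal is correct and follows the same route as the paper: reduce to co-contracting a single pair of non-adjacent vertices via Lemma~\ref{l:cocontract}, realize $\overline{CO}(\gam,B)$ as the induced subgraph of $\gex$ on $(V(\gam)\smallsetminus\{a,b\})\cup\{a^b\}$, and invoke Theorem~\ref{t:thm1}. The only difference is that you spell out the adjacency verification (including the cancellation argument showing $b\in\form{\st(a)}\form{\st(c)}$ forces $b\in\st(c)$), which the paper asserts without proof.
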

\begin{proof}
It suffices to prove the theorem in the case where $B$ is a pair of nonadjacent vertices.  Write $B=\{v_1,v_2\}$.  
Consider the induced subgraph $\Lambda$ of $\gex$ on $V(\gam)\cup\{v_2^{v_1}\}$.
In $\Lambda$, there is an edge between $v_2^{v_1}$ and another vertex $v$ of $\Gamma$ if and only if $v$ is connected to both $v_1$ and $v_2$.  Write $v_B=v_2^{v_1}$ and delete $v_1$ and $v_2$ from $\Lambda$. Note that the resulting graph is precisely $\overline{CO}(\gam,B)$, and that we obtain the conclusion of the theorem by Theorem \ref{t:thm1}.
\end{proof}

\begin{proof}[Proof of Corollary~\ref{c:chordal}]
It is clear from the definition that ${C_n\opp}$ co--contracts onto ${C_{n-1}\opp}$ for $n\ge4$.
\end{proof}
 
For the rest of this section, we use mapping class groups to recover the theory of contraction words from \cite{Kim2008}.  
We begin with an illustrative example: Consider the graph $C_6\opp$.  We think of this graph as $C_5$ with a ``split vertex".  Precisely, label the vertices of a $5$--cycle as $\{a,b,c,d,e\}$ and then split $c$ into two vertices $v$ and $w$.  We have that $v$ and $w$ are both connected to $b$ and $d$, and we add two extra edges between $v$ and $a$ and between $w$ and $e$. 
 It is easy to check that this graph is ${C_6\opp}$; see Figure~\ref{f:c6}.

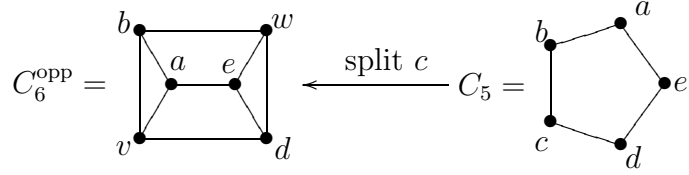
\begin{figure}[ht!] 
\[
\xymatrix{
C_6\opp=
\xy
0;/r.10pc/:
(20, 17)*{}="p";
(25, 20)*{w};
(-20, 17)*{}="a";
(-25, 20)*{b};
(-10, 0)*{}="b";
(-8, 7)*{a};
(-20,-17)*{}="c";
(-25, -20)*{v};
(20, -17)*{}="r";
(25, -20)*{d};
(10, 0)*{}="q";
(8, 6)*{e};
"p"*{\bullet};
"a"*{\bullet};
"b"*{\bullet};
"c"*{\bullet};
"r"*{\bullet};
"q"*{\bullet};
"p";"q"**\dir{-};
"p";"r"**\dir{-};
"q";"r"**\dir{-};
"a";"b"**\dir{-};
"a";"c"**\dir{-};
"b";"c"**\dir{-};
"a";"p"**\dir{-};
"b";"q"**\dir{-};
"c";"r"**\dir{-};
\endxy
& &
C_5
=
\xy
0;/r.10pc/:
(22, 20)*{}="a"; 
(29, 25)*{a};
(0, 13)*{}="b"; 
(-3,18)*{b};
(0, -11)*{}="c"; 
(-3,-18)*{c};
(22, -18)*{}="d"; 
(26,-22)*{d};
(36, 1)*{}="e";
(41, 1)*{e};
"a"*{\bullet};
"b"*{\bullet};
"c"*{\bullet};
"d"*{\bullet};
"e"*{\bullet};
"a";"b"**\dir{-};
"b";"c"**\dir{-};
"c";"d"**\dir{-};
"d";"e"**\dir{-};
"e";"a"**\dir{-};
\endxy
\ar[ll]_>>>>>>>>>>>>>>>*{\mbox{split }c\qquad\quad}
}
\]
\caption{$C_6\opp$ and $C_5$.
\label{f:c6}}
\end{figure}

\begin{prop}\label{p:c6}
If the generators of $A(C_6\opp)$ are labeled as in Figure~\ref{f:c6}, then there exists an $N$ such that for all $n\geq N$, \[\langle a^n,b^n,(vw)^n, d^n, e^n\rangle\cong A(C_5).\]
\end{prop}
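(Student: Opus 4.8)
The plan is to exhibit $a,b,vw,d,e$ as powers of mapping classes whose supporting subsurfaces have co--incidence graph $C_5$, and then to invoke the Dehn twist and partial pseudo-Anosov machinery underlying Lemmas~\ref{l:mod} and~\ref{l:CLM}. First I would record the commutation pattern of $\{a,b,vw,d,e\}$ in $A(C_6\opp)$ using the Centralizer Theorem~\ref{t:centralizer}. Since $v$ and $w$ are non-adjacent in $C_6\opp$, the word $vw$ is a single cyclically reduced pure factor with support $\{v,w\}$ (it is not a proper power). Hence, by Theorem~\ref{t:centralizer}, a vertex $x\in\{a,b,d,e\}$ commutes with $vw$ if and only if $x$ is adjacent to both $v$ and $w$; reading the adjacencies off Figure~\ref{f:c6} one finds that $b$ and $d$ commute with $vw$ while $a$ and $e$ do not, and that the commuting pairs inside $\{a,b,d,e\}$ are exactly $\{a,b\},\{a,e\},\{d,e\}$. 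Thus the commutation graph of the tuple $(a,b,vw,d,e)$ is exactly $C_5$, with consecutive vertices in this cyclic order. Consequently, for every $n$ the assignment of the $i$-th of $a^n,b^n,(vw)^n,d^n,e^n$ to the $i$-th vertex of $C_5$ respects the defining relators, giving a surjection $q\colon A(C_5)\twoheadrightarrow H$, where $H=\langle a^n,b^n,(vw)^n,d^n,e^n\rangle\le A(C_6\opp)$. The content of the proposition is that $q$ is injective once $n$ is large.

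Next I would build a mapping class group model. By Lemma~\ref{l:mod} there is a closed surface $\Sigma$ and pairwise non-isotopic simple closed curves $\{\alpha(x):x\in V(C_6\opp)\}$ whose co--incidence graph is $C_6\opp$; moreover I would choose the configuration so that $\alpha(v)$ and $\alpha(w)$ fill a \emph{proper} connected subsurface $\Sigma_0\subsetneq\Sigma$ and neither $\alpha(a)$ nor $\alpha(e)$ is isotopic into $\Sigma_0$ (the usual flexibility in such realizations; cf. the second proof of Theorem~\ref{t:thm1} and Lemma~\ref{l:extension}). Since edges of $C_6\opp$ correspond to disjoint curves, the rule $\phi(x)=T_{\alpha(x)}$ for $x\ne w$ and $\phi(w)=T_{\alpha(w)}^{-1}$ extends to a homomorphism $\phi\colon A(C_6\opp)\to\Mod(\Sigma)$ (we only need that adjacent vertices map to commuting twists), and Penner's construction~\cite{Penner1988} shows that $\phi(vw)=T_{\alpha(v)}T_{\alpha(w)}^{-1}$ is pseudo-Anosov on $\Sigma_0$. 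Writing $f_1,\dots,f_5$ for $T_{\alpha(a)},T_{\alpha(b)},\phi(vw),T_{\alpha(d)},T_{\alpha(e)}$, with supports the annuli $A_a,A_b,A_d,A_e$ about $\alpha(a),\alpha(b),\alpha(d),\alpha(e)$ and the subsurface $\Sigma_0$, the chosen configuration forces there to be no nesting relations among these five subsurfaces ($\Sigma_0$ has positive complexity, so is neither an annulus nor contained in one, and the potential containments $A_a,A_e\subseteq\Sigma_0$ are excluded by construction, while $\alpha(b),\alpha(d)$ are disjoint from $\Sigma_0$). A direct disjointness check — for instance, $A_a$ meets $\Sigma_0$ because $\alpha(a)$ meets $\alpha(w)\subseteq\Sigma_0$, whereas $A_b$ is disjoint from $\Sigma_0$ because $\alpha(b)$ is disjoint from both $\alpha(v)$ and $\alpha(w)$ — then shows that the co--incidence graph of $(A_a,A_b,\Sigma_0,A_d,A_e)$ is again $C_5$ in the cyclic order matching Step~1.

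To conclude, I would apply the right-angled Artin group criterion for high powers of Dehn twists and partial pseudo-Anosov homeomorphisms whose supports carry no nesting relations (Lemmas~\ref{l:mod} and~\ref{l:CLM}; see~\cite{Koberda2011,CLM2010}): there is an $N$ such that for every $n\ge N$ the group $\langle f_1^n,\dots,f_5^n\rangle$ is the right-angled Artin group on the co--incidence graph of the supports, namely $A(C_5)$, via the isomorphism carrying $f_i^n$ to the $i$-th standard generator. Since $f_i^n$ is exactly $\phi$ applied to the $i$-th of $a^n,b^n,(vw)^n,d^n,e^n$ (for instance $f_3^n=(T_{\alpha(v)}T_{\alpha(w)}^{-1})^n=\phi((vw)^n)$), the composite $\phi\circ q\colon A(C_5)\to\Mod(\Sigma)$ is precisely this isomorphism onto $\langle f_1^n,\dots,f_5^n\rangle$; in particular $\phi\circ q$ is injective, hence $q$ is injective and $H\cong A(C_5)$ for all $n\ge N$.

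The only step demanding genuine care — everything else is bookkeeping with the Centralizer Theorem together with Penner's and Koberda's theorems — is arranging the curve configuration so that $\alpha(v)\cup\alpha(w)$ fills a proper subsurface $\Sigma_0$ that $\alpha(a)$ and $\alpha(e)$ genuinely cross (ruling out nesting among the five supports) while keeping the co--incidence graph of all six curves equal to $C_6\opp$; once that configuration is fixed, the rest is automatic.
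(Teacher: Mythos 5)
Your proof is correct and follows essentially the same route as the paper's: realize the generators as (powers of) Dehn twists about curves with co--incidence graph $C_6\opp$, arrange $v$ and $w$ to fill a proper subsurface so that, after flipping the sign of one of the twists, $vw$ is pseudo-Anosov on that subsurface (Penner), check that the co--incidence graph of the five supports is $C_5$, and invoke Koberda's theorem. You supply some detail the paper leaves implicit — the Centralizer Theorem computation of the commutation graph of $\{a,b,vw,d,e\}$ and the explicit verification that the five supports carry no nesting relations — but the underlying argument is the same.
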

\begin{proof}
Represent the vertices of ${C_6\opp}$ as simple closed curves on a surface $\Sigma$ with the correct co--incidence correspondence.  We may arrange so that $v$ and $w$ together fill a torus $T$ with one boundary component, as the curves $x$ and $y$.  Writing $v=T_x$ and $w=T_y^{-1}$, we may assume that $vw$ is a pseudo-Anosov homeomorphism supported on $T$.  By \cite{Koberda2011}, we have the conclusion since the co--incidence graph of $\{a,b,T,d,e\}$ is precisely $C_5$.
\end{proof}

In \cite{Kim2008}, the first author constructs so--called \emph{contraction words} and \emph{contraction sequences}.  
We will not give precise definitions for these terms other than if $v$ and $w$ are nonadjacent vertices in a graph $\gam$, then any word in \[\langle a,b\rangle\smallsetminus \{a^mb^n:m,n\in\bZ\}^{\pm1}\] is a contraction word.  The primary result concerning contraction words is the following:

\begin{thm}[\cite{Kim2008}]
Let $\gam$ be a graph and let $\{B_1,\ldots,B_m\}$ be disjoint, anticonnected subsets of $\gam$.  For each $i$, we write $v_{B_i}$ for the vertex corresponding to $B_i$ in $\overline{CO}(\gam,(B_1,\ldots,B_m))$, and we write $g_i$ for some contraction word of $B_i$.  Then there is an injective map \[\iota:A(\overline{CO}(\gam,(B_1,\ldots,B_m)))\to A(\gam)\] such that for a vertex $x$, we have $\iota(x)=g_i$ if $x=v_{B_i}$ and $\iota(x)=x$ otherwise.
\end{thm}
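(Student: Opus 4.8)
The plan is to use the geometric machinery developed in the earlier sections, specifically Lemma~\ref{l:joingraph2} and the surface-theoretic construction behind Theorem~\ref{t:thm1}, rather than the combinatorial cancellation arguments of \cite{Kim2008}. The key observation is that a contraction word of an anticonnected set $B_i$ behaves, from the point of view of centralizers, exactly like a single pure factor supported on $B_i$; indeed if $g$ is a word whose support is an anticonnected set $B$ and $g$ is not conjugate into a proper ``sub-join'' of $\form{B}$, then the Centralizer Theorem forces the centralizer of $g$ to be generated by $g$ together with the vertices adjacent to all of $B$. So the first step is to record precisely which property of contraction words we need: for a pair of nonadjacent vertices $v,w$, a word $g\in\form{v,w}\smallsetminus\{v^mw^n\}^{\pm1}$ has support exactly $\{v,w\}$ and is cyclically reduced after conjugation, hence its pure factor decomposition is a single pure factor with support $\{v,w\}$. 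For a general anticonnected $B_i$, one builds $g_i$ inductively as in \cite{Kim2008}, and the same statement holds: $g_i$ is (conjugate to) a single pure factor supported on all of $B_i$.

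Next I would set up the surface model exactly as in the first proof of Theorem~\ref{t:thm1}: realize $\gam$ as the co--incidence graph of simple closed curves $\{\alpha(v):v\in V(\gam)\}$ on a closed surface $\Sigma$, and fix $N$ with $\phi\co A(\gam)\to\Mod(\Sigma)$, $\phi(v)=T_{\alpha(v)}^N$, injective. For each $i$, let $\Sigma_i$ be the regular neighborhood of $\bigcup_{v\in B_i}\alpha(v)$, capped off; since $B_i$ is anticonnected, $\Sigma_i$ is connected, and since the $B_i$ are disjoint the $\Sigma_i$ are pairwise disjoint up to isotopy exactly when the corresponding vertices of $\overline{CO}(\gam,(B_1,\ldots,B_m))$ are adjacent. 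By Lemma~\ref{l:filling curves}, a suitable product of powers of the Dehn twists $\{T_{\alpha(v)}:v\in B_i\}$ is pseudo-Anosov $\psi_i$ on $\Sigma_i$; crucially, $\phi^{-1}(\psi_i)$ can be taken to be a contraction word $g_i$ of $B_i$ (this is the point where one matches the algebraic and topological pictures — any word that produces a pseudo-Anosov on $\Sigma_i$ cannot be of the excluded form $\{a^mb^n\}^{\pm1}$, and conversely one checks a generic contraction word does the job). Then, as in Lemma~\ref{l:joingraph2}, approximate the stable laminations: fix $s_i\in B_i$ and note the co--incidence graph of the curves $\{\psi_i^{-M}(\alpha(s_i))\}$ together with the unmodified curves $\{\alpha(v):v\notin\bigcup B_i\}$, for $M$ large, is precisely $\overline{CO}(\gam,(B_1,\ldots,B_m))$. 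Applying Lemma~\ref{l:mod} to high powers of the Dehn twists about this curve system yields an embedding of $A(\overline{CO}(\gam,(B_1,\ldots,B_m)))$, and unwinding the construction shows it is realized by $x\mapsto x^n$ for $x\notin\bigcup B_i$ and $v_{B_i}\mapsto g_i^n$; a transvection/rescaling argument (or simply absorbing the power into the definition of the contraction word, which is allowed since powers of contraction words are contraction words) removes the exponent $n$ on the $g_i$.

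The main obstacle I expect is the bookkeeping in the correspondence between \emph{arbitrary} contraction words and pseudo-Anosov products: Lemma~\ref{l:filling curves} produces \emph{some} product of twists that is pseudo-Anosov, but the theorem must work for \emph{every} contraction word $g_i$. The cleanest way around this is a two-step argument: first prove the theorem for the specific contraction words coming out of Lemma~\ref{l:filling curves}, and then observe that the property ``$g_i$ induces an embedding as above'' depends only on $g_i$ through the fact that $g_i$ has full support $B_i$ and generates a maximal cyclic group whose centralizer in $A(\form{B_i})$ is itself — equivalently, $g_i$ is a single pure factor supported on all of $B_i$ — which is exactly the defining combinatorial feature of a contraction word established in \cite{Kim2008}. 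A secondary, minor point is ensuring no unexpected nesting among the $\Sigma_i$ (and among them and the leftover curves); this is handled exactly as in the second proof of Theorem~\ref{t:thm1}, by choosing the initial curve configuration on a surface of large enough genus and adjusting genus/punctures of the $\Sigma_i$ so that no inclusion relations can hold.
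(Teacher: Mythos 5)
The paper does not actually reprove this theorem: it cites \cite{Kim2008} for the full statement and then explicitly offers only a \emph{partial} recovery via mapping class groups, namely for contraction words of the special form $(a^nb^m)^{\pm N}$ with $N$ large, and with the subgroup generated by \emph{sufficiently large powers} of the other vertices. Your proposal attempts to upgrade this partial recovery to a complete proof, and the two places where you try to close the gap are exactly where the argument fails. First, every tool in the paper's surface-theoretic toolbox (Lemma~\ref{l:mod}, Lemma~\ref{l:CLM}, Lemma~\ref{l:filling curves}) produces embeddings only after passing to sufficiently high powers of the given mapping classes. So at best you obtain that $\langle x^N \ (x\notin\bigcup B_i),\ g_i^N\rangle$ is the expected right-angled Artin group; the theorem demands $\iota(x)=x$ and $\iota(v_{B_i})=g_i$ on the nose. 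Your suggested fixes do not work: ``absorbing the power into the contraction word'' changes which word you are proving the statement for (the theorem fixes $g_i$ in advance, and not every contraction word is an $N$-th power of one), and a ``transvection/rescaling argument'' is not available --- knowing that $\langle g_1^N,\ldots,g_k^N\rangle$ is canonically isomorphic to $A(\Lambda)$ does not imply that $\langle g_1,\ldots,g_k\rangle$ is, and no lemma in the paper supplies such an implication. Moreover the unmodified vertices also acquire the exponent $N$ under Lemma~\ref{l:mod}, so even the ``$\iota(x)=x$'' half of the conclusion is not reached.

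Second, your step from ``the specific words produced by Lemma~\ref{l:filling curves}'' to ``every contraction word'' rests on the assertion that injectivity depends on $g_i$ only through its being a single pure factor with full support $B_i$. That assertion is not established anywhere in the paper and is essentially the content of Kim's original combinatorial proof; asserting it as an ``observation'' is circular. (It is also worth checking it is even the right invariant: the definition of contraction word excludes $\{a^mb^n\}^{\pm1}$, and e.g.\ $ab$ with $a,b$ nonadjacent is a single pure factor with full support $\{a,b\}$ yet is \emph{not} a contraction word, so ``single pure factor with full support'' cannot be the defining combinatorial feature you need.) The correct assessment is that the mapping class group picture, as developed in this paper, yields the theorem only up to passing to powers and only for the specific words realizing pseudo-Anosov maps on the subsurfaces $\Sigma_i$; for the statement as written one must fall back on the combinatorial argument of \cite{Kim2008}.
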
 

We can partially understand this result using mapping class groups as follows:  our contraction words will be built out of two nonadjacent vertices $a$ and $b$ in $\gam$ and will be of the form $(a^nb^m)^{\pm N}$ for some $N$ sufficiently large.  To see this, we simply arrange $a$ and $b$ to correspond to two simple curves $x$ and $y$ which fill a torus $T$ with one boundary component in a large surface $\Sigma$.  Taking a power of a positive twist about $x$ and a power of a negative twist about $y$ and declaring these to be $a$ and $b$ respectively, shows that $(a^nb^m)^{\pm1}$ is always a pseudo-Anosov homeomorphism supported on $T$.  Passing to a power of this homeomorphism, we obtain that the subgroup of $A(\gam)$ generated by $(a^nb^m)^{\pm N}$ and sufficiently large powers of the other vertices of $\gam$ will isomorphic to $A(\overline{CO}(\gam,\{a,b\}))$.

\section{Triangle--free graphs and long cycles}\label{sec:long cycle}
In this section, we prove Theorem \ref{t:triangle--free}.  

\begin{proof}[Proof of Theorem \ref{t:triangle--free}] 
We suppose $A(\Lambda)$ embeds into $A(\gam)$ for some nonempty graphs $\Lambda$ and $\gam$.
We can assume that $\gam$ is not complete and does not split as a non-trivial join; 
otherwise, $\gam$ is complete bipartite and the proof is obvious from Corollary~\ref{c:complete bipartite}. By Lemma~\ref{l:disjoint}, we have only to consider the case when $\Lambda$ is connected.

Lemmas~\ref{l:cmfree} and~\ref{l:rank} imply that $\gex$ and $\Lambda$ are both triangle--free.
By Theorem~\ref{t:thm2}, there is an embedding $\phi: \Lambda \to \gex_k$ whose image is an induced subgraph.
We can further require that for any distinct vertices $v$ and $v'$ of $\Lambda$, the clique corresponding to $i(v)$ is not contained in the clique corresponding to $i(v')$.
There is a natural embedding $\psi:\gex\to \gex_k$.
If $\phi(\Lambda)$ is not contained in $\psi(\gex)$,
then for some vertex $u$ of $\Lambda$, $\phi(u) = v_{a,b}$ where $v_{a,b}\in V(\gex_k)$ corresponds to an edge $\{a,b\}$ of $\gex$. 
This implies that $\psi(a),\psi(b)\not\in \phi(V(\Lambda)\smallsetminus\{u\})$.
Since $\Lambda$ is connected and the two vertices $a$ and $b$ separate $v_{a,b}$ from the rest of $\gex_k$,
$\Lambda$ is a single vertex $\{u\}$. In particular, $\Lambda\le\gex$.
\end{proof}

We note another consequence of Theorem~\ref{t:triangle--free}, related to the Weakly Chordal Conjecture (Conjecture~\ref{c:wchordal}).

\begin{cor}\label{c:bipartite}
Let $\gam$ be a finite graph and $n\ge5$.
\begin{enumerate}
\item
Suppose $\gam$ is triangle--free.
If $A(C_n)\le\aga$ for some $n\ge5$, then $C_m\le\gam$ for some $5\le m\le n$.
\item
Suppose $\gam$ is bipartite.
If $\Lambda$ is a finite graph and $A(\Lambda)\le\aga$, then $\Lambda$ is bipartite.
\end{enumerate}
\end{cor}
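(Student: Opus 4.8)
The plan is to derive both statements from Theorem~\ref{t:triangle--free} together with the stability properties of the extension graph recorded in Lemma~\ref{l:cmfree}. The first thing I would note is that a bipartite graph has no odd cycle, hence no triangle; so in both parts $\gam$ is triangle--free and Theorem~\ref{t:triangle--free} applies. This converts the group--theoretic hypotheses into combinatorial ones: in part (1), $A(C_n)\le\aga$ yields $C_n\le\gex$, and in part (2), $A(\Lambda)\le\aga$ yields $\Lambda\le\gex$.

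For part (1), what remains is to pass from $C_n\le\gex$ back to a long induced cycle in $\gam$. Here I would invoke the contrapositive of Lemma~\ref{l:cmfree}(3): $\gam$ is triangle--free, and $\gex$ is not $C_m$--free for every $m$ in the range $5\le m\le n$ (it contains an induced $C_n$), so $\gam$ cannot be $C_m$--free for all such $m$; that is, $C_m\le\gam$ for some $5\le m\le n$. One could instead run the induction directly along the doubling sequence of Lemma~\ref{l:double}, applying Lemma~\ref{l:star double detail}(1) at each doubling step; but the delicate base case $m=5$ of that lemma is already absorbed into Lemma~\ref{l:cmfree}(3), so I would just cite the latter.

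For part (2) the argument is shorter still: once $\Lambda\le\gex$ is in hand, Lemma~\ref{l:cmfree}(5) tells us $\gex$ is bipartite because $\gam$ is, and an induced subgraph of a bipartite graph is bipartite, so $\Lambda$ is bipartite.

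I do not anticipate any real obstacle; the entire content of the corollary is carried by Theorem~\ref{t:triangle--free} and Lemma~\ref{l:cmfree}. The only points needing a moment's attention are verifying the hypothesis of Theorem~\ref{t:triangle--free} (triangle--freeness of $\gam$, which in part (2) uses the elementary implication bipartite $\Rightarrow$ triangle--free) and being careful to quote the correct index range $5\le m\le n$ from Lemma~\ref{l:cmfree}(3).
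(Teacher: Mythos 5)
Your proposal is correct and follows essentially the same route as the paper: part (1) is the contrapositive of Lemma~\ref{l:cmfree}(3) combined with Theorem~\ref{t:triangle--free} (which the paper leaves implicit but clearly intends), and part (2) is Theorem~\ref{t:triangle--free} plus the observation that the extension graph of a bipartite graph is bipartite. No gaps.
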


\begin{proof}[Proof of Corollary~\ref{c:bipartite}]
In (1), if $\gam$ does not contain an induced $C_m$ for any $5\le m\le n$,
then Lemma~\ref{l:cmfree} would imply that $\gex$ has no induced $C_n$.
Proof of (2) is immediate from Thoerem~\ref{t:triangle--free} and by observing that the extension graph of a bipartite graph is bipartite.
\end{proof}

\begin{cor}\label{c:tsfree}
The Weakly--Chordal conjecture holds whenever $\gam$ is triangle--free or square--free.
\end{cor}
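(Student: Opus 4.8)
The plan is to prove Conjecture~\ref{c:wchordal} under the extra hypothesis that $\gam$ is triangle--free or square--free, i.e., to show that such a $\gam$, when weakly chordal, satisfies $A(C_n)\not\le A(\gam)$ for every $n\ge5$. The triangle--free case is immediate: a triangle--free weakly chordal graph has, by the definition of weak chordality, no induced $C_m$ for any $m\ge5$, so Corollary~\ref{c:bipartite}(1) forbids an embedding $A(C_n)\le A(\gam)$ with $n\ge5$. (Equivalently: Lemma~\ref{l:cmfree}(3) shows $\gex$ has no induced $C_m$ for $m\ge5$, and then Theorem~\ref{t:triangle--free} applies.) The substance is in the square--free case.

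So suppose $\gam$ is square--free and weakly chordal. By Lemma~\ref{l:cmfree}(2),(3),(4), $\gex$ is then square--free and has no induced $C_m$ for any $m\ge5$. Assume for contradiction that $A(C_n)\le A(\gam)$ for some $n\ge5$. By Theorem~\ref{t:thm2 strong} there are cliques $K_1,\ldots,K_n$ of $\gex$ spanning an induced $C_n$ in $\gex_k$; reading indices modulo $n$, this means $K_i\cup K_{i+1}$ is a clique of $\gex$ for every $i$, while $K_i\cup K_j$ is not a clique whenever $j\not\equiv i,i\pm1$. Any such tuple is automatically pairwise non--nested, since for $n\ge5$ a proper containment $K_i\subsetneq K_j$ would make $K_i$ adjacent in $\gex_k$ to every $\gex_k$--neighbour of $K_j$, hence would create a chord. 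If every $K_i$ were a single vertex $\{a_i\}$, the $a_i$ would span an induced $C_n$ in $\gex$, contradicting the $C_n$--freeness of $\gex$. Hence the task reduces to showing that such a tuple can be chosen with every $K_i$ a singleton.

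To do this, pick among all tuples spanning an induced $C_n$ in $\gex_k$ one with $\sum_i|K_i|$ minimal, and suppose some $|K_j|\ge2$. For $v\in K_j$, the tuple obtained by replacing $K_j$ with $K_j\smallsetminus\{v\}$ again consists of distinct cliques with consecutive ones joined, so by minimality it must fail to span an induced $C_n$, which can only mean that a chord appears: there is an $l=l(v)\not\equiv j,j\pm1$ with $(K_j\smallsetminus\{v\})\cup K_{l(v)}$ a clique. As $K_j\cup K_{l(v)}$ is not a clique, $v$ is then the \emph{only} vertex of $K_j$ possessing a non--neighbour in $K_{l(v)}$; in particular $v\mapsto l(v)$ is injective on $K_j$. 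Now fix distinct $v,v'\in K_j$ and vertices $y\in K_{l(v)}$, $y'\in K_{l(v')}$ with $v\not\sim y$ and $v'\not\sim y'$. These four vertices are distinct; moreover $v\sim v'$, while $v$ is adjacent to every vertex of $K_{l(v')}$ and $v'$ to every vertex of $K_{l(v)}$ by the uniqueness just noted, so in particular $v\sim y'$ and $v'\sim y$. If $l(v)$ and $l(v')$ are adjacent on $C_n$, then $K_{l(v)}\cup K_{l(v')}$ is a clique, so $y\sim y'$, and $\{v,v',y,y'\}$ induces a square in $\gex$ --- contradicting square--freeness. So if $|K_j|\ge2$ then $l(v)$ and $l(v')$ are necessarily non--adjacent on the cycle.

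This last configuration is the main obstacle. Now $j$, $l(v)$, $l(v')$ are three pairwise non--cycle--adjacent vertices of $C_n$, forcing $n\ge6$, and $y,v',v,y'$ span only an induced $P_4$ of $\gex$ rather than a square. The plan here is to close this path off along the arc of $C_n$ joining $K_{l(v)}$ to $K_{l(v')}$ on the side away from $K_j$, so as to produce in $\gex$ either an induced square or an induced cycle of length at least $5$ --- both of which $\gex$ lacks. The delicate point, as always when passing from $\gex_k$ back to $\gex$, is controlling which individual vertices of the intervening cliques are adjacent to which; I expect this to need an induction on $n$ together with a careful choice of representatives along that arc --- for instance, replacing $K_j$ by the singleton $\{v'\}$ and exploiting the resulting chord $\{v'\}$--$K_{l(v)}$ in $\gex_k$ to extract a strictly shorter induced cycle of length still at least $5$, contrary to the minimality of $n$. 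Granting the reduction to singleton cliques, the contradiction is the one already recorded, which completes the square--free case.
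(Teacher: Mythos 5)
Your triangle--free case is correct and is exactly the paper's argument (Corollary~\ref{c:bipartite}(1), equivalently Lemma~\ref{l:cmfree}(3) plus Theorem~\ref{t:triangle--free}). The square--free case, however, has a genuine gap, and you flag it yourself: after setting up the induced $C_n$ of cliques in $\gex_k$ via Theorem~\ref{t:thm2 strong}, your reduction to singleton cliques breaks down precisely in the configuration where the two chord--targets $l(v)$ and $l(v')$ are non--adjacent on the cycle. There you only obtain an induced $P_4$ in $\gex$ rather than a forbidden square, and the proposed fix (``close this path off along the arc\dots I expect this to need an induction\dots Granting the reduction to singleton cliques'') is a plan, not a proof. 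Nothing you have written rules out this case, so the square--free half of the corollary is not established.

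The paper's proof of the square--free case is entirely different and much shorter: a square--free weakly chordal graph has no induced $C_4$ and no induced $C_m$ for $m\ge 5$, hence is \emph{chordal}. Right-angled Artin groups on chordal graphs contain no closed hyperbolic surface subgroups (\cite{Kim2010}, \cite{CSS2008}), whereas $A(C_n)$ for $n\ge 5$ does contain such subgroups (\cite{SDS1989}); so $A(C_n)\not\le A(\gam)$. If you want to salvage your combinatorial route, you would need to prove outright that the clique graph of a chordal (equivalently, square--free and long--cycle--free) extension graph contains no induced long cycle arising from a non--nested tuple as in Theorem~\ref{t:thm2 strong}; as it stands, the key case is missing, and I recommend replacing it with the chordality argument.
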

\begin{proof}
The triangle--free case was shown in Corollary~\ref{c:bipartite} (1).

A graph $\gam$ is called \emph{chordal} if it contain no induced cycle of length $n\geq 4$.  Right-angled Artin groups on chordal graphs do not contain fundamental groups of closed hyperbolic surfaces; see \cite{Kim2010} and \cite{CSS2008}. However, it is well--known that right-angled Artin groups on long cycles contain hyperbolic surface groups  \cite{SDS1989}.
Hence, right-angled Artin groups on chordal graphs do not contain $A(C_n)$ for any $n\ge5$.
This completes the square--free case.
\end{proof}

\begin{rem}\label{r:p6opp}
The smallest example of a weakly chordal graph for which Weakly Chordal Conjecture is unresolved is $P_6\opp$~\cite{CSS2008}.
It is known that $A(P_6\opp)$ does not contain $A(C_n)$ for an odd $n\ge5$~\cite{CSS2008,GS1999}.
\end{rem}

Theorem~\ref{t:triangle--free} trivially implies the Weakly--Chordal Conjecture for the case where the target graph $\gam$ is a cycle. A more precise statement on when there is an embedding from $A(C_m)$ to $A(C_n)$ is given by Thoerem~\ref{t:cntarget}.

\begin{proof}[Proof of Theorem \ref{t:cntarget}]
Let us fix one conjugate of $C_n$ in $C_n^e$ and denote it by $\Omega$.
We may assume $m,n\ge5$ by Corollary~\ref{c:complete bipartite} and Corollary~\ref{c:kambites}. By Theorem~\ref{t:triangle--free}, it suffices to prove that $C_m$ embeds in $C_n^e$ as an induced subgraph if and only if $m=n+k(n-4)$ for some $k\ge0$.

We first prove the forward implication by an induction on $m$. If $m\leq n$, the claim is trivial by Lemma~\ref{l:cmfree}.  Suppose that $\gamma\cong C_m$ is an induced subgraph of $C_n^e$, with $m>n$. Notice that $\gamma$ is not contained in one conjugate of $\Omega$ inside of $C_n^e$.  
Therefore, there exist two vertices $x,y$ in $\gamma$ such that $x$ and $y$ belong to distinct conjugates of $\Omega$ in $C_n^e$.
By Lemma~\ref{l:extension} (6), there exists a vertex $v\in C_n^e$ such that $\gamma\cap\st(v)$ contains at least two vertices, none of which are equal to $v$, and such that $\gamma\smallsetminus\gamma\cap\st(v)$ is disconnected.  Consider the graph spanned by $\gamma$ and $v$.  The vertex $v$ induces at least two more edges, but possibly more.  Taken together, these edges cellulate $\gamma$, dividing it into smaller induced cycles $\{A_1,\ldots,A_k\}$.  Any two of these cycles meet in either a path of length two, a single edge or precisely at the vertex $v$. See Figure~\ref{f:cntarget0}.

\begin{figure}[htb!]
  \tikzstyle {a}=[red,postaction=decorate,decoration={%
    markings,%
    mark=at position 1 with {\arrow[red]{stealth};}}]
  \tikzstyle {ab}=[black,postaction=decorate,decoration={%
    markings,%
    mark=at position .85 with {\arrow[black]{stealth};}}]
  \tikzstyle {ab2}=[black,postaction=decorate,decoration={%
    markings,%
    mark=at position .75 with {\arrow[black]{stealth};}}]
  \tikzstyle {b}=[blue,postaction=decorate,decoration={%
    markings,%
    mark=at position .85 with {\arrow[blue]{stealth};},%
    mark=at position 1 with {\arrow[blue]{stealth};}}]
  \tikzstyle {c}=[orange,postaction=decorate,decoration={%
    markings,%
    mark=at position .7 with {\arrow[orange]{stealth};},%
    mark=at position .85 with {\arrow[orange]{stealth};},
    mark=at position 1 with {\arrow[orange]{stealth};}
}]
  \tikzstyle {v}=[draw,shape=circle,fill=black,inner sep=0pt]
  \tikzstyle {bv}=[black,draw,shape=circle,fill=black,inner sep=1pt]
  \tikzstyle{every edge}=[-,draw]
\subfloat[(a)]{
\begin{tikzpicture}[thick]
\draw [blue,out=90,in=90] (0,2) edge (2,2)  -- (0,0);
\draw [blue,out=-90,in=-90] (2,0) edge (0,0) -- (2,2);
\draw (.8,2) node [below] {$A_1$};
\draw (.8,.1) node [] {$A_2$};
\draw (1,1) node[bv] (v) {} +(0,-.1) node[below]  {$v$};
\draw (0,.3) node[bv] (lv) {} -- (v) -- (2,.3) node[bv] (rv) {};
\draw (1,2.58) node[bv] {} +(0,.1) node[above]  {$x$};
\draw (1,-.58) node[bv] {} +(0,-.1)node[below] {$y$};
	\end{tikzpicture}
}
\hspace{.3in}
    \subfloat[(b)]{
\begin{tikzpicture}[thick]
\draw [blue,out=90,in=90] (0,2) edge (2,2)  -- (0,0);
\draw [blue,out=-90,in=-90] (2,0) edge (0,0) -- (2,2);
\draw (.8,2) node [below] {$A_1$};
\draw (.8,.1) node [] {$A_3$};
\draw (.35,1) node[] {$A_2$};
\draw (1.65,1) node[] {$A_4$};
\draw (1,1) node[bv] (v) {} +(0,-.1) node[below]  {$v$};
\draw (0,.3) node[bv] (lv) {} -- (v) -- (2,.3) node[bv] (rv) {};
\draw (1,2.58) node[bv] {} +(0,.1) node[above]  {$x$};
\draw (1,-.58) node[bv] {} +(0,-.1)node[below] {$y$};
\draw (0,1.7) node[bv] (luv) {} -- (v) -- (2,2) node[bv] (ruv) {};
	\end{tikzpicture}   
}%
\hspace{.3in}
\subfloat[(c)]{
\begin{tikzpicture}[thick]
\draw [blue,out=90,in=90] (0,2) edge (2,2)  -- (0,0);
\draw [blue,out=-90,in=-90] (2,0) edge (0,0) -- (2,2);
\draw (1,2) node [] {$A_1$};
\draw (.6,1.1) node [] {$A_2$};
\draw (1,0) node [] {$A_3$};
\draw (2,1) node[bv] (v) {} +(0,.1) node[right]  {$v$};
\draw (0,.3) node[bv] (lv) {} -- (v) -- (0,1.9) node[bv] (rv) {};
\draw (2,1.7) node[bv] {} -- (v) -- (2,.3) node[bv] (rv) {};
\draw (1,2.58) node[bv] {} +(0,.1) node[above]  {$x$};
\draw (1,-.58) node[bv] {} +(0,-.1)node[below] {$y$};
	\end{tikzpicture}
}
  \caption{Separation of a cycle, in the proof of Theorem \ref{t:cntarget}.}
  \label{f:cntarget0}
\end{figure}
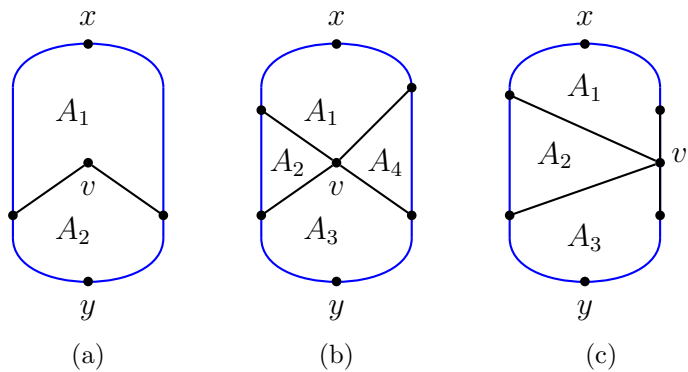

We claim that $v$ and $\gamma$ determine a cellulation of $\gamma$ which consists of exactly two induced subcycles.  To prove this, it suffices to see that there cannot be three or more induced cycles meeting at $v$ such that any two of the cycles intersect in an edge or a single vertex; that is, (b) or (c) in Figure~\ref{f:cntarget0} does not occur. Suppose we are given $k\geq 3$ such cycles ``packed" about a vertex $v$ of $C_n^e$.  First, notice that we may assume these cycles all have length $n$.  Indeed, if any one of them is longer then we can cellulate it by cycles of strictly shorter length by finding a vertex whose star separates the cycles, as above.  Now suppose that $k$ cycles of length $n$ are packed around a vertex $v$.  By Lemma~\ref{l:conjugate}, any $n$--cycles in $C_n^e$ is a conjugate of $\Omega$.  So, we may assume $v$ is a vertex in $\Omega$ with neighbors $a$ and $b$.  Since the cycles are packed about $v$, they are all conjugate to $\Omega$ by an element of the stabilizer of $v$, which is the group generated by $\{a,b,v\}$.  Since $v$ is central in this group, we may ignore it when we consider conjugates.  
As in Figure~\ref{f:cntarget} (a),
there exist $1=w_1,w_2,\ldots,w_k\in \form{a,b}\subseteq A(\gam)$ such that the following cycles are cyclically packed about $v$:
\[\{\Omega^{w_1},\ldots,\Omega^{w_k}\}.\]  
Notice that for each $i$, the word $w_i$ is a word in $a$ and $b$.  Furthermore, $w_{i-1}w_i^{-1}$ is a nonzero power of $a$ or of $b$, depending on the parity of $i$. If $k\geq 3$, $w_k$ cannot be a multiple of $a$ or $b$; however, since $\Omega^{w_k}$ and $\Omega^{w_1}$ share an edge, this would have to be the case.  So, $k=2$.

It follows that $\gamma$ is given by concatenating two induced cycles of length $k$ and $k'$ along a path of length two, so that $m=k+k'-4$.  When $n\geq 5$, square--freeness implies that $k$ and $k'$ are both smaller than $m$, which completes the induction.

Conversely, suppose that $m=n+k(n-4)$ for some $k\ge1$.  We can easily produce a copy of $C_m$ in $C_n^e$ as a ``linear" cellulation of a disk, as follows. We think of $C_n$ as the boundary of a disk.  On the boundary of each disk, choose two edge-disjoint induced paths $(a,b,c)$ and $(x,y,z)$. If $n=5$, we let $a=x$; otherwise, we assume $\{a,b,c\}$ and $\{x,y,z\}$ are disjoint. Arrange $k$ of these disks in a row and glue them together, identifying the copy of $\{a,b,c\}$ in one disk with the copy of $\{x,y,z\}$ in the next, gluing $a$ to $x$, $b$ to $y$ and $z$ to $c$. See Figure~\ref{f:cntarget} (b). The boundary of the resulting disk is clearly an induced subgraph of $C_n^e$ and has the desired length $n+k(n-4)$.
\end{proof}
\begin{figure}[htb!]
  \tikzstyle {a}=[red,postaction=decorate,decoration={%
    markings,%
    mark=at position 1 with {\arrow[red]{stealth};}}]
  \tikzstyle {ab}=[black,postaction=decorate,decoration={%
    markings,%
    mark=at position .85 with {\arrow[black]{stealth};}}]
  \tikzstyle {ab2}=[black,postaction=decorate,decoration={%
    markings,%
    mark=at position .75 with {\arrow[black]{stealth};}}]
  \tikzstyle {b}=[blue,postaction=decorate,decoration={%
    markings,%
    mark=at position .85 with {\arrow[blue]{stealth};},%
    mark=at position 1 with {\arrow[blue]{stealth};}}]
  \tikzstyle {c}=[orange,postaction=decorate,decoration={%
    markings,%
    mark=at position .7 with {\arrow[orange]{stealth};},%
    mark=at position .85 with {\arrow[orange]{stealth};},
    mark=at position 1 with {\arrow[orange]{stealth};}
}]
  \tikzstyle {v}=[draw,shape=circle,fill=black,inner sep=0pt]
  \tikzstyle {bv}=[black,draw,shape=circle,fill=black,inner sep=1pt]
  \tikzstyle{every edge}=[-,draw]
    \subfloat[(a)]{\begin{tikzpicture}[thick]
	\foreach \i in {0,...,4} {
		\draw (360/4*\i+90:2) node [bv] (v\i) {} ;
		\draw (0,0) -- (v\i);
		\draw (360/4*\i+90:2) node [bv] (w\i) {} ;
		}
		\draw (-.3,0) node [above] {$v$};
		\draw (0,0) node [bv]{};
		\draw (w1)+(-.1,0) node [left] {$b$};
		\draw (w3)+(.1,0) node [right] {$b^{w_2}$};
		\draw (w0)+(0,.1) node [above] {$a$};
		\draw (w2)+(0,-.1) node [below] {$a^{w_3}$};
		\draw [out=-120,in=-150] (w1) edge  node[pos=.5,right] (Q3) {} (w2);
		\draw [out=-120+90,in=-150+90] (w2) edge  node[pos=.5,right] (Q4) {} (w3);					
		\draw [out=-120+180,in=-150+180] (w3) edge  node[pos=.5,right]  (Q1) {}  (w0);							\draw [out=-120+270,in=-150+270] (w0) edge  node[pos=.5,right] (Q2) {}  (w1);	
		\draw (Q1)+(-.5,-.5) node[] {$\Omega^{w_2}$};	
		\draw (Q2)+(.5,-.5) node[] {$\Omega=\Omega^{w_1}$};	
		\draw (Q3)+(.5,.5) node[] {$\Omega^{w_4}$};	
		\draw (Q4)+(-.5,.5) node[] {$\Omega^{w_3}$};	
	\node  [inner sep=0.9pt] at (-90:1.5) {};  
\end{tikzpicture}}%
\hspace{.2in}
\subfloat[(b)]{
\begin{tikzpicture}[thick]
\foreach \i in {0,...,4}
    \foreach \j in {0,1,2}
		\draw (1.5*\i, \j ) node [bv] (v\i\j) {};
\foreach \i in {0,...,4}
{
\draw (v\i0)--(v\i2);
}
\foreach \i in {0,1,3}
{
\draw (v\i2)+(0,-.2) node[right] {$a$};
\draw (v\i1) node[right] {$b$};
\draw (v\i0)+(0,.2) node[right] {$c$};
}

\foreach \i in {1,2,4}
{
\draw (v\i2)+(0,-.2) node[left] {$x$};
\draw (v\i1) node[left] {$y$};
\draw (v\i0)+(0,.2) node[left] {$z$};
}

\draw [out=40,in=140] (0,2) edge (1.5,2);
\draw [out=40,in=140] (1.5,2) edge (3,2);
\draw [out=40,in=140] (4.5,2) edge (6,2);
\draw [out=-40,in=-140] (0,0) edge (1.5,0);
\draw [out=-40,in=-140](1.5,0) edge (3,0);
\draw [out=-40,in=-140](4.5,0) edge (6,0);
\draw (3.7,1) node[] {$\cdots$};
\node  [inner sep=0.9pt] at (-90:1.5) {};  
	\end{tikzpicture}
}
  \caption{Proof of Theorem \ref{t:cntarget}. (a) Cycles packed around a vertex. Note that $w_2\in\form{a}w_1=\form{a}, w_3\in\form{b}w_2, w_4\in\form{a}w_3$, and so forth. (b) A linear cellulation of a disk in $C_n^e$.}
  \label{f:cntarget}
\end{figure}
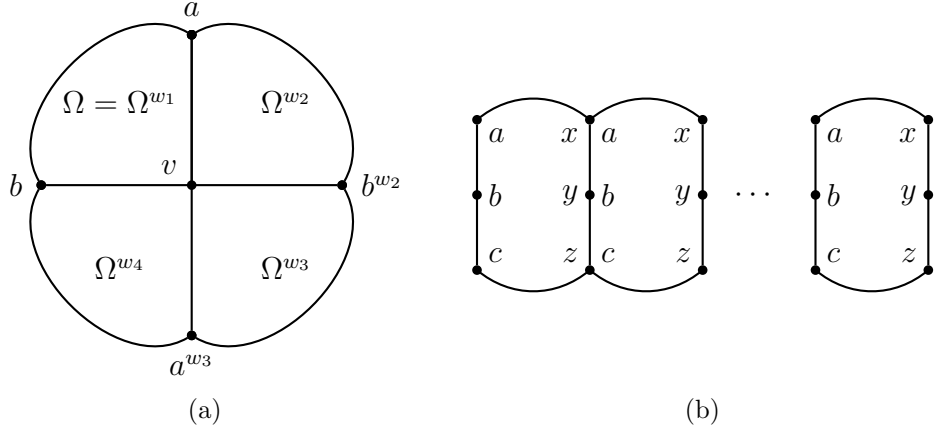

\section{Universal right-angled Artin groups}\label{s:universal}
In this short section, we prove Theorem \ref{t:universal}.
\begin{proof}[Proof of Theorem \ref{t:universal}]
If $\gam$ is triangle--free and has chromatic number $n$ then so does $\gex$; see Lemma~\ref{l:extension} (8).  Furthermore, all the induced subgraphs of $\gex$ also have chromatic number at most $n$.  It is a standard result of Erd\"os that there exist triangle--free graphs with arbitrarily large chromatic number (see \cite{Diestel}, for example), so there is no chance that $\gex$ contains every triangle--free graph.
\end{proof}

\section{Appendix: Topological Proof of Corollary~\ref{c:double}}\label{sec:appendix}
For a group $G$ and a subgroup $H$, let us denote by $G\ast_H G$ the free product of two copies of $G$ glued along the images of $H$. Also, the HNN-extension of $G$ along the identity map on $H$ is denoted as $G\ast_H$.
From~\cite{LS2001}, it easily follows that $G\ast_H G$ embeds into $G\ast_H$.
We strengthen this classical result as follows.
\begin{lem}\label{l:c:double}
Let $\phi:G\to G_1$ be a group isomorphism.
We let $P = (G\ast_H G)\ast_H, Q=G\ast_H$and call the stable generators of $P$ and $Q$ by $s$ and $t$, respectively.
We denote by $\phi$ the natural group isomorphism between the two copies of $G$ in $G\ast_H G$, regarded as subgroups of $P$.
Define $f:P\to Q$ by $f(g) = g,f(\phi(g)) = g^t$ for $g\in G$ and $f(s)=t^2$.
Then $f$ embeds $P$ into $Q$ as an index--two subgroup.
\end{lem}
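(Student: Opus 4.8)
The plan is to exhibit $f(P)$ as the index--two subgroup $N=\ker\epsilon$, where $\epsilon\colon Q\to\Z/2\Z$ is the homomorphism sending each element of $G$ to $0$ and $t$ to $1$; this $\epsilon$ is well defined precisely because the HNN relations $tht^{-1}=h$ ($h\in H$) lie in its kernel. Granting this, $[Q:f(P)]=[Q:N]=2$ since $\epsilon$ is onto, and it remains to see that $f$ is a well-defined injection onto $N$. Well-definedness is a direct check against the defining relations of $P=(G\ast_HG)\ast_H$: the relations inside each of the two copies of $G$ are respected because $f$ restricts to an isomorphism from each copy onto a conjugate of $G$ in $Q$; the amalgamating relation $h=\phi(h)$ is sent to $h=t^{-1}ht$, and the HNN relation $shs^{-1}=h$ to $t^{2}ht^{-2}=h$, both of which hold in $Q$ because $t$ centralizes $H$. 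Since also $\epsilon\circ f$ is trivial, we get $f(P)\subseteq N$.

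The crux is injectivity, together with the reverse inclusion $N\subseteq f(P)$. I would prove both at once by producing an explicit two--sided inverse $F\colon N\to P$. To this end, compute a presentation of $N$ by Reidemeister--Schreier with the Schreier transversal $\{1,t^{-1}\}$ (the choice $t^{-1}$ rather than $t$ is what makes the ``second'' vertex group come out as $t^{-1}Gt$, matching $f(\phi(g))=g^{t}=t^{-1}gt$). The Schreier generators are $\gamma_g:=g$ and $\delta_g:=t^{-1}gt$ for $g\in G$, and $\tau:=t^{2}$; rewriting the relators of $Q$ and simplifying, one obtains as a presentation of $N$: a copy of the relations of $G$ among the $\gamma_g$, a copy among the $\delta_g$, the relations $\gamma_h=\delta_h$ for $h\in H$, and the relations $\tau\delta_h\tau^{-1}=\gamma_h$ for $h\in H$. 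Comparing with the standard presentation of $P$, the assignment $\gamma_g\mapsto g$, $\delta_g\mapsto\phi(g)$, $\tau\mapsto s$ respects all of these relations --- the last family maps to $s\phi(h)s^{-1}=h$, which holds in $P$ since $\phi(h)=h$ and $shs^{-1}=h$ --- so it defines a homomorphism $F\colon N\to P$. Because $f(g)=\gamma_g$, $f(\phi(g))=\delta_g$ and $f(s)=\tau$, the maps $F$ and $f$ are mutually inverse on generators, hence $F\circ f=\mathrm{id}_P$ and $f\circ F=\mathrm{id}_N$. Thus $f\colon P\to N$ is an isomorphism, which completes the proof.

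I expect the only real difficulty to be the clean identification of $N$ in the third step: getting the Reidemeister--Schreier rewriting and the subsequent simplification of relators exactly right, and in particular keeping track of which conjugate of $G$ (namely $t^{-1}Gt$, not $tGt^{-1}$) plays the role of $\phi(G)$. An equivalent and perhaps more conceptual packaging of the same step is via Bass--Serre theory: $Q=G\ast_H$ acts on its Bass--Serre tree $T$ with quotient a single loop, the index--two subgroup $N$ acts on $T$ with quotient the connected double cover of that loop --- a bigon with two vertices and two edges --- and reading off the induced graph--of--groups decomposition of $N$ recovers exactly $(G\ast_HG)\ast_H$; one then checks this identification is realized by $f$. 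Either way, the well--definedness and index computations are immediate, and the structural identification of $N$ is where the content lies.
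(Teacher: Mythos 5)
Your proof is correct, but it proceeds differently from the paper's. The paper gives a short topological argument: it builds a CW-complex $Z$ for $Q=G\ast_H$ by attaching a tube $Y\times[0,1]$ (with $\pi_1(Y)=H$) to a space $X$ (with $\pi_1(X)=G$) along $Y\times\{0,1\}$, and then exhibits a degree--two cover $Z'\to Z$ obtained from two copies of $Z$ cut along $Y\times\{\tfrac12\}$ and reglued crosswise; by van Kampen $\pi_1(Z')\cong(G\ast_HG)\ast_H=P$, and the covering map realizes $f$. Your Reidemeister--Schreier argument is the algebraic shadow of this: your transversal $\{1,t^{-1}\}$ plays the role of the two sheets, the doubling of the $G$-relators into $\gamma$- and $\delta$-copies corresponds to the two copies of $X$, and your rewritten HNN relators $\gamma_h=\delta_h$ and $\tau\delta_h\tau^{-1}=\gamma_h$ record how the two halves of the cut tube reattach. (I checked the rewriting: for $u=1$ the relator $tht^{-1}h^{-1}$ becomes $\tau\delta_h\tau^{-1}\gamma_h^{-1}$ and for $u=t^{-1}$ it becomes $\gamma_h\delta_h^{-1}$, exactly as you state, and these Tietze-reduce to the presentation of $P$.) The Bass--Serre remark you make at the end is in fact the graph-of-groups translation of the paper's covering-space picture, so that alternative packaging is the one closest in spirit to the source. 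What your route buys is self-containedness and explicitness (a concrete presentation of $N$ and an explicit inverse $F$); what the topological route buys is brevity and a picture that makes the index-two statement and the identification of the subgroup simultaneously obvious. Both are sound.
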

\begin{proof}
Let $X$ be a CW-complex and $Y$ be a subcomplex such that $\pi_1(X)=G$ and $\pi_1(Y)=H$.
Construct a complex $Z$ for $Q$ by gluing $Y\times [0,1]$ to $Y\subseteq X$ along $Y\times 0$ and $Y\times 1$.
Then we construct a complex $Z'$ for $P$ by taking two copies $Z_1, Z_2$ of $Z$,
cutting $Z_i$ along the image of $Y\times\frac12$, and gluing those cut images in $Z_1$ to the corresponding cut images in $Z_2$, so that $Z'$ is an degree--two cover of $Z$.
\end{proof}

The reason for giving this strengthening is that it provides another proof of Theorem \ref{t:thm1} which is purely combinatorial.  The proof of Theorem \ref{t:thm1} as it is given above is the ``correct" proof since it leads to the natural generalizations which require mapping class groups in their proofs.

\begin{proof}[Proof of Theorem \ref{t:thm1}]
Lemma \ref{l:c:double} shows that $A(\gam\cup_{\st(v)}\gam)$ sits as an index--two subgroup of $A(\gam)$.  The result follows immediately.
\end{proof}
\section{Acknowledgements}
The authors thank D. Calegari, T. Church, B. Farb, L. Funar, J. Huizenga, C. McMullen, V. Gadre and S. Oum for useful comments and discussions.  Some of the fundamental ideas in this paper arose from discussions of the second author with M. Clay, C. Leininger and J. Mangahas at the Hausdorff Research Institute for Mathematics in Bonn. The first author thanks J. Manning for valuable remarks on Lemma~\ref{l:extension} (7).
\bibliography{./ref}

\providecommand{\bysame}{\leavevmode\hbox to3em{\hrulefill}\thinspace}
\providecommand{\MR}{\relax\ifhmode\unskip\space\fi MR }
\providecommand{\MRhref}[2]{%
  \href{http://www.ams.org/mathscinet-getitem?mr=#1}{#2}
}
\providecommand{\href}[2]{#2}
\begin{thebibliography}{10}

\bibitem{Agol2008}
Ian Agol, \emph{Criteria for virtual fibering}, J. Topol. \textbf{1} (2008),
  no.~2, 269--284.

\bibitem{Baudisch1981}
A.~Baudisch, \emph{Subgroups of semifree groups}, Acta Math. Acad. Sci. Hungar.
  \textbf{38} (1981), no.~1-4, 19--28.

\bibitem{BC2010}
Jason Behrstock and Ruth Charney, \emph{Divergence and quasimorphisms of
  right-angled artin groups}, To appear in {\it Math. Ann.}, arXiv:1001.3587v2
  [math.GR].

\bibitem{BLM1983}
Joan~S. Birman, Alex Lubotzky, and John McCarthy, \emph{{Abelian and solvable
  subgroups of the mapping class groups}}, Duke Math. J. \textbf{50} (1983),
  no.~4, 1107--1120.

\bibitem{Brunner1992}
A.~M. Brunner, \emph{Geometric quotients of link groups}, Topology Appl.
  \textbf{48} (1992), no.~3, 245--262.

\bibitem{CV2009}
Ruth Charney and Karen Vogtmann, \emph{Finiteness properties of automorphism
  groups of right-angled {A}rtin groups}, Bull. Lond. Math. Soc. \textbf{41}
  (2009), no.~1, 94--102.

\bibitem{CLM2010}
Matt Clay, Christopher~J Leininger, and Johanna Mangahas, \emph{The geometry of
  right angled artin subgroups of mapping class groups}, To appear in {\it
  Groups Geom. Dyn.}, \href{http://arxiv.org/abs/1007.1129v2}{arXiv:1007.1129v2
  [math.GT]}, 2010.

\bibitem{CLB1981}
D.~G. Corneil, H.~Lerchs, and L.~Stewart Burlingham, \emph{Complement reducible
  graphs}, Discrete Appl. Math. \textbf{3} (1981), no.~3, 163--174. \MR{619603
  (84d:05137)}

\bibitem{CSS2008}
John Crisp, Michah Sageev, and Mark Sapir, \emph{Surface subgroups of
  right-angled {A}rtin groups}, Internat. J. Algebra Comput. \textbf{18}
  (2008), no.~3, 443--491.

\bibitem{CW2004}
John Crisp and Bert Wiest, \emph{Embeddings of graph braid and surface groups
  in right-angled {A}rtin groups and braid groups}, Algebr. Geom. Topol.
  \textbf{4} (2004), 439--472.

\bibitem{Diestel}
Reinhard Diestel, \emph{Graph theory}, Graduate Texts in Mathematics, Springer,
  2000.

\bibitem{Droms1987}
Carl Droms, \emph{Graph groups, coherence, and three-manifolds}, J. Algebra
  \textbf{106} (1987), no.~2, 484--489.

\bibitem{FM2011}
B~Farb and D~Margalit, \emph{A primer on mapping class groups}, Princeton
  Mathematical Series, Princeton University Press, Princeton, 2011.

\bibitem{GS1999}
V.~S. Guba and M.~V. Sapir, \emph{On subgroups of {R.Thompson's} group {F} and
  other diagram groups}, Matem. Sb. \textbf{190} (2010), no.~8, 3--60, In
  Russian. English transl.: Sbornik. Math. 190(8) (1999) 1077--1130.

\bibitem{HW2008}
Fr{{\'e}}d{{\'e}}ric Haglund and Daniel~T. Wise, \emph{Special cube complexes},
  Geom. Funct. Anal. \textbf{17} (2008), no.~5, 1551--1620.

\bibitem{Kambites2009}
Mark Kambites, \emph{On commuting elements and embeddings of graph groups and
  monoids}, Proc. Edinb. Math. Soc. (2) \textbf{52} (2009), no.~1, 155--170.

\bibitem{KMNR1980}
K~Kim, L~Makar-Limanov, J~Neggers, and F~Roush, \emph{{Graph algebras}}, J.
  Algebra \textbf{64} (1980), no.~1, 46--51.

\bibitem{Kim2007}
Sang-hyun Kim, \emph{{Hyperbolic surface subgroups of right-angled {Artin}
  groups and graph products of groups. 2007}}, \textit{Ph.D. Thesis}, {Yale}
  {University} (2007).

\bibitem{Kim2008}
\bysame, \emph{Co-contractions of graphs and right-angled {A}rtin groups},
  Algebr. Geom. Topol. \textbf{8} (2008), no.~2, 849--868.

\bibitem{Kim2010}
\bysame, \emph{On right-angled {A}rtin groups without surface subgroups},
  Groups Geom. Dyn. \textbf{4} (2010), no.~2, 275--307.

\bibitem{Koberda2011}
Thomas Koberda, \emph{Right-angled {A}rtin groups and a generalized isomorphism
  problem for finitely generated subgroups of mapping class groups}, Preprint,
  \href{http://arxiv.org/abs/1007.1118v3}{arXiv:1007.1118v3 [math.GT]}.

\bibitem{LS2001}
Roger~C. Lyndon and Paul~E. Schupp, \emph{Combinatorial group theory}, Classics
  in Mathematics, Springer-Verlag, Berlin, 2001, Reprint of the 1977 edition.

\bibitem{Mangahas2010}
Johanna Mangahas, \emph{{A Recipe for Short-Word Pseudo-Anosovs}}, Proceedings
  of the American Mathematical Society (2010), no.~3, 573--578, To appear in
  {\it Amer. J. Math.}

\bibitem{Manning2005}
Jason~Fox Manning, \emph{Geometry of pseudocharacters}, Geom. Topol. \textbf{9}
  (2005), 1147--1185 (electronic).

\bibitem{Penner1988}
Robert~C. Penner, \emph{A construction of pseudo-{A}nosov homeomorphisms},
  Trans. Amer. Math. Soc. \textbf{310} (1988), no.~1, 179--197.

\bibitem{Sabalka2009}
L~Sabalka, \emph{{On rigidity and the isomorphism problem for tree braid
  groups}}, Groups Geom. Dyn. \textbf{3} (2009), no.~3, 469--523.

\bibitem{Servatius1989}
Herman Servatius and Carl Droms, \emph{{Surface subgroups of graph groups}},
  Proceedings of the American Mathematical Society \textbf{106} (1989), no.~3,
  573--578.

\bibitem{SDS1989}
Herman Servatius, Carl Droms, and Brigitte Servatius, \emph{Surface subgroups
  of graph groups}, Proc. Amer. Math. Soc. \textbf{106} (1989), no.~3,
  573--578.

\end{thebibliography}
\bibliographystyle{amsplain}

\end{document}